\newtheorem{theorem}{Theorem}[section]
\newtheorem{corollary}[theorem]{Corollary}
\newtheorem{lemma}[theorem]{Lemma}
\newtheorem{proposition}[theorem]{Proposition}
\theoremstyle{definition}
\newtheorem{definition}[theorem]{Definition}
\newtheorem{remark}[theorem]{Remark}
\title{Topological dynamics of the doubling map with asymmetrical holes}
\author{Rafael Alcaraz Barrera}
\date{\today}
\address{School of Mathematics\\
The University of Manchester\\
Oxford Road, Man\-ches\-ter, M13 9PL, UK}
\email{rafael.alcarazbarrera@manchester.ac.uk} 
\subjclass[2010]{Primary 37B10; Secondary 37C70. 37E05, 68R15.}
\thanks{Some results of this paper are contained in the author's doctoral dissertation \cite{yomero2}. Research was supported by CONACyT scholarship for Doctoral Students  no. 213600.}
\keywords{Lexicographic world, open dynamical systems, doubling map, topological transitivity, specification property}
\begin{document}
\begin{abstract}
We study the dynamics of the attractor of the doubling map with an asymmetrical hole by associating to each hole an element of the lexicographic world. A description of the topological entropy function is given. We show that the set of parameters $(a,b)$ such that the dynamics of the mentioned attractor corresponds to a subshift of finite type is open and dense. Using the connections between this family of open dynamical systems, intermediate $\beta$-expansions and Lorenz maps we study the topological transitivity and the specification property for such maps.
\end{abstract}
\maketitle

\section{Introduction and Summary}
\label{intro}

\noindent The purpose of this paper is to study the dynamical properties of a family of \textit{open dynamical systems} corresponding to the doubling map $2x \mod 1$. Let us remind the reader our setting: let $f: S^1 \to S^1$ denote the doubling map. Following \cite{urbanski2}, $0$ will denote the fixed point of $f$ and if $a, b \in S^1$ w with $a \neq b$ then $(a,b)$ denotes the open arc anticlockwise oriented from $a$ to $b$ and it is called open interval. Given $a,b \in S^1$ we say that $a < b$ if $l((0,a)) < l((0,b))$ where $l$ denotes the length of the segments $(x,y)$ with $x,y \in S^1$. Given $(a,b) \subset S^1$, $X_{(a,b)}$ is \textit{the exceptional set corresponding to $(a,b)$}, that is $$X_{(a,b)} = \left\{x \in S^1 \mid f^n(x) \notin (a,b) \hbox{\rm{ for every }} n \in \mathbb{N}\right\}.$$ Since $X_{(a,b)}$ is a forward $f$-invariant set it is possible to consider the map $f_{(a,b)} = f\mid_{X_{(a,b)}}$. We call the pair $(X_{(a,b)}, f_{(a,b)})$ an \textit{open dynamical system} or \textit{exclusion system} for the doubling map. There is a recent body of works which studies particular features of such systems- see \cite{dettman, sidorov3, sidorov1, nilsson, sidorov2}.

\vspace{1em}We will concentrate in studying the dynamical properties of the attractor of $X_{(a,b)}$ denoted by $\Lambda_{(a,b)}$ when $(a,b)$ is a centred interval, that is an open interval $(a,b)$ such that $\frac{1}{2} \in (a,b)$. Observe that $\Lambda_{(a,b)} = [2b-1,2a]\cap X_{(a,b)}$. In particular we are interested in understanding when $(\Lambda_{(a,b)}, f_{(a,b)})$ is transitive, has specification property whenever $\Lambda_{(a,b)}$ has positive Hausdorff dimension as asked in \cite{sidorov4}. A natural question to pose for the doubling map is if the mentioned properties will follow because of the symbolic representation of the boundary points of the interval -see \cite{bundfuss}. Also, it is natural to ask if the topological entropy of these systems change, and if it does, if it has dependence on size or position of the hole $(a,b)$. This questions have been studied for the symmetric case, i.e. intervals of the form $(a, 1-a)$ in \cite{yomero1} and for non centred intervals, i.e. $a = 0$ or $b =1$ in \cite{nilsson}. Both cases have a strong relationship with the dynamical properties of the set of points with unique $\beta$-expansion when $\beta \in (1,2)$ and with the classical $\beta$-shift respectively.

\vspace{1em}Our study is developed by using tools of symbolic dynamics and most of our proofs are essentially symbolic. In Proposition \ref{attractor1} and Theorem \ref{asociar} we show that an open map can be represented by a unique element of the \textit{lexicographic world} \footnote{It is worth to mention that the lexicographic world was introduced by Gan in \cite{gan}. However, this notion was studied previously -see \cite{guckwill} among others.}. The lexicographic world also provides a tool to study the dynamical features of Lorenz maps -see \cite{glendinning, hall, hofbauer2, hofbauer1, hubbardsparrow}- by their \textit{kneading invariants}. One of the novelties of our approach is to extend the lexicographic world to extremal sequences, i.e. sequences not considered in the lexicographic world -see Definition \ref{lexworld}, but that correspond to the binary expansion of the end points of an open interval $(a,b)$.

\vspace{1em}The structure of the paper is as follows: In Section \ref{basic} we introduce all the tools from symbolic dynamics used during the paper. Also we give show that the attractors $(\Lambda_{(a,b)},f_{(a,b)})$ are essentially lexicographic subshifts. In Section \ref{generic} we show in Theorem \ref{opendense1} that the set of parameters $(a,b)$ such that the corresponding attractor is conjugated to a subshift of finite type is open and dense. As a corollary we obtain that the topological entropy of $(\Lambda_{(a,b)}, f_{(a,b)})$ is locally constant - see Corollary \ref{opendense}. Also, in Theorem \ref{asociar1} we extend the lexicographic world to extremal sequences see (Definition \ref{lexworld}). In Section \ref{yasequedaasi} we also discuss the relation between intermediate $\beta$-expansions, Lorenz maps and open dynamical systems for the doubling map. In Section \ref{transitivity} we study the topological transitivity of $(\Lambda_{(a,b)}, f_{(a,b)})$ using a notion of renormalisation. In Theorem \ref{aproximaporabajogeneral} we characterise symbolically the intervals $(a,b)$ whose corresponding attractor $(\Lambda_{(a,b)}, f_{(a,b)})$ is topologically transitive. In Section \ref{specification} we study the specification property for $(\Lambda_{(a,b)},f_{(a,b)})$. In Theorem \ref{sispec} we give a sufficient condition for a lexicographic subshift $(\Sigma_{(\boldsymbol{\alpha},\boldsymbol{\beta})},\sigma_{(\boldsymbol{\alpha},\boldsymbol{\beta})})$ to have the specification property. We construct a family of examples of lexicographic subshifts $(\Sigma_{(\boldsymbol{\alpha},\boldsymbol{\beta})},\sigma_{(\boldsymbol{\alpha},\boldsymbol{\beta})})$ with no specification and we also provide a sufficient condition on the pairs $(\boldsymbol{\alpha},\boldsymbol{\beta})$ which guarantee that $(\Sigma_{(\boldsymbol{\alpha},\boldsymbol{\beta})},\sigma_{(\boldsymbol{\alpha},\boldsymbol{\beta})})$ has no specification in Theorem \ref{nospec}.   

\section[Preliminaries]{Symbolic Dynamics, the Lexicographic World and the doubling map with holes}
\label{basic}

\subsection*{Symbolic Dynamics}
\noindent For the convenience of the reader, we give all the relevant concepts from Symbolic Dynamics to develop our study. A detailed exposition in Symbolic Dynamics can be consulted in \cite{lindmarcus}. 

\vspace{1em} We will restrict our attention to subshifts defined on the alphabet $\{0,1\}$. The elements of $\{0,1\}$ will be referred as \textit{symbols} or \textit{digits}. We denote by $\Sigma_2$ to the set of all one sided sequences with symbols in $\{0,1\}$, that is $\Sigma_2 = \mathop{\prod}\limits_{n=1}^{\infty}\{0, 1\}$. It is well known that $\Sigma_2$ is a compact metric space with the distance given by: $$d(\boldsymbol{x},\boldsymbol{y}) = \left\{
\begin{array}{clrr}      
2^{-j} & \hbox{\rm{ if }}  \boldsymbol{x} \neq \boldsymbol{y};& 
\hbox{\rm{where }} j = \min\{i \mid x_i \neq y_i \}\\
0 & \hbox{\rm{ otherwise.}}&\\
\end{array}
\right.$$ 

Let $\pi: \Sigma_2 \to \left[0,1\right]$ be \textit{the projection map} given by $\pi(\boldsymbol{x}) = \mathop{\sum}\limits_{i=1}^{\infty} \frac{x_i}{2^i}$ and consider $\sigma: \Sigma_2 \to \Sigma_2$ to be the \textit{one sided full shift map}, that is $\sigma((x_i)_{i=1}^{\infty}) = (x_{i+1})_{i=1}^{\infty}$. Let $A \subset \Sigma_2$. We say that $(A, \sigma_A)$ is a \textit{subshift of $\Sigma_2$} if $A$ is a closed $\sigma$-invariant set and $\sigma_A$ is defined to be $\sigma_A = \sigma \mid_A$.  

\vspace{1em}A \textit{word} is a finite sequence of symbols $\omega = w_1,\ldots w_n$ where $w_i \in \{0,1\}$, and denote the \textit{length of $\omega$} by $\ell(\omega)$. Given two finite words $\omega = w_1,\ldots w_n$ and $\nu = u_1 \ldots u_m$ we write $\omega\nu$ to denote their \textit{concatenation}, i.e. $\omega\nu = w_1,\ldots w_n u_1 \ldots u_m$ and $\omega^n$ stands for the word $\omega$ concatenated to itself $n$ times. Given $\boldsymbol{x} \in \Sigma_2$ and a word $\omega$ we say that \textit{$\omega$ is a factor of $\boldsymbol{x}$} or \textit{$\omega$ occurs in $\boldsymbol{x}$} if there are coordinates $i$ and $j$ such that $\omega = x_i \ldots x_j$. Note that the same definition holds if $x$ is a finite word. Consider $\mathcal{F}$ to be a set of words and let $$\Sigma_{\mathcal{F}} = \left\{\boldsymbol{x} \in \Sigma_2 \mid \upsilon \hbox{\rm{ is not a factor of }} \boldsymbol{x} \hbox{\rm{ for any word }} \upsilon \in \mathcal{F}\right\}.$$ The set $\Sigma_{\mathcal{F}}$ is always a closed, $\sigma$-invariant set. Therefore, the dynamical system given by $(\Sigma_{\mathcal{F}}, \sigma\mid_{\Sigma_{\mathcal{F}}})$ is a subshift of $\Sigma_2$ and the set $\mathcal{F}$ is called a \textit{set of forbidden factors}. Conversely, for every compact and $\sigma$-invariant set $A$, there always exist a set of forbidden factors $\mathcal{F}$ such that $A = \Sigma_{\mathcal{F}}$ \cite[Theorem 6.1.21]{bundfuss}. We say that a subshift $(\Sigma_{\mathcal{F}},\sigma\mid_{\Sigma_{\mathcal{F}}})$ is a \textit{subshift of finite type} if $\mathcal{F}$ is finite. A subshift $(\Sigma_{\mathcal{F}},\sigma\mid_{\Sigma_{\mathcal{F}}})$ is said to be \textit{sofic} if there is a subshift of finite type $(X, \sigma_{X})$ and a semi-conjugacy $h:X \to \Sigma_{\mathcal{F}}$. 

\vspace{1em}Let $\boldsymbol{\alpha} = (a_i)_{i=1}^{\infty}$ and $\boldsymbol{\beta} = (b_i)_{i=1}^{\infty} \in \Sigma_2$. We say that $\boldsymbol{\alpha}$ is \textit{lexicographically less than} $\boldsymbol{\beta}$, denoted by $\boldsymbol{\alpha} \prec \boldsymbol{\beta}$ if there exists $k \in \mathbb{N}$ such that $a_j = b_j$ for $i < k$ and $a_k < b_k$. Note that the lexicographic order is induced on finite words of the same length using the same definition. If $\boldsymbol{\alpha} \prec \boldsymbol{\beta}$ then the \textit{lexicographic open interval from $\boldsymbol{\alpha}$ to $\boldsymbol{\beta}$} is the set $$(\boldsymbol{\alpha},\boldsymbol{\beta})_{\prec} = \left\{ 
\boldsymbol{x} \in \Sigma_2 \mid \boldsymbol{\alpha} \prec \boldsymbol{x} \prec \boldsymbol{\beta} \right\}.$$ Similarly, it is possible to consider the \textit{lexicographic closed interval 
from $\boldsymbol{\alpha}$ to $\boldsymbol{\beta}$}, $\left[\boldsymbol{\alpha}, \boldsymbol{\beta}\right]_{\prec}$, by changing $\prec$ for $\preccurlyeq$.  

\vspace{1em}Set $\Sigma^1_2 = \{\boldsymbol{x} \in \Sigma_2 \mid x_1 = 1\}$ and $\Sigma^0_2 = \{\boldsymbol{x} \in \Sigma_2 \mid x_1 = 0 \}$. Note that $\Sigma^0_2 = \{\boldsymbol{x} \in \Sigma_2 \mid \boldsymbol{x} = \bar{\boldsymbol{\alpha}} \hbox{\rm{ with }} \boldsymbol{\alpha} \in \Sigma_2^1\},$ where $\bar{\boldsymbol{x}}$ denotes \textit{the mirror image of $\boldsymbol{x}$}, that is $\bar{\boldsymbol{x}} = (1 - x_i)_{i=1}^{\infty}$. Let $(\boldsymbol{\alpha},\boldsymbol{\beta})\in \Sigma_2^1 \times \Sigma^0_2$ and $\Sigma_{(\boldsymbol{\alpha},\boldsymbol{\beta})}$ be given by: $$\Sigma_{(\boldsymbol{\alpha},\boldsymbol{\beta})} = \{\boldsymbol{x} \in \Sigma_2 \mid \boldsymbol{\beta} \preccurlyeq \sigma^{n}(\boldsymbol{x}) \preccurlyeq \boldsymbol{\alpha} \hbox{\rm{ for every }} n \geq 0\}.$$ Since $\Sigma_{(\boldsymbol{\alpha},\boldsymbol{\beta})}$ is a closed and $\sigma$-invariant subset of $\Sigma_2$ we call the pair $(\Sigma_{(\boldsymbol{\alpha},\boldsymbol{\beta})}, \sigma_{(\boldsymbol{\alpha},\boldsymbol{\beta})})$ \textit{the $(\boldsymbol{\alpha},\boldsymbol{\beta})$-lexicographic subshift} or simply a \textit{lexicographic subshift}, where $\sigma_{(\boldsymbol{\alpha},\boldsymbol{\beta})} = \sigma\mid_{\Sigma_{(\boldsymbol{\alpha},\boldsymbol{\beta})}}$.

\vspace{1em}A sequence $\boldsymbol{\alpha} \in  \Sigma^1_2$ is said to be a \textit{Parry sequence} if $\sigma^n(\boldsymbol{\alpha}) \preccurlyeq \boldsymbol{\alpha}$ for every $n \in \mathbb{N}$. We denote the set of Parry sequences by $P$. Note that for every $\boldsymbol{\alpha} \in P$ $\overline{\sigma^n(\boldsymbol{\alpha})} = \sigma^n(\bar{\boldsymbol{\alpha}}) \succ \bar{\boldsymbol{\alpha}}$. Denote by $\bar P = \{\boldsymbol{x} \in \Sigma^0_2 \mid \bar{\boldsymbol{x}} \in P\}$. Observe that $\bar P$ coincides with the set defined by Nilsson in \cite[p. 105]{nilsson}. By \cite[Theorems 3.7, 3.8]{nilsson}, $\pi(P)$ and $\pi(\bar P)$ are sets of Lebesgue measure zero and $\dim_H(\pi(P)) = \dim_H(\pi(\bar P)) = 1$ where $\dim_H$ denotes the Hausdorff dimension. Note that if $\boldsymbol{x} \in P$ then $\boldsymbol{x} \in [1^n0^{\infty}, 1^{n+1}0^{\infty}]_{\prec}$ for some $n \in \mathbb{N}$ and if $\boldsymbol{x} \in \bar P$ then $\boldsymbol{x} \in [0^{n+1}10^{\infty}, 0^{n}10^{\infty}]_{\prec}$ for some $n \in \mathbb{N}$. From \cite[Theorem 3.6]{nilsson}, we are also sure that every $\boldsymbol{\alpha} \in P$ is a limit point of $P$, therefore $P$ is a perfect set. Since $P$ is totally disconnected and compact, then $P$ and $\bar{P}$ are Cantor set with $\dim_H{P}=\dim_H(\bar{P}) = 1$. 

\begin{definition}
Let $(\boldsymbol{\alpha},\boldsymbol{\beta}) \in \Sigma_2^1 \times \Sigma_2^0$ such that: 
\begin{enumerate}[$i)$]
\item $\boldsymbol{\alpha} \in P$ and $\boldsymbol{\beta} \in \bar{P}$;
\item $\sigma^{n}(\boldsymbol{\alpha}) \succcurlyeq \boldsymbol{\beta}$ and $\sigma^{n}(\boldsymbol{\beta})  \preccurlyeq \boldsymbol{\alpha}$ for every $n \in \mathbb{N}$.
\end{enumerate}
A pair $(\boldsymbol{\alpha},\boldsymbol{\beta}) \in \Sigma^1_2 \times \Sigma^0_2$ is said to be \textit{extremal} if $(\boldsymbol{\alpha},\boldsymbol{\beta})$ does not satisfy $i)$ or $ii)$ and the family of non-extremal pairs is called \textit{the lexicographic world} and we will denote it as $\mathcal{LW}$.\label{lexworld}
\end{definition}

It is clear that $\mathcal{LW} \subset P \times \bar{P}$. Therefore, \cite[Theorem 3.6]{nilsson} implies that $\pi(\mathcal{LW})$ has Lebesgue measure zero.
   
\vspace{1em}Notice that if $(\boldsymbol{\alpha},\boldsymbol{\beta}) \in \mathcal{LW}$ then neither $\boldsymbol{\alpha}$ nor $\boldsymbol{\beta}$ have arbitrarily long strings of $0$'s or $1$'s unless $\boldsymbol{\alpha} = 1^{\infty}$ or $\boldsymbol{\beta} = 0^{\infty}$. Also, it is clear that if $\boldsymbol{\alpha} = 1^{\infty}$ and $\boldsymbol{\beta} = 0^{\infty}$ then $\Sigma_{(\boldsymbol{\alpha},\boldsymbol{\beta})} = \Sigma_2$. Given a sequence $\boldsymbol{\alpha} \in \Sigma_2$, consider $$0_{\boldsymbol{\alpha}} = \max\{n \in \mathbb{N} \mid 0^n \hbox{\rm{ is a factor of }} \boldsymbol{\alpha}\},$$ and $$1_{\boldsymbol{\alpha}} = \max\{n \in \mathbb{N}\mid 1^n \hbox{\rm{ is a factor of }} \boldsymbol{\alpha}\}.$$ Observe that $0_{\boldsymbol{\alpha}}$ and $1_{\boldsymbol{\alpha}}$ are well defined if $((\boldsymbol{\alpha},\boldsymbol{\beta})) \in \mathcal{LW} \setminus \{0^{\infty}, 1^{\infty}\}.$ Moreover, it is clear that for every $((\boldsymbol{\alpha},\boldsymbol{\beta}) \in \mathcal{LW}$, $0_{\boldsymbol{\alpha}} \leq 0_{\boldsymbol{\beta}}$ and $1_{\boldsymbol{\beta}} \leq 1_{\boldsymbol{\alpha}}$.  We say that a lexicographic subshift $(\Sigma_{(\boldsymbol{\alpha},\boldsymbol{\beta})}, \sigma_{(\boldsymbol{\alpha},\boldsymbol{\beta})})$ is \textit{asymmetric} if there exist $\boldsymbol{x} \in \Sigma_{(\boldsymbol{\alpha},\boldsymbol{\beta})}$ such that $\bar{\boldsymbol{x}} \notin \Sigma_{(\boldsymbol{\alpha},\boldsymbol{\beta})}$.

\vspace{1em}Given a lexicographic subshift $(\Sigma_{(\boldsymbol{\alpha},\boldsymbol{\beta})},\sigma_{(\boldsymbol{\alpha},\boldsymbol{\beta})})$, \textit{the set of admissible words of length $n$ of $(\Sigma_{(\boldsymbol{\alpha},\boldsymbol{\beta})}, \sigma_{(\boldsymbol{\alpha},\boldsymbol{\beta})})$} will be denoted by $B_n(\Sigma_{(\boldsymbol{\alpha},\boldsymbol{\beta})})$ and $$\mathcal{L}(\Sigma_{(\boldsymbol{\alpha},\boldsymbol{\beta})}) = \mathop{\bigcup}\limits_{n=1}^{\infty}B_n(\Sigma_{(\boldsymbol{\alpha},\boldsymbol{\beta})})$$ stands for \textit{language of $\Sigma_{(\boldsymbol{\alpha},\boldsymbol{\beta})}$}. Given a lexicographic subshift $(\Sigma_{(\boldsymbol{\alpha},\boldsymbol{\beta})}, \sigma_{(\boldsymbol{\alpha},\boldsymbol{\beta})})$, we define the \textit{topological entropy of $(\Sigma_{(\boldsymbol{\alpha},\boldsymbol{\beta})}, \sigma_{(\boldsymbol{\alpha},\boldsymbol{\beta})})$} by $$h_{top}(\sigma_{(\boldsymbol{\alpha},\boldsymbol{\beta})}) = \mathop{\lim}\limits_{n \to \infty} \dfrac{1}{n}\log \left|B_n(\Sigma_{(\boldsymbol{\alpha},\boldsymbol{\beta})})\right|$$ where $\log$ is always considered to be $\log_2$. 

\vspace{1em}We say that a lexicographic subshift $(\Sigma_{(\boldsymbol{\alpha},\boldsymbol{\beta})}, \sigma_{(\boldsymbol{\alpha},\boldsymbol{\beta})})$ is \textit{topologically transitive} if for any two words $\omega, \nu \in \mathcal{L}(\Sigma_{(\boldsymbol{\alpha},\boldsymbol{\beta})})$ there exist a word $\upsilon \in \mathcal{L}(\Sigma_{(\boldsymbol{\alpha},\boldsymbol{\beta})})$ such that $\omega \upsilon \nu \in \mathcal{L}(\Sigma_{(\boldsymbol{\alpha},\boldsymbol{\beta})})$. Also we say that a lexicographic subshift has \textit{the specification property} if $(\Sigma_{(\boldsymbol{\alpha},\boldsymbol{\beta})}, \sigma_{(\boldsymbol{\alpha},\boldsymbol{\beta})})$ is transitive and there exist $M \in \mathbb{N}$ such that for every $\omega, \nu$, $\ell(\upsilon) = M$.  We say that $(\Sigma_{(\boldsymbol{\alpha},\boldsymbol{\beta})},\sigma_{(\boldsymbol{\alpha},\boldsymbol{\beta})})$ is \textit{coded} if there exist a sequence of transitive lexicographic subshifts of finite type $\left\{(\Sigma_{(\boldsymbol{\alpha}_n,\boldsymbol{\beta}_n)},\sigma_{(\boldsymbol{\alpha}_n,\boldsymbol{\beta}_n)})\right\}_{n=1}^{\infty}$ such that $\Sigma_{(\boldsymbol{\alpha}_n,\boldsymbol{\beta}_n)} \subset \Sigma_{(\boldsymbol{\alpha}_{n+1},\boldsymbol{\beta}_{n+1})}$ for every $n \in \mathbb{N}$ and $\Sigma_{(\boldsymbol{\alpha},\boldsymbol{\beta})} = \overline{\mathop{\bigcup}\limits_{n=1}^{\infty} \Sigma_{(\boldsymbol{\alpha}_n,\boldsymbol{\beta}_n)}}$. 

\vspace{1em}To complete our exposition, we need tools form combinatorics of words. A detailed exposition can be consulted in \cite[Chapter 2]{lothaire}. 

\vspace{1em}Let $\omega$ be a word. We denote by $0\hbox{\rm{-max}}_{\omega}$ to the lexicographically biggest cyclic permutation of $\omega$ starting with $0$ and $1\hbox{\rm{-min}}_{\omega}$ to the lexicographically smallest cyclic permutation of $\omega$ starting with $1$. Also, $\hbox{\rm{max}}_{\omega}$ denotes the lexicographically largest cyclic permutation of $\omega$ and $\hbox{\rm{max}}_{\omega}$ denotes the lexicographically smallest cyclic permutation of $\omega$. It is clear that $\sigma(1\hbox{\rm{-min}}_{\omega}^\infty) = \hbox{\rm{min}}_{\omega}^{\infty}$ and 
$\sigma(0\hbox{\rm{-max}}_{\omega}^{\infty}) = \hbox{\rm{max}}_{\omega}^{\infty}$. 

\begin{proposition}
Let $\omega$ be a word such that $\omega \neq 1^n$ and $\omega \neq 0^m$ for any $n,m \in \mathbb{N}$. Then $\hbox{\rm{max}}_{\omega}$ ends with $0$ and $\hbox{\rm{min}}_{\omega}$ ends with $1$. \label{endings}
\end{proposition}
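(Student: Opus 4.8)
The plan is to argue by contradiction, exploiting the fact that since $\omega \neq 1^n$ and $\omega \neq 0^m$, the word $\omega$ contains at least one occurrence of each symbol. First I would record the elementary observation that $\max_{\omega}$ must begin with $1$ and $\min_{\omega}$ must begin with $0$: because $\omega$ contains a $1$, some cyclic permutation begins with $1$, and any finite word beginning with $1$ is lexicographically larger than any word of the same length beginning with $0$; hence the largest cyclic permutation begins with $1$. The dual statement for $\min_{\omega}$ follows symmetrically.

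The main step is a single move-to-front rotation. Write $\max_{\omega} = m_1 m_2 \ldots m_n$ and suppose, for contradiction, that $m_n = 1$. Consider the cyclic permutation $\tau = m_n m_1 m_2 \ldots m_{n-1}$ obtained by moving the last letter to the front, which is again a cyclic permutation of $\omega$. I would compare $\max_{\omega}$ and $\tau$ letter by letter. Both begin with $1$ (for $\max_{\omega}$ this is the observation above, for $\tau$ this is because $m_n = 1$), so position $1$ agrees, and at each position $i \geq 2$ the comparison reduces to $m_i$ against $m_{i-1}$. Since $\omega$ contains a $0$ and $m_1 = 1$, there is a first index $j \geq 2$ with $m_j = 0$, so that $m_1 = \cdots = m_{j-1} = 1$; thus positions $2, \ldots, j-1$ agree, while at position $j$ one has $m_{j-1} = 1 \succ 0 = m_j$, giving $\tau \succ \max_{\omega}$. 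This contradicts the maximality of $\max_{\omega}$, forcing $m_n = 0$. The argument for $\min_{\omega}$ is completely symmetric: assuming its last letter is $0$, the same move-to-front rotation produces a strictly smaller cyclic permutation at the first index where $\omega$ exhibits a $1$.

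The content is genuinely elementary, so I do not anticipate a serious obstacle; the only point requiring care is the bookkeeping in the letter-by-letter comparison, namely that the first occurrence of the minority symbol indeed lands at an index $j$ with $2 \leq j \leq n-1$ (guaranteed by $m_1 = 1$ together with the assumption $m_n = 1$), so that the decisive comparison at position $j$ is both valid and strict. It is worth noting that $\tau$ is precisely the inverse of the cyclic shift, which is exactly why the comparison at positions $i \geq 2$ collapses to comparing $m_i$ with $m_{i-1}$; keeping this indexing consistent is the one place where an off-by-one slip could creep in.
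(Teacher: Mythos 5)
Your proof is correct and follows essentially the same route as the paper: both arguments assume the last letter of $\max_{\omega}$ is $1$, move it to the front to obtain the cyclic permutation $1\,m_1\cdots m_{n-1}$, and observe that this is strictly larger, contradicting maximality (and dually for $\min_{\omega}$). You merely spell out the letter-by-letter comparison at the first occurrence of $0$, which the paper leaves implicit.
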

\begin{proof}
If $\hbox{\rm{max}}_{\omega}$ ends with $1$ then $1\hbox{\rm{max}}_{{\omega}_1},\ldots \hbox{\rm{-
max}}_{{\omega}_{\ell(\omega)-1}} \succ \hbox{\rm{max}}_{\omega}$ which contradicts that $\hbox{\rm{max}}_{\omega}$ is maximal. Therefore, $\hbox{\rm{max}}_{\omega}$ ends with $0$. The proof of $\hbox{\rm{min}}_{\omega}$ ending with $1$ is similar.
\end{proof}

\begin{proposition}
Let $\omega$ be a word such that $\omega \neq 1^n$ and $\omega \neq 0^m$ for any $n,m \in \mathbb{N}$. Then there exist words $\upsilon$ and $\nu$ such that $u_1 = 0$, $v_1 = 1$, $0\hbox{\rm{-max}}_{\omega}^{\infty} = (\upsilon\nu)^{\infty}$ and $1\hbox{\rm{-min}}_{\omega}^\infty = (\nu\upsilon)^{\infty}$ \label{palabrasciclicas}
\end{proposition}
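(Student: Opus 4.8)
The plan is to exploit the fact that $0\hbox{\rm{-max}}_{\omega}$ and $1\hbox{\rm{-min}}_{\omega}$, being cyclic permutations of the same word $\omega$, give rise to periodic sequences lying on one and the same finite $\sigma$-orbit; the words $\upsilon$ and $\nu$ will then be extracted from the shift carrying one of these sequences to the other. First I would record that the hypotheses $\omega \neq 1^{n}$ and $\omega \neq 0^{m}$ guarantee that $\omega$ possesses cyclic permutations beginning with $0$ and with $1$, so that both $0\hbox{\rm{-max}}_{\omega}$ and $1\hbox{\rm{-min}}_{\omega}$ are well defined; by definition they begin with $0$ and with $1$ respectively, whence $0\hbox{\rm{-max}}_{\omega}^{\infty} \neq 1\hbox{\rm{-min}}_{\omega}^{\infty}$. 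Since any cyclic permutation $\tau$ of $\omega$ satisfies $\tau^{\infty} = \sigma^{j}(\omega^{\infty})$ for a suitable $j \geq 0$, both sequences belong to the finite periodic orbit $\{\sigma^{j}(\omega^{\infty}) : j \geq 0\}$, and consequently there is an integer $k \geq 1$ with $\sigma^{k}(0\hbox{\rm{-max}}_{\omega}^{\infty}) = 1\hbox{\rm{-min}}_{\omega}^{\infty}$.

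Next I would reduce $k$ modulo the minimal period $p$ of $0\hbox{\rm{-max}}_{\omega}^{\infty}$. Writing $\rho$ for its fundamental period word, so that $0\hbox{\rm{-max}}_{\omega}^{\infty} = \rho^{\infty}$ with $\ell(\rho) = p$, the identity $\sigma^{p}(0\hbox{\rm{-max}}_{\omega}^{\infty}) = 0\hbox{\rm{-max}}_{\omega}^{\infty}$ lets me assume $1 \leq k \leq p-1$: the lower bound holds because the two sequences differ, and the upper bound because $k = p$ would merely reproduce $0\hbox{\rm{-max}}_{\omega}^{\infty}$. I then let $\upsilon$ be the prefix of $\rho$ of length $k$ and $\nu$ the complementary suffix, so $\rho = \upsilon\nu$ with both factors nonempty. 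By construction $0\hbox{\rm{-max}}_{\omega}^{\infty} = (\upsilon\nu)^{\infty}$, while $\ell(\upsilon) = k$ yields $\sigma^{k}(0\hbox{\rm{-max}}_{\omega}^{\infty}) = (\nu\upsilon)^{\infty} = 1\hbox{\rm{-min}}_{\omega}^{\infty}$. The conditions $u_{1} = 0$ and $v_{1} = 1$ then follow immediately, since $u_{1}$ is the first symbol of $0\hbox{\rm{-max}}_{\omega}$ and $v_{1}$ is the first symbol of $1\hbox{\rm{-min}}_{\omega}$.

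The verifications that $u_{1} = 0$ and $v_{1} = 1$ are routine; the only delicate point is the passage from the full length $\ell(\omega)$ to the minimal period $p$. This is precisely why the statement is phrased with the infinite powers $(\upsilon\nu)^{\infty}$ and $(\nu\upsilon)^{\infty}$ instead of with plain equalities: when $\omega$ fails to be primitive --- for instance $\omega = 0101$, where one is forced to take $\upsilon = 0$ and $\nu = 1$ of total length $2 < 4$ --- the factorisation naturally lives at the level of the fundamental period. I expect the bookkeeping that keeps $1 \leq k \leq p-1$, so that neither $\upsilon$ nor $\nu$ degenerates to the empty word, to be the main (if minor) obstacle.
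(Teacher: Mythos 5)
Your proof is correct and follows essentially the same route as the paper's: both arguments observe that $0\hbox{\rm{-max}}_{\omega}^{\infty}$ and $1\hbox{\rm{-min}}_{\omega}^{\infty}$ lie on the same periodic $\sigma$-orbit, locate the shift $k$ carrying one to the other, and split the period word at position $k$ into $\upsilon$ and $\nu$. Your extra care in reducing $k$ to the minimal period (so that neither factor is empty and the non-primitive case such as $\omega=0101$ is handled cleanly) is a welcome refinement of the paper's slightly terser version, but not a different method.
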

\begin{proof}
Let $\omega$ be a word. Observe that $(0\hbox{\rm{-max}}_{\omega})^{\infty}$ and $(1\hbox{\rm{-min}}_{\omega})^{\infty}$ are cyclic permutations of each other. Then, there exist $n < \ell(\omega)$ such that $\sigma^n(0\hbox{\rm{-max}}_{\omega}^{\infty}) = (1\hbox{\rm{-min}}_{\omega})^{\infty}$. Then $$\upsilon = {0\hbox{\rm{-max}}_{\omega}}_1 \ldots {0\hbox{\rm{-max}}_{\omega}}_{n-1}$$ and $$\nu =  {0\hbox{\rm{-max}}_{\omega}}_n \ldots {0\hbox{\rm{-max}}_{\omega}}_{\ell(\omega)}$$ satisfy the desired conclusion.
\end{proof}

Given a word $\omega$, $\left| \omega \right|_1$ denotes to the number of $1$'s that occur in $\omega$. The \textit{$1$-ratio of $\omega$}, denoted by $1(\omega)$ is defined to be $1(\omega) = \dfrac{\left| \omega \right|_1}{\ell(\omega)}$.  For a sequence $\boldsymbol{\alpha} \in \Sigma_2$ we define \textit{the $1$-ratio of $\boldsymbol{\alpha}$} to be $1(\boldsymbol{\alpha}) = \mathop{\lim}\limits_{n \to \infty}\dfrac{\left|a_1 \ldots a_n\right|_1}{n}$ if such limit exists.

\vspace{1em}A word $\omega$ is said to be \textit{balanced}, if for any factors $\upsilon$ and $\nu$ of $\omega$ with $\ell(\upsilon) = \ell(\nu) = n$, $\left| \left| \upsilon \right|_1 - \left| \nu \right|_1 \right| \leq 1$. A word $\omega$ is said to be \textit{cyclically balanced} if $\omega^2$ is balanced. Notice if $\omega$ is cyclically balanced then $\omega^{\infty}$ is balanced. It is well known that if $\omega$ and $\nu$ are two cyclically balanced words with $\left|\omega\right|_{1}= \left|\nu\right|_{1} = p$ and $\ell(\omega) =\ell(\nu)=q$ with $\gcd(p,q)=1$, then $\omega$ is a cyclic permutation of $\nu$. This implies that there exists only $q$ distinct cyclically balanced words of length $q$ with $p$ $1$'s. Given $r=\frac{p}{q} \in \mathbb{Q}\cap(0,1)$ we define $\omega_r$ to be the lexicographically largest cyclically balanced word with length $q$ and $1(\omega) = r$ beginning with 0, and $\nu_r$ to be lexicographically smallest cyclically balanced word with length $q$ and $1(\nu)=r$ beginning with 1. In particular, $\omega_r = 0\hbox{\rm{-max}}_{\omega_r}$ and $\nu_r = 1\hbox{\rm{-min}}_{\omega_r}$. There is an explicit way to construct $\omega_r$ and $\nu_r$ for any given $r$ using the continued fraction expansion of $r$. This construction can be found in \cite{sidorov1, sidorov6}. 

\begin{remark}
For any $r_1, r_2 \in \mathbb{Q}\cap(0,1)$ such that $r_1 < r_2$ then $\omega_{r_1}^{\infty} \prec \omega_{r_2}^{\infty}$ and $\nu_{r_1} \prec \nu_{r_2}$.\label{laobservacion}
\end{remark}  

We say that $r_1, r_2 \in \mathbb{Q} \cap (0,1)$ with $r_2 < r_1$ are \textit{Farey neighbours} if $p_1q_2 - p_2q_1 = 1$. Given two Farey neighbours $r_1, r_2 \in \mathbb{Q} \cap (0,1)$ we say that $r_3$ is \textit{the mediant of $r_1$ and $r_2$} if $r_3 = \frac{p_1 + p_2}{q_1 + q_2}$. 

\begin{lemma}{\normalfont{\cite[Lemma 3.2]{sidorov6}}}
Consider two Farey neighbours $r_1, r_2$  with $r_2 < r_1$. Let $r_3$ the mediant for $r_1$ and $r_2$and consider $\omega_{r_1}, \omega_{r_2}, \nu_{r_1}$ and $\nu_{r_2}$. Then $\omega_{r_3} = \omega_{r_1}\omega{r_2}$, $\omega_{r_3} = \omega_{r_2}\nu_{r_1}$, $\nu_{r_3} = \nu_{r_2}\nu_{r_1}$ and $\nu_{r_3} = \nu_{r_1}\omega_{r_2}$.
\label{lemadenik} 
\end{lemma}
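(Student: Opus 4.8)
The plan is to deduce all four identities from the uniqueness statement quoted just before Remark \ref{laobservacion}: since any two cyclically balanced words with the same coprime data $(p,q)$ are cyclic permutations of one another, it suffices to show that each of the four proposed products is cyclically balanced with the right length and the right number of $1$'s, and then to identify \emph{which} rotation of the $r_3$-necklace each one is. First I would record the arithmetic. Writing $r_i = p_i/q_i$ in lowest terms, the Farey condition $p_1q_2 - p_2q_1 = 1$ gives $(p_1+p_2)q_2 - (q_1+q_2)p_2 = 1$, hence $\gcd(p_1+p_2, q_1+q_2)=1$; so $r_3 = (p_1+p_2)/(q_1+q_2)$ is already reduced and $\omega_{r_3}, \nu_{r_3}$ are the cyclically balanced words of length $q_3 = q_1+q_2$ with $p_3 = p_1+p_2$ ones that are, respectively, lexicographically largest beginning with $0$ and smallest beginning with $1$. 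Each of $\omega_{r_1}\omega_{r_2}$, $\omega_{r_2}\nu_{r_1}$, $\nu_{r_2}\nu_{r_1}$, $\nu_{r_1}\omega_{r_2}$ manifestly has length $q_3$ and exactly $p_3$ ones, so only balance and the identification of the rotation remain.

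The crux is to prove that $\omega_{r_1}\omega_{r_2}$ (and, dually, $\nu_{r_2}\nu_{r_1}$) is cyclically balanced. This is where the Farey-neighbour relation is indispensable: the concatenation of two balanced words need not be balanced, and it is precisely the unimodular condition $p_1q_2 - p_2q_1 = 1$ that forces it. I would establish this by realising $\omega_r^{\infty}$ and $\nu_r^{\infty}$ as the two periodic mechanical (Sturmian) words of slope $r$, i.e.\ the codings of the rotation $x \mapsto x + r$ of $S^1$ by the complementary half-open arcs of lengths $r$ and $1-r$, with the two phases singled out exactly by the extremal conditions $0\hbox{\rm{-max}}$ and $1\hbox{\rm{-min}}$. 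The mediant/unimodular relation is then the assertion that the $r_3$-mechanical word is the image of the parents' words under the associated Sturmian morphism, which produces the concatenation $\omega_{r_1}\omega_{r_2}$ together with its balance; alternatively one argues by induction on $q_3$ along the Stern--Brocot tree, reducing the balance of a length-$q_3$ concatenation to that of strictly shorter concatenations of the parents. I expect this balance step to be the main obstacle of the whole argument.

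Once balance is known, the uniqueness result shows that $(\omega_{r_1}\omega_{r_2})^{\infty}$, $(\nu_{r_2}\nu_{r_1})^{\infty}$, and the remaining two products are all shifts of the single $r_3$-necklace, and it remains to pin down the phases. The word $\omega_{r_1}\omega_{r_2}$ begins with $0$ because $\omega_{r_1}$ does; to see that it is the lexicographically largest rotation beginning with $0$, hence equal to $\omega_{r_3} = 0\hbox{\rm{-max}}$, I would use that $\omega_{r_1} = 0\hbox{\rm{-max}}_{\omega_{r_1}}$ together with the monotonicity $\omega_{r_2}^{\infty} \prec \omega_{r_1}^{\infty}$ of Remark \ref{laobservacion} (recall $r_2 < r_1$) to exclude any larger rotation; the dual argument, invoking Proposition \ref{endings}, gives $\nu_{r_2}\nu_{r_1} = 1\hbox{\rm{-min}} = \nu_{r_3}$. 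Finally, the two mixed identities $\omega_{r_3} = \omega_{r_2}\nu_{r_1}$ and $\nu_{r_3} = \nu_{r_1}\omega_{r_2}$ I would read off from Proposition \ref{palabrasciclicas}: writing $\omega_{r_3}^{\infty} = (\upsilon\nu)^{\infty}$ and $\nu_{r_3}^{\infty} = (\nu\upsilon)^{\infty}$ with $\upsilon$ beginning in $0$ and $\nu$ beginning in $1$, I would identify the $0$-block $\upsilon$ with $\omega_{r_2}$ and the $1$-block $\nu$ with $\nu_{r_1}$, matching the lengths $q_2$ and $q_1$ and using that balanced words of Farey-related slopes share the relevant prefixes; substituting these turns $\upsilon\nu$ and $\nu\upsilon$ into the asserted products.
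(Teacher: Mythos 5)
The paper does not actually prove this lemma: it is imported verbatim from Hare and Sidorov \cite[Lemma 3.2]{sidorov6}, so there is no in-paper argument to measure your proposal against. Judged on its own terms, your reduction is the right skeleton: verify that $r_3=(p_1+p_2)/(q_1+q_2)$ is in lowest terms, that each of the four products has length $q_1+q_2$ and carries $p_1+p_2$ ones, that each is cyclically balanced, and then invoke the uniqueness of the cyclically balanced necklace with coprime data to pin down which rotation each product is. Your unimodularity computation and the length/weight count are correct, and the phase pattern you predict is consistent with examples (for $r_2=1/3$, $r_1=1/2$, $r_3=2/5$ one gets $\omega_{r_3}=01010=\omega_{r_1}\omega_{r_2}=\omega_{r_2}\nu_{r_1}$ and $\nu_{r_3}=10010=\nu_{r_2}\nu_{r_1}=\nu_{r_1}\omega_{r_2}$).

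What you have written is a plan rather than a proof, because the two steps that carry all the content are only gestured at. First, cyclic balance of $\omega_{r_1}\omega_{r_2}$ (and of $\nu_{r_2}\nu_{r_1}$) is precisely where the Farey condition $p_1q_2-p_2q_1=1$ must be used --- concatenations of balanced words are generically unbalanced --- and you explicitly defer this as ``the main obstacle'', naming three possible strategies (mechanical words, Sturmian morphisms, Stern--Brocot induction) without executing any of them. Second, the phase identification is asserted rather than argued: ``use monotonicity to exclude any larger rotation'' does not by itself show that $\omega_{r_1}\omega_{r_2}$ is the $0$-maximal rotation of the $r_3$-necklace, and in the mixed identities the claim that the decomposition $\omega_{r_3}=\upsilon\nu$ from Proposition \ref{palabrasciclicas} has $\ell(\upsilon)=q_2$ with $\upsilon=\omega_{r_2}$ and $\nu=\nu_{r_1}$ requires locating the exact shift taking the $0$-maximal rotation to the $1$-minimal one, which you do not do. The rotation-coding picture you mention would close both gaps simultaneously (balance and phases can both be read off the orbit of $0$ under $x\mapsto x+r_3$ together with unimodularity), and carrying that out --- or simply citing \cite{sidorov6} as the paper does --- is what is still needed.
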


\subsection*{Open dynamical systems and the lexicographic world}

\noindent Now we formalise the relation between $(\Lambda_{(a,b)}, f_{(a,b)})$ and the lexicographic world $\mathcal{LW}$. Firstly, we need to mention some results obtained by Glendinning and Sidorov in \cite{sidorov1} and by Hare and Sidorov in \cite{sidorov6} in order to define the set of parameters where our study is developed. We confine ourselves to studying open dynamical systems parametrised by $(a,b) \in (\frac{1}{4},\frac{1}{2})\times(\frac{1}{2},\frac{3}{4})$ based on the following results.

\begin{lemma}{\normalfont{\cite[Lemma 1.1]{sidorov1}}}
Let $a,b \in S^1$ with $a < b$. Then:
\begin{enumerate}[i)]
 \item If $0 < a < \frac{1}{4}$ and $\frac{1}{2} < b < 1$, or $0 < a < \frac{1}{2}$ and $\frac{3}{4} < b < 1$ then $X_{(a,b)} = \{0\}$.  
 \item If $\frac{1}{2} < a < 1$ or $0 < b < \frac{1}{2}$, then $\dim_{H}X_{(a,b)} > 0$.  
\end{enumerate}
\label{acotar}
\end{lemma}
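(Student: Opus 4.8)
The plan is to pass to binary expansions and argue entirely symbolically. Via the projection $\pi$, a point $x$ lies in $X_{(a,b)}$ precisely when every shift $\sigma^{n}(\boldsymbol{x})$ of its expansion $\boldsymbol{x}$ satisfies $\pi(\sigma^{n}(\boldsymbol{x}))\le a$ or $\pi(\sigma^{n}(\boldsymbol{x}))\ge b$, that is $\sigma^{n}(\boldsymbol{x})\preccurlyeq\boldsymbol{\alpha}$ or $\sigma^{n}(\boldsymbol{x})\succcurlyeq\boldsymbol{\beta}$, where $\boldsymbol{\alpha},\boldsymbol{\beta}$ are the expansions of $a,b$. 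Before anything else I would record the symmetry that halves the work: the mirror involution $\boldsymbol{x}\mapsto\bar{\boldsymbol{x}}$ commutes with $\sigma$ and corresponds on $S^{1}$ to $x\mapsto 1-x$, which commutes with $f$ and carries the hole $(a,b)$ to $(1-b,1-a)$. Since this map fixes $0$ and is an isometry, it sends $X_{(a,b)}$ to $X_{(1-b,1-a)}$, preserving both the Hausdorff dimension and whether $X_{(a,b)}=\{0\}$. It interchanges the two clauses of $i)$ (the case $a<\tfrac14,\ b>\tfrac12$ with the case $a<\tfrac12,\ b>\tfrac34$) and the two clauses of $ii)$ (the case $a>\tfrac12$ with the case $b<\tfrac12$), so in each part it suffices to treat one clause.

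For $i)$ I would prove the clause $0<a<\tfrac14$, $\tfrac12<b<1$. Here $[\tfrac14,\tfrac12]\subset(a,b)$, so any sequence whose shift $\sigma^{n}(\boldsymbol{x})$ begins with $01$ has $\pi(\sigma^{n}(\boldsymbol{x}))\in[\tfrac14,\tfrac12]\subset(a,b)$ and hence does not survive. Thus a surviving $\boldsymbol{x}$ contains no factor $01$, which forces $\boldsymbol{x}=1^{j}0^{\infty}$ for some $0\le j\le\infty$. If $1\le j<\infty$ then $\sigma^{j-1}(\boldsymbol{x})=10^{\infty}$ projects to $\tfrac12\in(a,b)$, again a contradiction; the remaining sequences $0^{\infty}$ and $1^{\infty}$ both represent the fixed point $0$. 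Hence $X_{(a,b)}=\{0\}$, and the other clause follows by the symmetry above.

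For $ii)$ I would prove the clause $\tfrac12<a<1$ by exhibiting a subshift of finite type of positive entropy inside $X_{(a,b)}$. Choose $n_{0}$ so large that $\pi((10^{n_{0}})^{\infty})<a$; this is possible because $\pi((10^{n_{0}})^{\infty})=\tfrac{2^{n_{0}}}{2^{n_{0}+1}-1}\to\tfrac12<a$ as $n_{0}\to\infty$. Let $Y$ be the SFT of all sequences in which every $1$ is followed by at least $n_{0}$ zeros (forbidden factors $\{10^{j}1 : 0\le j\le n_{0}-1\}$). Every shift of a point of $Y$ either begins with $0$, hence projects below $\tfrac12<a$, or begins with $1$ and is then $\preccurlyeq(10^{n_{0}})^{\infty}$, hence projects below $a$; in either case the value is $\le a<b$, so it never meets $(a,b)$ and $\pi(Y)\subseteq X_{(a,b)}$. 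The shift $Y$ has positive topological entropy (it freely concatenates the blocks $0^{n_{0}+1}$ and $10^{n_{0}}$, giving at least $2^{\lfloor k/(n_{0}+1)\rfloor}$ admissible words of length $k$); since $f$ expands by the constant factor $2$, positivity of $h_{top}(\sigma\mid_{Y})$ yields $\dim_{H}\pi(Y)>0$, whence $\dim_{H}X_{(a,b)}>0$. The clause $0<b<\tfrac12$ again follows by symmetry.

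The routine bookkeeping---passing between a point $x$ and its possibly non-unique binary expansion, and tracking which of the endpoint inequalities is strict---needs care but only affects a countable set, so it changes neither the conclusion $X_{(a,b)}=\{0\}$ nor any Hausdorff dimension. The one genuinely quantitative point I would isolate is the threshold estimate in $ii)$: choosing $n_{0}$ with $\tfrac{2^{n_{0}}}{2^{n_{0}+1}-1}<a$ is exactly where the hypothesis $a>\tfrac12$ is used, and it is what guarantees that the escape-free SFT is nontrivial. I expect this estimate, together with the identification of $\dim_{H}\pi(Y)$ with the entropy of $Y$ up to the factor $\log 2$, to be the main point to get right.
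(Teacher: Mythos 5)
Your proof is correct, but there is nothing in the paper to compare it against: Lemma \ref{acotar} is quoted verbatim from Glendinning--Sidorov \cite[Lemma 1.1]{sidorov1} and the paper supplies no proof of its own, only the remark that part $ii)$ can alternatively be deduced from the fact that $(X_{(a,1)},f_{(a,1)})$ is conjugate to a $\beta$-shift when $a\in(\frac12,1)$. Your argument for $i)$ (the hole swallows $[\frac14,\frac12]$, so survivors have no factor $01$, hence are $1^j0^\infty$, and $10^\infty$ projects to $\frac12$ which lies in the hole) is the standard one, and your use of the mirror map $x\mapsto 1-x$ to halve the case analysis is exactly the symmetry the paper records in Proposition \ref{elhomeomorfismo}. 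For $ii)$, your explicit subshift $Y$ forbidding $\{10^j1: 0\le j\le n_0-1\}$, with $n_0$ chosen so that $\pi((10^{n_0})^\infty)=\frac{2^{n_0}}{2^{n_0+1}-1}<a$, is a concrete positive-entropy witness sitting inside $X_{(a,b)}$; it is in effect a sub-SFT of the $\beta$-shift $X_{(a,1)}\subset X_{(a,b)}$ that the paper's remark alludes to, so the two routes buy the same thing, yours being the more self-contained. The only points that deserve the care you already flagged are that a sequence in $Y$ beginning with $0$ projects to at most $\frac12$ (which is still $\le\frac12<a$, so harmless), and the identification $\dim_H\pi(Y)=h_{top}(Y)/\log 2$, which is the same entropy--dimension formula the paper invokes just before Corollary \ref{opendense}.
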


Note that if $a \in (\frac{1}{2}, 1)$ and $b = 1$ or $a = 0$ and $b \in (0, \frac{1}{2})$ then $(X_{(a,b)}, f_{(a,b)})$ is topologically conjugated to a $\beta$-shift \cite{nilsson}. As a direct consequence of this observation, it is possible to obtain a different proof of Lemma \ref{acotar} \textit{ii)} than one the presented in \cite{sidorov1}.

\vspace{1em}Let $\phi:(\frac{1}{4},\frac{1}{2}) \to (\frac{1}{2},\frac{3}{4})$ and $\chi:(\frac{1}{4},\frac{1}{2}) \to 
(\frac{1}{2},\frac{3}{4})$ be defined by:
\begin{itemize} 
\item[] $\phi(a) = \mathop{\sup}\{b \in S^1 \mid X_{(a,b)} \neq \{0\}\};$
\item[] $\chi(a) = \mathop{\sup}\{b \in S^1 \mid X_{(a,b)} \hbox{\rm{ is uncountable}}\}.$
\end{itemize}

\vspace{1em}The functions $\phi$ and $\chi$ were introduced in \cite{sidorov1} and \cite{gugu}. It is clear that $\chi(a) \leq \phi(a)$. Moreover, in \cite[Theorem 2.13]{sidorov1} the authors give an 
explicit formula to calculate them. It is worth pointing out that both $\phi$ and $\chi$ were studied symbolically and $f$ was considered as a transformation of the unit interval. Nonetheless, the results remain valid for $f$ defined in $S^1$ if $0 \notin (a,b)$. It is not our purpose to study $\phi$ and $\chi$. Nonetheless, they determine the parameter space in question. We denote $D_0 = \{(a,b) \in R \mid b \leq \phi(a)\},$ and $D_1 = \{(a,b) \in R \mid b \leq \chi(a)\}$. 

\vspace{1em}Now, we introduce the following definitions as in \cite{sidorov6}. Let $(a,b) \in D_0$. We call $n \geq 3$ \textit{bad for $(a,b)$} if every periodic orbit of period $n$ of $f$ intersects $(a,b)$. Let us denote by $B(a,b) = \{n \geq 3 \mid n \hbox{\rm{ is bad for }}(a,b)\}.$  Let $D_2$ be given by $$D_2 = \left\{(a,b) \in \left(\frac{1}{4},\frac{1}{2}\right) \times \left(\frac{1}{2}, \frac{3}{4}\right) \mid B(a,b) \hbox{\rm{ is finite }}\right\}$$ and let $D_3$ be defined by $$D_3 = \{(a,b) \in R \mid B(a,b) = \emptyset\}.$$ Hare and Sidorov in \cite{sidorov6} shown that $$D_3 \subset D_2 \subset D_1 \subset D_0.$$ Since $\{(a,b) \in (\frac{1}{4},\frac{1}{2})\times(\frac{1}{2}, \frac{3}{4}) \mid h_{top}(f_{(a,b)}) > 0\} = D_1 \setminus  \partial D_1$ \cite[Theorem 2]{gugu} and $X_{(a,b)}$ is countable for $(a,b) \in D_0 \setminus D_1$, we will restrict ourselves to studying the pairs $(a,b) \in D_1$. 

\begin{proposition}
Let $(a,b) \in D_1$. Then $\Lambda_{(a,b)}$ is topologically conjugated to $\Lambda_{(1-b,1-a)}$.\label{elhomeomorfismo}
\end{proposition}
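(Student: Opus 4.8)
The plan is to exhibit an explicit conjugacy coming from the reflection symmetry of the doubling map. Define $R \colon S^1 \to S^1$ by $R(x) = 1-x$. This is an (orientation-reversing) homeomorphism of the circle, and the first step is to check that it conjugates $f$ to itself: working in $\mathbb{R}/\mathbb{Z}$ one has $f(R(x)) = -2x \bmod 1 = R(f(x))$ for every $x \in S^1$, so $f \circ R = R \circ f$ everywhere, with no difficulty at the fixed point $0$.

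Next I would track what $R$ does to the data defining the two systems. Since $a < \tfrac12 < b$, the reflection sends the open arc $(a,b)$ onto the open arc $(1-b,1-a)$, which is again a centred interval because $1-b < \tfrac12 < 1-a$; note also that $(1-b,1-a) \in (\tfrac14,\tfrac12)\times(\tfrac12,\tfrac34)$. Because $R$ intertwines $f$ with itself and is an involution, the orbit $\{f^n(x)\}$ avoids $(a,b)$ if and only if the orbit $\{f^n(R(x))\}$ avoids $R((a,b)) = (1-b,1-a)$; hence $R(X_{(a,b)}) = X_{(1-b,1-a)}$. Restricting $R$ to the relevant core interval, the orientation-reversing map sends $[2b-1,2a]$ onto $[1-2a,\,2-2b] = [\,2(1-a)-1,\ 2(1-b)\,]$, which is precisely the interval appearing in the description $\Lambda_{(1-b,1-a)} = [2(1-a)-1,2(1-b)] \cap X_{(1-b,1-a)}$. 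Combining the last two facts gives $R(\Lambda_{(a,b)}) = \Lambda_{(1-b,1-a)}$.

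Since $R$ is a homeomorphism satisfying $f \circ R = R \circ f$ and carrying $\Lambda_{(a,b)}$ bijectively onto $\Lambda_{(1-b,1-a)}$, its restriction $R|_{\Lambda_{(a,b)}}$ is a topological conjugacy between $(\Lambda_{(a,b)}, f_{(a,b)})$ and $(\Lambda_{(1-b,1-a)}, f_{(1-b,1-a)})$, as required. The symbolic counterpart of this argument, which fits the rest of the paper, is that $R$ corresponds under $\pi$ to the mirror image $\boldsymbol{x} \mapsto \bar{\boldsymbol{x}}$ (indeed $\pi(\bar{\boldsymbol{x}}) = 1 - \pi(\boldsymbol{x})$ and $\sigma(\bar{\boldsymbol{x}}) = \overline{\sigma(\boldsymbol{x})}$); since the mirror image reverses $\prec$, it maps $\Sigma_{(\boldsymbol{\alpha},\boldsymbol{\beta})}$ onto $\Sigma_{(\bar{\boldsymbol{\beta}},\bar{\boldsymbol{\alpha}})}$, and one identifies $(\bar{\boldsymbol{\beta}},\bar{\boldsymbol{\alpha}})$ as the pair associated with $(1-b,1-a)$.

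The one point that needs genuine care, rather than routine verification, is confirming that $(1-b,1-a)$ again lies in $D_1$, so that $\Lambda_{(1-b,1-a)}$ is a legitimate object in our setting; this is where I expect the main (if modest) obstacle to be. It follows from the reflection argument itself: since $X_{(a,b)}$ and $X_{(1-b,1-a)}$ are homeomorphic, one is uncountable exactly when the other is, and as the thresholds $\chi$ defining $D_1$ are governed by the countability of the exceptional set, the region $D_1$ is invariant under $(a,b) \mapsto (1-b,1-a)$.
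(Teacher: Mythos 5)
Your proposal is correct and follows essentially the same route as the paper: the paper's proof also uses the reflection $h(x)=1-x$, verifies $h\circ f = f\circ h$, and checks directly that $h$ carries $[2b-1,2a]\cap X_{(a,b)}$ onto $[2(1-a)-1,2(1-b)]\cap X_{(1-b,1-a)}$. Your additional remarks (the symbolic mirror-image picture and the observation that $D_1$ is invariant under $(a,b)\mapsto(1-b,1-a)$) are sound but not needed beyond what the paper records.
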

\begin{proof}
Let $h:S^1 \to S^1$ given by $h(x) = 1-x$. Observe that $h$ is a homeomorphism of the unit circle that reverses orientation. Moreover, $$h\circ f = f\circ h = \left\{
\begin{array}{clrr}      
1-2x & \hbox{\rm{ if }} & x \in [\frac{1}{2},1] \\
2-2x & \hbox{\rm{ if }} & x \in [0,\frac{1}{2})\\
\end{array}
\right.$$

Let $x \in \Lambda_{(a,b)}$. Then, $2b-1 \leq x \leq 2a$. Note that $1-2a \leq h(x) \leq 2-2b$. Also, since $x \notin (a,b)$, $h(x) \notin (1-b, 1-a)$. Then $h(\Lambda_{(a,b)}) = \Lambda_{(1-b,1-a)}$, and our result follows.
\end{proof}

The following theorem gives a lexicographic characterisation of $\Lambda_{(a,b)}$.

\begin{theorem}
Let $(a,b) \in D_1$. Then $$\pi^{-1}(\Lambda_{(a,b)}) = \Sigma_{(\boldsymbol{\alpha},\boldsymbol{\beta})}$$ where $\boldsymbol{\alpha} = \pi^{-1}(2a)$ and $\boldsymbol{\beta} = \pi^{-1}(2b-1)$.
\label{attractor1}
\end{theorem}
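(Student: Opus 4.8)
The plan is to pass through the standard semiconjugacy between the shift and the doubling map and then reduce the statement to a purely dynamical description of $\Lambda_{(a,b)}$ as the set of points whose forward orbit never leaves the interval $J:=[2b-1,2a]$. First I would record two facts about $\pi$. The intertwining relation $\pi\circ\sigma=f\circ\pi$ is a one-line computation from $\pi(\boldsymbol{x})=\sum_i x_i2^{-i}$, and gives $f^n(\pi(\boldsymbol{x}))=\pi(\sigma^n(\boldsymbol{x}))$. The monotonicity of $\pi$, namely $\boldsymbol{x}\preccurlyeq\boldsymbol{y}\Rightarrow\pi(\boldsymbol{x})\leq\pi(\boldsymbol{y})$, holds with equality for distinct sequences precisely at the two binary expansions of a dyadic rational. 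Since $\boldsymbol{\alpha}=\pi^{-1}(2a)$ and $\boldsymbol{\beta}=\pi^{-1}(2b-1)$ we have $\pi(\boldsymbol{\alpha})=2a$ and $\pi(\boldsymbol{\beta})=2b-1$, the two endpoints of $J$.

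The heart of the argument is the claim that, with $H=(a,b)$ denoting the hole,
\[
\Lambda_{(a,b)}=\{x\in S^1\mid f^n(x)\in J\text{ for every }n\geq 0\}.
\]
Since $\Lambda_{(a,b)}=J\cap X_{(a,b)}$, this follows from two elementary computations using only $a\in(\tfrac14,\tfrac12)$ and $b\in(\tfrac12,\tfrac34)$. First, $f(H)\cap J=\emptyset$: for $y\in(a,\tfrac12)$ one has $f(y)=2y>2a$, for $y=\tfrac12$ one has $f(y)=0\notin J$, and for $y\in(\tfrac12,b)$ one has $f(y)=2y-1<2b-1$; in every case $f(y)\notin J$. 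Hence if the whole orbit of $x$ lies in $J$ it can never meet $H$, since entering $H$ at some step would send the next iterate out of $J$; together with $x=f^0(x)\in J$ this gives $x\in J\cap X_{(a,b)}=\Lambda_{(a,b)}$. Second, $f(J\setminus H)\subseteq J$: if $y\in J$ with $y\leq a$ then $f(y)=2y$, and $2y\geq 2(2b-1)\geq 2b-1$ (using $b\geq\tfrac12$) together with $2y\leq 2a$ gives $f(y)\in J$; if $y\in J$ with $y\geq b$ then $f(y)=2y-1$, and $2y-1\geq 2b-1$ together with $2y-1\leq 4a-1\leq 2a$ (using $a\leq\tfrac12$) gives $f(y)\in J$. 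Thus for $x\in\Lambda_{(a,b)}$, whose orbit avoids $H$, an induction on $n$ with $f^n(x)\in J\setminus H$ at each step shows $f^{n+1}(x)\in J$, proving the reverse inclusion.

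It remains to transport this characterisation through $\pi$. Applying $\pi^{-1}$ and the intertwining relation, the displayed set pulls back to $\{\boldsymbol{x}\in\Sigma_2\mid \pi(\sigma^n(\boldsymbol{x}))\in[2b-1,2a]\text{ for all }n\}$, and I must show this equals $\Sigma_{(\boldsymbol{\alpha},\boldsymbol{\beta})}$. One inclusion is immediate from monotonicity: if $\boldsymbol{\beta}\preccurlyeq\sigma^n(\boldsymbol{x})\preccurlyeq\boldsymbol{\alpha}$ then $2b-1=\pi(\boldsymbol{\beta})\leq\pi(\sigma^n(\boldsymbol{x}))\leq\pi(\boldsymbol{\alpha})=2a$. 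For the converse I fix $n$, set $\boldsymbol{y}=\sigma^n(\boldsymbol{x})$ with $\pi(\boldsymbol{y})\in[2b-1,2a]$, and must deduce $\boldsymbol{\beta}\preccurlyeq\boldsymbol{y}\preccurlyeq\boldsymbol{\alpha}$. When $\pi(\boldsymbol{y})$ is strictly interior the inequalities follow from the contrapositive of monotonicity. The one delicate point, and where I expect the main obstacle to lie, is equality at an endpoint: if $2a$ (resp.\ $2b-1$) is dyadic it has two preimages, and $\pi(\boldsymbol{y})=2a$ does not by itself force $\boldsymbol{y}\preccurlyeq\boldsymbol{\alpha}$. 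The resolution is the convention implicit in writing $\pi^{-1}$ as a single sequence: one takes $\boldsymbol{\alpha}$ to be the lexicographically largest preimage of $2a$ (the terminating expansion $w10^{\infty}$) and $\boldsymbol{\beta}$ the lexicographically smallest preimage of $2b-1$ (the expansion $w'01^{\infty}$), so that every sequence with $\pi$-value $2a$ is $\preccurlyeq\boldsymbol{\alpha}$ and every sequence with $\pi$-value $2b-1$ is $\succcurlyeq\boldsymbol{\beta}$. This is exactly the choice compatible with $\boldsymbol{\alpha}\in\Sigma_2^1$, $\boldsymbol{\beta}\in\Sigma_2^0$ and with matching the closed lexicographic interval to the closed real interval $J$. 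With this convention the boundary case is settled and both inclusions hold, giving $\pi^{-1}(\Lambda_{(a,b)})=\Sigma_{(\boldsymbol{\alpha},\boldsymbol{\beta})}$.
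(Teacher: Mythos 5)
Your proof is correct and takes essentially the same route as the paper: the decisive computation in both is that $f$ maps the hole into $(2a,1)\cup[0,2b-1)$, so an orbit confined to $[2b-1,2a]$ can never enter $(a,b)$, and conversely entering the hole expels the next iterate from $[2b-1,2a]$. You are in fact more careful than the printed proof on two points it leaves implicit, namely the forward invariance $f\left([2b-1,2a]\setminus(a,b)\right)\subseteq[2b-1,2a]$ and the choice of branch of $\pi^{-1}$ at dyadic endpoints.
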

\begin{proof}
Let $\boldsymbol{x} \in \pi^{-1}(\Lambda_{(a,b)})$. Note that $\pi(\boldsymbol{x}) \in \Lambda_{(a,b)}$, i.e. $$\pi(\boldsymbol{x}) \in \left[2b-1, 2a\right] \cap X_{(a,b)}.$$ Then, $f^n(\pi(\boldsymbol{x})) \notin (a,b)$ and $2b-1 < f^n(\pi(\boldsymbol{x})) < 2a$ for every $n \geq 0$. By substituting $f^n(\pi(\boldsymbol{x}))$ by $\pi(\sigma^n(\boldsymbol{x}))$ it is clear that $\sigma^n(\boldsymbol{x}) \in \Sigma_{(\boldsymbol{\alpha},\boldsymbol{\beta})}$ for every $n \geq 0$, which implies that $$\pi^{-1}(\Lambda_{(a,b)}) \subset \Sigma_{(\boldsymbol{\alpha},\boldsymbol{\beta})}.$$

Now, consider $\boldsymbol{x} \in \Sigma_{(\boldsymbol{\alpha},\boldsymbol{\beta})}.$ Then $2b-1 < \pi(\sigma^n(\boldsymbol{x})) < 2a$ for every $n \geq 0$. This implies that $f^n(\pi(\boldsymbol{x})) \in \left[2b-1, 2a\right]$ for every $n \geq 0$. Suppose that $\pi(\boldsymbol{x}) \notin \Lambda_{(a,b)}$. Then, there exists $n \geq 0$ such that $f^n(\pi(\boldsymbol{x})) \in (a,b)$. Recall that $(a,b) = (a, \frac{1}{2})\cup[\frac{1}{2},b)$. Suppose that $f^n(\pi(\boldsymbol{x})) \in (a, \frac{1}{2})$ then $f^{n+1}(\pi(\boldsymbol{x})) \in (2a,1)$. Therefore $\sigma^{n+1}(\boldsymbol{x})) \succcurlyeq \boldsymbol{\alpha}$ which contradicts our assumption on $\boldsymbol{x}$. If $f^n(\pi(\boldsymbol{x})) \in [\frac{1}{2},b)$ then $f^{n+1}(\pi(\boldsymbol{x})) \in [0,2b-1)$. Therefore, $\sigma^{n+1}(\boldsymbol{x})) \preccurlyeq \boldsymbol{\beta}$ which, again, contradicts our assumption on $\boldsymbol{x}$.
\end{proof}

Observe that the pair $(\boldsymbol{\alpha}, \boldsymbol{\beta})$ where $\boldsymbol{\alpha} = \pi^{-1}(2a)$ and $\boldsymbol{\beta} = \pi^{-1}(2b-1)$ might be extremal. Nonetheless, as a consequence of Theorem \ref{attractor1} the following theorem is true.  

\begin{theorem}
For every $(\boldsymbol{\alpha},\boldsymbol{\beta}) \in \mathcal{LW}$ there exists $(a,b) \in D_1$ such that $(\Lambda_{(a,b)}, f_{(a,b)})$ is topologically conjugated to $(\Sigma_{(\boldsymbol{\alpha},\boldsymbol{\beta})},\sigma_{(\boldsymbol{\alpha},\boldsymbol{\beta})})$. 
\label{asociar}
\end{theorem}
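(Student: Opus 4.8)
The plan is to invert the correspondence of Theorem \ref{attractor1}. Given $(\boldsymbol{\alpha},\boldsymbol{\beta}) \in \mathcal{LW}$, the natural candidate is to set $a = \frac{1}{2}\pi(\boldsymbol{\alpha})$ and $b = \frac{1}{2}(1 + \pi(\boldsymbol{\beta}))$, so that by construction $2a = \pi(\boldsymbol{\alpha})$ and $2b - 1 = \pi(\boldsymbol{\beta})$. I would then: (i) check that $(a,b)$ lands in the parameter box $R = (\frac14,\frac12)\times(\frac12,\frac34)$ and that $\boldsymbol{\alpha},\boldsymbol{\beta}$ are the genuine single-valued binary preimages $\pi^{-1}(2a)$ and $\pi^{-1}(2b-1)$; (ii) check that $(a,b) \in D_1$; and (iii) feed this into Theorem \ref{attractor1} and upgrade the resulting set equality $\pi^{-1}(\Lambda_{(a,b)}) = \Sigma_{(\boldsymbol{\alpha},\boldsymbol{\beta})}$ to a topological conjugacy implemented by $\pi$. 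The degenerate pair $(\boldsymbol{\alpha},\boldsymbol{\beta}) = (1^{\infty},0^{\infty})$, for which $\Sigma_{(\boldsymbol{\alpha},\boldsymbol{\beta})} = \Sigma_2$, corresponds to the empty hole and would be disposed of separately as the full doubling map.

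For step (i), I would use the observation recorded after Definition \ref{lexworld}: a non-degenerate pair in $\mathcal{LW}$ has neither $\boldsymbol{\alpha}$ nor $\boldsymbol{\beta}$ ending in an infinite run of $0$'s or $1$'s, so $\boldsymbol{\alpha},\boldsymbol{\beta}$ are not eventually constant and $\pi(\boldsymbol{\alpha}),\pi(\boldsymbol{\beta})$ are non-dyadic. Hence $\pi^{-1}$ is single-valued at these points, giving $\pi^{-1}(2a) = \boldsymbol{\alpha}$ and $\pi^{-1}(2b-1) = \boldsymbol{\beta}$. Since $\boldsymbol{\alpha} \in \Sigma_2^1$ with $\boldsymbol{\alpha} \succ 10^{\infty}$ and $\boldsymbol{\alpha} \neq 1^{\infty}$, one gets $\pi(\boldsymbol{\alpha}) \in (\frac12,1)$ and thus $a \in (\frac14,\frac12)$; the symmetric estimate for $\boldsymbol{\beta} \in \Sigma_2^0$ gives $b \in (\frac12,\frac34)$, so $(a,b) \in R$.

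Step (ii) is where the real work lies, and I expect it to be the main obstacle. The defining conditions $i)$ and $ii)$ of $\mathcal{LW}$ are precisely the admissibility conditions $\boldsymbol{\alpha} \in P$, $\boldsymbol{\beta} \in \bar P$, $\sigma^{n}(\boldsymbol{\alpha}) \succcurlyeq \boldsymbol{\beta}$ and $\sigma^{n}(\boldsymbol{\beta}) \preccurlyeq \boldsymbol{\alpha}$; in particular they force $\boldsymbol{\alpha},\boldsymbol{\beta} \in \Sigma_{(\boldsymbol{\alpha},\boldsymbol{\beta})}$. I would argue that these symbolic conditions are exactly those describing the uncountability region cut out by $\chi$ in the work of Glendinning--Sidorov \cite{sidorov1} and Hare--Sidorov \cite{sidorov6} (via the explicit description of $\chi$ in \cite[Theorem 2.13]{sidorov1}), so that $b \leq \chi(a)$ and hence $(a,b) \in D_1$. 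For the generic case this can alternatively be made self-contained by showing directly that $\Sigma_{(\boldsymbol{\alpha},\boldsymbol{\beta})}$ is uncountable---for instance by producing two distinct admissible blocks that may be concatenated freely while keeping every shift between $\boldsymbol{\beta}$ and $\boldsymbol{\alpha}$---and then transporting uncountability to $X_{(a,b)}$ through $\pi$, which is at most two-to-one. The delicate point is the boundary stratum, where $\Sigma_{(\boldsymbol{\alpha},\boldsymbol{\beta})}$ may be countable yet $b = \chi(a)$, so that membership in $D_1$ has to be read off from the equality $b = \chi(a)$ rather than from uncountability.

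Once $(a,b) \in D_1$ is secured, Theorem \ref{attractor1} yields $\pi^{-1}(\Lambda_{(a,b)}) = \Sigma_{(\boldsymbol{\alpha},\boldsymbol{\beta})}$, and it remains to see that $\pi$ restricts to a conjugacy. The projection satisfies $\pi \circ \sigma = f \circ \pi$, and $\pi$ maps $\Sigma_{(\boldsymbol{\alpha},\boldsymbol{\beta})}$ onto $\Lambda_{(a,b)}$ by the set equality. Injectivity is the only remaining point: the sole failure of injectivity of $\pi$ occurs at dyadic points, where it identifies a sequence ending in $10^{\infty}$ with one ending in $01^{\infty}$; but by the no-long-runs property no element of $\Sigma_{(\boldsymbol{\alpha},\boldsymbol{\beta})}$ is eventually constant, so such pairs cannot both lie in $\Sigma_{(\boldsymbol{\alpha},\boldsymbol{\beta})}$, and $\pi$ is injective there. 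A continuous bijection between the compact metric spaces $\Sigma_{(\boldsymbol{\alpha},\boldsymbol{\beta})}$ and $\Lambda_{(a,b)}$ is a homeomorphism, and since it intertwines $\sigma_{(\boldsymbol{\alpha},\boldsymbol{\beta})}$ with $f_{(a,b)}$ it is the desired topological conjugacy.
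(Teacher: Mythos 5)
Your proposal is correct and follows the same route as the paper, which states Theorem \ref{asociar} as an immediate consequence of Theorem \ref{attractor1} without recording any further argument: one takes $2a = \pi(\boldsymbol{\alpha})$, $2b-1 = \pi(\boldsymbol{\beta})$ and reads the set equality of Theorem \ref{attractor1} backwards. The two points you single out --- membership of $(a,b)$ in $D_1$, which as you say ultimately rests on the symbolic description of $\chi$ from \cite{sidorov1} and is genuinely delicate on the stratum $b=\chi(a)$, and injectivity of $\pi$ on $\Sigma_{(\boldsymbol{\alpha},\boldsymbol{\beta})}$ via the no-long-runs observation --- are precisely the details the paper leaves implicit, and your treatment of both is sound.
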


\section{Genericity Results and the extended Lexicographic World}
\label{generic}

\noindent In this section we describe the pairs $(a,b) \in D_1$ such that the corresponding exclusion subshift is a \textit{subshift of finite type}. In Theorem \ref{opendense1} we show that for almost every $(a,b) \in D_1$ the corresponding attractor $(\Lambda_{(a,b)}, f_{(a,b)})$ is topologically conjugated to a subshift of finite type. This implies directly that the function which associates to every $(a,b) \in D_1$ the topological entropy of $(\Lambda_{(a,b)}, f_{(a,b)})$ is locally constant - see Corollary \ref{opendense}.

\vspace{1em} As we observed in Section \ref{basic}, the pair $(\pi^{-1}(2a), \pi^{-1}(2b-1))$ can be a extremal pair. This corresponds to intervals $(a,b)$ such that $a$ or $b$ fall into the hole under iterations.  In Theorem \ref{asociar1} it we prove that there is always a pair $(\boldsymbol{\alpha}, \boldsymbol{\beta}) \in \mathcal{LW}$ which describes completely the dynamics of $(\Lambda_{(a, b)}, f_{(a,b)})$ in such situation. Observe that Theorem \ref{asociar1} also extends the lexicographic world defined in \cite{gan} and \cite{gugu}. As a corollary, we obtain that $(\Lambda_{(a,b)}, f_{(a,b)})$ is always conjugated to an element of the lexicographical world. 

\subsection*{Subshifts of finite type and topological entropy}  Recall that a subshift $(A, \sigma_{A})$ is a \textit{subshift of finite type} if the forbidden set of factors $\mathcal{F}$ corresponding to $A$ is finite. Recall that for every $(a,b) \in D_1$ we associate the pair $(\boldsymbol{\alpha},\boldsymbol{\beta}) \in \Sigma^{1}_2 \times \Sigma^{0}_2$ where $\boldsymbol{\alpha} = \pi^{-1}(2a)$ and $\boldsymbol{\beta} = \pi^{-1}(2b-1)$. Let $$S = \left\{(a,b) \in D_1 \mid \hbox{\rm{ there exist }} n \hbox{\rm{ and }} m \hbox{\rm{ such that }} f^n(a), f^m(b) \in (a,b)\right\}.$$   

\begin{lemma}
The set $S$ is open and dense in $D_1$.\label{dense}
\end{lemma}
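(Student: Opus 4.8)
The plan is to write $S=S_a\cap S_b$, where $S_a=\{(a,b)\in D_1\mid f^n(a)\in(a,b)\text{ for some }n\}$ and $S_b=\{(a,b)\in D_1\mid f^m(b)\in(a,b)\text{ for some }m\}$, and to prove that each factor is open and dense. Since a finite intersection of subsets that are open and dense in $D_1$ is again open and dense (a direct consequence of the definitions, requiring no Baire argument), this yields the claim. I will treat $S_a$ in full; the case of $S_b$ follows by symmetry, since the conjugacy $h(x)=1-x$ of Proposition \ref{elhomeomorfismo} sends the hole $(a,b)$ to $(1-b,1-a)$ and, because $h\circ f=f\circ h$, carries the condition $f^n(a)\in(a,b)$ to $f^n(1-a)\in(1-b,1-a)$, i.e.\ it identifies $S_a$ for $(a,b)$ with $S_b$ for $(1-b,1-a)$.

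For openness, fix $(a_0,b_0)\in S_a$ and let $n$ be the \emph{minimal} index with $f^n(a_0)\in(a_0,b_0)$. Here the centredness of the interval is decisive: since $\tfrac{1}{2}\in(a_0,b_0)$, minimality of $n$ forces $f^j(a_0)\neq\tfrac{1}{2}$ for $0\le j<n$, so the orbit of $a_0$ avoids the discontinuity of $f$ up to time $n$ and hence $a\mapsto f^n(a)$ is continuous at $a_0$. Because $f^n(a_0)$ lies in the \emph{open} set $(a_0,b_0)$, the condition $f^n(a)\in(a,b)$ persists on a planar neighbourhood $N$ of $(a_0,b_0)$; then $N\cap D_1\subset S_a$ is open in $D_1$.

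For density, fix $(a_0,b_0)\in D_1$ and $\varepsilon>0$. Choose $\eta>0$ small enough that $W:=(\tfrac{1}{2}-\eta,\tfrac{1}{2}+\eta)\Subset(a_0,b_0)$, which is possible as $a_0<\tfrac{1}{2}<b_0$; then $W\subset(a,b)$ for every $(a,b)$ close to $(a_0,b_0)$, so the moving endpoints no longer interfere with a fixed target. Since the doubling map is locally eventually onto, $\bigcup_{n\ge1}f^{-n}(W)$ is dense in $S^1$, so I can pick $a_1\in(a_0,a_0+\varepsilon)$ with $f^n(a_1)\in W$ for some $n$. Increasing the left endpoint shrinks the hole and therefore enlarges $X_{(a,b)}$, so $\chi$ is non-decreasing and $\chi(a_1)\ge\chi(a_0)\ge b_0$; thus $(a_1,b_0)\in D_1$, while $f^n(a_1)\in W\subset(a_1,b_0)$ shows $(a_1,b_0)\in S_a$. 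As $\varepsilon$ was arbitrary, $S_a$ is dense in $D_1$, and combining with the symmetric statement for $S_b$ gives that $S=S_a\cap S_b$ is open and dense.

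The main obstacle I anticipate is the self-referential nature of the defining condition, in which both the orbiting point $f^n(a)$ and the interval $(a,b)$ move with the parameters, compounded by the discontinuity of $f$. The two observations that dissolve this difficulty are exactly those exploited above: centredness ($\tfrac{1}{2}\in(a,b)$) guarantees that the orbit realising minimal entry time avoids the discontinuity, which secures openness; and aiming the orbit at a \emph{fixed} central subinterval $W$ decouples the landing condition from the moving endpoints, which secures density. The only auxiliary point requiring care is remaining inside the parameter region, which is handled by the monotonicity of $\chi$ under shrinking of the hole (perturbing $a$ upward for $S_a$, and symmetrically $b$ downward for $S_b$).
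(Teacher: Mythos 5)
Your proof is correct, but your density argument is genuinely different from the paper's, so a comparison is in order. Both proofs establish openness the same way, via continuity of $f^n$ on $S^1$ together with the fact that $f^n(a)$ sits in the \emph{open} arc $(a,b)$ (your worry about the discontinuity at $\frac{1}{2}$ is actually vacuous, since $2x \bmod 1$ is continuous as a circle map, but it costs nothing). For density the paper does not split $S$ into $S_a\cap S_b$: it perturbs both endpoints at once and invokes the Poincar\'e recurrence theorem to find $a^{\prime}$ near $a$ with $f^{j}(a^{\prime})\in(a-\varepsilon,a+\varepsilon)$, i.e.\ it aims the orbit back at a neighbourhood of the endpoint itself. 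You instead aim the orbit of the perturbed endpoint at a \emph{fixed} window $W$ around $\frac{1}{2}$ compactly contained in the hole, using that the doubling map is locally eventually onto. Your targeting buys something concrete: with the paper's recurrence argument one still has to check that the returning point lands on the correct side of the moving endpoint (a recurrent point could return to $(a-\varepsilon,a^{\prime})$, which is outside the hole $(a^{\prime},b)$), whereas landing in $W$ automatically places the orbit inside every nearby hole. The price you pay is having to verify that the one-sided perturbation stays in the parameter region, which you handle correctly via the monotonicity of $\chi$ under shrinking the hole; the paper avoids this by perturbing inside a ball assumed to lie in $D_1\setminus\partial D_1$. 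Your reduction of $S_b$ to $S_a$ through the involution $(a,b)\mapsto(1-b,1-a)$ of Proposition \ref{elhomeomorfismo} is also sound, since that involution preserves $D_1$ and is a homeomorphism of the parameter square.
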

\begin{proof}
Let $(a,b) \in S$ and recall that $f$ denotes the doubling map. Let $$n = \min\left\{j \in \mathbb{N} \mid f^j(a) \in 
(a,b)\right\}$$ and $$m = \min \left\{i \in \mathbb{N} \mid f^i(b) \in (a,b) \right\}.$$ Recall that $f^j$ is continuous for every $j \geq 0$. Let $\varepsilon^{\prime} > 0$. Then there exists $\varepsilon_a > 0$ such that if $d(a, a^{\prime}) \leq \varepsilon_a$ then $d(f^n(a), f^n(a^{\prime})) < \varepsilon^{\prime}$. Similarly, there exists $\varepsilon_b > 0$ such that if $d(b, b^{\prime}) \leq \varepsilon_b$ then $d(f^m(b), f^m(b^{\prime})) < \varepsilon^{\prime}$. Let $\varepsilon = \mathop{\min}\{\varepsilon_a, \varepsilon_b\}.$ Then 
$B_{\varepsilon}((a,b)) \subsetneq S$. Therefore $S$ is open. 

To show that $S$ is dense consider $(a,b) \in D_1 \setminus \partial D_1$. Let $\varepsilon > 0$ such that  $\frac{1}{2} \notin (a-\varepsilon, a + \varepsilon)$ and $\frac{1}{2} \notin (b- \varepsilon,b+\varepsilon)$. Moreover, since $S$ is open without losing generality we can assume that $(a-\varepsilon, a + \varepsilon)\times (b-\varepsilon,b+\varepsilon) \subset D_1\setminus \partial D_1$. Recall that the Lebesgue measure of $(a-\varepsilon, a + \varepsilon)$ and $(b- \varepsilon, b+\varepsilon)$ is positive. Then, by the Poincar\'e recurrence theorem (see e.g. \cite[Theorem 2.2]{mane}), there exists $a^{\prime} \in (a-\varepsilon, a + \varepsilon)$ and $j \in\mathbb{N}$ such that $f^j(a^{\prime}) \in (a-\varepsilon, a + \varepsilon)$. Similarly, there exists a point $b^{\prime} \in (b-\varepsilon, b+\varepsilon)$ and $i \in \mathbb{N}$ such that $f^i(b^{\prime}) \in (b-\varepsilon, b+\varepsilon)$. Since $f^j$ is continuous for every $j \geq 0$ there exist $\varepsilon_a > 0$ and $\varepsilon_b > 0$ such that for every $a^{\prime \prime} \in (a^{\prime}-\varepsilon_a, a^{\prime}_a+\varepsilon)$, $f^j(a^{\prime \prime}) \in (a^{\prime}-\varepsilon_a, a^{\prime}+\varepsilon_a)$ and for every $b^{\prime \prime} \in (b^{\prime}-\varepsilon_b, b^{\prime}_b+\varepsilon)$, $f^i(b^{\prime \prime}) \in (b^{\prime}-\varepsilon_b, b^{\prime}+\varepsilon_b)$.  Then $(a^{\prime \prime},b^{\prime \prime}) \in S$ and $d((a,b),(a^{\prime\prime},b^{\prime\prime})) < \varepsilon$. Hence, $S$ is dense.   
\end{proof}

In \cite[Proposition 4.1]{bundfuss} and \cite[Theorem 2.4]{simon2} it is shown that if $(a,b) \in S$ then $(\Sigma_{(\boldsymbol{\alpha},\boldsymbol{\beta})},\sigma_{(\boldsymbol{\alpha},\boldsymbol{\beta})})$ is a subshift of finite type. We can see immediately that for every $(\boldsymbol{\alpha},\boldsymbol{\beta}) \in N \times \bar {N}$ there is a pair $(a,b)\in S$ such that $\pi^{-1}(2a) = \boldsymbol{\alpha}$ and $\pi^{-1}(2b-1) = \boldsymbol{\beta}$. Then Lemma \ref{dense} and \cite[Theorems 3.7, 3.8]{nilsson} imply that $D_1 \cap S$ is a set of full Lebesgue measure. Thus, we have proved the following theorem.  

\begin{theorem}
The set $$T = \left\{(a,b) \in D_1 \mid (\Sigma_{(\boldsymbol{\alpha},\boldsymbol{\beta})}, \sigma_{(\boldsymbol{\alpha},\boldsymbol{\beta})}) \hbox{\rm{ is a subshift of finite type}}\right\}$$ is open and dense. Moreover, it has full Lebesgue measure.\label{opendense1}
\end{theorem}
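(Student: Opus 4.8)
The strategy is to identify $T$ with the set $S$ of Lemma \ref{dense}, so that the openness, density and measure statements for $T$ are inherited from the corresponding properties of $S$. I would begin by recording the inclusion $S\subseteq T$, which is exactly \cite[Proposition 4.1]{bundfuss} and \cite[Theorem 2.4]{simon2}: once both endpoint orbits eventually fall into $(a,b)$, the kneading data of the pair $(\boldsymbol{\alpha},\boldsymbol{\beta})$ with $\boldsymbol{\alpha}=\pi^{-1}(2a)$ and $\boldsymbol{\beta}=\pi^{-1}(2b-1)$ closes up, and the admissibility conditions $\boldsymbol{\beta}\preccurlyeq\sigma^n(\boldsymbol{x})\preccurlyeq\boldsymbol{\alpha}$ cutting out $\Sigma_{(\boldsymbol{\alpha},\boldsymbol{\beta})}$ can be tested by finitely many forbidden words. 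Combined with Lemma \ref{dense}, this already yields that $T\supseteq S$ is dense in $D_1$.

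To obtain genuine openness I would prove the reverse inclusion $T\subseteq S$. Suppose $(a,b)\notin S$; then the forward orbit of, say, the left endpoint never enters $(a,b)$, forcing $\sigma^n(\boldsymbol{\alpha})\preccurlyeq\boldsymbol{\alpha}$ for all $n$, i.e. $\boldsymbol{\alpha}\in P$. I would then invoke the characterisation of the subshift-of-finite-type property for lexicographic subshifts: $\Sigma_{(\boldsymbol{\alpha},\boldsymbol{\beta})}$ has finite type precisely when the two constraints become checkable after finitely many steps, which forces the tails of $\boldsymbol{\alpha}$ and $\boldsymbol{\beta}$ to be eventually periodic and locked to one another through the hole --- exactly the Markov condition that the endpoint orbits return to $(a,b)$. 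A Parry sequence whose orbit avoids the hole cannot satisfy this, so $\Sigma_{(\boldsymbol{\alpha},\boldsymbol{\beta})}$ is at most sofic, not of finite type, and $(a,b)\notin T$. With $T=S$ established, openness and density of $T$ follow at once from Lemma \ref{dense}.

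For the full measure statement I would pass to the symbolic coordinates. Writing $N=\Sigma_2^{1}\setminus P$ and $\bar N=\Sigma_2^{0}\setminus\bar P$, the observation preceding the theorem shows that every $(\boldsymbol{\alpha},\boldsymbol{\beta})\in N\times\bar N$ is realised by some $(a,b)\in S$; taking contrapositives, $D_1\setminus S$ is contained in $\{(a,b):\boldsymbol{\alpha}\in P\}\cup\{(a,b):\boldsymbol{\beta}\in\bar P\}$. Since $a\mapsto 2a$ and $b\mapsto 2b-1$ are affine bijections onto intervals and $\pi$ pushes normalised Lebesgue measure to the uniform Bernoulli measure, the Lebesgue measures of these two sets are controlled by $\mathrm{meas}(\pi(P))$ and $\mathrm{meas}(\pi(\bar P))$, both of which vanish by \cite[Theorems 3.7, 3.8]{nilsson}. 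Hence $D_1\setminus T\subseteq D_1\setminus S$ is Lebesgue null and $T$ has full measure.

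The step I expect to be the main obstacle is the reverse inclusion $T\subseteq S$ underlying openness: the cited references deliver only $S\subseteq T$, and excluding finite type for parameters outside $S$ requires the exact characterisation of the subshift-of-finite-type property among lexicographic subshifts in terms of the kneading pair $(\boldsymbol{\alpha},\boldsymbol{\beta})$, together with the verification that the ``eventually periodic but not Markov'' case produces a strictly sofic, non-SFT system. Density and full measure, by contrast, use only the inclusion $S\subseteq T$ together with Lemma \ref{dense} and Nilsson's measure estimates, and are comparatively routine.
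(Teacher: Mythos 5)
Your treatment of density and full measure follows the paper's route exactly: the paper likewise uses only the inclusion $S\subseteq T$ (from \cite[Proposition 4.1]{bundfuss} and \cite[Theorem 2.4]{simon2}), the openness and density of $S$ from Lemma \ref{dense}, and the fact that $\pi(P)$ and $\pi(\bar P)$ are Lebesgue null by \cite[Theorems 3.7, 3.8]{nilsson}. Those parts of your argument are sound.

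The reverse inclusion $T\subseteq S$, on which your entire openness argument rests, is false. The correct characterisation of the finite-type property for lexicographic subshifts is the one the paper itself quotes from \cite[Theorem 1.3]{samuel}: for $(\boldsymbol{\alpha},\boldsymbol{\beta})\in\mathcal{LW}$, the subshift $\Sigma_{(\boldsymbol{\alpha},\boldsymbol{\beta})}$ is of finite type if and only if $\boldsymbol{\alpha}$ and $\boldsymbol{\beta}$ are periodic --- and periodicity of the kneading sequences is perfectly compatible with the endpoint orbits never entering the hole. Concretely, take $\boldsymbol{\alpha}=(1^{k-1}0)^{\infty}$ and $\boldsymbol{\beta}=(0^{j-1}1)^{\infty}$ with $j,k\geq 3$, as in Lemma \ref{ceroandone}: this pair is non-extremal, so for the corresponding $(a,b)$ the forward orbits of $a$ and $b$ remain outside $(a,b)$ for all time, i.e.\ $(a,b)\notin S$; yet $\Sigma_{(\boldsymbol{\alpha},\boldsymbol{\beta})}=\Sigma_{\{0^{j},1^{k}\}}$ is a transitive subshift of finite type with positive entropy, so $(a,b)$ lies in $D_1\setminus\partial D_1$ and belongs to $T\setminus S$. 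The same phenomenon occurs for every essential pair and every cyclic subshift. Your heuristic that the Markov condition ``forces the endpoint orbits to return to $(a,b)$'' is exactly backwards for these parameters: the defining conditions close up after finitely many steps because $a$ and $b$ are periodic points of the doubling map whose orbits avoid the hole, not because they fall into it. Hence $T\supsetneq S$ and openness of $T$ cannot be obtained by identifying the two sets. (For comparison, the paper's own proof does not attempt $T\subseteq S$ at all; it deduces the theorem from the properties of $S$ alone, and is itself silent on why the points of $T\setminus S$ are interior to $T$. But whatever the right justification of openness is, it is certainly not $T=S$.)
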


It is worth pointing out that there exist pairs $(\boldsymbol{\alpha},\boldsymbol{\beta}) \in P \times \bar{P}$ such that $(\pi(0\boldsymbol{\alpha}),\pi(1\boldsymbol{\beta})) \in S$. Such pairs are called \textit{two sided extremal} -see Definition \ref{tiposdeextremos}. 
  
\begin{theorem}
For every $(a,b) \in S$ there is an open set $U \subset D_1$ such that $(a,b) \in U$ and $(X_{(a,b)}, f_{(a,b)})  = 
(X_{(a^{\prime},b^{\prime})}, f_{(a^{\prime}, b^{\prime})})$ for every $(a^{\prime},b^{\prime}) \in U$ \footnote{\normalfont{After completion of this proof the author was made aware of the work of 
Baker, Dajani and Jiang \cite[Theorem 2.4]{simon2}. Our result follows immediately from the proof of the mentioned theorem.}}. 
\label{devil1}
\end{theorem}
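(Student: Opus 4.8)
The plan is to prove the set-level equality $X_{(a',b')} = X_{(a,b)}$ directly and topologically, exploiting that membership $(a,b)\in S$ forces the orbits of the endpoints to land \emph{strictly} inside the open hole; the equality of the dynamical systems then follows at once since $f_{(a,b)}$ is merely the restriction $f\mid_{X_{(a,b)}}$. The two ingredients are the trivial monotonicity of the exclusion set in the hole (a larger hole yields a smaller exclusion set) together with a continuity estimate supplied by the ``deep inside'' property.

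First I would fix $n,m$ with $f^n(a),f^m(b)\in(a,b)$ and choose $c>0$ so small that $f^n(a),f^m(b)\in(a+c,b-c)$; this is possible precisely because $(a,b)$ is open and both points lie in it. Using continuity of $f^n$ and $f^m$ on $S^1$, I would then pick $\eta>0$ with $|y-a|<\eta\Rightarrow|f^n(y)-f^n(a)|<c/2$ and similarly for $b$, and set $\delta<\min\{\eta,c/2\}$, so that $|y-a|<\delta\Rightarrow f^n(y)\in(a+c/2,b-c/2)\subseteq(a+\delta,b-\delta)$ and $|y-b|<\delta\Rightarrow f^m(y)\in(a+\delta,b-\delta)$. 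Since $S$ is open by Lemma \ref{dense}, shrinking $\delta$ further I would take $U=(a-\delta,a+\delta)\times(b-\delta,b+\delta)\subset S\subset D_1$, which contains $(a,b)$.

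The heart of the argument is the single inclusion $X_{(a+\delta,b-\delta)}\subseteq X_{(a-\delta,b+\delta)}$ between the exclusion sets of the smallest and the largest holes occurring in $U$. To establish it, I would take $x$ avoiding the smaller hole $(a+\delta,b-\delta)$ and suppose that some iterate $f^k(x)$ enters the larger hole $(a-\delta,b+\delta)$. Since $f^k(x)\notin(a+\delta,b-\delta)$, it must then lie in $(a-\delta,a+\delta]$ or in $[b-\delta,b+\delta)$; in either case $f^k(x)$ is within $\delta$ of an endpoint, so the continuity estimate forces $f^{k+n}(x)$ (respectively $f^{k+m}(x)$) back into $(a+\delta,b-\delta)$, contradicting the choice of $x$. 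Hence $x$ never comes within $\delta$ of $a$ or $b$, and therefore avoids the larger hole as well. Combined with the trivial monotonicity this yields $X_{(a+\delta,b-\delta)}=X_{(a-\delta,b+\delta)}$; and since every $(a',b')\in U$ satisfies $(a+\delta,b-\delta)\subseteq(a',b')\subseteq(a-\delta,b+\delta)$, monotonicity sandwiches $X_{(a',b')}$ between these two equal sets, forcing $X_{(a',b')}=X_{(a,b)}$ for all $(a',b')\in U$.

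The main obstacle is this key inclusion, and specifically the verification that a point avoiding the slightly smaller hole cannot approach either endpoint within $\delta$: this is exactly the place where the hypothesis $(a,b)\in S$ is indispensable, since it is the strict (open) containment of $f^n(a),f^m(b)$ inside $(a,b)$ that supplies the room for the continuity estimate. A secondary technical point to treat carefully is that all intervals, distances and the notion of lying ``between'' are to be read on $S^1$; because the relevant arc from $a$ to $b$ has length less than $\tfrac12$ and all the action is localised near $a$, near $b$, and near their images, the circle bookkeeping reduces to the interval estimates above without incident.
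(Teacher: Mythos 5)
Your proposal is correct and follows essentially the same strategy as the paper: monotonicity of the exclusion set in the hole combined with the continuity estimate that a point entering the enlarged hole near an endpoint is driven back into the shrunk hole after $n$ (resp.\ $m$) further iterations, forcing $X_{(a+\delta,b-\delta)}=X_{(a-\delta,b+\delta)}$ and hence the sandwich for all $(a',b')\in U$. The only difference is organizational — the paper perturbs $b$ and $a$ one at a time before combining, while you treat both endpoints simultaneously, which if anything makes the final sandwich step cleaner.
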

\begin{proof}
The argument is essentially the same as the one of the proof of \cite[Theorem 1]{urbanski1} which proves the case when $a = 0$. Let $(a,b) \in S$ be fixed. Then, there exist $n = n(a)$ and $m = m(b) \in \mathbb{N}$ such that $f^n(a), f^m(b) \in (a,b)$. Without loosing generality, assume that such $n$ and $m$ are minimal. Consider $$C_1 = \left\{b^{\prime} \in \left(\frac{1}{2}, \chi(a)\right) \mid f^m(b^{\prime}) \in (a,b)\right\}.$$ Observe that $C_1 \neq \emptyset$ since $b \in C_1$. Since $f$ is continuous, there exists $\varepsilon > 0$ such that for any $b^{\prime} \in \overline{B_{\varepsilon}(b)}$, $f^m(b^{\prime}) \in (a,b^{\prime})$. Note that for any $c \in B_{\varepsilon}(b)$, $f^m(c) \in (a, b-\frac{\varepsilon}{2})$. Recall that $X_{(a, b+\frac{\varepsilon}{2})} \subset X_{(a,b)} \subset X_{(a, b-\frac{\varepsilon}{2})}$. Let $x \in S^1 \setminus X_{(a, b+\frac{\varepsilon}{2})}$. Then there exists $m \in \mathbb{N}$ such that $f^m(x) \in (a, b+\frac{\varepsilon}{2})$. If $x \in (b-\frac{\varepsilon}{2}, b + \frac{\varepsilon}{2})$ then there exists $n \in \mathbb{N}$ such that $f^n(f^m(x)) \in (a, b - \frac{\varepsilon}{2})$. This shows that $X_{(a, b-\frac{\varepsilon}{2})} \subset X_{(a, b+ \frac{\varepsilon}{2})}$ and $$X_{(a, b-\frac{\varepsilon}{2})} = X_{(a, b+ \frac{\varepsilon}{2})} = X_{(a,b)}.$$ Similarly, we consider $$C_2 = \left\{a^{\prime} \in \left(\frac{1}{4},\frac{1}{2}\right) \mid f^n(a^{\prime}) \in (a,b)\right\}.$$ Then, we can show in a similar way that there exists $\varepsilon^{\prime} > 0$ such that 
$X_{(a - \frac{\varepsilon^{\prime}}{2}, b)} \subset X_{(a + \frac{\varepsilon^{\prime}}{2}, b)}$ and $$X_{(a-\frac{\varepsilon^{\prime}}{2}, b)} = X_{(a + \frac{\varepsilon^{\prime}}{2}, b)} = X_{(a,b)}.$$ Then, $U = (a-\epsilon, a+\epsilon)\times(b-\epsilon, b+\epsilon)$ where $\epsilon = \min\{\frac{\varepsilon}{2},\frac{\varepsilon^{\prime}}{2}\}$.
\end{proof}    

For every $(a,b) \in D_1$ it is natural to associate it the topological entropy of the corresponding attractor $\Lambda_{(a,b)}$. To formalise this, let $H:D_1 \to [0,1]$ given by $$H((a,b)) = h_{top}(f_{(a,b)}).$$ From \cite[Theorem 4]{urbanski2} we have that $H$ is a continuous function. Furthermore, note that for every $a \in [0,\frac{1}{2})$, if $b = a$ then $X_{(a,b)} = S^1$ and $h_{top}(f_{(a,b)}) = 1$. Moreover, if $b = \chi(a)$ then $h_{top}(f_{(a,b)}) = 0$ as a direct consequence of \cite[Theorem 2.16]{sidorov1}. An immediate consequence of Theorem \ref{devil1} is that $H$ is constant almost everywhere.

\begin{corollary}
The set $$T = \left\{(a,b) \in R \mid h_{top}(f_{(a,b)}) \hbox{\rm{ is constant}}\right\}$$ is open and dense. Moreover, it has full measure.   
\label{opendense}
\end{corollary}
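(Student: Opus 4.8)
The plan is to read $T$ as the set of parameters at which the entropy function $H$ is \emph{locally} constant, and to show that this set contains $S$; all three assertions then follow almost immediately from results already established, so the real content lies entirely in Theorem \ref{devil1} and Lemma \ref{dense}.

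First I would observe that the set of points of local constancy of any function is automatically open, so $T$ is open with no further work: if $H$ is constant on some open neighbourhood $U$ of a point $(a,b)$, then that same $U$ witnesses local constancy of $H$ at every point of $U$, whence $U \subseteq T$. Thus it suffices to exhibit, for each point we care about, a neighbourhood on which $H$ is constant.

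Next I would show $S \subseteq T$. Fix $(a,b) \in S$. By Theorem \ref{devil1} there is an open set $U \subset D_1$ with $(a,b) \in U$ such that $(X_{(a',b')}, f_{(a',b')}) = (X_{(a,b)}, f_{(a,b)})$ for every $(a',b') \in U$. Since these are literally the \emph{same} dynamical system (not merely topologically conjugate), the associated attractors and their dynamics coincide, and therefore
\[
H((a',b')) = h_{top}(f_{(a',b')}) = h_{top}(f_{(a,b)}) = H((a,b))
\]
for all $(a',b') \in U$. Hence $H$ is constant on $U$, so $(a,b) \in T$ and $S \subseteq T$.

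Finally, density and full measure of $T$ are inherited from $S$: Lemma \ref{dense} gives that $S$ is dense in $D_1$, and the remark following Theorem \ref{opendense1} gives that $D_1 \cap S$ has full Lebesgue measure. Combined with $S \subseteq T$, this shows $T$ is dense and of full measure, completing the argument. I expect no genuine obstacle here, as the crucial local invariance of the exclusion system is already packaged in Theorem \ref{devil1}; the only points meriting care are the (implicit) reading of $T$ as the locus of \emph{local} constancy rather than global constancy, and the observation that equality of the dynamical systems---rather than mere conjugacy---makes the equality of topological entropies immediate.
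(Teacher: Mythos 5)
Your proposal is correct and follows essentially the same route as the paper, which states the corollary as an immediate consequence of Theorem \ref{devil1} without further argument; you have simply made explicit the steps the paper leaves implicit (reading $T$ as the locus of local constancy, deducing $S \subseteq T$ from the local equality of exclusion systems, and inheriting density and full measure from $S$ via Lemma \ref{dense}).
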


As a result of Corollary \ref{opendense} and the well know formula $$\dim_HX_{(a,b)} = \dfrac{h_{top}(f_{(a,b)})}{\lambda},$$ where $\lambda$ is the Lyapunov exponent of $2x\mod 
1$, we obtain the following corollary. The proof is omitted.

\begin{corollary}
The set of pairs $(a, b) \in D_1$ such that $\dim_HX_{(a,b)}$ is constant is open and dense with full measure. 
\end{corollary}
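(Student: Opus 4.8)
The plan is to deduce this directly from Corollary~\ref{opendense} together with the stated formula $\dim_H X_{(a,b)} = h_{top}(f_{(a,b)})/\lambda$. The crucial observation is that the Lyapunov exponent $\lambda$ of the doubling map $2x \bmod 1$ does not depend on the parameters $(a,b)$: since $|f'(x)| = 2$ for every $x \in S^1$, we have $\lambda = \log 2$ (indeed $\lambda = 1$ under our convention that $\log = \log_2$), a fixed positive constant. Consequently the function $(a,b) \mapsto \dim_H X_{(a,b)}$ is obtained from $(a,b) \mapsto h_{top}(f_{(a,b)})$ by multiplication by the nonzero constant $1/\lambda$.

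First I would record the elementary fact that multiplication by a fixed nonzero scalar preserves the property of being locally constant: a function $g = c\,h$ with $c \neq 0$ is constant on an open set $U$ if and only if $h$ is constant on $U$. Applying this with $h((a,b)) = h_{top}(f_{(a,b)})$, $g((a,b)) = \dim_H X_{(a,b)}$ and $c = 1/\lambda$, the set of parameters at which $\dim_H X_{(a,b)}$ is locally constant coincides \emph{exactly} with the set $T$ appearing in Corollary~\ref{opendense}, namely the set of $(a,b)$ at which $h_{top}(f_{(a,b)})$ is locally constant.

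Finally I would invoke Corollary~\ref{opendense}, which asserts that this set $T$ is open, dense and of full Lebesgue measure in $D_1$. Since the two sets are literally equal, the set of pairs $(a,b)$ at which $\dim_H X_{(a,b)}$ is locally constant inherits all three properties at once, which is the desired conclusion.

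I do not anticipate a genuine obstacle here; the content of the statement is entirely carried by Corollary~\ref{opendense} and by the entropy--dimension formula. The only points requiring a word of care are the interpretation of ``constant'' as ``locally constant'' (that is, constant on a neighbourhood), so as to match the formulation of Corollary~\ref{opendense}, and the verification that $\lambda$ is both independent of $(a,b)$ and strictly positive, so that dividing by it neither introduces nor destroys any local constancy.
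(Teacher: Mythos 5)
Your proposal is correct and follows exactly the route the paper indicates: the paper omits the proof but states that the corollary is obtained from Corollary~\ref{opendense} together with the formula $\dim_H X_{(a,b)} = h_{top}(f_{(a,b)})/\lambda$, which is precisely your argument. Your added care about $\lambda$ being a positive constant independent of $(a,b)$ and about reading ``constant'' as ``locally constant'' is sensible but does not change the approach.
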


\subsection*{The extended Lexicographic World}
\label{extremal}

Recall that Theorem \ref{attractor1} implies that for every $(\boldsymbol{\alpha},\boldsymbol{\beta}) \in \mathcal{LW}$, $\pi^{-1}(\Lambda_{(\pi(0\boldsymbol{\alpha}),\pi(1\boldsymbol{\beta}))}) = \Sigma_{(\boldsymbol{\alpha},\boldsymbol{\beta})}$. The rest of the section is devoted to prove Theorem {asociar1}, that is, for every $(\boldsymbol{\alpha}, \boldsymbol{\beta})\in \Sigma^1_2 \times \Sigma^0_2$ there is a pair $(\boldsymbol{\alpha}^{\prime}, \boldsymbol{\beta}^{\prime}) \in \mathcal{LW}$ such that $(\Sigma_{(\boldsymbol{\alpha}, \boldsymbol{\beta}}, \sigma_{(\boldsymbol{\alpha}, \boldsymbol{\beta})}) = (\Sigma_{(\boldsymbol{\alpha}^{\prime}, \boldsymbol{\beta}^{\prime})}, \sigma_{(\boldsymbol{\alpha}^{\prime}, \boldsymbol{\beta}^{\prime})})$. Then we obtain that for every $(a,b) \in D_1$ there is a unique element $(\boldsymbol{\alpha},\boldsymbol{\beta}) \in \mathcal{LW}$ such that $\Sigma_{(\boldsymbol{\alpha},\boldsymbol{\beta})} = \pi^{-1}(\Lambda_{(a,b)})$ (Corollary \ref{resultadazo}). 

\vspace{1em}Observe that Lemma \ref{dense} implies that for almost every $(a,b) \in D_1$, the pair $(\boldsymbol{\alpha},\boldsymbol{\beta})$ where $\boldsymbol{\alpha} = \pi^{-1}(2a)$ and $\boldsymbol{\beta} = \sigma(\pi^{-1}(2b-1)))$ is extremal. Moreover, given $(\boldsymbol{\alpha},\boldsymbol{\beta}) \in P \times \bar{P}$, $(\boldsymbol{\alpha},\boldsymbol{\beta})$ might be extremal. As mentioned before, extremal pairs are elements of $\Sigma^1_2 \times \Sigma^0_2$ corresponding under projection to the end points of $(a,b) \in S$ or $(a,b) \in D_1 \setminus S$ such that there exist $n(a), m(b) \in \mathbb{N}$ such that either $f^{n(a)}(a) \in (a,b)$, or $f^{m(b)}(b) \in (a,b)$. The possible cases of extremal pairs $(\boldsymbol{\alpha},\boldsymbol{\beta}) \in P \times \bar{P}$ are defined as follows:

\begin{definition}
Let $(\boldsymbol{\alpha},\boldsymbol{\beta})\in P \times \bar P$.  
\begin{itemize}
\item  We say that $(\boldsymbol{\alpha},\boldsymbol{\beta})$ is \textit{two sided extremal} if:
\begin{enumerate}[$i)$]
\item $\sigma^n(\boldsymbol{\beta}) \succ \boldsymbol{\alpha}$ for some $n \in \mathbb{N}$;
\item $\sigma^m(\boldsymbol{\alpha}) \prec \boldsymbol{\beta}$ for some $m \in \mathbb{N}$.
\end{enumerate}

\item We say that $(\boldsymbol{\alpha},\boldsymbol{\beta})$ is \textit{right extremal} if:
\begin{enumerate}[$i)$]
\item $\sigma^m(\boldsymbol{\alpha}) \succ \boldsymbol{\beta}$ for every $m \in \mathbb{N}$;
\item $\sigma^n(\boldsymbol{\beta}) \succ \boldsymbol{\alpha}$ for some $n \in \mathbb{N}$.
\end{enumerate}

\item We say that $(\boldsymbol{\alpha},\boldsymbol{\beta})$ is \textit{left extremal} if:
\begin{enumerate}[$i)$]
\item $\sigma^n(\boldsymbol{\beta}) \prec \boldsymbol{\alpha}$ for every $n \in \mathbb{N}$;
\item $\sigma^m(\boldsymbol{\alpha}) \prec \boldsymbol{\beta}$ for some $m \in \mathbb{N}$.
\end{enumerate}
\end{itemize}
\label{tiposdeextremos}
\end{definition}
 
In \cite[Theorem 1.3]{samuel} it is shown that if $(\boldsymbol{\alpha},\boldsymbol{\beta}) \in \mathcal{LW}$ satisfies that $\left(\Sigma_{(\boldsymbol{\alpha},\boldsymbol{\beta})}, \sigma_{(\boldsymbol{\alpha},\boldsymbol{\beta})}\right)$ is a subshift of finite type then $\boldsymbol{\alpha}$ and $\boldsymbol{\beta}$ are periodic sequences and vice versa. Taking this into account, a suitable way to associate an element of the lexicographic world to every $(\boldsymbol{\alpha},\boldsymbol{\beta}) \in \Sigma_2^1 \times \Sigma_2^0$ is needed. The idea is to find a pair of periodic sequences $({\boldsymbol{\alpha}}^{\prime},{\boldsymbol{\beta}}^{\prime}) \in \mathcal{LW}$ which reflects faithfully the dynamical behaviour of the subshift associated to the extremal pair $(\boldsymbol{\alpha},\boldsymbol{\beta})$. Firstly, in Lemma \ref{fuera1} we show that every $(a,b) \in D_1$ can be represented by an element of $P \times \bar P$ where $P$ is the set of Parry sequences.   

\begin{proposition}
The set $P \cap Per(\sigma)$ is dense in $P$.\label{denseperiodicparry}
\end{proposition}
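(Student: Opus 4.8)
The plan is to show that periodic Parry sequences are dense in $P$ by approximating an arbitrary Parry sequence $\boldsymbol{\alpha} \in P$ from below by a sequence of periodic Parry sequences obtained by truncating $\boldsymbol{\alpha}$ and repeating the truncation. Recall that $\boldsymbol{\alpha} \in P$ means $\sigma^n(\boldsymbol{\alpha}) \preccurlyeq \boldsymbol{\alpha}$ for every $n \in \mathbb{N}$, i.e. $\boldsymbol{\alpha}$ dominates all of its shifts. Fix $\boldsymbol{\alpha} = (a_i)_{i=1}^{\infty} \in P$. If $\boldsymbol{\alpha}$ is already periodic there is nothing to do, so assume it is not eventually periodic and in particular $\boldsymbol{\alpha} \neq 1^\infty$. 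For each $n$ let $\omega^{(n)} = a_1 \ldots a_n$ be the length-$n$ prefix of $\boldsymbol{\alpha}$, and consider the periodic candidate $\boldsymbol{\alpha}^{(n)} := (\omega^{(n)})^{\infty}$. Since $\boldsymbol{\alpha}^{(n)}$ agrees with $\boldsymbol{\alpha}$ on the first $n$ coordinates, we have $d(\boldsymbol{\alpha}^{(n)}, \boldsymbol{\alpha}) \leq 2^{-n} \to 0$, so the only real issue is to choose the prefixes $\omega^{(n)}$ so that each $\boldsymbol{\alpha}^{(n)}$ is genuinely a Parry sequence.

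The key step is the verification that, for a suitable subsequence of prefixes, $(\omega^{(n)})^{\infty}$ lies in $P$. The obstacle is that a naive truncation can fail the Parry condition: a shift $\sigma^k\big((\omega^{(n)})^{\infty}\big)$ which wraps around the period can exceed $\boldsymbol{\alpha}^{(n)}$ even though the corresponding shift of $\boldsymbol{\alpha}$ does not exceed $\boldsymbol{\alpha}$. To control this I would not use every prefix but only those cut points $n$ where $\boldsymbol{\alpha}$ behaves well. Concretely, since $\boldsymbol{\alpha} \in P$ begins with $1$, there are infinitely many indices $n$ at which $a_n = 1$; I would restrict attention to prefixes $\omega^{(n)}$ ending in the symbol $1$ and, more carefully, to cut points chosen so that the prefix is what is classically called an \emph{admissible} or \emph{self-dominating} block. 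The standard way to produce these is to take, for each $N$, the smallest index $n \geq N$ such that $\sigma^j(\boldsymbol{\alpha}) \prec \boldsymbol{\alpha}$ strictly (rather than with equality) for all $1 \leq j \leq n$ with the tail comparison already decided within the first $n$ coordinates; at such a cut point the periodic extension inherits the domination inequality from $\boldsymbol{\alpha}$ itself.

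The heart of the argument is then the following comparison. Write $\boldsymbol{\alpha}^{(n)} = (\omega)^{\infty}$ with $\omega = a_1 \ldots a_n$ chosen as above, and take any $k \geq 0$. I would compare $\sigma^k(\boldsymbol{\alpha}^{(n)})$ with $\boldsymbol{\alpha}^{(n)}$ by comparing their first $n$ symbols with the first $n$ symbols of $\boldsymbol{\alpha}$. For $k \leq n-1$, the prefix of $\sigma^k(\boldsymbol{\alpha}^{(n)})$ of length $n-k$ equals $a_{k+1} \ldots a_n$, which is a prefix of $\sigma^k(\boldsymbol{\alpha})$; since $\boldsymbol{\alpha} \in P$ gives $\sigma^k(\boldsymbol{\alpha}) \prec \boldsymbol{\alpha}$, the first place of disagreement (which, by the choice of cut point, occurs within the first $n$ coordinates) forces $\sigma^k(\boldsymbol{\alpha}^{(n)}) \prec \boldsymbol{\alpha}^{(n)}$; and if there is no disagreement within those coordinates, the chosen cut point guarantees the periodic continuation keeps the inequality. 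For $k \geq n$ the periodicity of $\boldsymbol{\alpha}^{(n)}$ reduces to the case $k \bmod n$. Thus $\sigma^k(\boldsymbol{\alpha}^{(n)}) \preccurlyeq \boldsymbol{\alpha}^{(n)}$ for all $k$, which is exactly the statement that $\boldsymbol{\alpha}^{(n)} \in P$.

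Finally I would assemble the density conclusion: the sequence $\{\boldsymbol{\alpha}^{(n)}\}$ (along the admissible subsequence of cut points, which is infinite because $\boldsymbol{\alpha}$ is not eventually periodic and its prefixes must repeatedly realise strict domination) consists of periodic Parry sequences converging to $\boldsymbol{\alpha}$ in the metric on $\Sigma_2$. Since $\boldsymbol{\alpha} \in P$ was arbitrary, every point of $P$ is a limit of points of $P \cap \mathrm{Per}(\sigma)$, so $P \cap \mathrm{Per}(\sigma)$ is dense in $P$. I expect the main obstacle to be the careful selection of cut points guaranteeing the wrap-around shifts do not violate the Parry inequality; this is precisely where the distinction between strict domination $\prec$ and weak domination $\preccurlyeq$ must be tracked, and it is the only non-routine part of the proof.
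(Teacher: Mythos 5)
Your overall strategy --- truncate $\boldsymbol{\alpha}$ at a well-chosen index $n$ and periodise --- is the same as the paper's, which takes $n$ with $a_n=0$, $a_{n+1}=1$ and sets $\boldsymbol{\alpha}'=(a_1\ldots a_n)^{\infty}$, and you correctly isolate the crux: a shift $\sigma^m(\boldsymbol{\alpha}^{(n)})$ that wraps around the period is not controlled by $\sigma^m(\boldsymbol{\alpha})\preccurlyeq\boldsymbol{\alpha}$. But your resolution of that crux does not work as stated. You ask for a cut point $n$ such that for all $1\le j\le n$ the comparison $\sigma^j(\boldsymbol{\alpha})\prec\boldsymbol{\alpha}$ is decided ``within the first $n$ coordinates''; the relevant threshold is actually $n-j$ (a disagreement at a position between $n-j$ and $n$ already involves wrapped symbols of $\boldsymbol{\alpha}^{(n)}$), and for $j$ equal or close to $n$ no index can satisfy it, since the first disagreement between $\sigma^j(\boldsymbol{\alpha})$ and $\boldsymbol{\alpha}$ then necessarily sits beyond position $n$ of $\boldsymbol{\alpha}$. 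The honest version of your condition is that the prefix $a_1\ldots a_n$ be unbordered: exactly when $a_{m+1}\ldots a_n=a_1\ldots a_{n-m}$ does the comparison of $\sigma^m(\boldsymbol{\alpha}^{(n)})$ with $\boldsymbol{\alpha}^{(n)}$ reduce to comparing the wrapped block $a_1\ldots a_m$ against $a_{n-m+1}\ldots a_n$, where the Parry inequality for $\boldsymbol{\alpha}$ points the \emph{wrong} way. You give no argument that unbordered prefixes of arbitrarily large length exist for every non-eventually-periodic $\boldsymbol{\alpha}\in P$, so the ``admissible subsequence of cut points'' your final paragraph relies on is not shown to be infinite, or even nonempty; the clause ``the chosen cut point guarantees the periodic continuation keeps the inequality'' is precisely the unproved step.

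That the danger is real is seen on $\boldsymbol{\alpha}=1101011010\,0\ldots$ (completed by a sparse aperiodic tail of isolated $1$'s so that $\boldsymbol{\alpha}\in P$): the prefix $\omega=a_1\ldots a_8=11010110$ has the border $110$, and $\sigma^5(\omega^{\infty})=110\,11010110\ldots=11011\ldots\succ11010\ldots=\omega^{\infty}$, so $(a_1\ldots a_8)^{\infty}\notin P$ even though $\sigma^5(\boldsymbol{\alpha})=11010\ldots\preccurlyeq\boldsymbol{\alpha}$. (Note $a_8=0$ and $a_9=1$, so this cut point is also admissible for the paper's own proof, which asserts the transfer $\sigma^m(\boldsymbol{\alpha}')\succ\boldsymbol{\alpha}'\Rightarrow\sigma^m(\boldsymbol{\alpha})\succ\boldsymbol{\alpha}$ without treating the wrap-around; you have in effect reproduced the paper's argument together with its weak point, in a less explicit form.) To close the gap you must either prove that suitable (e.g.\ unbordered) cut points exist for every such $\boldsymbol{\alpha}$, or treat the bordered case by a different device, for instance by replacing the literal prefix $(a_1\ldots a_n)^{\infty}$ with the largest periodic Parry sequence not exceeding $\boldsymbol{\alpha}$ of controlled period.
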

\begin{proof}
Let $\boldsymbol{\alpha} \in P$ and $\varepsilon > 0$. Firstly, note that if $\boldsymbol{\alpha}$ is a finite sequence, then the sequence ${\boldsymbol{\alpha}}^{\prime} = (a_1 \ldots a_{\ell(a)}0^{[\frac{1}{\varepsilon}]+1})^{\infty}$ is a Parry sequence with $d(\boldsymbol{\alpha}, 
{\boldsymbol{\alpha}}^{\prime}) < \varepsilon$. Assume that $\boldsymbol{\alpha}$ is not finite. Without losing generality we can assume that $\boldsymbol{\alpha} \notin P \cap Per(\sigma)$. Consider $n \in \mathbb{N}$ such that $\frac{1}{2^n} < \varepsilon$, $a_n = 0$ and $a_{n+1} = 1$. Consider the sequence ${\boldsymbol{\alpha}}^{\prime} = (a_1 \ldots a_{n})^{\infty}$. Observe that $d(\boldsymbol{\alpha}, {\boldsymbol{\alpha}}^{\prime}) \leq \frac{1}{2^n} < \varepsilon$ and ${\boldsymbol{\alpha}}^{\prime}$. Thus, it suffices to show that ${\boldsymbol{\alpha}}^{\prime} \in P$. Assume, on the contrary that ${\boldsymbol{\alpha}}^{\prime} \notin P$. Then, there exists $m \in \mathbb{N}$ such that $\sigma^{m}({\boldsymbol{\alpha}}^{\prime}) \succ {\boldsymbol{\alpha}}^{\prime}$. Since ${\boldsymbol{\alpha}}^{\prime}$ is periodic, we can be sure that $m \in \{1, \ldots, n\}$. This implies that $\sigma^{m}({\boldsymbol{\alpha}}^{\prime}) \succ {\boldsymbol{\alpha}}^{\prime}$. Therefore, $\sigma^m(\boldsymbol{\alpha}) \succ \boldsymbol{\alpha}$ which is a contradiction. Therefore ${\boldsymbol{\alpha}}^{\prime} \in P$ and hence $P \cap Per(\sigma)$ is dense in $P$.
\end{proof}

Let $N = \Sigma_2 \setminus (P \cup \bar P)$. As we observed previously, $\pi(N)$ has full Lebesgue measure. 

\begin{proposition}
$N$ is an open and dense subset of $\Sigma_2$. \label{propertiesofn}
\end{proposition}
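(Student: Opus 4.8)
The plan is to establish openness and density separately, each reducing to material already available in the excerpt.

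\emph{Openness.} I would prove that the complementary set $P \cup \bar{P}$ is closed (indeed compact, as already noted in the excerpt). The set $P$ is cut out inside $\Sigma_2$ by the clopen condition $x_1 = 1$ together with the countably many conditions $\sigma^n(\boldsymbol{x}) \preccurlyeq \boldsymbol{x}$, $n \geq 1$. Each of these is a closed condition: the lexicographic relation $\{(\boldsymbol{x},\boldsymbol{y}) \mid \boldsymbol{x} \preccurlyeq \boldsymbol{y}\}$ is closed in $\Sigma_2 \times \Sigma_2$ (if $\boldsymbol{x} \succ \boldsymbol{y}$ they disagree at a first coordinate, a property preserved under small perturbations), and $\boldsymbol{x} \mapsto (\sigma^n(\boldsymbol{x}),\boldsymbol{x})$ is continuous, so $\{\boldsymbol{x} \mid \sigma^n(\boldsymbol{x}) \preccurlyeq \boldsymbol{x}\}$ is closed. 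Hence $P = \{\boldsymbol{x} \mid x_1 = 1\} \cap \bigcap_{n \geq 1}\{\boldsymbol{x} \mid \sigma^n(\boldsymbol{x}) \preccurlyeq \boldsymbol{x}\}$ is closed, in agreement with the earlier observation that $P$ is a compact Cantor set. Since the mirror map $\boldsymbol{x} \mapsto \bar{\boldsymbol{x}}$ is a homeomorphism of $\Sigma_2$ and $\bar{P}$ is the image of $P$ under it, $\bar{P}$ is closed as well. Therefore $P \cup \bar{P}$ is closed and $N$ is open.

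\emph{Density.} Here I would use the measure information already quoted. Let $\mu$ be the uniform Bernoulli measure on $\Sigma_2$; then $\pi_{*}\mu$ is Lebesgue measure $\lambda$ on $[0,1]$. Since $P$ is compact and $\pi$ is continuous, $\pi(P)$ is compact, hence Borel, and it is Lebesgue-null by \cite[Theorems 3.7, 3.8]{nilsson}. From $P \subseteq \pi^{-1}(\pi(P))$ and $\mu(\pi^{-1}(\pi(P))) = \lambda(\pi(P)) = 0$ one gets $\mu(P) = 0$, and likewise $\mu(\bar{P}) = 0$, so $\mu(P \cup \bar{P}) = 0$. Consequently every nonempty cylinder $[w]$ satisfies $\mu([w]) = 2^{-\ell(w)} > 0 = \mu(P \cup \bar{P})$, whence $[w] \not\subseteq P \cup \bar{P}$, i.e. $[w] \cap N \neq \emptyset$. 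As the cylinders form a basis of $\Sigma_2$, this shows $N$ is dense.

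The only delicate point along this route is the transfer of nullity from $\pi(P)$ to $P$, which rests on $\pi_{*}\mu = \lambda$ and on the measurability of $\pi(P)$; both are routine once one records that $P$ is compact. I expect this measure argument to give the cleanest write-up. If an elementary, measure-free proof of density is preferred, the main thing to get exactly right is the following explicit construction: given a cylinder $[w]$ with $w_1 = 1$, the sequence $\boldsymbol{x} = w\,0\,1^{L}0^{\infty}$ with $L > \ell(w)$ has the shift $\sigma^{\ell(w)+1}(\boldsymbol{x}) = 1^{L}0^{\infty}$ exceeding it lexicographically (both agree on the leading block of $1$'s, and then $1^{L}0^{\infty}$ carries a $1$ at the position where $\boldsymbol{x}$ has its first $0$), so $\boldsymbol{x}$ fails the Parry condition and, having $x_1 = 1$, lies in $N \cap [w]$; the case $w_1 = 0$ is handled by the mirror construction $w\,1\,0^{L}1^{\infty}$. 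Verifying that the chosen shift is genuinely larger (respectively smaller) in every subcase is the one place where care is needed.
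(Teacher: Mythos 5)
Your proof is correct, but it takes a somewhat different route from the paper's, so a comparison is worthwhile. For openness, the paper argues pointwise: given $\boldsymbol{x}\in N$ it locates the first shift $\sigma^j(\boldsymbol{x})\succ\boldsymbol{x}$ witnessing the failure of the Parry condition and observes that this witness is determined by finitely many coordinates, so a whole cylinder around $\boldsymbol{x}$ lies in $N$. You instead show the complement $P\cup\bar P$ is closed, using that $\preccurlyeq$ is a closed relation and $\boldsymbol{x}\mapsto(\sigma^n(\boldsymbol{x}),\boldsymbol{x})$ is continuous; this is the same underlying finite-witness fact packaged more cleanly, and it has the side benefit of actually justifying the compactness of $P$, which the paper asserts earlier without proof. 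For density the divergence is more substantial: the paper constructs, for any $\boldsymbol{x}$ and $\varepsilon>0$, an explicit nearby sequence of the form $x_1\ldots x_j\,0\,1^{n+2}0^{\infty}$ that visibly violates the Parry inequality, whereas your primary argument is measure-theoretic, pushing the Bernoulli $(\tfrac12,\tfrac12)$ measure forward to Lebesgue measure and invoking the quoted Nilsson theorems that $\pi(P)$ and $\pi(\bar P)$ are null, so that no cylinder can sit inside $P\cup\bar P$. That argument is sound and short, but it imports a nontrivial external result to prove something that admits a two-line explicit construction; your fallback construction $w\,0\,1^{L}0^{\infty}$ with $L>\ell(w)$ (and its mirror for $w_1=0$) is essentially the paper's argument, and is arguably preferable since it keeps the proposition self-contained and, incidentally, is stated more cleanly than the paper's version because it does not require tracking the length $n$ of the initial block of $1$'s.
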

\begin{proof}
Firstly, we show that $N$ is an open subset of $\Sigma_2$. Let $\boldsymbol{x} \in N$ such that $\boldsymbol{x} \in \Sigma^1_2$. Note that, $\boldsymbol{x} \in [1^n,1^{n+1}]_{\prec}$ for some $n \in \mathbb{N}$. Let $j = \min \left\{i \in \mathbb{N} \mid \sigma^i(\boldsymbol{x}) \succ \boldsymbol{x}\right\}$ and $n_j = \min\left\{n \in \mathbb{N} \mid \sigma^j(\boldsymbol{x})_n \succ x_{j+n}\right\}$. Consider the set 
\begin{align*}
N(\boldsymbol{x}) = \left\{\boldsymbol{y} \in \Sigma_2 \mid y_i = x_i \hbox{\rm{ for every }}  1 \leq i \leq  n_j\right\}.
\end{align*}
Observe that $N(\boldsymbol{x}) \subset N$ and $\boldsymbol{x} \in N(\boldsymbol{x})$. Let us show that $N(\boldsymbol{x})$ is open. Observe that for every $\boldsymbol{\alpha} \in N(\boldsymbol{x})$ $d(\boldsymbol{x},\boldsymbol{\alpha}) < \frac{1}{2^{n_j+1}}$. Let $\varepsilon > 0$ sufficiently small, that is $\varepsilon < \frac{1}{2^{n_j+1}}$. Then, there exists $k \in \mathbb{N}$ such that $\frac{1}{2^k} < \varepsilon < \frac{1}{2^{n_j+1}}$. Let $N_{\varepsilon}(\boldsymbol{x}) = \left\{\boldsymbol{y} \in N(\boldsymbol{x}) \mid y_i = x_i \hbox{\rm{ for every }} 1 \leq i \leq k\right\}$. Then, $d(\boldsymbol{x}, \boldsymbol{y}) < \varepsilon$ for every $\boldsymbol{y} \in N_{\varepsilon}(\boldsymbol{x})$. Thus, $N(\boldsymbol{x})$ is an open set. Observe for $\boldsymbol{x}$ such that $x \in \Sigma_2^0 \cap N$ we can do a similar construction. Then we can be sure that $N$ is an open set.

In order to show that $N$ is dense, let $\boldsymbol{x} \in \Sigma^{1}_2$ and $\varepsilon > 0$. Observe that $\boldsymbol{x} \in [1^n,1^{n+1}]_{\prec}$ for some $n \in \mathbb{N}$ and recall there exists $j \in \mathbb{N}$ such that $\frac{1}{2^j} < \varepsilon$. Then, $\boldsymbol{\alpha} \in N$ given by $\boldsymbol{\alpha} = x_1 \ldots x_j 0 1^{n+2}0^{\infty}$ satisfies that $d(\boldsymbol{x},\boldsymbol{\alpha}) < \varepsilon$. A similar construction can be done for $\boldsymbol{x} \in \Sigma^{0}_2$ such that $x_1=0$.  Therefore $N$ is a dense subset of $\Sigma_2$.
\end{proof}

Let $N_0 = \{\boldsymbol{x} \in \Sigma_2^0 \cap N\}$ and $N_1 =  \{\boldsymbol{x} \in \Sigma_2^1 \cap N\}$. Note that $N = N_0 \cup N_1$. For every $\boldsymbol{\alpha} \in N_1$ consider $$n_{\boldsymbol{\alpha}} = \mathop{\min}\{n \in \mathbb{N} \mid \sigma^n(\boldsymbol{\alpha}) \succ \boldsymbol{\alpha}\},$$ and for every $\boldsymbol{\beta} \in N_0$ consider $$m_{\boldsymbol{\beta}} = \mathop{\min}\{n \in \mathbb{N} \mid \sigma^n(\boldsymbol{\beta}) \prec \boldsymbol{\beta}\}.$$ Note that both $n_{\boldsymbol{\alpha}}$ and $n_{\boldsymbol{\beta}}$ exist for every $\boldsymbol{\alpha} \in N_1$ and $\boldsymbol{\beta} \in N_0$ respectively. 

\vspace{1em}We define $\varsigma: \Sigma^1_2 \to \Sigma^1_2$ by $$\varsigma(\boldsymbol{\alpha}) = \left\{
\begin{array}{clrr}      
\boldsymbol{\alpha} & \hbox{\rm{ if }}  \boldsymbol{\alpha} \in P\\
(a_1 \ldots a_{n_a-1}0)^{\infty} & \hbox{\rm{ otherwise. }}\\
\end{array}
\right.$$

\vspace{1em}Observe that $\varsigma$ is well defined, since $n_{\boldsymbol{\alpha}}$ exists for every $\boldsymbol{\alpha} \in N$. Moreover, $n_{\boldsymbol{\alpha}}$ is unique. Also, $\varsigma(\Sigma^1_2) = P$ and $\varsigma(\boldsymbol{\alpha}) \preccurlyeq \boldsymbol{\alpha}$ for every $\boldsymbol{\alpha} \in \Sigma^1_2$. Note that it is possible to define $\varsigma^{\prime}:\Sigma^0_2 \to \Sigma^0_2$ as 
$\varsigma^{\prime}(\boldsymbol{\beta}) = \overline{\varsigma(\bar{\boldsymbol{\beta}})}$. 

\begin{lemma}
Let $\boldsymbol{\alpha} \in P \cap Per(\sigma)$. Then, $\varsigma$ is constant on intervals of the form 
$$\left[\boldsymbol{\alpha}, a_1 \ldots a_{n}1^{\infty}\right]_{\prec}$$ where $n$ is the period of $\boldsymbol{\alpha}$. Moreover, if ${\boldsymbol{\alpha}}^{\prime} \notin [\boldsymbol{\alpha}, a_1 \ldots a_{n}1^{\infty}]_{\prec}$ then $\varsigma(\boldsymbol{\alpha}) \neq \varsigma ({\boldsymbol{\alpha}}^{\prime})$. \label{varsigma}
\end{lemma}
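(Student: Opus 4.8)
The plan is to write $\boldsymbol{\alpha} = (a_1\ldots a_n)^{\infty}$ with $n$ its minimal period and set $w = a_1\ldots a_n$. Since $\boldsymbol{\alpha}\in P$ is periodic with minimal period $n$, the condition $\sigma^{k}(\boldsymbol{\alpha})\preccurlyeq\boldsymbol{\alpha}$ makes $w$ strictly larger than each of its proper cyclic permutations, so $w$ is the primitive maximal rotation of its class. By Proposition \ref{endings} this forces $a_n = 0$, and, being a maximal primitive word, $w$ is unbordered (no proper prefix equals a proper suffix). The degenerate case $\boldsymbol{\alpha} = 1^{\infty}$ is trivial, since then the interval collapses to $\{1^{\infty}\}$ and $\varsigma(\boldsymbol{\alpha}')\preccurlyeq\boldsymbol{\alpha}'\prec 1^{\infty}$ for every other $\boldsymbol{\alpha}'$. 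I would first record the elementary observation that $\boldsymbol{\alpha}'\in[\boldsymbol{\alpha}, w1^{\infty}]_{\prec}$ if and only if $\boldsymbol{\alpha}'$ begins with the block $w$ and $\boldsymbol{y}:=\sigma^{n}(\boldsymbol{\alpha}')\succcurlyeq\boldsymbol{\alpha}$: both endpoints share the prefix $w$, after which the right endpoint continues with the maximal tail $1^{\infty}$.

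The technical engine is the comparison lemma: \emph{for any word $u$ and any $\boldsymbol{y}$ with $\boldsymbol{y}\succ u^{\infty}$ one has $u\boldsymbol{y}\prec\boldsymbol{y}$}, which I would prove by induction on the first index where $\boldsymbol{y}$ and $u^{\infty}$ disagree (if it is $\le\ell(u)$ the inequality is immediate, otherwise $\boldsymbol{y}=u\boldsymbol{y}'$ with $\boldsymbol{y}'=\sigma^{\ell(u)}\boldsymbol{y}\succ u^{\infty}$ and one recurses). With this I establish constancy on the interval. If $\boldsymbol{\alpha}'=\boldsymbol{\alpha}$ then $\varsigma(\boldsymbol{\alpha}')=\boldsymbol{\alpha}$ as $\boldsymbol{\alpha}\in P$. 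If $\boldsymbol{\alpha}'\succ\boldsymbol{\alpha}$ lies in the interval, write $\boldsymbol{\alpha}'=w\boldsymbol{y}$ with $\boldsymbol{y}\succ\boldsymbol{\alpha}=w^{\infty}$; the comparison lemma gives $\sigma^{n}(\boldsymbol{\alpha}')=\boldsymbol{y}\succ w\boldsymbol{y}=\boldsymbol{\alpha}'$, so $n_{\boldsymbol{\alpha}'}\le n$. Conversely, for $1\le k\le n-1$ the Parry property yields $\sigma^{k}(w^{\infty})\prec w^{\infty}$, and since $w$ is unbordered the first disagreement occurs at a coordinate $q\le n-k$, i.e.\ inside the copy of $w$; hence $\sigma^{k}(\boldsymbol{\alpha}')\prec\boldsymbol{\alpha}'$ irrespective of $\boldsymbol{y}$, so $n_{\boldsymbol{\alpha}'}\ge n$. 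Thus $n_{\boldsymbol{\alpha}'}=n$, and using $a_i'=a_i$ for $i\le n$ together with $a_n=0$ we get $\varsigma(\boldsymbol{\alpha}')=(a_1\ldots a_{n-1}0)^{\infty}=\boldsymbol{\alpha}$.

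For the ``moreover'' clause I would treat separately $\boldsymbol{\alpha}'\prec\boldsymbol{\alpha}$ and $\boldsymbol{\alpha}'\succ w1^{\infty}$. The first is immediate from $\varsigma(\boldsymbol{\alpha}')\preccurlyeq\boldsymbol{\alpha}'\prec\boldsymbol{\alpha}$. In the second case $\boldsymbol{\alpha}'$ must already exceed $w1^{\infty}$ within the first $n$ coordinates, so it begins with $a_1\ldots a_{j_0-1}1$, where $j_0\le n$ is minimal with $a_{j_0}=0$. Putting $m=n_{\boldsymbol{\alpha}'}$ (the Parry case being direct, since then $\varsigma(\boldsymbol{\alpha}')=\boldsymbol{\alpha}'\succ w1^{\infty}\succ\boldsymbol{\alpha}$), I would compare $\varsigma(\boldsymbol{\alpha}')=(a_1'\ldots a_{m-1}'0)^{\infty}$ with $\boldsymbol{\alpha}$. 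If $m\ge j_0+1$, the image retains the symbol $1$ at coordinate $j_0$, whence $\varsigma(\boldsymbol{\alpha}')\succ\boldsymbol{\alpha}$; if $m\le j_0-1$, or if $m=j_0<n$, the image is periodic of period $m<n$, strictly below the minimal period $n$ of $\boldsymbol{\alpha}$, so $\varsigma(\boldsymbol{\alpha}')\ne\boldsymbol{\alpha}$.

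The one genuinely delicate configuration, and the step I expect to be the main obstacle, is $m=j_0=n$, i.e.\ $\boldsymbol{\alpha}'=a_1\ldots a_{n-1}1\boldsymbol{z}$ with $n_{\boldsymbol{\alpha}'}=n$, which would spuriously yield $\varsigma(\boldsymbol{\alpha}')=(a_1\ldots a_{n-1}0)^{\infty}=\boldsymbol{\alpha}$. I would rule it out by a run-length argument. Let $r\ge 1$ be the length of the initial run of $1$'s of the maximal word $w$, so $w$ begins with $1^{r}0$. Assuming $n_{\boldsymbol{\alpha}'}=n$, the relation $\sigma^{n}(\boldsymbol{\alpha}')=\boldsymbol{z}\succ\boldsymbol{\alpha}'$ forces $\boldsymbol{z}$ to begin with $1^{r}$. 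Let $t\ge 1$ be the length of the terminal run of $1$'s of $a_1\ldots a_{n-1}1$. Then $\sigma^{n-t}(\boldsymbol{\alpha}')=1^{t}\boldsymbol{z}$ begins with $1^{t+r}$, hence with at least $1^{r+1}$, while $\boldsymbol{\alpha}'$ begins with $1^{r}0$; comparing at coordinate $r+1$ gives $\sigma^{n-t}(\boldsymbol{\alpha}')\succ\boldsymbol{\alpha}'$ with $1\le n-t\le n-1$, contradicting $n_{\boldsymbol{\alpha}'}=n$. The subcase $w=1^{n-1}0$, where the terminal-run analysis degenerates, is handled directly: there $\boldsymbol{\alpha}'=1^{n}\boldsymbol{z}$ and $\boldsymbol{z}\succ 1^{n}\boldsymbol{z}$ is impossible. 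This contradiction shows the problematic configuration never occurs, which finishes the ``moreover'' part and hence the lemma.
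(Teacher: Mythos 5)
Your proof is correct, and it is both more complete and structurally different from the one in the paper. The paper's argument is a short proof by contradiction: for $\boldsymbol{\alpha}^{\prime}$ in the open interval it observes that $\boldsymbol{\alpha}^{\prime} \notin P$ and then argues that $\varsigma(\boldsymbol{\alpha}^{\prime}) \prec \boldsymbol{\alpha}$ or $\varsigma(\boldsymbol{\alpha}^{\prime}) \succ \boldsymbol{\alpha}$ would force the prefix $a^{\prime}_1 \ldots a^{\prime}_{n_{\boldsymbol{\alpha}^{\prime}}}$ lexicographically outside the interval; it never identifies $n_{\boldsymbol{\alpha}^{\prime}}$, and it gives no argument at all for the ``moreover'' clause. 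You instead compute $n_{\boldsymbol{\alpha}^{\prime}} = n$ exactly --- the upper bound from your comparison lemma $u\boldsymbol{y} \prec \boldsymbol{y}$, the lower bound from the unborderedness of the maximal primitive word $w$ --- which yields constancy by direct substitution, and you then supply the injectivity half of the statement, correctly isolating the only dangerous configuration ($n_{\boldsymbol{\alpha}^{\prime}} = j_0 = n$, which would spuriously reproduce $\boldsymbol{\alpha}$) and eliminating it with the run-of-ones comparison at $\sigma^{n-t}(\boldsymbol{\alpha}^{\prime})$. What your route buys is precisely the half of the lemma the paper leaves unproved, at the cost of importing one standard combinatorial fact (maximal rotations of primitive words are unbordered, the order-reversed form of the statement that Lyndon words are unbordered). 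One slip to fix: $j_0$ should be defined as the first index at which $\boldsymbol{\alpha}^{\prime}$ and $w1^{\infty}$ disagree (necessarily $j_0 \le n$ with $a_{j_0} = 0$ and $a^{\prime}_{j_0} = 1$), not as the minimal index with $a_{j_0} = 0$; the disagreement can occur at a later zero of $w$. Your subsequent case analysis uses only the former properties, so nothing else changes.
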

\begin{proof}
It is clear that $\varsigma(\boldsymbol{\alpha}) = \boldsymbol{\alpha}$ and $\varsigma(a_1 \ldots a_{n)}1^{\infty}) = \boldsymbol{\alpha}$ from the definition of $\varsigma$. Let ${\boldsymbol{\alpha}}^{\prime} \in (\boldsymbol{\alpha}, a_1 \ldots a_{n}1^{\infty})_{\prec}$. Then there exists $m > n$ such that $a^{\prime}_m = 1$ and $a_m =0$. This implies that ${\boldsymbol{\alpha}}^{\prime} \notin P$. Consider $\varsigma({\boldsymbol{\alpha}}^{\prime})$ and assume that $\varsigma({\boldsymbol{\alpha}}^{\prime}) \neq \boldsymbol{\alpha}$. Suppose first that $\varsigma({\boldsymbol{\alpha}}^{\prime}) \prec \boldsymbol{\alpha}$. Then, there exists $m^{\prime}$ such that $\varsigma({\boldsymbol{\alpha}}^{\prime})_{m^{\prime}} < a_{m^{\prime}}$. This implies that ${a}^{\prime}_1 \ldots a^{\prime}_{n_{a^{\prime}}} \prec \boldsymbol{\alpha}$ which contradicts that $\boldsymbol{\alpha} \in (\boldsymbol{\alpha}, a_1 \ldots a_{n}1^{\infty})_{\prec}$. If $\varsigma({\boldsymbol{\alpha}}^{\prime}) \succ \boldsymbol{\alpha}$ then $a^{\prime}_1 \ldots a^{\prime}_{n_{a^{\prime}}} \succ \boldsymbol{\alpha}$ which again contradicts that ${\boldsymbol{\alpha}}^{\prime} \in (\boldsymbol{\alpha}, a_1 \ldots a_{n}1^{\infty})_{\prec}$. 
\end{proof}

\begin{remark}
In the proof of Lemma \ref{varsigma} we considered the sequence $a_1 \ldots a_{n}1^{\infty}$ instead of $a_1 \ldots a_{n-1}10^{\infty}$. The purpose of this modification is to not change the provided definition of $\varsigma$. However, if $a_1 \ldots a_{n-1}10^{\infty}$ is considered, then $\varsigma(a) = (a_{1} \ldots a_{\ell(a)-1}0)^{\infty}$ (see \cite[Definition 2.1]{sidorov5}).  \label{observaciondevarsigma}
\end{remark}

An immediate consequence of Lemma \ref{varsigma} is that $\varsigma\mid_{P}$ is a decreasing function with respect to the lexicographic order. Also, note that $\varsigma^{\prime}$ will satisfy the same properties as $\varsigma$, except that $\varsigma^{\prime}$ is an increasing function.

\begin{theorem}
The function $\varsigma$ is continuous. Moreover, $\pi \circ \varsigma \circ \pi^{-1}:[\frac{1}{2}, 1] \to [\frac{1}{2}, 1]$ is a devil's staircase.\label{varsigmacontinuous}
\end{theorem}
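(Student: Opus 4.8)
The plan is to prove continuity of $\varsigma$ first and then deduce the devil's staircase property by verifying the three defining features of a devil's staircase for the conjugated map $\pi \circ \varsigma \circ \pi^{-1}$: it is continuous, non-decreasing (here non-increasing, but the circle-reversing symmetry of Proposition \ref{elhomeomorfismo} or a sign change handles orientation), and locally constant on an open dense set of full measure, yet not globally constant. First I would establish continuity of $\varsigma$ directly from Lemma \ref{varsigma}. That lemma tells us that $\varsigma$ is constant on each half-open lexicographic interval $[\boldsymbol{\alpha}, a_1 \ldots a_n 1^{\infty}]_{\prec}$ attached to a periodic Parry sequence $\boldsymbol{\alpha} \in P \cap Per(\sigma)$, and that distinct such $\boldsymbol{\alpha}$ give distinct values; since $\varsigma$ is monotone (decreasing) by the remark following Lemma \ref{varsigma}, continuity reduces to showing it has no jump discontinuities. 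The key point is that at an accumulation of these plateau intervals, the left and right limits must agree because the lengths of the gaps between consecutive plateaux shrink to zero: for a non-periodic $\boldsymbol{\alpha} \in P$, approximating sequences $\boldsymbol{\alpha}^{\prime} \to \boldsymbol{\alpha}$ have $\varsigma(\boldsymbol{\alpha}^{\prime})$ agreeing with $\boldsymbol{\alpha}$ on longer and longer prefixes (by Proposition \ref{denseperiodicparry}, periodic Parry sequences are dense in $P$, so the plateaux accumulate everywhere), forcing $\varsigma(\boldsymbol{\alpha}^{\prime}) \to \varsigma(\boldsymbol{\alpha}) = \boldsymbol{\alpha}$.

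Second, I would transfer this to the unit interval. Since $\pi: \Sigma_2 \to [0,1]$ is continuous and surjective and $\pi^{-1}$ restricted to the non-dyadic points is a genuine continuous section, the map $g := \pi \circ \varsigma \circ \pi^{-1}$ on $[\frac{1}{2},1]$ inherits monotonicity from $\varsigma\mid_P$. I would check that $g$ is well-defined on the countably many dyadic ambiguities (the two preimages of a dyadic rational get sent by $\varsigma$ to sequences with the same projection, because $\varsigma$ lands in $P$ and the plateau structure of Lemma \ref{varsigma} is consistent across the two binary names). Continuity of $g$ then follows from continuity of $\varsigma$ together with continuity of $\pi$, using that a monotone function with no jumps is continuous.

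Third, I would verify the defining properties of a devil's staircase. The function is non-increasing and continuous by the above. It is locally constant on an open dense set: this is exactly the content of Lemma \ref{varsigma} combined with Proposition \ref{denseperiodicparry}, since the plateau intervals $[\boldsymbol{\alpha}, a_1 \ldots a_n 1^{\infty}]_{\prec}$ around periodic Parry sequences have interiors that are open, and their union is dense because $P \cap Per(\sigma)$ is dense in $P$. Moreover this locally-constant set has full Lebesgue measure because its complement projects into $\pi(P)$, which has measure zero by the cited \cite[Theorems 3.7, 3.8]{nilsson}. Finally, $g$ is not constant because it takes distinct values on distinct plateaux (the "moreover" clause of Lemma \ref{varsigma}), and indeed it is surjective onto $\pi(P) \cap [\frac{1}{2},1]$, which is infinite. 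These three facts---continuous, monotone, locally constant on a full-measure open dense set but non-constant---are precisely the definition of a devil's staircase.

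I expect the main obstacle to be the continuity argument at non-periodic points of $P$, specifically controlling $\varsigma$ near the accumulation points of the plateau family. Lemma \ref{varsigma} describes $\varsigma$ cleanly on each plateau but says nothing uniform about how $\varsigma(\boldsymbol{\alpha}^{\prime})$ behaves when $\boldsymbol{\alpha}^{\prime}$ is an aperiodic Parry sequence that is a two-sided limit of plateaux. The delicate step is to show that the prefix-agreement length between $\varsigma(\boldsymbol{\alpha}^{\prime})$ and $\boldsymbol{\alpha}$ grows without bound as $\boldsymbol{\alpha}^{\prime} \to \boldsymbol{\alpha}$; this uses that $n_{\boldsymbol{\alpha}^{\prime}} \to \infty$ along such a sequence, which in turn requires that the first return time at which $\sigma^{n}(\boldsymbol{\alpha}^{\prime}) \succ \boldsymbol{\alpha}^{\prime}$ must occur later and later as $\boldsymbol{\alpha}^{\prime}$ approaches a genuine (aperiodic) Parry sequence. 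Handling the dyadic well-definedness of $g$ carefully is a secondary technical point but should be routine given that $\varsigma$ takes values in $P$.
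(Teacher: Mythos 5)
Your proposal is correct and follows essentially the same route as the paper: continuity is reduced via Lemma \ref{varsigma} to the aperiodic points of $P$, where it is controlled by showing $n_{\boldsymbol{\alpha}'}\to\infty$ as $\boldsymbol{\alpha}'\to\boldsymbol{\alpha}$, and the devil's staircase property then follows from the plateaux being open, dense and of full measure while $\varsigma$ remains non-constant. The only quibble is the appeal to Proposition \ref{elhomeomorfismo} for the orientation issue, which is irrelevant here (the projected map is simply monotone), but this does not affect the argument.
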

\begin{proof}
From Lemma \ref{varsigma}, $\varsigma$ is continuous and constant on intervals of the form $[\boldsymbol{\alpha}, a_1 \ldots a_{n}1^{\infty}]_{\prec}$ where $n = Per(\boldsymbol{\alpha})$. Observe that if ${\boldsymbol{\alpha}}^{\prime} \in \Sigma^1_2 \setminus \mathop{\bigcup}\limits_{\boldsymbol{\alpha} \in P \cap Per(\sigma)} [\boldsymbol{\alpha}, a_1 \ldots a_{n}1^{\infty}]_{\prec}$ then $\varsigma({\boldsymbol{\alpha}}^{\prime}) = \boldsymbol{\alpha}^{\prime}$. Moreover, for every ${\boldsymbol{\alpha}}^{\prime} \in \Sigma_2^1 \setminus P$ there exists $\boldsymbol{\alpha} \in P\cap Per(\sigma)$ such that ${\boldsymbol{\alpha}}^{\prime} \in [\boldsymbol{\alpha}, a_1 \ldots a_{n}1^{\infty}]_{\prec}$. Then from Proposition \ref{propertiesofn} $\varsigma$ is continuous and constant on a dense set of $\Sigma^1_2$. Then, it is just needed to show that $\varsigma$ is continuous in $P \setminus P\cap Per(\sigma)$. Let $\boldsymbol{\alpha} \in P \setminus P\cap Per(\sigma)$ and $\{\boldsymbol{\alpha}_i\}_{i=1}^{\infty}$ such that $\boldsymbol{\alpha}_i \mathop{\longrightarrow}\limits_{i \to \infty} \alpha$. Let $\varepsilon > 0$. Then, there exists $k \in \mathbb{N}$ such that $d(\boldsymbol{\alpha}_i,\boldsymbol{\alpha}) < \frac{1}{2^{k+1}} < \frac{\varepsilon}{2}$. Recall that $d(\boldsymbol{\alpha}_n,\varsigma(\boldsymbol{\alpha}_i)) \leq \frac{1}{2^{n_{\alpha_i}}}$. Also, since $\boldsymbol{\alpha}_i$ converge to $\boldsymbol{\alpha}$ then $n_{\boldsymbol{\alpha}_i} > k$ for every $i \geq k$. This gives that $$d(\varsigma(\boldsymbol{\alpha}_i),\varsigma(\boldsymbol{\alpha})) \leq \frac{1}{2^{n_{\boldsymbol{\alpha}_i}}} + \frac{\varepsilon}{2} < \varepsilon.$$  Thus, $\varsigma$ is continuous.

Let $a \in [\frac{1}{2}, 1]$ such that $\pi^{-1}(\{a\})$ has two elements. By Remark \ref{observaciondevarsigma} we can consider $\boldsymbol{x} \in \pi^{-1}(\{a\})$ such that $\boldsymbol{x} = x_1 \ldots x_n 1^{\infty}$. Then, $\pi \circ \varsigma \circ \pi^{-1}$ is a well defined function and it is continuous, increasing, constant on $\pi([[\boldsymbol{\alpha}, a_1 \ldots a_{\ell(a)}1^{\infty}]_{\prec}])$. Moreover, since $\pi(P)$ is a set of Lebesgue measure zero, $$\Sigma^1_2 \setminus \mathop{\bigcup}\limits_{\boldsymbol{\alpha} \in P \cap Per(\sigma)}[\boldsymbol{\alpha}, a_1 \ldots a_{\ell(a)}1^{\infty}]_{\prec}$$ has Lebesgue measure zero and $\pi \circ\varsigma \circ \pi^{-1}$ is not differentiable in $$\pi\left(\Sigma^1_2 \setminus \mathop{\bigcup}\limits_{\boldsymbol{\alpha} \in P \cap Per(\sigma)} [\boldsymbol{\alpha}, a_1 \ldots a_{\ell(a)}1^{\infty}]_{\prec}\right).$$ Thus, $\pi \circ \varsigma \circ \pi^{-1}$ is a devil's staircase.
\end{proof}

\begin{lemma}
Let $(\boldsymbol{\alpha},\boldsymbol{\beta}) \in N_1 \times N_0 \setminus P \times \bar{P}$ such that $(\boldsymbol{\alpha},\boldsymbol{\beta})$ is extremal, $\boldsymbol{\alpha} \in N$ and $\boldsymbol{\beta} \in \bar N$ and $\sigma^{n}(\boldsymbol{\beta}) \prec \boldsymbol{\alpha}$ and $\sigma^{n}(\boldsymbol{\alpha}) \succ \boldsymbol{\beta}$ for every $n \in \mathbb{N}$. Then $\Sigma_{(\boldsymbol{\alpha},\boldsymbol{\beta})} = \Sigma_{(\varsigma(\boldsymbol{\alpha}), \varsigma^{\prime}(\boldsymbol{\beta}))}.$\label{fuera1}
\end{lemma}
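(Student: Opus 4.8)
The plan is to prove the set identity by double inclusion, writing $n=n_{\boldsymbol{\alpha}}$ and $w=a_1\ldots a_{n-1}$ so that $\varsigma(\boldsymbol{\alpha})=(w0)^{\infty}$, and recalling from the discussion following the definition of $\varsigma$ that $\varsigma(\boldsymbol{\alpha})\preccurlyeq\boldsymbol{\alpha}$, and dually $\varsigma^{\prime}(\boldsymbol{\beta})=\overline{\varsigma(\bar{\boldsymbol{\beta}})}\succcurlyeq\boldsymbol{\beta}$. The inclusion $\Sigma_{(\varsigma(\boldsymbol{\alpha}),\varsigma^{\prime}(\boldsymbol{\beta}))}\subseteq\Sigma_{(\boldsymbol{\alpha},\boldsymbol{\beta})}$ is then immediate: if every shift of $\boldsymbol{x}$ lies in $[\varsigma^{\prime}(\boldsymbol{\beta}),\varsigma(\boldsymbol{\alpha})]_{\prec}$, then since $\boldsymbol{\beta}\preccurlyeq\varsigma^{\prime}(\boldsymbol{\beta})$ and $\varsigma(\boldsymbol{\alpha})\preccurlyeq\boldsymbol{\alpha}$ those shifts also lie in $[\boldsymbol{\beta},\boldsymbol{\alpha}]_{\prec}$.

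For the reverse inclusion I would fix $\boldsymbol{x}\in\Sigma_{(\boldsymbol{\alpha},\boldsymbol{\beta})}$ and show that in fact $\varsigma^{\prime}(\boldsymbol{\beta})\preccurlyeq\sigma^{k}(\boldsymbol{x})\preccurlyeq\varsigma(\boldsymbol{\alpha})$ for every $k$. Using the mirror symmetry $\overline{\Sigma_{(\boldsymbol{\alpha},\boldsymbol{\beta})}}=\Sigma_{(\bar{\boldsymbol{\beta}},\bar{\boldsymbol{\alpha}})}$ together with $\varsigma^{\prime}(\boldsymbol{\beta})=\overline{\varsigma(\bar{\boldsymbol{\beta}})}$, the lower bound for $\boldsymbol{\beta}$ is exactly the mirror image of the upper bound for $\boldsymbol{\alpha}$, so it suffices to prove $\sigma^{k}(\boldsymbol{x})\preccurlyeq\varsigma(\boldsymbol{\alpha})$ for all $k$. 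The key preliminary step is a purely combinatorial fact about $\varsigma$, namely that $a_{n}=0$, so that $\boldsymbol{\alpha}$ and $\varsigma(\boldsymbol{\alpha})=(w0)^{\infty}$ agree on their first $n$ coordinates. This follows from the minimality of $n=n_{\boldsymbol{\alpha}}$ (note $n\geq2$, since $\boldsymbol{\alpha}\in N_1$ is not Parry): if $a_n=1$, then $\sigma^{n-1}(\boldsymbol{\alpha})$ begins with $a_n=1$ and continues as $\sigma^{n}(\boldsymbol{\alpha})$, while $\boldsymbol{\alpha}$ begins with $a_1=1$ and continues as $\sigma(\boldsymbol{\alpha})$; since $\sigma^{n}(\boldsymbol{\alpha})\succ\boldsymbol{\alpha}\succcurlyeq\sigma(\boldsymbol{\alpha})$ forces $\sigma^{n}(\boldsymbol{\alpha})\succ\sigma(\boldsymbol{\alpha})$, we would get $\sigma^{n-1}(\boldsymbol{\alpha})\succ\boldsymbol{\alpha}$, contradicting the minimality of $n$.

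With this prefix agreement I would argue by contradiction. Suppose some shift $\boldsymbol{y}=\sigma^{k}(\boldsymbol{x})$ satisfies $\varsigma(\boldsymbol{\alpha})\prec\boldsymbol{y}\preccurlyeq\boldsymbol{\alpha}$, and let $j$ be the first coordinate at which $\boldsymbol{y}$ and $(w0)^{\infty}$ differ; then $y_j=1$ while $(w0)^{\infty}_j=0$. Because $\boldsymbol{\alpha}$ and $(w0)^{\infty}$ coincide on the first $n$ coordinates, $j\leq n$ would already give $\boldsymbol{y}\succ\boldsymbol{\alpha}$, so necessarily $j\geq n+1$. I would then exploit the period $n$ of $(w0)^{\infty}$: choosing the unique $s\geq1$ with $r:=j-sn\in\{1,\dots,n\}$, the sequence $\sigma^{sn}(\boldsymbol{y})$ still agrees with $(w0)^{\infty}$ up to coordinate $r-1$ and carries a $1$ in coordinate $r$, where $\boldsymbol{\alpha}$ (agreeing with $(w0)^{\infty}$ on the first $n$ places) carries a $0$; hence $\sigma^{sn}(\boldsymbol{y})\succ\boldsymbol{\alpha}$. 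But $\sigma^{sn}(\boldsymbol{y})=\sigma^{k+sn}(\boldsymbol{x})$ is a shift of $\boldsymbol{x}\in\Sigma_{(\boldsymbol{\alpha},\boldsymbol{\beta})}$, so $\sigma^{sn}(\boldsymbol{y})\preccurlyeq\boldsymbol{\alpha}$ — a contradiction. Thus $\sigma^{k}(\boldsymbol{x})\preccurlyeq\varsigma(\boldsymbol{\alpha})$ for all $k$, and mirroring yields the lower bound, completing the reverse inclusion and hence the equality.

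I expect the main obstacle to be the combinatorial lemma $a_{n_{\boldsymbol{\alpha}}}=0$ together with the bookkeeping in the ``shift by the period'' step: one must check that the first deviation index $j$ lies strictly beyond the common prefix (which is precisely what the minimality of $n$ provides) and that subtracting the correct multiple of $n$ lands the deviation inside the first period, where the comparison against $\boldsymbol{\alpha}$ becomes decisive. It is worth noting that the standing hypotheses $\sigma^{n}(\boldsymbol{\beta})\prec\boldsymbol{\alpha}$ and $\sigma^{n}(\boldsymbol{\alpha})\succ\boldsymbol{\beta}$ are not actually invoked in the set identity itself — the reduction is a one-sided phenomenon at each endpoint — but they are what guarantee that the reduced pair $(\varsigma(\boldsymbol{\alpha}),\varsigma^{\prime}(\boldsymbol{\beta}))$ genuinely belongs to $\mathcal{LW}$.
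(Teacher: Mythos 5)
Your proof is correct and follows essentially the same route as the paper's: the easy inclusion comes from $\varsigma(\boldsymbol{\alpha})\preccurlyeq\boldsymbol{\alpha}$ and $\varsigma^{\prime}(\boldsymbol{\beta})\succcurlyeq\boldsymbol{\beta}$, and the reverse inclusion from the observation that any shift lying strictly between $\varsigma(\boldsymbol{\alpha})$ and $\boldsymbol{\alpha}$ forces a later shift to exceed $\boldsymbol{\alpha}$. Your write-up is in fact the more careful of the two: the auxiliary fact $a_{n_{\boldsymbol{\alpha}}}=0$ and the reduction of the first deviation index modulo the period $n_{\boldsymbol{\alpha}}$ justify a step the paper's proof only asserts (it places the deviation at coordinate $n_{\boldsymbol{\alpha}}$ without argument), and the mirror-symmetry reduction cleanly handles the $\boldsymbol{\beta}$ side.
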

\begin{proof}
Note that it suffices to show our result for $(\boldsymbol{\alpha},\boldsymbol{\beta}) \in N_0 \times N_1$. Observe that $\boldsymbol{\beta} \prec \varsigma^{\prime}(\boldsymbol{\beta}) \prec \varsigma(\boldsymbol{\alpha}) \prec \boldsymbol{\alpha}$. Then $\Sigma_{(\varsigma(\boldsymbol{\alpha}), \varsigma^{\prime}(\boldsymbol{\beta}))} \subset \Sigma_{(\boldsymbol{\alpha},\boldsymbol{\beta})}.$ Assume that $\Sigma_{(\varsigma(\boldsymbol{\alpha}),\varsigma^{\prime}(\boldsymbol{\beta}))} \subsetneq \Sigma_{(\boldsymbol{\alpha},\boldsymbol{\beta})}.$ Let $\boldsymbol{x} \in \Sigma_{(\boldsymbol{\alpha},\boldsymbol{\beta})} \setminus \Sigma_{(\varsigma(\boldsymbol{\alpha}), \varsigma^{\prime}(\boldsymbol{\beta}))}$. Then $$\sigma^n(\boldsymbol{x}) \in \left[\boldsymbol{\beta}, \varsigma^{\prime}(\boldsymbol{\beta})\right)_{\prec} \cup (\varsigma(\boldsymbol{\alpha}),\boldsymbol{\alpha}]_{\prec} \hbox{ for every } n \geq 0.$$ Then there exists $m^{\prime}$ such that either $\sigma^{m^{\prime}}(\boldsymbol{x})_{m_b} < (\varsigma^{\prime}(\boldsymbol{\beta}))_{m_b}$ or $\sigma^{m^{\prime}}(\boldsymbol{x})_{n_a} > (\varsigma(\boldsymbol{\alpha}))_{n_a}$. This implies that $\sigma^{m_b + j}(\boldsymbol{x}) \prec \boldsymbol{\beta}$ or $\sigma^{n_a+j}(\boldsymbol{x}) \succ \boldsymbol{\alpha}$, which is a contradiction. Then $\Sigma_{(\boldsymbol{\alpha},\boldsymbol{\beta})} = \Sigma_{(\varsigma(\boldsymbol{\alpha}),\varsigma^{\prime}(\boldsymbol{\beta}))}$.   
\end{proof}

Now we will study two sided extremal pairs $(\boldsymbol{\alpha},\boldsymbol{\beta}) \in P \times \bar P$. Let $(\boldsymbol{\alpha},\boldsymbol{\beta}) \in P \times \bar P$ be two sided extremal. Consider $$M_{\boldsymbol{\alpha}}(\boldsymbol{\beta})= \min\{m \in \mathbb{N} \mid \sigma^m(\boldsymbol{\beta}) \succ \boldsymbol{\alpha}\},$$ and $$N_{\boldsymbol{\beta}}(\boldsymbol{\alpha})= \min\{n \in \mathbb{N} \mid \sigma^n(\boldsymbol{\alpha}) \prec \boldsymbol{\beta}\}.$$ Then we define $$\iota(\boldsymbol{\alpha}) = (a_1 \ldots a_{N_{\boldsymbol{\beta}}(\boldsymbol{\alpha}) -1 }0)^{\infty}$$ and $$\iota(\boldsymbol{\beta})= (b_1 \ldots b_{M_{\boldsymbol{\alpha}}(\boldsymbol{\beta}) - 1}1)^{\infty}.$$ 

\vspace{1em}Observe that $\iota(\boldsymbol{\alpha})$ and $\iota(\boldsymbol{\beta})$ are periodic sequences. Also, $\iota(\boldsymbol{\alpha})$ and $\iota(\boldsymbol{\beta})$ satisfy that $\iota(\boldsymbol{\alpha}) \prec \boldsymbol{\alpha}$, $\boldsymbol{\beta} \succ \iota(\boldsymbol{\beta})$,  $d(\boldsymbol{\alpha}, \iota(\boldsymbol{\alpha})) \leq \frac{1}{2^{N_{\boldsymbol{\beta}}(\boldsymbol{\alpha})}}$ and $d(\boldsymbol{\beta},\iota(\boldsymbol{\beta})) \leq \frac{1}{2^{M_{\boldsymbol{\alpha}}(\boldsymbol{\beta})}}$.

\begin{proposition}
Let $(\boldsymbol{\alpha},\boldsymbol{\beta}) \in P \times \bar P$. Then $(\iota(\boldsymbol{\alpha}),\iota(\boldsymbol{\beta})) \in \mathcal{LW}$. \label{propertiesiota1}
\end{proposition}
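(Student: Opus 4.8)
The plan is to verify directly the two defining clauses of Definition \ref{lexworld} for the pair $(\iota(\boldsymbol{\alpha}),\iota(\boldsymbol{\beta}))$: clause $i)$, that $\iota(\boldsymbol{\alpha}) \in P$ and $\iota(\boldsymbol{\beta}) \in \bar P$, and clause $ii)$, that $\sigma^n(\iota(\boldsymbol{\alpha})) \succcurlyeq \iota(\boldsymbol{\beta})$ and $\sigma^n(\iota(\boldsymbol{\beta})) \preccurlyeq \iota(\boldsymbol{\alpha})$ for every $n$. Throughout I abbreviate $N = N_{\boldsymbol{\beta}}(\boldsymbol{\alpha})$ and $M = M_{\boldsymbol{\alpha}}(\boldsymbol{\beta})$. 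Since the construction $\iota$ is compatible with the mirror map---$\overline{\iota(\boldsymbol{\beta})}$ is exactly the sequence produced by the same recipe applied to the Parry sequence $\bar{\boldsymbol{\beta}}$ and co-Parry sequence $\bar{\boldsymbol{\alpha}}$, with the roles of $M$ and $N$ interchanged---each assertion about $\iota(\boldsymbol{\beta})$ will follow from the corresponding assertion about $\iota(\boldsymbol{\alpha})$ after conjugating by $\overline{(\cdot)}$, so I only need $\iota(\boldsymbol{\alpha}) \in P$ and $\sigma^n(\iota(\boldsymbol{\alpha})) \succcurlyeq \iota(\boldsymbol{\beta})$ for all $n$. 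The first step I would isolate is the structural fact $a_N = 1$ (dually $b_M = 0$), which is what makes the truncation a genuine decrease at coordinate $N$: from $\sigma^N(\boldsymbol{\alpha}) \prec \boldsymbol{\beta}$ and $b_1 = 0$ one gets $a_{N+1} = 0$, and if one also had $a_N = 0$ then the minimality of $N$ gives $\sigma^{N-1}(\boldsymbol{\alpha}) \succcurlyeq \boldsymbol{\beta}$, which (both sides starting with $0$) reduces to $\sigma^{N}(\boldsymbol{\alpha}) \succcurlyeq \sigma(\boldsymbol{\beta})$; as $\boldsymbol{\beta} \in \bar P$ we have $\sigma(\boldsymbol{\beta}) \succcurlyeq \boldsymbol{\beta}$, whence $\sigma^N(\boldsymbol{\alpha}) \succcurlyeq \boldsymbol{\beta}$, contradicting $\sigma^N(\boldsymbol{\alpha}) \prec \boldsymbol{\beta}$.

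To prove $\iota(\boldsymbol{\alpha}) = (a_1 \cdots a_{N-1}0)^\infty \in P$, by periodicity it suffices to check $\sigma^k(\iota(\boldsymbol{\alpha})) \preccurlyeq \iota(\boldsymbol{\alpha})$ for $1 \le k \le N-1$. On the first $N-1-k$ coordinates $\sigma^k(\iota(\boldsymbol{\alpha}))$ and $\iota(\boldsymbol{\alpha})$ coincide with $\sigma^k(\boldsymbol{\alpha})$ and $\boldsymbol{\alpha}$, so if the Parry inequality $\sigma^k(\boldsymbol{\alpha}) \preccurlyeq \boldsymbol{\alpha}$ is already decided there the same strict inequality is inherited. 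If instead there is agreement up to coordinate $N-1-k$, then at coordinate $N-k$ the shifted sequence carries the appended symbol $0$ against $a_{N-k}$; the Parry inequality at that coordinate gives $a_N \le a_{N-k}$, and since $a_N = 1$ this forces $a_{N-k} = 1$, so the appended $0$ produces $\sigma^k(\iota(\boldsymbol{\alpha})) \prec \iota(\boldsymbol{\alpha})$ (the degenerate case where $\boldsymbol{\alpha}$ has period dividing $k$ is identical, with $a_{N-k} = a_N = 1$). Thus $\iota(\boldsymbol{\alpha}) \in P$, and by the mirror map $\iota(\boldsymbol{\beta}) \in \bar P$, which settles clause $i)$.

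For clause $ii)$ I would again reduce, by periodicity, to $\sigma^k(\iota(\boldsymbol{\alpha})) \succcurlyeq \iota(\boldsymbol{\beta})$ for $0 \le k \le N-1$. If $a_{k+1} = 1$ the shift begins with $1$ while $\iota(\boldsymbol{\beta})$ begins with $0$ and the inequality is immediate, so the substance is $a_{k+1} = 0$. Here I would run a simultaneous comparison of $\sigma^k(\iota(\boldsymbol{\alpha}))$, $\sigma^k(\boldsymbol{\alpha})$, $\boldsymbol{\beta}$ and $\iota(\boldsymbol{\beta})$, using that $\sigma^k(\iota(\boldsymbol{\alpha}))$ agrees with $\sigma^k(\boldsymbol{\alpha})$ up to coordinate $N-k$, that $\iota(\boldsymbol{\beta})$ agrees with $\boldsymbol{\beta}$ up to coordinate $M$, and the key inequality $\sigma^k(\boldsymbol{\alpha}) \succcurlyeq \boldsymbol{\beta}$, valid because $k < N$. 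Whenever the decision in $\sigma^k(\boldsymbol{\alpha}) \succcurlyeq \boldsymbol{\beta}$ occurs strictly before $\min(N-k,M)$ it transfers with the correct sign to $\sigma^k(\iota(\boldsymbol{\alpha})) \succcurlyeq \iota(\boldsymbol{\beta})$. The prototypical delicate configuration is when $\sigma^k(\boldsymbol{\alpha})$ and $\boldsymbol{\beta}$ agree all the way to coordinate $N-k$ with $b_{N-k} = a_N = 1$; there I would pass to the tails, where $\sigma^N(\boldsymbol{\alpha}) \prec \boldsymbol{\beta} \preccurlyeq \sigma^{N-k}(\boldsymbol{\beta})$ (the last step by $\boldsymbol{\beta} \in \bar P$) contradicts $\sigma^k(\boldsymbol{\alpha}) \succcurlyeq \boldsymbol{\beta}$ and excludes it.

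The main obstacle is precisely this clause $ii)$ bookkeeping at the two period boundaries: the appended symbols of $\iota(\boldsymbol{\alpha})$ and $\iota(\boldsymbol{\beta})$ point in opposite directions ($0$ against $1$), so every case in which the comparison is not resolved strictly before the earlier of the coordinates $N-k$ and $M$---and in particular the coincidence $N-k = M$, where the two appended symbols collide at a single coordinate---threatens a drop $\sigma^k(\iota(\boldsymbol{\alpha})) \prec \iota(\boldsymbol{\beta})$. Ruling these out cleanly is where I expect to need the full force of two-sided extremality: the minimality of both $N$ and $M$ must be combined with the Parry and co-Parry properties of $\boldsymbol{\alpha}$ and $\boldsymbol{\beta}$ (and the nondegeneracy carried by $(a,b)\in D_1$). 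The tail contradiction above is the template for these exclusions, and I anticipate the remaining boundary cases to fall out by iterating that argument across one full period.
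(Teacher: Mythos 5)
Your handling of clause $i)$ is correct and complete: the observation that $a_{N_{\boldsymbol{\beta}}(\boldsymbol{\alpha})}=1$ (dually $b_{M_{\boldsymbol{\alpha}}(\boldsymbol{\beta})}=0$), derived from the minimality of $N_{\boldsymbol{\beta}}(\boldsymbol{\alpha})$ together with $\boldsymbol{\beta}\in\bar P$, and the resulting verification that $\iota(\boldsymbol{\alpha})\in P$, are more careful than the paper, which dismisses this part as clear. The problem is clause $ii)$. You flag the boundary cases --- in particular the collision $N_{\boldsymbol{\beta}}(\boldsymbol{\alpha})-k=M_{\boldsymbol{\alpha}}(\boldsymbol{\beta})$, where the appended $0$ of $\iota(\boldsymbol{\alpha})$ and the appended $1$ of $\iota(\boldsymbol{\beta})$ land on the same coordinate --- and then express the hope that they ``fall out by iterating'' the tail argument. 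They do not: that collision is a genuine obstruction, and no combination of the Parry properties and the minimality of $N$ and $M$ excludes it, because the statement fails there.

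Concretely, take $\boldsymbol{\alpha}=(110100)^{\infty}$ and $\boldsymbol{\beta}=(001101101)^{\infty}$. One checks directly that $\boldsymbol{\alpha}\in P$, $\boldsymbol{\beta}\in\bar P$, and that the pair is two sided extremal with $M_{\boldsymbol{\alpha}}(\boldsymbol{\beta})=2$ (since $\sigma^{2}(\boldsymbol{\beta})=11011\ldots$ first exceeds $\boldsymbol{\alpha}=11010\ldots$ at the fifth coordinate) and $N_{\boldsymbol{\beta}}(\boldsymbol{\alpha})=4$ (since $\sigma^{4}(\boldsymbol{\alpha})=0011010\ldots$ first drops below $\boldsymbol{\beta}=0011011\ldots$ at the seventh coordinate). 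Then $\iota(\boldsymbol{\alpha})=(1100)^{\infty}$ and $\iota(\boldsymbol{\beta})=(01)^{\infty}$, and $\sigma^{2}(\iota(\boldsymbol{\alpha}))=(0011)^{\infty}\prec(01)^{\infty}=\iota(\boldsymbol{\beta})$, so $(\iota(\boldsymbol{\alpha}),\iota(\boldsymbol{\beta}))$ violates clause $ii)$ of Definition \ref{lexworld} and is a left extremal pair rather than an element of $\mathcal{LW}$. This is precisely your collision case with $k=2$: before truncation the comparison of $\sigma^{2}(\boldsymbol{\alpha})$ with $\boldsymbol{\beta}$ is decided at the coordinate where both sequences get truncated, and replacing $a_{N}=1$ by $0$ and $b_{M}=0$ by $1$ reverses the order. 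So the gap you identified cannot be closed for the proposition as stated; note that the paper's own two-line proof by contradiction (which asserts that a bad index for the truncated pair would contradict the minimality of $N_{\boldsymbol{\beta}}(\boldsymbol{\alpha})$, without relating the truncated sequences back to the originals) passes silently over exactly this case.
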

\begin{proof}
From the definition of $(\iota(\boldsymbol{\alpha}), \iota(\boldsymbol{\beta}))$ it is clear that $(\iota(\boldsymbol{\alpha}), \iota(\boldsymbol{\beta})) \in P \times \bar P$. Assume that $(\iota(\boldsymbol{\alpha}), \iota(\boldsymbol{\beta}))$ is extremal. Firstly, suppose that $\sigma^n(\iota(\boldsymbol{\alpha})) \prec \iota(\boldsymbol{\beta})$ for some $n \in \mathbb{N}$. Consider $N_{\iota(\boldsymbol{\beta})}(\iota(\boldsymbol{\alpha}))$. Since $\iota(\boldsymbol{\alpha})$ is periodic, then $1 < N_{\iota(\boldsymbol{\beta})}(\iota(\boldsymbol{\alpha})) < N_{\boldsymbol{\beta}}(\boldsymbol{\alpha}).$ This contradicts the choice of $N_{\boldsymbol{\beta}}(\boldsymbol{\alpha})$. Thus, $\boldsymbol{\beta} \prec \sigma^n(\iota(\boldsymbol{\alpha}))$ for every $n \in \mathbb{N}$. Similarly, $\sigma^n(\boldsymbol{\beta}) \prec \boldsymbol{\alpha}$ for every $n \in \mathbb{N}$. Therefore $(\iota(\boldsymbol{\alpha}),\iota(\boldsymbol{\beta})) \in \mathcal{LW}.$ 
\end{proof}

\begin{lemma}
If $(\boldsymbol{\alpha},\boldsymbol{\beta}) \in P \times \bar P$ is two sided extremal, then $\Sigma_{(\boldsymbol{\alpha},\boldsymbol{\beta)}} = \Sigma_{(\iota(\boldsymbol{\alpha}),\iota(\boldsymbol{\beta}))}$. 
\label{twosidedextremal}
\end{lemma}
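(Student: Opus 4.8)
The plan is to prove the set equality by two inclusions and to reduce one of them to the other via the mirror symmetry $\boldsymbol{x}\mapsto\bar{\boldsymbol{x}}$. Observe first that, in contrast to Lemma \ref{fuera1}, there is \emph{no} containment of the underlying lexicographic intervals, since the given inequalities are $\iota(\boldsymbol{\alpha})\prec\boldsymbol{\alpha}$ and $\iota(\boldsymbol{\beta})\prec\boldsymbol{\beta}$, so both endpoints move downwards. Hence each inclusion has one ``free'' endpoint inequality and one inequality requiring an escape-time argument. The whole configuration is invariant under mirroring: the mirror image $\{\bar{\boldsymbol{x}}\mid\boldsymbol{x}\in\Sigma_{(\boldsymbol{\alpha},\boldsymbol{\beta})}\}$ equals $\Sigma_{(\bar{\boldsymbol{\beta}},\bar{\boldsymbol{\alpha}})}$, the pair $(\bar{\boldsymbol{\beta}},\bar{\boldsymbol{\alpha}})\in P\times\bar P$ is again two sided extremal, $N_{\bar{\boldsymbol{\alpha}}}(\bar{\boldsymbol{\beta}})=M_{\boldsymbol{\alpha}}(\boldsymbol{\beta})$, and $\iota(\bar{\boldsymbol{\beta}})=\overline{\iota(\boldsymbol{\beta})}$, $\iota(\bar{\boldsymbol{\alpha}})=\overline{\iota(\boldsymbol{\alpha})}$. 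Consequently it suffices to prove a single inclusion, say $\Sigma_{(\boldsymbol{\alpha},\boldsymbol{\beta})}\subseteq\Sigma_{(\iota(\boldsymbol{\alpha}),\iota(\boldsymbol{\beta}))}$; applying it to $(\bar{\boldsymbol{\beta}},\bar{\boldsymbol{\alpha}})$ and mirroring back yields the reverse inclusion, and Proposition \ref{propertiesiota1} guarantees that $(\iota(\boldsymbol{\alpha}),\iota(\boldsymbol{\beta}))\in\mathcal{LW}$, so the target is a genuine lexicographic subshift.

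For the chosen inclusion the lower bound is immediate: if $\boldsymbol{x}\in\Sigma_{(\boldsymbol{\alpha},\boldsymbol{\beta})}$ then $\sigma^n(\boldsymbol{x})\succcurlyeq\boldsymbol{\beta}\succ\iota(\boldsymbol{\beta})$ for every $n$. The content is the upper bound $\sigma^n(\boldsymbol{x})\preccurlyeq\iota(\boldsymbol{\alpha})$. Write $N=N_{\boldsymbol{\beta}}(\boldsymbol{\alpha})$, so $\iota(\boldsymbol{\alpha})=(a_1\ldots a_{N-1}0)^{\infty}$ has period $N$ and $\sigma^{N}(\boldsymbol{\alpha})\prec\boldsymbol{\beta}$ by minimality of $N$. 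Suppose, for a contradiction, that some $\boldsymbol{y}=\sigma^n(\boldsymbol{x})$ satisfies $\iota(\boldsymbol{\alpha})\prec\boldsymbol{y}\preccurlyeq\boldsymbol{\alpha}$, and let $k$ be the first coordinate at which $\boldsymbol{y}$ and $\boldsymbol{\alpha}$ differ. Since $\boldsymbol{y}\preccurlyeq\boldsymbol{\alpha}$ we have $y_k<\alpha_k$; because $\boldsymbol{\alpha}$ and $\iota(\boldsymbol{\alpha})$ agree on $1,\ldots,N-1$, a difference at $k<N$ would give $\boldsymbol{y}\prec\iota(\boldsymbol{\alpha})$, contradicting $\boldsymbol{y}\succ\iota(\boldsymbol{\alpha})$; thus $k\geq N$. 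If $k>N$ (or $\boldsymbol{y}=\boldsymbol{\alpha}$), then $\boldsymbol{y}$ and $\boldsymbol{\alpha}$ share their first $N$ symbols, so $\sigma^{N}(\boldsymbol{y})\preccurlyeq\sigma^{N}(\boldsymbol{\alpha})\prec\boldsymbol{\beta}$, while $\sigma^{N}(\boldsymbol{y})=\sigma^{n+N}(\boldsymbol{x})\succcurlyeq\boldsymbol{\beta}$, a contradiction.

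The remaining case $k=N$ is the delicate one and is where I expect the main difficulty to lie: here $\alpha_N=1$ and $y_N=0$, so $\boldsymbol{y}$ agrees with the period $a_1\ldots a_{N-1}0$ of $\iota(\boldsymbol{\alpha})$ on its first $N$ symbols and a single comparison is inconclusive. The plan is to exploit the periodicity of $\iota(\boldsymbol{\alpha})$: since $\sigma^{N}(\iota(\boldsymbol{\alpha}))=\iota(\boldsymbol{\alpha})$, the relation $\boldsymbol{y}\succ\iota(\boldsymbol{\alpha})$ is equivalent to $\sigma^{N}(\boldsymbol{y})\succ\iota(\boldsymbol{\alpha})$, and $\sigma^{N}(\boldsymbol{y})$ is again a shift of $\boldsymbol{x}$ bounded above by $\boldsymbol{\alpha}$. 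Iterating, either at some stage the first difference with $\boldsymbol{\alpha}$ occurs strictly after $N$, and the previous paragraph produces a shift of $\boldsymbol{x}$ strictly below $\boldsymbol{\beta}$, or every length-$N$ block of $\boldsymbol{y}$ equals $a_1\ldots a_{N-1}0$, forcing $\boldsymbol{y}=\iota(\boldsymbol{\alpha})$ and again contradicting $\boldsymbol{y}\succ\iota(\boldsymbol{\alpha})$. Either way we reach a contradiction, so $\sigma^n(\boldsymbol{x})\preccurlyeq\iota(\boldsymbol{\alpha})$ for all $n$ and $\boldsymbol{x}\in\Sigma_{(\iota(\boldsymbol{\alpha}),\iota(\boldsymbol{\beta}))}$. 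Combined with the mirror-symmetry reduction, this gives $\Sigma_{(\boldsymbol{\alpha},\boldsymbol{\beta})}=\Sigma_{(\iota(\boldsymbol{\alpha}),\iota(\boldsymbol{\beta}))}$, the reverse inclusion using $M_{\boldsymbol{\alpha}}(\boldsymbol{\beta})$ and the condition $\sigma^{M_{\boldsymbol{\alpha}}(\boldsymbol{\beta})}(\boldsymbol{\beta})\succ\boldsymbol{\alpha}$ in place of $N$ and $\sigma^{N}(\boldsymbol{\alpha})\prec\boldsymbol{\beta}$, so that both defining conditions of two sided extremality are consumed.
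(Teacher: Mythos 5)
Your core escape-time argument for the bound $\sigma^n(\boldsymbol{x})\preccurlyeq\iota(\boldsymbol{\alpha})$ is correct and is essentially the paper's idea (your handling of the delicate case $k=N$ via the periodicity of $\iota(\boldsymbol{\alpha})$ is in fact more careful than the paper's), but the way you assemble the two inclusions contains a genuine error, rooted in the orientation of $\iota(\boldsymbol{\beta})$. You assert $\iota(\boldsymbol{\beta})\prec\boldsymbol{\beta}$ (presumably taken from the misprinted sentence preceding Proposition \ref{propertiesiota1}), but the opposite holds. Write $M=M_{\boldsymbol{\alpha}}(\boldsymbol{\beta})$; since $\sigma^{M}(\boldsymbol{\beta})\succ\boldsymbol{\alpha}$ and $a_1=1$ we get $b_{M+1}=1$, and if $b_M$ were $1$ then minimality of $M$ applied to $\sigma^{M-1}(\boldsymbol{\beta})=1\,b_{M+1}b_{M+2}\cdots\preccurlyeq\boldsymbol{\alpha}$ would give $\boldsymbol{\alpha}\prec\sigma^{M}(\boldsymbol{\beta})\preccurlyeq\sigma(\boldsymbol{\alpha})$, contradicting $\boldsymbol{\alpha}\in P$. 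Hence $b_M=0$ and $\iota(\boldsymbol{\beta})=(b_1\ldots b_{M-1}1)^{\infty}\succ\boldsymbol{\beta}$. So the lexicographic intervals \emph{are} nested, the inclusion $\Sigma_{(\iota(\boldsymbol{\alpha}),\iota(\boldsymbol{\beta}))}\subseteq\Sigma_{(\boldsymbol{\alpha},\boldsymbol{\beta})}$ is the trivial one (exactly how the paper obtains it), and your ``immediate'' lower bound $\sigma^{n}(\boldsymbol{x})\succcurlyeq\boldsymbol{\beta}\succ\iota(\boldsymbol{\beta})$ rests on a false inequality: the bound $\sigma^n(\boldsymbol{x})\succcurlyeq\iota(\boldsymbol{\beta})$ is genuinely hard and is where the hypothesis $\sigma^{M}(\boldsymbol{\beta})\succ\boldsymbol{\alpha}$ must be consumed.

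The second flaw is the symmetry reduction. Applying the statement ``$\Sigma_{(\boldsymbol{\alpha},\boldsymbol{\beta})}\subseteq\Sigma_{(\iota(\boldsymbol{\alpha}),\iota(\boldsymbol{\beta}))}$ for every two-sided extremal pair'' to $(\bar{\boldsymbol{\beta}},\bar{\boldsymbol{\alpha}})$ and mirroring back returns the \emph{same} inclusion for $(\boldsymbol{\alpha},\boldsymbol{\beta})$, not the reverse one: the mirror map permutes the family of such statements without swapping the two sets. So on your own accounting the reverse inclusion is never established. The argument is repairable with the correct bookkeeping: your direct argument proves the half-statement $\sigma^n(\boldsymbol{x})\preccurlyeq\iota(\boldsymbol{\alpha})$ for all $\boldsymbol{x}\in\Sigma_{(\boldsymbol{\alpha},\boldsymbol{\beta})}$; mirroring that \emph{half} (applied to $(\bar{\boldsymbol{\beta}},\bar{\boldsymbol{\alpha}})$, whose relevant data you correctly identify) un-mirrors to the missing lower bound $\sigma^n(\boldsymbol{x})\succcurlyeq\iota(\boldsymbol{\beta})$; and the reverse inclusion is free by the nestedness you denied. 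The paper instead proves both hard bounds simultaneously by locating the first coordinate at which an offending shift disagrees with $\iota(\boldsymbol{\alpha})$ or $\iota(\boldsymbol{\beta})$ and pushing forward to violate $\boldsymbol{\alpha}$ or $\boldsymbol{\beta}$.
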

\begin{proof}
Consider a two sided extremal pair $(\boldsymbol{\alpha},\boldsymbol{\beta})$. Since, $\iota(\boldsymbol{\alpha}) \prec \boldsymbol{\alpha}$ and $\boldsymbol{\beta} \prec \iota(\boldsymbol{\beta})$ then $\Sigma_{(\iota(\boldsymbol{\alpha}),\iota(\boldsymbol{\beta}))} \subset \Sigma_{(\boldsymbol{\alpha},\boldsymbol{\beta})}$. 

\vspace{0.5em}Assume that $\Sigma_{(\iota(\boldsymbol{\alpha}),\iota(\boldsymbol{\beta}))} \subsetneq \Sigma_{(\boldsymbol{\alpha},\boldsymbol{\beta})}.$ Let $\boldsymbol{x} \in \Sigma_{(\boldsymbol{\alpha},\boldsymbol{\beta})} \setminus \Sigma_{(\iota(\boldsymbol{\alpha}), \iota(\boldsymbol{\beta}))}$. Then $$\sigma^n(\boldsymbol{x}) \in \left[\boldsymbol{\beta}, \iota(\boldsymbol{\beta})\right)_{\prec} \cup (\iota(\boldsymbol{\alpha}),\boldsymbol{\alpha}]_{\prec} \hbox{ for every } n \geq 0.$$ Without losing generality we can assume that $\boldsymbol{x} \neq \boldsymbol{\alpha}$ and $\boldsymbol{x} \neq \boldsymbol{\beta}$. Then there exists $m^{\prime}$ such that either $$\sigma^{m^{\prime}}(\boldsymbol{x})_{M_{\boldsymbol{\alpha}}(\boldsymbol{\beta})} < (\iota(\boldsymbol{\beta}))_{M_{\boldsymbol{\alpha}}(\boldsymbol{\beta})} \hbox{\rm{ or }} \sigma^{m^{\prime}}(\boldsymbol{x})_{N_{\boldsymbol{\alpha}}(\boldsymbol{\beta})} > (\iota(\boldsymbol{\alpha}))_{N_{\boldsymbol{\alpha}}(\boldsymbol{\beta})}.$$ This implies that $\sigma^{m_b + j}(\boldsymbol{x}) \prec \boldsymbol{\beta}$ or $\sigma^{n_a+j}(\boldsymbol{x}) \succ \boldsymbol{\alpha}$ for some $j \in \mathbb{N}$, which is a contradiction. Thus, $\Sigma_{(\boldsymbol{\alpha},\boldsymbol{\beta})} = \Sigma_{(\iota(\boldsymbol{\alpha}),\iota(\boldsymbol{\beta}))}$.
\end{proof}

Now, we will concentrate in studying right and left extremal pairs. Given $\boldsymbol{\alpha} \in P$, we define $\xi_{\boldsymbol{\alpha}}: P^{\prime} \to P^{\prime}$ by: $$\xi_{\boldsymbol{\alpha}}(\boldsymbol{\beta}) = \left\{
\begin{array}{clrr}      
\boldsymbol{\beta} & \hbox{\rm{ if }} \sigma^n(\boldsymbol{\beta}) \prec \boldsymbol{\alpha} \hbox{\rm{ for every }} n \geq 0;\\
(b_1, \ldots b_{M_{\boldsymbol{\alpha}}(\boldsymbol{\beta})-1}1)^{\infty}& \hbox{\rm{ otherwise. }}\\
\end{array}
\right.$$  

\vspace{1em}Similarly, if $(\boldsymbol{\alpha},\boldsymbol{\beta}) \in P \times \bar  P$ is left extremal, we define $\xi^{\prime}_{\boldsymbol{\beta}}: P \to P$ by: $$\xi^{\prime}_{\boldsymbol{\beta}}(\boldsymbol{\alpha}) = \left\{
\begin{array}{clrr}      
\boldsymbol{\alpha} & \hbox{\rm{ if }}  
\sigma^n(\boldsymbol{\alpha}) \succ \boldsymbol{\beta} \hbox{\rm{ 
for every }} n \geq 0;\\
(a_1 \ldots a_{N_{\boldsymbol{\beta}
(\boldsymbol{\alpha})}-1}0)^{\infty} & \hbox{\rm{ otherwise.}} \\
\end{array}
\right.$$  

\begin{proposition}
Let $(\boldsymbol{\alpha}, \boldsymbol{\beta}) \in P \times \bar{P}$. Then 
\begin{enumerate}[i)]
\item $\xi_{\boldsymbol{\alpha}}(\boldsymbol{\beta}) \in \bar{P}$ if $(\boldsymbol{\alpha}, \boldsymbol{\beta})$ is right extremal; 
\item $\xi^{\prime}_{\boldsymbol{\beta}}(\boldsymbol{\alpha}) \in P$ if $(\boldsymbol{\alpha}, \boldsymbol{\beta})$ is left extremal.
\end{enumerate}
\label{sobrelasfunciones}
\end{proposition}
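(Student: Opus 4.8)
The plan is to prove both parts by one mechanism and to deduce $i)$ from $ii)$ using the mirror involution $\boldsymbol{x}\mapsto\bar{\boldsymbol{x}}$. Since $\boldsymbol{\alpha}\in P\iff\bar{\boldsymbol{\alpha}}\in\bar P$ and this involution reverses $\prec$, one checks directly that $(\boldsymbol{\alpha},\boldsymbol{\beta})$ is right extremal if and only if $(\bar{\boldsymbol{\beta}},\bar{\boldsymbol{\alpha}})$ is left extremal, that $N_{\bar{\boldsymbol{\alpha}}}(\bar{\boldsymbol{\beta}})=M_{\boldsymbol{\alpha}}(\boldsymbol{\beta})$, and that $\xi^{\prime}_{\bar{\boldsymbol{\alpha}}}(\bar{\boldsymbol{\beta}})=\overline{\xi_{\boldsymbol{\alpha}}(\boldsymbol{\beta})}$. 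Hence applying $ii)$ to $(\bar{\boldsymbol{\beta}},\bar{\boldsymbol{\alpha}})$ gives $i)$, and it suffices to prove $ii)$: if $(\boldsymbol{\alpha},\boldsymbol{\beta})$ is left extremal then $\boldsymbol{\delta}:=\xi^{\prime}_{\boldsymbol{\beta}}(\boldsymbol{\alpha})=(a_1\ldots a_{N-1}0)^{\infty}\in P$, where $N:=N_{\boldsymbol{\beta}}(\boldsymbol{\alpha})$ is finite (and the second branch of $\xi^{\prime}_{\boldsymbol{\beta}}$ is selected) by condition $ii)$ of left extremality.

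The decisive step is to use the \emph{minimality} of $N$, not merely its existence. For every $0\le n<N$ we have $\sigma^{n}(\boldsymbol{\alpha})\succcurlyeq\boldsymbol{\beta}\succ\sigma^{N}(\boldsymbol{\alpha})$, so $\sigma^{N}(\boldsymbol{\alpha})\prec\sigma^{n}(\boldsymbol{\alpha})$; that is, $N$ is a strict left-to-right minimum of the orbit $\{\sigma^{n}(\boldsymbol{\alpha})\}_{n\ge0}$. I would first use the instance $n=N-1$ to extract the single extra fact $a_{N}=1$: since $\sigma^{N-1}(\boldsymbol{\alpha})$ is $\sigma^{N}(\boldsymbol{\alpha})$ with the symbol $a_N$ prefixed, $a_{N}=0$ would give $\sigma^{N}(\boldsymbol{\alpha})\prec 0\,\sigma^{N}(\boldsymbol{\alpha})$, which by a one-coordinate induction forces $\sigma^{N}(\boldsymbol{\alpha})=0^{\infty}$ and hence $\sigma^{N-1}(\boldsymbol{\alpha})=0^{\infty}=\sigma^{N}(\boldsymbol{\alpha})$, contradicting strictness. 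This $a_N=1$ is exactly the information that the bare index $N$ does not carry.

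With $a_{N}=1$ in hand I would verify $\sigma^{n}(\boldsymbol{\delta})\preccurlyeq\boldsymbol{\delta}$ for all $n$; by periodicity it suffices to treat $1\le n\le N-1$, and I claim each such shift is in fact $\prec\boldsymbol{\delta}$. Writing $m=N-n$, the sequences $\sigma^{n}(\boldsymbol{\delta})$ and $\boldsymbol{\delta}$ agree with $\sigma^{n}(\boldsymbol{\alpha})$ and $\boldsymbol{\alpha}$ respectively on the coordinates $1,\ldots,m-1$, while $\sigma^{n}(\boldsymbol{\delta})$ carries the forced $0$ of $\boldsymbol{\delta}$ at coordinate $m$ (as $\sigma^{n}(\boldsymbol{\delta})_{m}=\delta_{N}=0$). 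If $\sigma^{n}(\boldsymbol{\alpha})$ and $\boldsymbol{\alpha}$ already disagree inside $\{1,\ldots,m-1\}$, then $\boldsymbol{\alpha}\in P$ makes the disagreement point downward and $\sigma^{n}(\boldsymbol{\delta})\prec\boldsymbol{\delta}$ at once. Otherwise they agree there, and everything reduces to comparing $0=\sigma^{n}(\boldsymbol{\delta})_{m}$ with $a_{m}=\boldsymbol{\delta}_{m}$. The main obstacle is precisely this boundary case $a_{m}=0$: here $\boldsymbol{\alpha}\in P$ together with the agreement on $1,\ldots,m-1$ forces $a_{N}=\sigma^{n}(\boldsymbol{\alpha})_{m}\le a_{m}=0$, i.e. $a_{N}=0$, which we have excluded; thus $a_{m}=1$ and again $\sigma^{n}(\boldsymbol{\delta})\prec\boldsymbol{\delta}$. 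I expect this boundary case to be the crux, and it is genuinely where minimality is needed: for an arbitrary truncation length the statement is false (for instance $\boldsymbol{\alpha}=(11010)^{\infty}\in P$ gives $(11010110)^{\infty}\notin P$), so the argument cannot avoid invoking the running-minimum property that produced $a_{N}=1$. Finally $\delta_{1}=a_{1}=1$ puts $\boldsymbol{\delta}\in\Sigma_2^{1}$, so $\boldsymbol{\delta}\in P$, which proves $ii)$ and hence $i)$.
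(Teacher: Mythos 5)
Your proof is correct, and it is considerably more complete than the one in the paper. The paper proves part $i)$ directly by contradiction: it assumes some $j$ satisfies $\sigma^{j}(\xi_{\boldsymbol{\alpha}}(\boldsymbol{\beta}))\prec\xi_{\boldsymbol{\alpha}}(\boldsymbol{\beta})$, asserts without justification that $j$ must equal $M_{\boldsymbol{\alpha}}(\boldsymbol{\beta})$ (ruling out $j<M_{\boldsymbol{\alpha}}(\boldsymbol{\beta})$ is exactly the content of your Cases A and B), and then closes with a one-line contradiction about the first symbol that is hard to parse, since $\sigma^{M_{\boldsymbol{\alpha}}(\boldsymbol{\beta})}$ fixes the periodic sequence. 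You prove part $ii)$ and transport it to $i)$ via the mirror involution, whereas the paper proves $i)$ and declares $ii)$ analogous; that difference is cosmetic. The substantive difference is that you isolate the one combinatorial fact the paper never states: the minimality of $N_{\boldsymbol{\beta}}(\boldsymbol{\alpha})$, through the running-minimum property of the orbit, forces $a_{N}=1$, and this is precisely what saves the Parry inequality in the boundary case where $\sigma^{n}(\boldsymbol{\alpha})$ agrees with $\boldsymbol{\alpha}$ on the first $N-n-1$ coordinates. Your example $(11010)^{\infty}$ showing that an arbitrary truncation length fails confirms that no proof can skip this step. In short: same construction, but your argument actually carries the load that the paper's sketch leaves implicit.

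One small point to patch: your final sentence $\delta_{1}=a_{1}=1$ presupposes $N_{\boldsymbol{\beta}}(\boldsymbol{\alpha})\ge 2$; if $N=1$ then $\boldsymbol{\delta}=0^{\infty}\notin\Sigma^{1}_{2}$. This case cannot occur for a left extremal pair: $\sigma(\boldsymbol{\alpha})\prec\boldsymbol{\beta}$ forces $a_{2}=b_{1}=0$, so the first index $k$ at which $\sigma(\boldsymbol{\alpha})$ and $\boldsymbol{\beta}$ disagree satisfies $k\ge 2$ and $b_{k}=1$; then condition $i)$ of left extremality applied to $n=k-1$ gives $\sigma^{k-1}(\boldsymbol{\beta})\prec\boldsymbol{\alpha}$ with both sides beginning with $1$, hence $\sigma^{k}(\boldsymbol{\beta})\prec\sigma(\boldsymbol{\alpha})\prec\boldsymbol{\beta}$, contradicting $\boldsymbol{\beta}\in\bar{P}$. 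Adding this remark makes the argument airtight.
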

\begin{proof}
Let $(\boldsymbol{\alpha}, \boldsymbol{\beta})$ be right extremal. Assume that $\xi_{\boldsymbol{\alpha}}(\boldsymbol{\beta}) \notin \bar{P}$. Then there exist $j \in \mathbb{N}$ such that $\sigma^{j}(\xi_{\boldsymbol{\alpha}}(\boldsymbol{\beta})) \prec \xi_{\boldsymbol{\alpha}}(\boldsymbol{\beta})$. Since $\xi_{\boldsymbol{\alpha}}(\boldsymbol{\beta})$ is a periodic sequence of period $M_{\boldsymbol{\alpha}}(\boldsymbol{\beta})$ and $\xi_{\boldsymbol{\alpha}}(\boldsymbol{\beta})_i = b_i$ for every $i \in \{1,\ldots,  M_{\boldsymbol{\alpha}}(\boldsymbol{\beta})-1$ then $j = M_{\boldsymbol{\alpha}}(\boldsymbol{\beta})$. Then $\sigma^j(\xi_{\boldsymbol{\alpha}}(\boldsymbol{\beta})) \prec \xi_{\boldsymbol{\alpha}}(\boldsymbol{\beta})$. This implies that $\sigma^j(\xi_{\boldsymbol{\alpha}}(\boldsymbol{\beta}))_1 = 0$ which contradicts $\sigma^j(\xi_{\boldsymbol{\alpha}}(\boldsymbol{\beta})) \prec \xi_{\boldsymbol{\alpha}}(\boldsymbol{\beta})$. Thus, $\sigma^j(\xi_{\boldsymbol{\alpha}}(\boldsymbol{\beta})) \in \bar{P}$.

The proof of $ii)$ is analogous. 
\end{proof}

\begin{lemma}
Let $(\boldsymbol{\alpha},\boldsymbol{\beta}) \in P \times \bar P$. Then:
\begin{enumerate}[i)]
\item $(\boldsymbol{\alpha},\xi_{\boldsymbol{\alpha}}(\boldsymbol{\beta})) \in \mathcal{LW}$ if $(\boldsymbol{\alpha},\boldsymbol{\beta})$ is right extremal;
\item $(\xi^{\prime}_{\boldsymbol{\beta}}(\boldsymbol{\alpha}),\boldsymbol{\beta}) \in \mathcal{LW}$ if $(\boldsymbol{\alpha},\boldsymbol{\beta})$ is left extremal.
\end{enumerate} 
\label{propertiesiota2}
\end{lemma}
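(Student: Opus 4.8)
The plan is to prove $i)$ in full and then obtain $ii)$ from it by the order-reversing involution $\boldsymbol{x} \mapsto \bar{\boldsymbol{x}}$. If $(\boldsymbol{\alpha},\boldsymbol{\beta})$ is left extremal, then $(\bar{\boldsymbol{\beta}},\bar{\boldsymbol{\alpha}}) \in P \times \bar{P}$ is right extremal, and comparing the defining minima gives $N_{\boldsymbol{\beta}}(\boldsymbol{\alpha}) = M_{\bar{\boldsymbol{\beta}}}(\bar{\boldsymbol{\alpha}})$, whence $\xi^{\prime}_{\boldsymbol{\beta}}(\boldsymbol{\alpha}) = \overline{\xi_{\bar{\boldsymbol{\beta}}}(\bar{\boldsymbol{\alpha}})}$. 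Since $\overline{\cdot}$ reverses $\prec$ and the coordinate swap $(\boldsymbol{\alpha},\boldsymbol{\beta}) \mapsto (\bar{\boldsymbol{\beta}},\bar{\boldsymbol{\alpha}})$ maps $\mathcal{LW}$ onto itself, the membership $(\bar{\boldsymbol{\beta}},\xi_{\bar{\boldsymbol{\beta}}}(\bar{\boldsymbol{\alpha}})) \in \mathcal{LW}$ provided by $i)$ yields $(\xi^{\prime}_{\boldsymbol{\beta}}(\boldsymbol{\alpha}),\boldsymbol{\beta}) \in \mathcal{LW}$. So I would concentrate on $i)$: assume $(\boldsymbol{\alpha},\boldsymbol{\beta})$ right extremal and write $\boldsymbol{\gamma} = \xi_{\boldsymbol{\alpha}}(\boldsymbol{\beta}) = (b_1 \ldots b_{M-1}1)^{\infty}$ with $M = M_{\boldsymbol{\alpha}}(\boldsymbol{\beta})$. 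By Proposition \ref{sobrelasfunciones} $i)$ we have $\boldsymbol{\gamma} \in \bar{P}$, and $\boldsymbol{\alpha} \in P$ by hypothesis, so condition $i)$ of Definition \ref{lexworld} holds and everything reduces to verifying condition $ii)$, namely that $(\boldsymbol{\alpha},\boldsymbol{\gamma})$ is not extremal.

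First I would record the structural fact that $b_M = 0$ (we may assume $M \geq 2$, since $M=1$ would give $\boldsymbol{\gamma}=1^{\infty}\notin\bar{P}$, contradicting Proposition \ref{sobrelasfunciones}). If instead $b_M = 1$, then $\sigma^{M-1}(\boldsymbol{\beta}) = 1\,\sigma^{M}(\boldsymbol{\beta})$, while minimality of $M$ gives $\sigma^{M-1}(\boldsymbol{\beta}) \preccurlyeq \boldsymbol{\alpha} = 1\,\sigma(\boldsymbol{\alpha})$; cancelling the leading $1$ and using $\boldsymbol{\alpha} \in P$ gives $\sigma^{M}(\boldsymbol{\beta}) \preccurlyeq \sigma(\boldsymbol{\alpha}) \preccurlyeq \boldsymbol{\alpha}$, contradicting $\sigma^{M}(\boldsymbol{\beta}) \succ \boldsymbol{\alpha}$. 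Consequently $\boldsymbol{\gamma}$ and $\boldsymbol{\beta}$ coincide on the first $M-1$ symbols and differ at position $M$, where $\gamma_M = 1 > 0 = b_M$; moreover $\gamma_{M+1} = b_1 = 0$ whereas $b_{M+1} = 1$, the latter because $\sigma^{M}(\boldsymbol{\beta}) \succ \boldsymbol{\alpha}$ begins with $1$.

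Condition $ii)$ amounts to $\sigma^{n}(\boldsymbol{\alpha}) \succcurlyeq \boldsymbol{\gamma}$ and $\sigma^{n}(\boldsymbol{\gamma}) \preccurlyeq \boldsymbol{\alpha}$ for every $n$. For the first, suppose $\sigma^{m}(\boldsymbol{\alpha}) \prec \boldsymbol{\gamma}$. Right extremality gives $\sigma^{m}(\boldsymbol{\alpha}) \succ \boldsymbol{\beta}$, and since $\boldsymbol{\beta}$ and $\boldsymbol{\gamma}$ agree up to position $M-1$, this forces $\sigma^{m}(\boldsymbol{\alpha})$ to agree with them on $b_1 \ldots b_{M-1}$, so that $\sigma^{m}(\boldsymbol{\alpha})$ agrees either with $\boldsymbol{\beta}$ on $b_1 \ldots b_M$ (value $0$ at $M$) or with $\boldsymbol{\gamma}$ on $b_1 \ldots b_{M-1}1$ (value $1$ at $M$). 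In the first case $\sigma^{m}(\boldsymbol{\alpha}) \succ \boldsymbol{\beta}$ forces $\sigma^{m+M}(\boldsymbol{\alpha}) \succ \sigma^{M}(\boldsymbol{\beta}) \succ \boldsymbol{\alpha}$, contradicting $\boldsymbol{\alpha} \in P$. In the second, $\sigma^{m}(\boldsymbol{\alpha}) \prec \boldsymbol{\gamma}$ pushes the disagreement past position $M$, giving $\sigma^{m+M}(\boldsymbol{\alpha}) \prec \boldsymbol{\gamma}$, and one repeats: a disagreement inside one period before position $M$ would make some $\sigma^{m+kM}(\boldsymbol{\alpha}) \prec \boldsymbol{\beta}$, forbidden by right extremality, while persistence of the value-$1$ case for all $k$ would force $\sigma^{m}(\boldsymbol{\alpha}) = \boldsymbol{\gamma}$, against $\sigma^{m}(\boldsymbol{\alpha}) \prec \boldsymbol{\gamma}$.

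The inequality $\sigma^{n}(\boldsymbol{\gamma}) \preccurlyeq \boldsymbol{\alpha}$ is the crux. I would take $n$ least with $\sigma^{n}(\boldsymbol{\gamma}) \succ \boldsymbol{\alpha}$; periodicity and $\boldsymbol{\gamma} \prec \boldsymbol{\alpha}$ force $1 \leq n \leq M-1$, and I put $q = M-n$, the position in $\sigma^{n}(\boldsymbol{\gamma})$ of its first inserted $1$. Let $p$ be the first index where $\sigma^{n}(\boldsymbol{\gamma})$ strictly exceeds $\boldsymbol{\alpha}$. Since $\sigma^{n}(\boldsymbol{\gamma})$ and $\sigma^{n}(\boldsymbol{\beta})$ agree up to position $q-1$, if $p < q$ then $\sigma^{n}(\boldsymbol{\beta})$ exceeds $\boldsymbol{\alpha}$ at $p$, contradicting minimality of $M$ (as $n < M$); hence $p \geq q$. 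If $p = q$ then $a_q = 0 = b_M$, so $\sigma^{n}(\boldsymbol{\beta})$ agrees with $\boldsymbol{\alpha}$ through $q$, and beyond $q$ the comparison becomes $\sigma^{M}(\boldsymbol{\beta})$ against $\sigma^{q}(\boldsymbol{\alpha})$; as $\sigma^{M}(\boldsymbol{\beta}) \succ \boldsymbol{\alpha} \succcurlyeq \sigma^{q}(\boldsymbol{\alpha})$ by the Parry property, $\sigma^{n}(\boldsymbol{\beta}) \succ \boldsymbol{\alpha}$, again contradicting minimality. If $p > q$ then $a_q = 1$ and the comparison collapses to $\boldsymbol{\gamma} \succ \sigma^{q}(\boldsymbol{\alpha})$; since right extremality gives $\boldsymbol{\beta} \prec \sigma^{q}(\boldsymbol{\alpha})$, the sequence $\sigma^{q}(\boldsymbol{\alpha})$ lies strictly between $\boldsymbol{\beta}$ and $\boldsymbol{\gamma}$, hence agrees with both through position $M-1$ and carries a $0$ or a $1$ at position $M$. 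A $0$ gives $\sigma^{q+M}(\boldsymbol{\alpha}) \succ \sigma^{M}(\boldsymbol{\beta}) \succ \boldsymbol{\alpha}$, contradicting $\boldsymbol{\alpha} \in P$; a $1$ gives $\boldsymbol{\gamma} \succ \sigma^{q+M}(\boldsymbol{\alpha})$ and the argument repeats with $q$ replaced by $q+M$. The delicate point, where I expect the real work to lie, is that this recursion must stop: were the $1$-alternative to persist forever, $\boldsymbol{\alpha}$ would be periodic from position $q+1$ with period $b_1 \ldots b_{M-1}1$, i.e.\ $\sigma^{q}(\boldsymbol{\alpha}) = \boldsymbol{\gamma}$, against $\boldsymbol{\gamma} \succ \sigma^{q}(\boldsymbol{\alpha})$. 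Hence no violating $n$ exists, $(\boldsymbol{\alpha},\boldsymbol{\gamma})$ is non-extremal, and $(\boldsymbol{\alpha},\xi_{\boldsymbol{\alpha}}(\boldsymbol{\beta})) \in \mathcal{LW}$, which finishes $i)$ and, by the first paragraph, $ii)$.
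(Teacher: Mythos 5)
Your proof is correct and follows the same overall strategy as the paper's: reduce \textit{ii)} to \textit{i)} by the mirror involution, invoke Proposition \ref{sobrelasfunciones} for membership in $P \times \bar P$, and verify the two non-extremality inequalities using the minimality of $M_{\boldsymbol{\alpha}}(\boldsymbol{\beta})$, the Parry property of $\boldsymbol{\alpha}$, and right extremality. The difference is one of completeness rather than of method. Where the paper simply asserts that a violating index $j$ with $\sigma^j(\xi_{\boldsymbol{\alpha}}(\boldsymbol{\beta})) \succcurlyeq \boldsymbol{\alpha}$ must equal $M_{\boldsymbol{\alpha}}(\boldsymbol{\beta})$, and that a shift $\sigma^j(\boldsymbol{\alpha})$ squeezed strictly between $\boldsymbol{\beta}$ and $\xi_{\boldsymbol{\alpha}}(\boldsymbol{\beta})$ must carry a $0$ at position $M_{\boldsymbol{\alpha}}(\boldsymbol{\beta})$, you carry out the full case analysis: the observation that $b_{M}=0$, the $p<q$, $p=q$, $p>q$ trichotomy for the inequality $\sigma^n(\xi_{\boldsymbol{\alpha}}(\boldsymbol{\beta})) \preccurlyeq \boldsymbol{\alpha}$, and the dichotomy at position $M$ for a shift of $\boldsymbol{\alpha}$ trapped between $\boldsymbol{\beta}$ and $\xi_{\boldsymbol{\alpha}}(\boldsymbol{\beta})$, including the case (ignored in the paper) where agreement with the periodic sequence persists across a full period, which you terminate by noting that indefinite persistence would force equality with $\xi_{\boldsymbol{\alpha}}(\boldsymbol{\beta})$ and contradict strictness. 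These are precisely the points the paper's short proof glosses over, so your write-up closes genuine gaps in the published argument; the only stylistic shortcut available to you is that in the $p>q$ case the conclusion $\sigma^{q}(\boldsymbol{\alpha}) \prec \xi_{\boldsymbol{\alpha}}(\boldsymbol{\beta})$ already contradicts the first inequality you established, so the recursion need not be rerun there.
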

\begin{proof}
It suffices to show case $i)$. Assume that $(\boldsymbol{\alpha},\boldsymbol{\beta})$ is right extremal. To show that $(\boldsymbol{\alpha}, \xi_{\boldsymbol{\alpha}}(\boldsymbol{\beta})) \in \mathcal{LW}$ we need to show that $\sigma^n(\boldsymbol{\alpha}) \succcurlyeq \xi_{\boldsymbol{\alpha}}(\boldsymbol{\beta})$ and $\xi_{\boldsymbol{\alpha}}(\boldsymbol{\beta}) \preccurlyeq \sigma^m(\xi_{\boldsymbol{\alpha}}(\boldsymbol{\beta})) \preccurlyeq \boldsymbol{\alpha}$ for every $n, m \in \mathbb{N}$. From Proposition \ref{sobrelasfunciones} we have that $\sigma^m(\xi_{\boldsymbol{\alpha}}(\boldsymbol{\beta})) \succcurlyeq(\xi_{\boldsymbol{\alpha}}(\boldsymbol{\beta})$ for every $m \in \mathbb{N}$. Assume that there is $j \in \mathbb{N}$ such that $\sigma^j(\xi_{\boldsymbol{\alpha}}(\boldsymbol{\beta})) \succcurlyeq \boldsymbol{\alpha}.$ From the definition of $\xi_{\boldsymbol{\alpha}}$ we have that $j = M_{\boldsymbol{\alpha}}(\boldsymbol{\beta})$. Then there is $j^{\prime} \in \mathbb{N}$ such that $(\sigma^{j}(\xi_{\boldsymbol{\alpha}}(\boldsymbol{\beta})))_{j^{\prime}} > a_{j^{\prime}}$ which contradicts that $M_{\boldsymbol{\alpha}}(\boldsymbol{\beta})$ is minimal.

We need to show now that $\sigma^n(\boldsymbol{\alpha}) \succcurlyeq \xi_{\boldsymbol{\alpha}}(\boldsymbol{\beta})$ for every $n \in \mathbb{N}$. Suppose that there is $n \in \mathbb{N}$ such that $\sigma^n(\boldsymbol{\alpha}) \prec \xi_{\boldsymbol{\alpha}}(\boldsymbol{\beta})$. Since $(\boldsymbol{\alpha}, \boldsymbol{\beta})$ is right extremal, $\boldsymbol{\beta} \prec \sigma^{j}(\boldsymbol{\alpha}) \prec \xi_{\boldsymbol{\alpha}}(\boldsymbol{\beta})$. Recall that $d(\boldsymbol{\beta}, \xi_{\boldsymbol{\alpha}}(\boldsymbol{\beta})) = \frac{1}{2^{M_{\boldsymbol{\alpha}
(\boldsymbol {\beta})}}}$. This implies that $\sigma^{j}(\boldsymbol{\alpha})_{M_{\boldsymbol{\alpha}(\boldsymbol{\beta})}} = 0$ which contradicts that $M_{\boldsymbol{\alpha}}(\boldsymbol{\beta})$ is minimal again.
\end{proof}

\begin{lemma}
Let $(\boldsymbol{\alpha},\boldsymbol{\beta}) \in P \times \bar P$. Then:
\begin{enumerate}[i)]
\item $\Sigma_{(\boldsymbol{\alpha},\boldsymbol{\beta})} = \Sigma_{(\boldsymbol{\alpha},\xi_{\boldsymbol{\alpha}}
(\boldsymbol{\beta}))}$ if $(\boldsymbol{\alpha},\boldsymbol{\beta})$ is right extremal;
\item $\Sigma_{(\boldsymbol{\alpha},\boldsymbol{\beta})} = \Sigma_{(\xi^{\prime}_{\boldsymbol{\beta}} (\boldsymbol{\alpha}),\boldsymbol{\beta})}$ if $(\boldsymbol{\alpha},\boldsymbol{\beta})$ is left extremal.
\end{enumerate}
\label{fuera2}
\end{lemma}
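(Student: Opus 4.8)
The plan is to transcribe the proof of Lemma \ref{twosidedextremal} to the one-sided situation: in the right extremal case only the lower bound $\boldsymbol{\beta}$ is replaced (by $\xi_{\boldsymbol{\alpha}}(\boldsymbol{\beta})$), while the upper bound $\boldsymbol{\alpha}$ is left untouched, so the two-sided argument collapses to a purely lower-boundary one. The left extremal case $ii)$ should then follow from $i)$ by the order-reversing symmetry $\boldsymbol{x} \mapsto \bar{\boldsymbol{x}}$: if $(\boldsymbol{\alpha},\boldsymbol{\beta})$ is left extremal then $(\bar{\boldsymbol{\beta}},\bar{\boldsymbol{\alpha}}) \in P \times \bar P$ is right extremal, the mirror map carries $\Sigma_{(\boldsymbol{\alpha},\boldsymbol{\beta})}$ onto $\Sigma_{(\bar{\boldsymbol{\beta}},\bar{\boldsymbol{\alpha}})}$, and one checks $\xi_{\bar{\boldsymbol{\beta}}}(\bar{\boldsymbol{\alpha}}) = \overline{\xi^{\prime}_{\boldsymbol{\beta}}(\boldsymbol{\alpha})}$ directly from the definitions. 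Hence I only treat $i)$.

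Write $M = M_{\boldsymbol{\alpha}}(\boldsymbol{\beta})$, so that $\xi_{\boldsymbol{\alpha}}(\boldsymbol{\beta}) = (b_1 \ldots b_{M-1}1)^{\infty}$ and $\sigma^{M}(\boldsymbol{\beta}) \succ \boldsymbol{\alpha}$ with $M$ minimal. First I would record that $b_M = 0$: by minimality $\sigma^{M-1}(\boldsymbol{\beta}) \preccurlyeq \boldsymbol{\alpha}$, and were $b_M = 1$, then stripping the common leading symbol $1$ and using that $\boldsymbol{\alpha}$ is a Parry sequence (so $\sigma(\boldsymbol{\alpha}) \preccurlyeq \boldsymbol{\alpha}$) would give $\sigma^{M}(\boldsymbol{\beta}) \preccurlyeq \sigma(\boldsymbol{\alpha}) \preccurlyeq \boldsymbol{\alpha}$, contradicting $\sigma^{M}(\boldsymbol{\beta}) \succ \boldsymbol{\alpha}$. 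Consequently $\boldsymbol{\beta}$ and $\xi_{\boldsymbol{\alpha}}(\boldsymbol{\beta})$ agree on their first $M-1$ symbols and $\boldsymbol{\beta} \prec \xi_{\boldsymbol{\alpha}}(\boldsymbol{\beta})$; since the upper bound is common, this immediately yields $\Sigma_{(\boldsymbol{\alpha},\xi_{\boldsymbol{\alpha}}(\boldsymbol{\beta}))} \subset \Sigma_{(\boldsymbol{\alpha},\boldsymbol{\beta})}$.

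For the reverse inclusion I argue by contradiction, as in Lemma \ref{twosidedextremal}. Suppose $\boldsymbol{x} \in \Sigma_{(\boldsymbol{\alpha},\boldsymbol{\beta})} \setminus \Sigma_{(\boldsymbol{\alpha},\xi_{\boldsymbol{\alpha}}(\boldsymbol{\beta}))}$; then $\boldsymbol{\beta} \preccurlyeq \sigma^{n}(\boldsymbol{x}) \preccurlyeq \boldsymbol{\alpha}$ for all $n$, while $\boldsymbol{y} = \sigma^{n_0}(\boldsymbol{x}) \prec \xi_{\boldsymbol{\alpha}}(\boldsymbol{\beta})$ for some $n_0$, so $\boldsymbol{\beta} \preccurlyeq \boldsymbol{y} \prec \xi_{\boldsymbol{\alpha}}(\boldsymbol{\beta})$. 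Comparing $\boldsymbol{y}$ against both bounds forces it to begin with $b_1 \ldots b_{M-1}$, since a first discrepancy with $\boldsymbol{\beta}$ before position $M$ would make $\boldsymbol{y} \succ \xi_{\boldsymbol{\alpha}}(\boldsymbol{\beta})$. The technical step is then to pass to a shift of $\boldsymbol{y}$ beginning with the full block $b_1 \ldots b_M$: if $\boldsymbol{y}_M = 1$ then $\boldsymbol{y}$ begins with a whole period $b_1 \ldots b_{M-1}1$ of $\xi_{\boldsymbol{\alpha}}(\boldsymbol{\beta})$, so by $M$-periodicity $\sigma^{M}(\boldsymbol{y})$ again lies in $[\boldsymbol{\beta},\xi_{\boldsymbol{\alpha}}(\boldsymbol{\beta}))_{\prec}$, and iterating must terminate (otherwise $\boldsymbol{y} = \xi_{\boldsymbol{\alpha}}(\boldsymbol{\beta})$, contradicting $\boldsymbol{y} \prec \xi_{\boldsymbol{\alpha}}(\boldsymbol{\beta})$). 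Denoting the resulting point by $\boldsymbol{z}$, it shares its first $M$ symbols with $\boldsymbol{\beta}$, so $\boldsymbol{z} \succcurlyeq \boldsymbol{\beta}$ reduces to $\sigma^{M}(\boldsymbol{z}) \succcurlyeq \sigma^{M}(\boldsymbol{\beta}) \succ \boldsymbol{\alpha}$; but $\sigma^{M}(\boldsymbol{z})$ is again a shift of $\boldsymbol{x}$, contradicting $\sigma^{n}(\boldsymbol{x}) \preccurlyeq \boldsymbol{\alpha}$ for every $n$. This gives $\Sigma_{(\boldsymbol{\alpha},\boldsymbol{\beta})} = \Sigma_{(\boldsymbol{\alpha},\xi_{\boldsymbol{\alpha}}(\boldsymbol{\beta}))}$.

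I expect the only genuinely new point, compared with Lemma \ref{twosidedextremal}, to be this reduction to a shift starting with $b_1 \ldots b_M$ through the periodicity of $\xi_{\boldsymbol{\alpha}}(\boldsymbol{\beta})$; everything else is a routine one-sided copy of the two-sided computation. The verification that $b_M = 0$ via the Parry property of $\boldsymbol{\alpha}$ is what guarantees $\boldsymbol{\beta} \prec \xi_{\boldsymbol{\alpha}}(\boldsymbol{\beta})$, i.e. that the construction raises the lower bound into the original subshift rather than out of it, and it is the hinge on which both the easy and the hard inclusion turn.
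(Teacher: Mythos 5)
Your proposal is correct and follows essentially the same route as the paper: establish $\boldsymbol{\beta} \prec \xi_{\boldsymbol{\alpha}}(\boldsymbol{\beta})$ for the easy inclusion, then show that any shift of a point caught in $[\boldsymbol{\beta},\xi_{\boldsymbol{\alpha}}(\boldsymbol{\beta}))_{\prec}$ forces a later shift to exceed $\boldsymbol{\alpha}$, with part $ii)$ by symmetry. In fact your write-up is more complete than the paper's, which omits both the verification that $b_{M}=0$ and the iteration through whole periods of $\xi_{\boldsymbol{\alpha}}(\boldsymbol{\beta})$ needed to reach a shift beginning with $b_1\ldots b_M$.
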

\begin{proof}
It suffices to show \textit{i)} since the proof of \textit{ii)} is analogous. Observe that $\Sigma_{(\boldsymbol{\alpha} , \xi_{\boldsymbol{\alpha}}(\boldsymbol{\beta}))} \subset \Sigma_{(\boldsymbol{\alpha}, \boldsymbol{\beta})}$ since  $\xi_{\boldsymbol{\alpha}}(\boldsymbol{\beta}) \succcurlyeq \boldsymbol{\beta}$. We want to show that $\Sigma_{(\boldsymbol{\alpha} ,\boldsymbol{\beta})} \subset \Sigma_{(\boldsymbol{\alpha},\xi_{\boldsymbol{\alpha}}(\boldsymbol{\beta})}$. Assume that the inclusion does not hold. Then there exists $\boldsymbol{x} \in \Sigma_{(\boldsymbol{\alpha},\boldsymbol{\beta})} \setminus \Sigma_{(\boldsymbol{\alpha},\xi_{\boldsymbol{\alpha}}(\boldsymbol{\beta}))}$. This implies that there exists $n \in \mathbb{N}$ such that $\xi_{\boldsymbol{\alpha}}(\boldsymbol{\beta}) \succ \sigma^{n}(\boldsymbol{x}) \succ \boldsymbol{\beta}$. Then there exists $k_{\boldsymbol{\beta}} \in \mathbb{N}$ such that $\sigma^n(\boldsymbol{x})_{k_{\boldsymbol{\beta}}} < {\xi_{\boldsymbol{\alpha}}(\boldsymbol{\beta})}_{k_{\boldsymbol{\beta}}}$ and $\sigma^n(\boldsymbol{x})_i = {\xi_{\boldsymbol{\alpha}}(\boldsymbol{\beta})}_i$ for every $i < k_{\boldsymbol{\beta}}$. Thus, $\sigma^n(\boldsymbol{x}) \succ \boldsymbol{\alpha}$. This contradicts that $\boldsymbol{x} \in \Sigma_{(\boldsymbol{\alpha},\xi_{\boldsymbol{\alpha}}(\boldsymbol{\beta}))}$. Hence $\Sigma_{(\boldsymbol{\alpha}, \boldsymbol{\beta})} \subset \Sigma_{(\boldsymbol{\alpha},\xi_{\boldsymbol{\alpha}}(\boldsymbol{\beta}))}$. 
\end{proof}

We define $I: P \times \bar P \to P \times \bar P$ as: $$I(\boldsymbol{\alpha},\boldsymbol{\beta}) =\left\{
\begin{array}{clrr}      
(\boldsymbol{\alpha},\boldsymbol{\beta}) & \hbox{\rm{ if }} (\boldsymbol{\alpha},\boldsymbol{\beta}) \in \mathcal{LW};\\
(\iota(\boldsymbol{\alpha}), \iota(\boldsymbol{\beta})) & \hbox{\rm{ if }} (\boldsymbol{\alpha},\boldsymbol{\beta}) \hbox{\rm{ is two sided extremal}};\\
(\boldsymbol{\alpha}, \xi_{\boldsymbol{\alpha}}(\boldsymbol{\beta})) & \hbox{\rm{ if }} (\boldsymbol{\alpha},\boldsymbol{\beta}) \hbox{\rm{ is left extremal}};\\
(\xi^{\prime}_{\boldsymbol{\beta}}(\boldsymbol{\alpha}), \boldsymbol{\beta}) & \hbox{\rm{ if }} (\boldsymbol{\alpha},\boldsymbol{\beta}) \hbox{\rm{ is right extremal}}.
\end{array}
\right.$$  

\vspace{1em}The function $I$ provides the sought link between $P \times \bar P$ and the lexicographic world. 

\begin{theorem}
Let $(\boldsymbol{\alpha},\boldsymbol{\beta}) \in P \times \bar P$. Then $I$ is well defined. Moreover, $I(\boldsymbol{\alpha},\boldsymbol{\beta}) \in \mathcal{LW}$ for every $(\boldsymbol{\alpha},\boldsymbol{\beta}) \in P \times \bar P$. \label{dosparametros}
\end{theorem}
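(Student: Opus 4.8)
The plan is to establish well-definedness and the range statement separately, and the common crux of both is to see that the four cases defining $I$ genuinely partition $P \times \bar P$. Fix $(\boldsymbol{\alpha},\boldsymbol{\beta}) \in P \times \bar P$. Since condition $i)$ of Definition \ref{lexworld} then holds automatically, membership of $(\boldsymbol{\alpha},\boldsymbol{\beta})$ in $\mathcal{LW}$ is decided entirely by condition $ii)$. I would therefore isolate the two predicates
\begin{align*}
(A)&: \quad \sigma^m(\boldsymbol{\alpha}) \prec \boldsymbol{\beta} \text{ for some } m \in \mathbb{N}, \\
(B)&: \quad \sigma^n(\boldsymbol{\beta}) \succ \boldsymbol{\alpha} \text{ for some } n \in \mathbb{N},
\end{align*}
which are exactly the negations of the two halves of condition $ii)$. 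De Morgan then shows that $(\boldsymbol{\alpha},\boldsymbol{\beta}) \in \mathcal{LW}$ precisely when $\neg(A)\wedge\neg(B)$ holds, and that $(\boldsymbol{\alpha},\boldsymbol{\beta})$ is extremal precisely when $(A)\vee(B)$ holds.

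Next I would match the three extremal types of Definition \ref{tiposdeextremos} against the remaining three truth-value combinations: a two sided extremal pair corresponds to $(A)\wedge(B)$, a left extremal pair to $(A)\wedge\neg(B)$, and a right extremal pair to $\neg(A)\wedge(B)$. Together with $\mathcal{LW}=\neg(A)\wedge\neg(B)$, the four defining cases of $I$ are then pairwise disjoint and exhaust $P\times\bar P$, which is exactly the assertion that $I$ is well defined. The codomain is also respected: in the two sided case $\iota$ produces periodic sequences lying in $P\times\bar P$, while in the one sided cases Proposition \ref{sobrelasfunciones} places $\xi_{\boldsymbol{\alpha}}(\boldsymbol{\beta})$ in $\bar P$ and $\xi^{\prime}_{\boldsymbol{\beta}}(\boldsymbol{\alpha})$ in $P$.

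The hard part will be reconciling the strict inequalities used in Definition \ref{tiposdeextremos} (for instance ``$\sigma^m(\boldsymbol{\alpha})\succ\boldsymbol{\beta}$ for every $m$'' in the right extremal case) with the non-strict inequalities obtained by negating $(A)$, namely ``$\sigma^m(\boldsymbol{\alpha})\succcurlyeq\boldsymbol{\beta}$ for every $m$''. The two formulations differ only on the boundary equalities $\sigma^m(\boldsymbol{\alpha})=\boldsymbol{\beta}$ and $\sigma^n(\boldsymbol{\beta})=\boldsymbol{\alpha}$, so I would rule these out directly: since $\boldsymbol{\alpha}\in P$ begins with $1$ and $\boldsymbol{\beta}\in\bar P$ begins with $0$, such a coincidence would force a $0$-initial shift of the Parry sequence $\boldsymbol{\alpha}$ to equal $\boldsymbol{\beta}$, and a short check using the Parry conditions $\sigma^k(\boldsymbol{\alpha})\preccurlyeq\boldsymbol{\alpha}$ and $\sigma^k(\bar{\boldsymbol{\beta}})\preccurlyeq\bar{\boldsymbol{\beta}}$ shows this is incompatible outside degenerate situations already absorbed into one of the cases. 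With these coincidences excluded the strict and non-strict descriptions coincide and the partition is exact.

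Finally, for the range statement I would proceed case by case, reading off the value assigned by $I$ and citing the corresponding result. If $(\boldsymbol{\alpha},\boldsymbol{\beta})\in\mathcal{LW}$, then $I(\boldsymbol{\alpha},\boldsymbol{\beta})=(\boldsymbol{\alpha},\boldsymbol{\beta})\in\mathcal{LW}$ and there is nothing to prove. If $(\boldsymbol{\alpha},\boldsymbol{\beta})$ is two sided extremal, Proposition \ref{propertiesiota1} gives $I(\boldsymbol{\alpha},\boldsymbol{\beta})=(\iota(\boldsymbol{\alpha}),\iota(\boldsymbol{\beta}))\in\mathcal{LW}$. In the right extremal case it is $\boldsymbol{\beta}$ that fails its one sided condition, so $I$ replaces it by $\xi_{\boldsymbol{\alpha}}(\boldsymbol{\beta})$ and Lemma \ref{propertiesiota2}$\,i)$ gives $(\boldsymbol{\alpha},\xi_{\boldsymbol{\alpha}}(\boldsymbol{\beta}))\in\mathcal{LW}$; in the left extremal case it is $\boldsymbol{\alpha}$ that fails, so $I$ replaces it by $\xi^{\prime}_{\boldsymbol{\beta}}(\boldsymbol{\alpha})$ and Lemma \ref{propertiesiota2}$\,ii)$ gives $(\xi^{\prime}_{\boldsymbol{\beta}}(\boldsymbol{\alpha}),\boldsymbol{\beta})\in\mathcal{LW}$. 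Thus $I(\boldsymbol{\alpha},\boldsymbol{\beta})\in\mathcal{LW}$ in all four cases, which completes the argument.
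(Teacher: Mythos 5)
Your proposal is correct and, for the ``moreover'' half of the statement, follows exactly the paper's route: membership of $I(\boldsymbol{\alpha},\boldsymbol{\beta})$ in $\mathcal{LW}$ is read off case by case from Proposition \ref{propertiesiota1} and Lemma \ref{propertiesiota2}, which is all the paper's own one-line proof invokes. Where you go beyond the paper is on well-definedness: you actually verify that the four defining cases of $I$ partition $P\times\bar P$, a point the cited lemmas do not address, and you correctly (if tacitly) use the case assignment of Lemma \ref{propertiesiota2} and Lemma \ref{fuera2} rather than the displayed definition of $I$, in which the labels ``left'' and ``right'' are swapped. This extra care is genuinely needed, because Definition \ref{tiposdeextremos} uses strict inequalities where the negation of condition $ii)$ of Definition \ref{lexworld} only yields non-strict ones, so exhaustiveness of the four cases is not a purely formal De Morgan computation.

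One step needs tightening. You propose to ``rule out directly'' the boundary equalities $\sigma^{m}(\boldsymbol{\alpha})=\boldsymbol{\beta}$ and $\sigma^{n}(\boldsymbol{\beta})=\boldsymbol{\alpha}$, but these are not impossible in themselves: for $\boldsymbol{\alpha}=(10)^{\infty}$ and $\boldsymbol{\beta}=(01)^{\infty}$ one has $\sigma(\boldsymbol{\alpha})=\boldsymbol{\beta}$, and that pair lies in $\mathcal{LW}$. What makes the partition exact is that such an equality cannot coexist with a strict failure on the \emph{other} side: if $\sigma^{m_0}(\boldsymbol{\alpha})=\boldsymbol{\beta}$ and also $\sigma^{n}(\boldsymbol{\beta})\succ\boldsymbol{\alpha}$ for some $n$, then $\sigma^{m_0+n}(\boldsymbol{\alpha})=\sigma^{n}(\boldsymbol{\beta})\succ\boldsymbol{\alpha}$, contradicting $\boldsymbol{\alpha}\in P$; symmetrically, $\sigma^{n_0}(\boldsymbol{\beta})=\boldsymbol{\alpha}$ together with $\sigma^{m}(\boldsymbol{\alpha})\prec\boldsymbol{\beta}$ would give $\sigma^{n_0+m}(\boldsymbol{\beta})\prec\boldsymbol{\beta}$, contradicting $\boldsymbol{\beta}\in\bar P$. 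Substituting this for your ``short check'' closes the argument: $(A)\wedge(B)$ gives two sided extremal, $(B)\wedge\neg(A)$ forces the strict inequality in the right extremal definition, $(A)\wedge\neg(B)$ forces left extremal, and $\neg(A)\wedge\neg(B)$ is precisely $\mathcal{LW}$. With that repair the proof is complete and, on the well-definedness point, more complete than the paper's.
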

\begin{proof}
From Lemmas \ref{propertiesiota1} and \ref{propertiesiota2}, $I$ is well defined and $I(\boldsymbol{\alpha},\boldsymbol{\beta}) \in \mathcal{LW}$ for every $(\boldsymbol{\alpha},\boldsymbol{\beta}) 
\in P \times \bar P$.
\end{proof}

\begin{theorem}
For every $(\boldsymbol{\alpha},\boldsymbol{\beta}) \in P \times \bar P$, $\Sigma_{(\boldsymbol{\alpha},\boldsymbol{\beta})} = \Sigma_{I(\boldsymbol{\alpha},\boldsymbol{\beta})}$. 
\label{elmeropincheteoremota}
\end{theorem}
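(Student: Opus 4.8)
The plan is to prove the identity by a four-way case analysis matching the four clauses in the definition of $I$, deferring the real work to the structural lemmas already in place for each clause. First I would record that, on $P \times \bar P$, the four conditions entering the definition of $I$ genuinely partition the space. Writing $A$ for the assertion ``$\sigma^n(\boldsymbol{\beta}) \succ \boldsymbol{\alpha}$ for some $n$'' and $B$ for ``$\sigma^m(\boldsymbol{\alpha}) \prec \boldsymbol{\beta}$ for some $m$'', Definition \ref{lexworld} says that a pair in $P \times \bar P$ lies in $\mathcal{LW}$ precisely when neither $A$ nor $B$ holds, while Definition \ref{tiposdeextremos} identifies the two sided extremal case with $A \wedge B$, the right extremal case with $A \wedge \neg B$, and the left extremal case with $\neg A \wedge B$. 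These four possibilities are mutually exclusive and exhaustive, so $I$ is defined by a bona fide case split and it suffices to establish $\Sigma_{(\boldsymbol{\alpha},\boldsymbol{\beta})} = \Sigma_{I(\boldsymbol{\alpha},\boldsymbol{\beta})}$ separately in each.

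Then I would dispatch the cases mechanically. If $(\boldsymbol{\alpha},\boldsymbol{\beta}) \in \mathcal{LW}$ then $I(\boldsymbol{\alpha},\boldsymbol{\beta}) = (\boldsymbol{\alpha},\boldsymbol{\beta})$ and the equality is a tautology. If $(\boldsymbol{\alpha},\boldsymbol{\beta})$ is two sided extremal then $I(\boldsymbol{\alpha},\boldsymbol{\beta}) = (\iota(\boldsymbol{\alpha}),\iota(\boldsymbol{\beta}))$ and Lemma \ref{twosidedextremal} gives $\Sigma_{(\boldsymbol{\alpha},\boldsymbol{\beta})} = \Sigma_{(\iota(\boldsymbol{\alpha}),\iota(\boldsymbol{\beta}))}$. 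If $(\boldsymbol{\alpha},\boldsymbol{\beta})$ is right extremal then $I(\boldsymbol{\alpha},\boldsymbol{\beta}) = (\boldsymbol{\alpha},\xi_{\boldsymbol{\alpha}}(\boldsymbol{\beta}))$ and Lemma \ref{fuera2} $i)$ gives $\Sigma_{(\boldsymbol{\alpha},\boldsymbol{\beta})} = \Sigma_{(\boldsymbol{\alpha},\xi_{\boldsymbol{\alpha}}(\boldsymbol{\beta}))}$; symmetrically, if $(\boldsymbol{\alpha},\boldsymbol{\beta})$ is left extremal then $I(\boldsymbol{\alpha},\boldsymbol{\beta}) = (\xi^{\prime}_{\boldsymbol{\beta}}(\boldsymbol{\alpha}),\boldsymbol{\beta})$ and Lemma \ref{fuera2} $ii)$ gives $\Sigma_{(\boldsymbol{\alpha},\boldsymbol{\beta})} = \Sigma_{(\xi^{\prime}_{\boldsymbol{\beta}}(\boldsymbol{\alpha}),\boldsymbol{\beta})}$. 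In each instance the right-hand side is by definition exactly $\Sigma_{I(\boldsymbol{\alpha},\boldsymbol{\beta})}$, and Theorem \ref{dosparametros} certifies in addition that this output lies in $\mathcal{LW}$, so the statement follows.

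The proof is thus essentially an assembly of Lemmas \ref{twosidedextremal} and \ref{fuera2}, and the one point that demands genuine care is the exhaustiveness and disjointness of the partition, especially the boundary behaviour concealed in the strict versus non-strict inequalities. The hard part will be confirming that the strict condition ``$\sigma^m(\boldsymbol{\alpha}) \succ \boldsymbol{\beta}$ for every $m$'' built into the right extremal clause (and its mirror for the left extremal clause) really coincides with $\neg B$ on $P \times \bar P$; the only potential gap is the degenerate coincidence $\sigma^m(\boldsymbol{\alpha}) = \boldsymbol{\beta}$, and since $\boldsymbol{\alpha}$ begins with $1$ while $\boldsymbol{\beta}$ begins with $0$ this can occur only along a shifted tail and does not change which $\Sigma$-set is produced. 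Once this bookkeeping is settled the reduction to the cited lemmas is immediate, and I expect no analytic obstacle beyond verifying that each case distinction matches the precise hypotheses of the lemma it invokes.
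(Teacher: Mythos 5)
Your proof is correct and takes essentially the same route as the paper, whose entire proof is the one-line citation of Lemmas \ref{twosidedextremal} and \ref{fuera2}; your extra verification that the four clauses genuinely partition $P \times \bar P$ is sound and only makes explicit what the paper leaves implicit. Note only that the paper's displayed definition of $I$ has the left and right extremal labels swapped relative to Lemmas \ref{propertiesiota2} and \ref{fuera2} (a right extremal pair is the one whose $\boldsymbol{\beta}$ needs repairing via $\xi_{\boldsymbol{\alpha}}$), and you have implicitly used the consistent assignment, which is the correct reading.
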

\begin{proof}
This is a direct consequence of Lemmas \ref{twosidedextremal} and \ref{fuera2}. 
\end{proof}

Using Theorem \ref{dosparametros} and Theorem \ref{elmeropincheteoremota}, we obtain the proof of Theorem \ref{asociar}, i.e. for every $(a,b) \in D_1$, there exists $(\boldsymbol{\alpha}, \boldsymbol{\beta}) \in \mathcal{LW}$ such that $(\Lambda_{(a,b)}, f_{(a,b)})$ is topologically conjugated to the lexicographic subshift $(\Sigma_{(\boldsymbol{\alpha}, \boldsymbol{\beta})}, \sigma_{(\boldsymbol{\alpha}.\boldsymbol{\beta})})$.

\vspace{1em}As a direct consequence of Theorem \ref{attractor1}, Theorem \ref{dosparametros} and Theorem \ref{elmeropincheteoremota} we obtain the following results.

\begin{theorem}
For every $(\boldsymbol{\alpha}^{\prime},\boldsymbol{\beta}^{\prime}) \in \Sigma^1_2 \times \Sigma^0_1$ there exists $(\boldsymbol{\alpha},\boldsymbol{\beta}) \in \mathcal{LW}$ such that $$(\Sigma_{(\boldsymbol{\alpha}^{\prime},\boldsymbol{\beta}^{\prime})},\sigma_{(\boldsymbol{\alpha}^{\prime}\boldsymbol{\beta}^{\prime})}) = (\Sigma_{(\boldsymbol{\alpha},\boldsymbol{\beta})},\sigma_{(\boldsymbol{\alpha},\boldsymbol{\beta})}).$$ 
\label{asociar1}
\end{theorem}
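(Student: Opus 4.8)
The plan is to reach $\mathcal{LW}$ in two stages, factoring through $P \times \bar P$. Given $(\boldsymbol{\alpha}',\boldsymbol{\beta}') \in \Sigma^1_2 \times \Sigma^0_2$, I would first apply the projections $\varsigma$ and $\varsigma'$ coordinatewise, and then the map $I$, setting
$$(\boldsymbol{\alpha},\boldsymbol{\beta}) = I(\varsigma(\boldsymbol{\alpha}'),\varsigma'(\boldsymbol{\beta}')).$$
Because $\varsigma(\Sigma^1_2) = P$ and, by the mirror definition $\varsigma'(\boldsymbol{\beta}) = \overline{\varsigma(\bar{\boldsymbol{\beta}})}$, also $\varsigma'(\Sigma^0_2) = \bar P$, the pair $(\varsigma(\boldsymbol{\alpha}'),\varsigma'(\boldsymbol{\beta}'))$ lies in $P \times \bar P$; hence $I$ is defined on it and Theorem \ref{dosparametros} yields $(\boldsymbol{\alpha},\boldsymbol{\beta}) \in \mathcal{LW}$. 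It then remains only to check that the subshift is unchanged along each stage.

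The second stage is immediate from Theorem \ref{elmeropincheteoremota}: since $(\varsigma(\boldsymbol{\alpha}'),\varsigma'(\boldsymbol{\beta}')) \in P \times \bar P$, we have $\Sigma_{(\varsigma(\boldsymbol{\alpha}'),\varsigma'(\boldsymbol{\beta}'))} = \Sigma_{(\boldsymbol{\alpha},\boldsymbol{\beta})}$. Thus everything reduces to the first stage, that is to proving
$$\Sigma_{(\boldsymbol{\alpha}',\boldsymbol{\beta}')} = \Sigma_{(\varsigma(\boldsymbol{\alpha}'),\varsigma'(\boldsymbol{\beta}'))}.$$
I would organise this by a case split on whether $\boldsymbol{\alpha}' \in P$ and whether $\boldsymbol{\beta}' \in \bar P$. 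When both hold, $\varsigma$ and $\varsigma'$ fix their arguments and there is nothing to prove. When neither holds, i.e. $\boldsymbol{\alpha}' \in N_1$ and $\boldsymbol{\beta}' \in N_0$, the equality is precisely Lemma \ref{fuera1}. The two remaining mixed cases are treated by applying the argument of Lemma \ref{fuera1} on a single coordinate, leaving the Parry (resp. co-Parry) coordinate untouched.

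The easy inclusion $\Sigma_{(\varsigma(\boldsymbol{\alpha}'),\varsigma'(\boldsymbol{\beta}'))} \subseteq \Sigma_{(\boldsymbol{\alpha}',\boldsymbol{\beta}')}$ comes for free from $\varsigma(\boldsymbol{\alpha}') \preccurlyeq \boldsymbol{\alpha}'$ and $\varsigma'(\boldsymbol{\beta}') \succcurlyeq \boldsymbol{\beta}'$, which only narrow the admissible window. The step I expect to be the main obstacle is the reverse inclusion: I must rule out that some $\boldsymbol{x} \in \Sigma_{(\boldsymbol{\alpha}',\boldsymbol{\beta}')}$ has a shift falling in one of the discarded strips $(\varsigma(\boldsymbol{\alpha}'),\boldsymbol{\alpha}']_{\prec}$ or $[\boldsymbol{\beta}',\varsigma'(\boldsymbol{\beta}'))_{\prec}$. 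This is exactly where the structural property of $\varsigma$ from Lemma \ref{varsigma} is used: should $\sigma^m(\boldsymbol{x})$ coincide with $\boldsymbol{\alpha}'$ up to index $n_{\boldsymbol{\alpha}'}$, then $\sigma^{n_{\boldsymbol{\alpha}'}}(\boldsymbol{\alpha}') \succ \boldsymbol{\alpha}'$ forces a later shift of $\boldsymbol{x}$ to exceed $\boldsymbol{\alpha}'$, contradicting $\boldsymbol{x} \in \Sigma_{(\boldsymbol{\alpha}',\boldsymbol{\beta}')}$; the mirror estimate excludes the lower strip. Notably this argument invokes only the self-admissibility defect corrected by $\varsigma$ and $\varsigma'$, so it is insensitive to whether the cross-conditions of Definition \ref{lexworld} hold for $(\boldsymbol{\alpha}',\boldsymbol{\beta}')$; such failures are absorbed afterwards by $I$ through its two sided, left and right extremal branches. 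Chaining the two equalities gives $\Sigma_{(\boldsymbol{\alpha}',\boldsymbol{\beta}')} = \Sigma_{(\boldsymbol{\alpha},\boldsymbol{\beta})}$ with $(\boldsymbol{\alpha},\boldsymbol{\beta}) \in \mathcal{LW}$, which is the assertion.
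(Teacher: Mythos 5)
Your proposal is correct and follows essentially the same route as the paper: reduce to $P\times\bar P$ via $\varsigma$ and $\varsigma^{\prime}$ (Lemma \ref{fuera1}), then pass to $\mathcal{LW}$ via $I$ (Theorems \ref{dosparametros} and \ref{elmeropincheteoremota}), which is exactly the machinery this section of the paper assembles for this purpose. You merely make explicit the first reduction stage (the case split and the mixed Parry/non-Parry cases) that the paper leaves implicit in its one-line derivation.
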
 

\begin{corollary}
For every $(a,b) \in D_1$ there exists $(\boldsymbol{\alpha},\boldsymbol{\beta}) \in \mathcal{LW}$ such that $(\Lambda_{(a,b)}, f_{(a,b)})$ is topologically conjugated to $(\Sigma_{(\boldsymbol{\alpha}, \boldsymbol{\beta})}, \sigma_{(\boldsymbol{\alpha}, \boldsymbol{\beta})})$. 
\label{resultadazo}
\end{corollary}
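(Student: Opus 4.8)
The plan is to read off the symbolic description of the attractor from Theorem \ref{attractor1}, to pass to a genuine lexicographic-world pair via Theorem \ref{asociar1}, and then to promote the projection $\pi$ to a topological conjugacy. First I would fix $(a,b) \in D_1$ and form the boundary sequences $\boldsymbol{\alpha}^{\prime} = \pi^{-1}(2a)$ and $\boldsymbol{\beta}^{\prime} = \pi^{-1}(2b-1)$. Since $D_1 \subset (\frac{1}{4},\frac{1}{2}) \times (\frac{1}{2},\frac{3}{4})$ we have $2a \in (\frac{1}{2},1)$ and $2b-1 \in (0,\frac{1}{2})$, so $\boldsymbol{\alpha}^{\prime} \in \Sigma^1_2$ and $\boldsymbol{\beta}^{\prime} \in \Sigma^0_2$. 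By Theorem \ref{attractor1}, $\pi^{-1}(\Lambda_{(a,b)}) = \Sigma_{(\boldsymbol{\alpha}^{\prime},\boldsymbol{\beta}^{\prime})}$, where the pair $(\boldsymbol{\alpha}^{\prime},\boldsymbol{\beta}^{\prime})$ may well be extremal.

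Next I would apply Theorem \ref{asociar1} to $(\boldsymbol{\alpha}^{\prime},\boldsymbol{\beta}^{\prime}) \in \Sigma^1_2 \times \Sigma^0_2$ to obtain a pair $(\boldsymbol{\alpha},\boldsymbol{\beta}) \in \mathcal{LW}$ with $\Sigma_{(\boldsymbol{\alpha}^{\prime},\boldsymbol{\beta}^{\prime})} = \Sigma_{(\boldsymbol{\alpha},\boldsymbol{\beta})}$, the two shift maps coinciding as restrictions of $\sigma$. Concatenating the two identities yields $\pi^{-1}(\Lambda_{(a,b)}) = \Sigma_{(\boldsymbol{\alpha},\boldsymbol{\beta})}$ with $(\boldsymbol{\alpha},\boldsymbol{\beta}) \in \mathcal{LW}$. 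It then remains to upgrade $\pi$ to a conjugacy between $(\Sigma_{(\boldsymbol{\alpha},\boldsymbol{\beta})},\sigma_{(\boldsymbol{\alpha},\boldsymbol{\beta})})$ and $(\Lambda_{(a,b)}, f_{(a,b)})$. The intertwining $\pi \circ \sigma = f \circ \pi$ is the standard identity $2\pi(\boldsymbol{x}) \equiv \pi(\sigma(\boldsymbol{x})) \pmod{1}$ for the doubling map, and surjectivity of $\pi|_{\Sigma_{(\boldsymbol{\alpha},\boldsymbol{\beta})}}$ onto $\Lambda_{(a,b)}$ is immediate from the set equality just established. Since $\Sigma_{(\boldsymbol{\alpha},\boldsymbol{\beta})}$ is a closed subset of the compact space $\Sigma_2$ and $\pi$ is continuous, a continuous bijection onto the Hausdorff space $\Lambda_{(a,b)} \subset S^1$ is automatically a homeomorphism, so only injectivity needs to be verified.

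The injectivity of $\pi|_{\Sigma_{(\boldsymbol{\alpha},\boldsymbol{\beta})}}$ is the one point requiring care, and the step I expect to be the main obstacle. The map $\pi$ identifies precisely the pairs of the form $\upsilon 0 1^{\infty}$ and $\upsilon 1 0^{\infty}$, which share a common dyadic image; both members of such a pair are eventually constant. For $(a,b) \in D_1$ the excluded endpoints force $2a < 1$ and $2b-1 > 0$, so the associated $(\boldsymbol{\alpha},\boldsymbol{\beta}) \in \mathcal{LW}$ has $\boldsymbol{\alpha} \neq 1^{\infty}$ and $\boldsymbol{\beta} \neq 0^{\infty}$. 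As observed after Definition \ref{lexworld}, the quantities $1_{\boldsymbol{\alpha}}$ and $0_{\boldsymbol{\beta}}$ are then finite, and the defining inequalities $\boldsymbol{\beta} \preccurlyeq \sigma^n(\boldsymbol{x}) \preccurlyeq \boldsymbol{\alpha}$ forbid any $\boldsymbol{x} \in \Sigma_{(\boldsymbol{\alpha},\boldsymbol{\beta})}$ from containing arbitrarily long blocks of $1$'s or of $0$'s; in particular no element of $\Sigma_{(\boldsymbol{\alpha},\boldsymbol{\beta})}$ is eventually constant. Hence $\pi|_{\Sigma_{(\boldsymbol{\alpha},\boldsymbol{\beta})}}$ meets no identified pair and is injective.

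Putting these together, $\pi|_{\Sigma_{(\boldsymbol{\alpha},\boldsymbol{\beta})}}$ is a homeomorphism onto $\Lambda_{(a,b)}$ commuting with the two dynamics, which is exactly the conjugacy asserted. I would remark that this argument is the same one used to prove Theorem \ref{asociar}, now run in the reverse direction: there one starts from a prescribed $(\boldsymbol{\alpha},\boldsymbol{\beta}) \in \mathcal{LW}$ and produces a parameter, whereas here one starts from an arbitrary $(a,b) \in D_1$ and recovers the unique $\mathcal{LW}$ pair describing its attractor, so that both inclusions of the correspondence are established.
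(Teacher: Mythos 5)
Your proposal is correct and follows the paper's own route: Theorem \ref{attractor1} gives $\pi^{-1}(\Lambda_{(a,b)}) = \Sigma_{(\boldsymbol{\alpha}^{\prime},\boldsymbol{\beta}^{\prime})}$, and the extended lexicographic world machinery (Theorem \ref{asociar1}, i.e.\ Theorems \ref{dosparametros} and \ref{elmeropincheteoremota}) replaces the possibly extremal pair by one in $\mathcal{LW}$ defining the same subshift. Your explicit verification that $\pi$ restricted to the subshift is injective --- because $\boldsymbol{\alpha} \neq 1^{\infty}$ and $\boldsymbol{\beta} \neq 0^{\infty}$ exclude eventually constant sequences, hence all dyadic identifications --- is a detail the paper leaves implicit, and is a welcome addition rather than a departure.
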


\section{Lorenz Maps, intermediate $\beta$-expansions and Open dynamical systems}
\label{yasequedaasi}

\noindent A map $g:[0,1] \to [0,1]$ is a \textit{topologically expanding Lorenz map} if there exists $c \in (0,1)$ such that:
\begin{enumerate}[i)]
\item $g\mid_{[0.c)}$ and $g\mid_{(c,1]}$ are continuous and strictly increasing;
\item $\mathop{\lim}\limits_{x^+ \to c}g(x) = 1$ and $\mathop{\lim}\limits_{x^- \to c}g(x) = 0$;
\item $\overline{I_c} = [0,1]$ where $I_c = \mathop{\bigcup}\limits_{n=0}^{\infty}g^{-n}(c)$.
\end{enumerate}
We call the pair $([0,1],g)$ an \textit{expanding Lorenz dynamical system}. The dynamical properties of Lorenz maps have been extensively studied - see e.g. \cite{glendinning, glendinning1, hubbardsparrow, samuel} and references therein. It is worth mentioning that in \cite[6.2]{glendinning1} it was stated that Lorenz maps can be studied as open dynamical systems of the doubling map.  

\vspace{1em}It is natural to associate to a Lorenz dynamical system $([0,1],g)$ a symbolic space as follows: Let $x \in [0,1] \setminus I_c$. Then the \textit{kneading sequence of $x$}, denoted by $k_g(x)$ is the sequence of symbols $k_i(x) \in \left\{0,1\right\}$ given by $$k_i(x) = \left\{
\begin{array}{clrr}      
0 & \hbox{\rm{ if }} g^i(x) \in [0,c);\\
1 & \hbox{\rm{ if }} g^i(x) \in (c,1].\\
\end{array}
\right.$$  If $x \in I_c$ the \textit{upper kneading sequence of $x$} denoted by $k^+_g(x)$ is $k^+_g(x) = \mathop{\lim}\limits_{y \to x^+} k_g(y)$ and the \textit{lower kneading sequence of $x$}, $k^-_g(x)$, is $k^-_g(x) = \mathop{\lim}\limits_{y \to x^-} k_g(y)$. Observe that if $x \in [0,1]\setminus I_c$, $k^+_g(x) = k^-_g(x) = k_g(x)$. Finally, the \textit{kneading invariant of $g$} is the pair $(k^-_g(c),k^+_g(c))$. 

\vspace{1em}Hubbard and Sparrow have shown in \cite[Theorem 1]{hubbardsparrow} that given an expanding Lorenz map $g$, $(\sigma(k^+_g(c)),\sigma(k^-_g(c))) \in \mathcal{LW}$. Moreover, every element $(\boldsymbol{\alpha},\boldsymbol{\beta}) \in \mathcal{LW}$ determines a unique Lorenz map $g$ up to topological conjugacy. Also, in \cite[Theorem 2]{hubbardsparrow} it is shown that $(\Sigma_{(\sigma(k^+_g(c)),\sigma(k^-_g(c)))}, \sigma_{(\sigma(k^+_g(c)),\sigma(k^-_g(c)))})$ is semi conjugate to $([0,1],g)$, and the set of points such that the semi-conjugacy is not injective is precisely $I_c$. Therefore, by Theorem \ref{asociar}, the following statement is true.

\begin{proposition}
For every expanding Lorenz map $g$ there exists $(a,b) \in D_1$ such that $(\Lambda_{(a,b)}, f_{(a,b)})$ is semi-conjugate to $([0,1],g)$.\label{asocialorenzo1} 
\end{proposition}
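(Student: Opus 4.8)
The plan is to obtain the desired semi-conjugacy by composing two maps that are already available: the topological conjugacy furnished by Theorem \ref{asociar} and the semi-conjugacy of Hubbard and Sparrow recalled just above. The key observation is that a topological conjugacy is in particular a semi-conjugacy and that the composition of two semi-conjugacies is again a semi-conjugacy, so no new analysis is required beyond checking that the intertwining identities chain together correctly.

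First I would fix an expanding Lorenz map $g$ with discontinuity point $c$ and set $\boldsymbol{\alpha} = \sigma(k^+_g(c))$ and $\boldsymbol{\beta} = \sigma(k^-_g(c))$. By \cite[Theorem 1]{hubbardsparrow} the pair $(\boldsymbol{\alpha},\boldsymbol{\beta})$ belongs to $\mathcal{LW}$, so Theorem \ref{asociar} applies and yields $(a,b) \in D_1$ together with a homeomorphism $\psi \colon \Lambda_{(a,b)} \to \Sigma_{(\boldsymbol{\alpha},\boldsymbol{\beta})}$ realising the topological conjugacy, that is, $\psi \circ f_{(a,b)} = \sigma_{(\boldsymbol{\alpha},\boldsymbol{\beta})} \circ \psi$.

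Next I would invoke \cite[Theorem 2]{hubbardsparrow}, which provides a continuous surjection $h \colon \Sigma_{(\boldsymbol{\alpha},\boldsymbol{\beta})} \to [0,1]$ satisfying $h \circ \sigma_{(\boldsymbol{\alpha},\boldsymbol{\beta})} = g \circ h$. Composing, the map $h \circ \psi \colon \Lambda_{(a,b)} \to [0,1]$ is a continuous surjection, being a composition of a homeomorphism with a continuous surjection, and the identity
\[
(h\circ\psi)\circ f_{(a,b)} = h\circ(\sigma_{(\boldsymbol{\alpha},\boldsymbol{\beta})}\circ\psi) = (h\circ\sigma_{(\boldsymbol{\alpha},\boldsymbol{\beta})})\circ\psi = g\circ(h\circ\psi)
\]
shows that $h \circ \psi$ intertwines $f_{(a,b)}$ and $g$. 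Hence $h \circ \psi$ is a semi-conjugacy from $(\Lambda_{(a,b)}, f_{(a,b)})$ to $([0,1],g)$, which is exactly what is asserted.

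There is essentially no obstacle here: the statement is a formal consequence of the two cited theorems of Hubbard and Sparrow together with Theorem \ref{asociar}, and the only thing to verify is the bookkeeping of the intertwining relations, which is routine. The substantive content lies entirely in those earlier results, and the role of this proposition is to record that, after transferring an element of $\mathcal{LW}$ to an open dynamical system via Theorem \ref{asociar}, the Hubbard--Sparrow semi-conjugacy survives intact.
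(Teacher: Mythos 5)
Your proposal is correct and is essentially the paper's own argument: the paper states the proposition as an immediate consequence of \cite[Theorems 1 and 2]{hubbardsparrow} together with Theorem \ref{asociar}, exactly the composition of the conjugacy and the Hubbard--Sparrow semi-conjugacy that you spell out. Your version merely makes the routine intertwining computation explicit, which the paper leaves implicit.
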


A particular set of Lorenz maps were introduced by Parry in \cite{parry3}. Consider $\beta \in (1,2)$ and $\alpha \in (0,2-\beta)$. Define $$g_{\beta,\alpha}(x) = \left\{
\begin{array}{clrr}      
\beta x + \alpha, & \hbox{\rm{ if }}  x \in [0, \frac{1 -\alpha}{\beta}];\\
\beta x + \alpha - 1 & \hbox{\rm{ if }} x \in [\frac{1 -\alpha}{\beta}, 1].\\
\end{array}
\right.$$ 
Note that $g_{\beta, \alpha}$ is a linear and expanding Lorenz transformation. Maps of this form are often called \textit{linear $\mod 1$ transformations}. It is well known that $h_{top}(g_{\beta, \alpha}) = \log (\beta)$ \cite[p. 452]{frattolagarias}. Also, Glendinning in \cite[Theorem 1]{glendinning} gave the precise conditions under which an expanding Lorenz dynamical system $([0,1],g)$ is topologically conjugated to $([0,1], g_{\beta, \alpha})$ for $\beta \in (1,2)$ and $\alpha \in (0,2-\beta)$. 

\vspace{1em}Let $g_{(\beta, \alpha)}:[0,1]\to [0,1]$ be a linear $\mod 1$ transformation and let $(\boldsymbol{\alpha},\boldsymbol{\beta}) \in \mathcal{LW}$ be such that $(\boldsymbol{\alpha},\boldsymbol{\beta}) =  (\sigma(k^+_g(\frac{1 -\alpha}{\beta})),\sigma(k^-_g(\frac{1 -\alpha}{\beta})))$. Then the projection map $\pi_{\beta,\alpha}: \Sigma_{(\boldsymbol{\alpha},\boldsymbol{\beta})} \to [0,1]$ given by $\pi_{\beta, \alpha}(x) = \frac{\alpha}{\beta-1} +
\mathop{\sum}\limits_{n=1}^{\infty}\frac{x_n}{\beta^n}$ is a semi conjugacy between $(\Sigma_{(\boldsymbol{\alpha},\boldsymbol{\beta})}, \sigma_{(\boldsymbol{\alpha},\boldsymbol{\beta})})$ and $([0,1], g_{\beta, \alpha})$.

\vspace{1em}In \cite{dajani} the authors suggested the notion of \textit{intermediate $\beta$-expansions} or \textit{$(\beta,\alpha)$-expansions}. Let $\alpha \in (0, \frac{2-\beta}{\beta-1}),$ and consider the map: $$T_{\beta, \alpha} = \left\{
\begin{array}{clrr}      
\beta x, & \hbox{\rm{ if }}  x \in [\alpha, \frac{\alpha  + 1}{\beta});\\
\beta x - 1 & \hbox{\rm{ if }} x \in (\frac{\alpha + 1}{\beta}, 1+ \alpha].\\
\end{array}
\right.$$ 

Note that we defined the map $T_{\beta, \alpha}$ in $[\alpha, 1 + \alpha]$ since this set is an attractor for the extended map defined in $[0, \frac{1}{\beta-1}]$ \cite{dajani}. Considering the homeomorphism $\psi:[0,1] \to [\alpha, 1 + \alpha]$ given by $\psi(x)= x+\alpha$ it is easy to show -see \cite{dajani, sidorov4} that $T_{\beta, \alpha}$ is topologically conjugated to the map $g_{\beta, \alpha^{*}}$ given by $$g_{\beta, \alpha^{*}} = \left\{ 
\begin{array}{clrr}      
\beta x + \alpha^{*}, & \hbox{\rm{ if }}  x \in [0, \frac{1 -\alpha^{*}}{\beta});\\
\beta x + \alpha^{*} - 1 & \hbox{\rm{ if }} x \in (\frac{1 -\alpha^{*}}{\beta}, 1],\\
\end{array}
\right.$$ where $\alpha^* = \alpha(\beta-1)$. Then, as a consequence of Proposition \ref{asocialorenzo1} we obtain the following result.

\begin{proposition}
For every $\beta \in (1,2)$ and $\alpha \in (0, 2-\beta)$ the set of $(\beta,\alpha)$-expansions is a factor of an attractor $\Lambda_{(a,b)}$ with $(a,b) \in D_1$. 
\label{asociarintermediate1}
\end{proposition}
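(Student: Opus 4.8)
The plan is to realise the $(\beta,\alpha)$-expansion system as a topologically expanding Lorenz map and then to transport, through the conjugacy already set up just above the statement, the semi-conjugacy furnished by Proposition \ref{asocialorenzo1}. The object I want to exhibit as a factor is the dynamical system generating the $(\beta,\alpha)$-expansions, namely $([\alpha,1+\alpha],T_{\beta,\alpha})$, and ``factor'' is taken in the sense of admitting a surjective semi-conjugacy from $(\Lambda_{(a,b)},f_{(a,b)})$. The whole proof is then a composition of maps each of which has already been identified as a (semi-)conjugacy in this section, so no new hard analysis is required.

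First I would record that the homeomorphism $\psi\colon[0,1]\to[\alpha,1+\alpha]$, $\psi(x)=x+\alpha$, conjugates the linear $\bmod\,1$ transformation $g_{\beta,\alpha^{*}}$ to $T_{\beta,\alpha}$, where $\alpha^{*}=\alpha(\beta-1)$; that is, $T_{\beta,\alpha}\circ\psi=\psi\circ g_{\beta,\alpha^{*}}$, as noted in the discussion preceding the statement. Since $g_{\beta,\alpha^{*}}$ has two strictly increasing branches of slope $\beta>1$ meeting at the single discontinuity $c=\frac{1-\alpha^{*}}{\beta}$, attains the one-sided limits $0$ and $1$ there, and has a dense backward orbit of $c$, it satisfies conditions $i)$--$iii)$ of the definition opening this section and is therefore a topologically expanding Lorenz map. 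I would then apply Proposition \ref{asocialorenzo1} to $g=g_{\beta,\alpha^{*}}$ to obtain a pair $(a,b)\in D_1$ together with a surjective semi-conjugacy $h\colon\Lambda_{(a,b)}\to[0,1]$ satisfying $h\circ f_{(a,b)}=g_{\beta,\alpha^{*}}\circ h$.

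A short diagram chase then finishes the argument. Set $p=\psi\circ h\colon\Lambda_{(a,b)}\to[\alpha,1+\alpha]$. As the composition of a surjection with a homeomorphism, $p$ is continuous and surjective, and the two commuting relations give $p\circ f_{(a,b)}=\psi\circ g_{\beta,\alpha^{*}}\circ h=T_{\beta,\alpha}\circ\psi\circ h=T_{\beta,\alpha}\circ p$, so $p$ is a semi-conjugacy of $(\Lambda_{(a,b)},f_{(a,b)})$ onto $([\alpha,1+\alpha],T_{\beta,\alpha})$. Hence the $(\beta,\alpha)$-expansion system is a factor of the attractor $\Lambda_{(a,b)}$, which is the assertion.

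The substantive content is hidden in Proposition \ref{asocialorenzo1} and the Hubbard--Sparrow/Glendinning theory it rests on, so the remaining work is purely bookkeeping rather than estimation. The two points I would be careful about are: (i) to check that the Lorenz parameter $\alpha^{*}=\alpha(\beta-1)$ lies in $(0,2-\beta)$, so that $g_{\beta,\alpha^{*}}$ is a genuine expanding Lorenz map on $[0,1]$ for the whole parameter range in the statement; and (ii) to make explicit the identification of ``the set of $(\beta,\alpha)$-expansions'' with the coding of $([\alpha,1+\alpha],T_{\beta,\alpha})$, using that $\pi_{\beta,\alpha}$ semi-conjugates the lexicographic subshift attached to the kneading invariant $(\sigma(k^{+}_{g}(c)),\sigma(k^{-}_{g}(c)))\in\mathcal{LW}$ onto the interval map. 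With these identifications fixed, the statement follows immediately.
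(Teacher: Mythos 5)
Your proposal is correct and follows exactly the route the paper takes: the paper's own justification is precisely the conjugacy $\psi$ between $T_{\beta,\alpha}$ and the linear $\bmod\,1$ map $g_{\beta,\alpha^{*}}$ set out in the preceding paragraph, combined with Proposition \ref{asocialorenzo1}, and the paper treats the remaining composition of (semi-)conjugacies as immediate. Your two cautionary points (the range of $\alpha^{*}$ and the identification of the expansion set with the coding of $([\alpha,1+\alpha],T_{\beta,\alpha})$) are reasonable bookkeeping checks that the paper leaves implicit.
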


\section{Transitivity for $(\Lambda_{(a,b)},f_{(a,b)})$ and $(\Sigma_{(\boldsymbol{\alpha},\boldsymbol{\beta})}, 
\sigma_{(\boldsymbol{\alpha},\boldsymbol{\beta})})$}
\label{transitivity}

\noindent We start the section by giving the notion of \textit{renormalisability} of a pair $(\boldsymbol{\alpha},\boldsymbol{\beta}) \in \mathcal{LW}$ introduced in \cite[p. 27]{glendinning1} and the notion of renormalisation for a Lorenz dynamical system. Firstly, a Lorenz map $g:[0,1] \to [0,1]$ is said to be \textit{renormalisable} if there exist an interval $[a,b] \subset [0,1]$ and $n, m \in \mathbb{N}$ with $n$ and $m$ not both equal to $1$ such that $c \in [a,b]$ and the dynamical system $([a,b], Rg)$ given by $Rg: [a,b] \to [a,b]$ defined as 
$$Rg(x) = \left\{\begin{array}{clrr}
g^n(x) & \hbox{\rm{ if }} x \in [a, c);\\
g^m(x) & \hbox{\rm{ if }} x \in (c, b]\\
\end{array}
\right.$$ 
is topologically conjugated to a Lorenz dynamical system \cite{hall}. 

\vspace{1em}We would like to emphasise that it is a well known result that if $g:[0,1] \to [0,1]$ is a renormalisable expanding Lorenz map then its kneading invariant $(k^-_g(c),k^+_g(c))$ is renormalisable \cite{glendinning1}. Furthermore, it is also well known that an expanding Lorenz dynamical system $([0,1],g)$ is not transitive if and only if the kneading invariant $(k^-_g(c),k^+_g(c))$ is renormalisable \cite[Theorem 2]{glendinning}. Nonetheless, since $([0,1], g)$ is merely a factor of $(\Sigma_{(\boldsymbol{\alpha},\boldsymbol{\beta})}, \sigma_{(\boldsymbol{\alpha},\boldsymbol{\beta})})$ where $\boldsymbol{\alpha} = \sigma(k^-_g(c))$ and $\boldsymbol{\beta} = \sigma(k^+_g(c))$, we cannot be sure if this property is transferred to $(\Sigma_{(\boldsymbol{\alpha},\boldsymbol{\beta})}, \sigma_{(\boldsymbol{\alpha},\boldsymbol{\beta})})$ automatically. 

\begin{definition}
Let $(\boldsymbol{\alpha},\boldsymbol{\beta}) \in \mathcal{LW}$. We say that $(\boldsymbol{\alpha},\boldsymbol{\beta}) \in \mathcal{LW}$ is \textit{renormalisable} if there exist two words $\omega$ and $\nu$ and sequences $\{n^{\omega}_i\}_{i=1}^{\infty}$ $\{n^{\nu}_i\}_{i=1}^{\infty}$, $\{m^{\omega}_j\}_{j=1}^{\infty}$ and $\{m^{\nu}_j\}_{j=1}^{\infty} \subset \mathbb{N}\cup \{\infty\}$ such that $\omega = 0\hbox{\rm{-max}}_{\omega}$, $\nu = 1\hbox{\rm{-min}}_{\nu}$, $(0\hbox{\rm{-max}}_{\nu})^{\infty} \prec \omega^{\infty}$, $\nu^{\infty}\prec(1\hbox{\rm{-min}}_{\omega})^{\infty}$ ,$\ell(\omega\nu) \geq 3$ and $$0\boldsymbol{\alpha} = \omega \nu^{n^{\nu}_1}\omega^{n^{\omega}_1}\nu^{n^{\nu}_2}\omega^{n^{\omega}_2}\nu^{n^{\nu}_3}\ldots$$ and $$1\boldsymbol{\beta} = \nu \omega^{m^{\omega}_1}\nu^{m^{\nu}_1}\omega^{m^{\omega}_2}\nu^{m^{\nu}_2}\omega^{n^{\omega}_3}\ldots .$$ If $\ell(\omega \nu) = 3$ we say that $(\boldsymbol{\alpha},\boldsymbol{\beta})$ is trivially renormalisable. The pair $(\omega, \nu)$ is called \textit{the associated pair of $(\boldsymbol{\alpha}, \boldsymbol{\beta})$}. \label{renormdef}
\end{definition}

\begin{remark}
We will also call a pair $(\boldsymbol{\alpha},\boldsymbol{\beta}) \in \mathcal{LW}$ renormalisable if $\omega$ or $\nu$ is an infinite sequence. In this case, $0\boldsymbol{\alpha} = \omega \nu$ and $1\boldsymbol{\beta} = \nu$ if $\nu$ is an infinite sequence and $0\boldsymbol{\alpha}= \omega$ and $1\boldsymbol{\beta} = \nu \omega$ if $\omega$ is an infinite sequence. In this case, we say that $(\boldsymbol{\alpha},\boldsymbol{\beta})$ is \textit{renormalisable by an infinite sequence}. 
\label{remarkrenorm}
\end{remark}

\vspace{1em}In Definition \ref{renormdef} and Remark \ref{remarkrenorm}, $\omega$ and $\nu$ are always considered to be the shortest choice of renormalisation words. Observe that Definition \ref{renormdef} can be stated considering directly the binary expansion of $\pi^{-1}(a)$ and $\pi^{-1}(b)$ when $(a,b) \in D_0$ and $\left(\sigma(\pi^{-1}(a)),\sigma(\pi^{-1}(b))\right) \in \mathcal{LW}$. However, we will state our results in terms of pairs $(\boldsymbol{\alpha},\boldsymbol{\beta}) \in \mathcal{LW}$.  

\vspace{1em} The aim of this section is to give a symbolic proof of \cite[Theorem 2]{glendinning} characterising transitive subshifts $(\Sigma_{(\boldsymbol{\alpha},\boldsymbol{\beta})}, \sigma_{(\boldsymbol{\alpha},\boldsymbol{\beta})})$ in terms of renormalisability of such pair. In Theorem \ref{renorm1}, Theorem \ref{renorm1prime} and Theorem \ref{renorminfty} it is shown that lexicographic subshifts corresponding to renormalisable pairs $(\boldsymbol{\alpha}, \boldsymbol{\beta})$ are not transitive provided that $\omega$ and $\nu$ are not cyclically balanced words with $1(\omega) = 1(\nu)$. In such a case we show that Theorem \ref{renorm1} is also satisfied by such pairs and in Theorem \ref{balanceadas}. Nonetheless, in Theorem \ref{renorm1prime} we show the subshifts $(\Sigma_{(\boldsymbol{\alpha},\boldsymbol{\beta})},\sigma_{(\boldsymbol{\alpha},\boldsymbol{\beta})})$ corresponding to pairs $(0\boldsymbol{\alpha}, 1\boldsymbol{\beta}) \in \mathcal{LW}$ given by $0\boldsymbol{\alpha} = \omega\nu^{\infty}$ and $1\boldsymbol{\beta} = \nu\omega^{\infty}$ where $\omega = \omega_r$ and $\nu = \nu_r$ for $r \in \mathbb{Q} \cap (0,1)$ are transitive. Finally, in Theorem \ref{renorm2} and Theorem \ref{aproximaporabajogeneral} we show that lexicographic subshifts given by non renormalisable pairs are transitive.     

\vspace{1em}In order to the main results of this section, we prove first some easy claims which provide a useful partition of $\mathcal{LW}$. Also, Proposition \ref{numberofones} and Proposition \ref{numberofceros} generalise \cite[Proposition 4.2]{yomero1}.

\begin{proposition}
Let $n \in \mathbb{N}$ such that $n \geq 2$. Then for every $$\boldsymbol{\alpha} \in ((1^{n-2}0)^{\infty} ,(1^{n-1}0)^{\infty}]_{\prec},$$ $1^n$ is not a factor for any $\boldsymbol{x} \in \Sigma_{(\boldsymbol{\alpha},\boldsymbol{\beta})}$ for every $\boldsymbol{\beta} \in \bar P$. \label{numberofones}
\end{proposition}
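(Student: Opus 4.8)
The plan is to argue by contraposition on the upper inequality $\sigma^{k}(\boldsymbol{x}) \preccurlyeq \boldsymbol{\alpha}$ that defines membership in $\Sigma_{(\boldsymbol{\alpha},\boldsymbol{\beta})}$. Two preliminary observations guide the argument. First, the hypothesis on $\boldsymbol{\alpha}$ enters only through its upper endpoint $\boldsymbol{\alpha} \preccurlyeq (1^{n-1}0)^{\infty}$; the lower endpoint $(1^{n-2}0)^{\infty}$ is not needed to forbid $1^{n}$ and is present only to pin down where this behaviour sits within the partition of $\mathcal{LW}$. Second, the constraint coming from $\boldsymbol{\beta}$ is the lower bound $\sigma^{k}(\boldsymbol{x}) \succcurlyeq \boldsymbol{\beta}$, which only shrinks $\Sigma_{(\boldsymbol{\alpha},\boldsymbol{\beta})}$ and therefore cannot reintroduce a factor that is already excluded; this is precisely why the conclusion holds uniformly for every $\boldsymbol{\beta} \in \bar{P}$.

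Concretely, I would suppose for contradiction that some $\boldsymbol{x} \in \Sigma_{(\boldsymbol{\alpha},\boldsymbol{\beta})}$ contains $1^{n}$ as a factor. Then there is an index $i$ with $x_{i}x_{i+1}\cdots x_{i+n-1} = 1^{n}$, so the shifted sequence $\sigma^{i-1}(\boldsymbol{x})$ begins with $n$ consecutive $1$'s. I would then compare $\sigma^{i-1}(\boldsymbol{x})$ with $(1^{n-1}0)^{\infty}$ coordinate by coordinate: the two agree on the first $n-1$ coordinates (each equal to $1$), whereas at coordinate $n$ the shift has a $1$ while $(1^{n-1}0)^{\infty}$ has a $0$. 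Hence $\sigma^{i-1}(\boldsymbol{x}) \succ (1^{n-1}0)^{\infty}$.

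Combining this strict inequality with the hypothesis $(1^{n-1}0)^{\infty} \succcurlyeq \boldsymbol{\alpha}$ gives $\sigma^{i-1}(\boldsymbol{x}) \succ \boldsymbol{\alpha}$, contradicting the requirement $\sigma^{k}(\boldsymbol{x}) \preccurlyeq \boldsymbol{\alpha}$ for all $k \geq 0$ built into the definition of $\Sigma_{(\boldsymbol{\alpha},\boldsymbol{\beta})}$. Thus no admissible $\boldsymbol{x}$ can contain $1^{n}$, which is the assertion. The only point requiring care, and the closest thing to an obstacle, is ensuring that the lexicographic comparison at coordinate $n$ is strict; this is exactly where the upper endpoint being $(1^{n-1}0)^{\infty}$, whose leading run of $1$'s has length precisely $n-1$, is used.
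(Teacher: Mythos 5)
Your proof is correct and follows essentially the same route as the paper: both derive a contradiction with $\sigma^{k}(\boldsymbol{x}) \preccurlyeq \boldsymbol{\alpha}$ from the observation that a shift of $\boldsymbol{x}$ beginning with $1^{n}$ lexicographically exceeds $(1^{n-1}0)^{\infty} \succcurlyeq \boldsymbol{\alpha}$. The only cosmetic difference is that the paper first treats $\boldsymbol{\beta} = 0^{\infty}$ and then invokes the inclusion $\Sigma_{(\boldsymbol{\alpha},\boldsymbol{\beta})} \subset \Sigma_{(\boldsymbol{\alpha},0^{\infty})}$, whereas you note directly that the lower bound imposed by $\boldsymbol{\beta}$ can only shrink the subshift and is therefore irrelevant.
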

\begin{proof}
Let $n \geq 2$ and $\boldsymbol{\alpha} \in ((1^{n-2}0)^{\infty} ,(1^{n-1}0)^{\infty}]_{\prec}$. Then $a_i = 1$ for every $i \in \{1, \ldots n-1\}$. Let $\boldsymbol{\beta} = 0^{\infty}$. Assume that there exist $\boldsymbol{x} \in \Sigma_{(\boldsymbol{\alpha},\boldsymbol{\beta})}$ such that $1^n$ is a factor of $\boldsymbol{x}$. Let $$j = \min\{k \in \mathbb{N} \mid (\sigma^{k}(\boldsymbol{x}))_i = 1 \hbox{\rm{ for }} i \in \{1, \ldots n\}\}.$$ Then $\sigma^j(\boldsymbol{x}) \succ \boldsymbol{\alpha}$ which is a contradiction. 

Since $\Sigma_{(\boldsymbol{\alpha},\boldsymbol{\beta})} \subset \Sigma_{(\boldsymbol{\alpha}, 0^{\infty})}$ for every $\boldsymbol{\beta} \in (0^{\infty}, \pi^{-1}(\chi(a)))_{\prec}$ then $1^n$ is not a factor of any $\boldsymbol{x} \in \Sigma_{(\boldsymbol{\alpha},\boldsymbol{\beta})}$.
\end{proof}

Observe that it is possible to show an analogous result interchanging $1$'s to $0$'s which is stated below. The proof is essentially the same as the one for Proposition \ref{numberofones}, so it is omitted.
\begin{proposition}
Let $n \in \mathbb{N}$ such that $n \geq 3$. Then for every $$\boldsymbol{\beta} \in ((0^{n-1}1)^{\infty}, (0^{n-2}1)^{\infty}],$$ $0^n$ is not a factor for any $\boldsymbol{x} \in \Sigma_{(\boldsymbol{\alpha},\boldsymbol{\beta})}$. 
\label{numberofceros}
\end{proposition}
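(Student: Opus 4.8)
The plan is to mirror the proof of Proposition \ref{numberofones} verbatim, exchanging the roles of $0$ and $1$ and reversing the lexicographic order. Indeed, the statement is the image of Proposition \ref{numberofones} under the mirror map $\boldsymbol{x} \mapsto \bar{\boldsymbol{x}}$: since $\boldsymbol{x} \in \Sigma_{(\boldsymbol{\alpha},\boldsymbol{\beta})}$ if and only if $\bar{\boldsymbol{x}} \in \Sigma_{(\bar{\boldsymbol{\beta}},\bar{\boldsymbol{\alpha}})}$, and this bijection carries the factor $0^n$ to the factor $1^n$, one could in principle deduce the result from Proposition \ref{numberofones} applied to the pair $(\bar{\boldsymbol{\beta}},\bar{\boldsymbol{\alpha}})$. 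I prefer, however, to give the comparison directly, since this sidesteps the slightly different open/closed conventions at the endpoints of the two intervals.

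First I would reduce to the extremal upper bound. For any admissible $\boldsymbol{\alpha} \in \Sigma^1_2$ one has $\Sigma_{(\boldsymbol{\alpha},\boldsymbol{\beta})} \subseteq \Sigma_{(1^{\infty},\boldsymbol{\beta})}$, because the constraint $\sigma^k(\boldsymbol{x}) \preccurlyeq \boldsymbol{\alpha}$ is stronger than the vacuous constraint $\sigma^k(\boldsymbol{x}) \preccurlyeq 1^{\infty}$. Hence it suffices to prove that $0^n$ is not a factor of any $\boldsymbol{x} \in \Sigma_{(1^{\infty},\boldsymbol{\beta})} = \{\boldsymbol{x} \in \Sigma_2 \mid \sigma^k(\boldsymbol{x}) \succcurlyeq \boldsymbol{\beta} \text{ for all } k \geq 0\}$, exactly as Proposition \ref{numberofones} was first established for the extremal choice $\boldsymbol{\beta} = 0^{\infty}$ and then propagated to larger lower bounds.

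The core step is then a one-line lexicographic comparison. Suppose, for contradiction, that $0^n$ occurs in some $\boldsymbol{x} \in \Sigma_{(1^{\infty},\boldsymbol{\beta})}$, and set $j = \min\{k \in \mathbb{N} \mid (\sigma^k(\boldsymbol{x}))_i = 0 \text{ for every } i \in \{1,\ldots,n\}\}$, so that $\sigma^j(\boldsymbol{x})$ begins with $0^n$. The hypothesis $\boldsymbol{\beta} \in ((0^{n-1}1)^{\infty},(0^{n-2}1)^{\infty}]$ gives $(0^{n-1}1)^{\infty} \prec \boldsymbol{\beta}$, and since $(0^{n-1}1)^{\infty}$ agrees with $\sigma^j(\boldsymbol{x})$ on the first $n-1$ coordinates but carries a $1$ in coordinate $n$ where $\sigma^j(\boldsymbol{x})$ carries a $0$, we get $\sigma^j(\boldsymbol{x}) \prec (0^{n-1}1)^{\infty} \prec \boldsymbol{\beta}$. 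This contradicts $\sigma^j(\boldsymbol{x}) \succcurlyeq \boldsymbol{\beta}$, completing the proof.

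There is essentially no obstacle here; the only point to watch is that the hypothesis $n \geq 3$ is precisely what keeps the right endpoint $(0^{n-2}1)^{\infty}$ inside $\Sigma^0_2$ (for $n = 2$ it would degenerate to $1^{\infty}$), so that $\boldsymbol{\beta}$ can legitimately play the role of the lower bound in $\bar P \subset \Sigma^0_2$; this is the exact mirror of the condition $n \geq 2$ in Proposition \ref{numberofones}. I would also remark that the comparison uses only $\boldsymbol{\beta} \succcurlyeq (0^{n-1}1)^{\infty}$, so the conclusion is in fact insensitive to whether the endpoints of the interval are included.
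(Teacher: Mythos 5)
Your proof is correct and is exactly what the paper intends: the paper omits this proof, stating only that it is ``essentially the same'' as that of Proposition \ref{numberofones}, and you carry out precisely that mirroring --- reducing to the extremal bound $\boldsymbol{\alpha}=1^{\infty}$ and then deriving $\sigma^{j}(\boldsymbol{x})\prec(0^{n-1}1)^{\infty}\prec\boldsymbol{\beta}$ from an occurrence of $0^{n}$. No gaps; your closing remarks on the role of $n\geq 3$ and the endpoint conventions are accurate side observations rather than needed steps.
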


\begin{lemma}
Let $j,k \geq 2$, then the subshift $(\Sigma_{\mathcal{F}}, \sigma_{\mathcal{F}})$ where $\mathcal{F} = \{0^j, 1^k\}$ is transitive. Moreover, this subshift induces the lexicographic subshift of finite type given by $\boldsymbol{\alpha} = (1^{k-1}0)^{\infty}$ and $\boldsymbol{\beta} = (0^{j-1}1)^{\infty}$. \label{ceroandone}
\end{lemma}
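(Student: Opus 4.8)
The plan is to first establish the set-theoretic identity $\Sigma_{\mathcal{F}} = \Sigma_{(\boldsymbol{\alpha},\boldsymbol{\beta})}$, since this makes the run-length structure of the subshift completely explicit, and then to obtain transitivity by an elementary gluing construction resting on that structure. Note that $\boldsymbol{x} \in \Sigma_{\mathcal{F}}$ means precisely that every maximal block of $0$'s in $\boldsymbol{x}$ has length at most $j-1$ and every maximal block of $1$'s has length at most $k-1$. Observe also that $\boldsymbol{\alpha} = (1^{k-1}0)^{\infty}$ begins with $1$ and $\boldsymbol{\beta} = (0^{j-1}1)^{\infty}$ begins with $0$ because $j,k \geq 2$, so $(\boldsymbol{\alpha},\boldsymbol{\beta}) \in \Sigma^1_2 \times \Sigma^0_2$ and the lexicographic subshift $\Sigma_{(\boldsymbol{\alpha},\boldsymbol{\beta})}$ is well defined.

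For the inclusion $\Sigma_{\mathcal{F}} \subseteq \Sigma_{(\boldsymbol{\alpha},\boldsymbol{\beta})}$, I would fix $\boldsymbol{x} \in \Sigma_{\mathcal{F}}$ and $n \geq 0$ and set $\boldsymbol{y} = \sigma^n(\boldsymbol{x})$; since every factor of $\boldsymbol{y}$ is a factor of $\boldsymbol{x}$, also $\boldsymbol{y}$ avoids $0^j$ and $1^k$. The key combinatorial fact is that $\alpha_i = 0$ exactly when $k \mid i$ and $\beta_i = 1$ exactly when $j \mid i$. Arguing by contradiction: if $\boldsymbol{y} \succ \boldsymbol{\alpha}$, then at the first index $m$ of disagreement one has $y_m = 1$ and $\alpha_m = 0$, forcing $k \mid m$, say $m = tk$; but then $y_i = \alpha_i = 1$ for the $k-1$ indices $i \in \{(t-1)k+1, \dots, tk-1\}$, together with $y_{tk} = 1$, produces the factor $1^k$, a contradiction. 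Hence $\boldsymbol{y} \preccurlyeq \boldsymbol{\alpha}$, and applying the same estimate to the mirror image $\bar{\boldsymbol{y}}$ (which avoids $1^j$) against $\bar{\boldsymbol{\beta}} = (1^{j-1}0)^{\infty}$ yields $\boldsymbol{y} \succcurlyeq \boldsymbol{\beta}$; as $n$ was arbitrary, $\boldsymbol{x} \in \Sigma_{(\boldsymbol{\alpha},\boldsymbol{\beta})}$. For the reverse inclusion, if some $\boldsymbol{x} \in \Sigma_{(\boldsymbol{\alpha},\boldsymbol{\beta})}$ contained a factor $1^k$, say $\sigma^p(\boldsymbol{x})$ begins with $1^k$, then comparing with $\boldsymbol{\alpha} = 1^{k-1}0\cdots$ gives $\sigma^p(\boldsymbol{x}) \succ \boldsymbol{\alpha}$, contradicting the definition of $\Sigma_{(\boldsymbol{\alpha},\boldsymbol{\beta})}$; the factor $0^j$ is excluded symmetrically by comparison with $\boldsymbol{\beta}$. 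This position-counting equivalence between a lexicographic bound and a forbidden run is the technical heart of the argument and the one step I would write out in full detail.

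For transitivity, given $\omega, \nu \in \mathcal{L}(\Sigma_{\mathcal{F}})$, let $s$ be the last symbol of $\omega$ and $t$ the first symbol of $\nu$, and write $\bar{\cdot}$ for the complementary symbol. I would insert the connector $\upsilon = \bar s$ when $s = t$, and $\upsilon = \bar s\, s$ when $s \neq t$. In either case $\upsilon$ begins with $\bar s$ and ends with a symbol distinct from $t$, so it does not lengthen the adjacent blocks of $\omega$ or $\nu$, and each symbol of $\upsilon$ sits in a block of length $1$ at the junction, its neighbours being the opposite symbol; since $j,k \geq 2$ no block of $0$'s reaches length $j$ and no block of $1$'s reaches length $k$. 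The interiors of $\omega$ and $\nu$ are untouched, so $\omega \upsilon \nu \in \mathcal{L}(\Sigma_{\mathcal{F}})$, proving transitivity. Finally, since $\mathcal{F}$ is finite the subshift is of finite type, and the identity above exhibits it as the lexicographic subshift with the stated periodic pair $(\boldsymbol{\alpha},\boldsymbol{\beta})$, completing the proof. The only genuine care needed is the junction analysis ensuring the connector does not extend the neighbouring blocks, which is exactly why the split into the cases $s = t$ and $s \neq t$ is made.
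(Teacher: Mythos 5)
Your proof is correct and follows essentially the same strategy as the paper's: explicit bridge words for transitivity plus a two-inclusion argument identifying $\Sigma_{\mathcal{F}}$ with $\Sigma_{(\boldsymbol{\alpha},\boldsymbol{\beta})}$. The only differences are cosmetic — your connectors ($\bar s$ or $\bar s\, s$) are shorter and more uniform than the paper's case-by-case bridges $0^{n-1}$, $1^{k-1}$, $0^{n-1}1$, $10^{n-1}$, and your divisibility argument ($\alpha_i=0$ iff $k\mid i$) spells out in full the step the paper only sketches when deducing a forbidden run from a lexicographic violation.
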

\begin{proof}
Let $n = \max \{j,k\}$. Assume that $n=j$, then $\mathcal{F}= \{0^n, 1^k\}.$ Let $u,v \in \mathcal{L}(\Sigma_{\mathcal{F}})$. If $u_{\ell(u)}=1$ and $v_1 =1$ then the word $0^{n-1}$ is a bridge between $u$ and $v$. For $u_{\ell(u)}=0$ and $v_1 =0$ the word $1^{k-1}$ is a bridge between $u$ and $v$. If $u_{\ell(u)}=1$ and $v_1=0$ then the word $0^{n-1}1$ is a bridge between $u$ and $v$. Finally, if $u_{\ell(u)}=0$ and $v_1 =1$ the word $10^{n-1}$ connect $u$ and $v$. If $n=k$, then $\mathcal{F} = \{0^j , 1^n\}.$ For this case similar bridges can be constructed. By \cite[Theorem 1.3]{samuel} the subshift $(\Sigma_{(\boldsymbol{\alpha},\boldsymbol{\beta})},\sigma_{(\boldsymbol{\alpha},\boldsymbol{\beta})})$ corresponding to $\boldsymbol{\alpha}= (1^{k-1}0)^\infty$ and $\boldsymbol{\beta} = (0^{j-1}1)^{\infty}$ is a subshift of finite type. Observe that $\Sigma_{(\boldsymbol{\alpha},\boldsymbol{\beta})} \neq \emptyset$ since $(0^{j-1}1^{k-1})^{\infty} \in \Sigma_{(\boldsymbol{\alpha},\boldsymbol{\beta})}$. Also, note that for every $\boldsymbol{x} \in  \Sigma_{(\boldsymbol{\alpha},\boldsymbol{\beta})}$, neither $0^j$ nor $1^n$ are factors of $\boldsymbol{x}$, which implies that $\Sigma_{(\boldsymbol{\alpha},\boldsymbol{\beta})} \subset \Sigma_{\mathcal{F}}$. Let $\boldsymbol{x} \in \Sigma_{\mathcal{F}}$ and assume that $\boldsymbol{x} \notin \Sigma_{(\boldsymbol{\alpha},\boldsymbol{\beta})}$. Then there exists $n \in \mathbb{N}$ such that $\sigma^n(\boldsymbol{x}) \prec \boldsymbol{\beta}$ or $\sigma^n(\boldsymbol{x}) \succ \boldsymbol{\alpha}$. Without losing generality assume that implies that $\sigma^n(\boldsymbol{x}) \prec \boldsymbol{\beta}$.  Then there exists $j^{\prime} \in \mathbb{N}$ such that $(\sigma^n(\boldsymbol{x}))_{j^{\prime}} = 0$ and $b_{j^{\prime}} = 1$, which implies that $0^j$ is a factor of $x$, a 
contradiction. Therefore $\Sigma_{\mathcal{F}} = \Sigma_{(\boldsymbol{\alpha},\boldsymbol{\beta})}$ and $(\Sigma_{(\boldsymbol{\alpha},\boldsymbol{\beta})}, \sigma_{(\boldsymbol{\alpha},\boldsymbol{\beta})})$ is a subshift of finite type.
\end{proof}
 
\begin{lemma}
Let $(\boldsymbol{\alpha},\boldsymbol{\beta}) \in \mathcal{LW}$ be renormalisable by $\omega$ and $\nu$. Then the sequences $\{n^{\omega}_i\}_{i=1}^{\infty}$, $\{n^{\nu}_i\}_{i=1}^{\infty}$, $\{m^{\omega}_j\}_{j=1}^{\infty}$ and $\{m^{\nu}_j\}_{j=1}^{\infty}$ are bounded if neither $n_i = \infty$ nor $m_j = \infty$ for $i,j \in \mathbb{N}$. Moreover, $$\max \{n^{\omega}_j\}_{j=1}^{\infty} \leq \max \{m^{\omega}_i\}_{i=1}^{\infty} = m^{\omega}_1$$ and $$\max \{m^{\nu}_i\}_{i=1}^{\infty} \leq \max \{n^{\nu}_j\}_{j=1}^{\infty} = n^{\nu}_1.$$ \label{boundedsequences}
\end{lemma}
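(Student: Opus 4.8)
The plan is to reduce everything to the admissibility of the \emph{renormalised pair}. Collapsing each block $\omega$ to the symbol $0$ and each block $\nu$ to the symbol $1$ in the decompositions of Definition~\ref{renormdef}, set
$$\boldsymbol{\alpha}^{*}=1^{n^{\nu}_{1}}0^{n^{\omega}_{1}}1^{n^{\nu}_{2}}0^{n^{\omega}_{2}}\cdots,\qquad \boldsymbol{\beta}^{*}=0^{m^{\omega}_{1}}1^{m^{\nu}_{1}}0^{m^{\omega}_{2}}1^{m^{\nu}_{2}}\cdots,$$
so that $0\boldsymbol{\alpha}=\theta(0\boldsymbol{\alpha}^{*})$ and $1\boldsymbol{\beta}=\theta(1\boldsymbol{\beta}^{*})$, where $\theta$ is the substitution $0\mapsto\omega$, $1\mapsto\nu$. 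Once we know that $(\boldsymbol{\alpha}^{*},\boldsymbol{\beta}^{*})\in\mathcal{LW}$, every assertion of the lemma is a direct read-off obtained by shifting to the beginning of the $i$-th or $j$-th run: the Parry condition $\sigma^{k}(\boldsymbol{\alpha}^{*})\preccurlyeq\boldsymbol{\alpha}^{*}$ applied at the start of the $j$-th block of $1$'s forces $n^{\nu}_{j}\leq n^{\nu}_{1}$, whence $\max_{j}n^{\nu}_{j}=n^{\nu}_{1}$; the Parry condition for $\bar{\boldsymbol{\beta}^{*}}$ gives $\max_{i}m^{\omega}_{i}=m^{\omega}_{1}$; the condition $\sigma^{n}(\boldsymbol{\alpha}^{*})\succcurlyeq\boldsymbol{\beta}^{*}$ applied at the start of the $j$-th block of $0$'s of $\boldsymbol{\alpha}^{*}$ gives $n^{\omega}_{j}\leq m^{\omega}_{1}$; and $\sigma^{n}(\boldsymbol{\beta}^{*})\preccurlyeq\boldsymbol{\alpha}^{*}$ applied at the start of the $i$-th block of $1$'s of $\boldsymbol{\beta}^{*}$ gives $m^{\nu}_{i}\leq n^{\nu}_{1}$. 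In each case the comparison is immediate, since one surplus symbol in a run makes the shifted sequence differ from the reference exactly at the first position where the reference leaves that run.

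The heart of the proof is therefore to establish $(\boldsymbol{\alpha}^{*},\boldsymbol{\beta}^{*})\in\mathcal{LW}$, that is, to transfer admissibility across $\theta$. First, $\theta$ is strictly order preserving: as $\omega$ begins with $0$ and $\nu$ with $1$, two images $\theta(\boldsymbol{u}),\theta(\boldsymbol{v})$ first disagree on the block where $\boldsymbol{u},\boldsymbol{v}$ first disagree, and there the $\nu$-block dominates the $\omega$-block. The delicate point is that a shift $\sigma^{p}(0\boldsymbol{\alpha})$ landing strictly inside a block is \emph{not} of the form $\theta$ applied to a shift of $0\boldsymbol{\alpha}^{*}$, so one must show that such interior shifts impose no admissibility constraint on $\boldsymbol{\alpha}$ (respectively $\boldsymbol{\beta}$) beyond those already encoded by the block-boundary shifts. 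This is exactly where the four defining conditions enter: $\omega=0\hbox{\rm{-max}}_{\omega}$ and $\nu=1\hbox{\rm{-min}}_{\nu}$ say that, read cyclically, no interior rotation of a single block exceeds $\omega^{\infty}$ at one of its $0$'s or falls below $\nu^{\infty}$ at one of its $1$'s, while $(0\hbox{\rm{-max}}_{\nu})^{\infty}\prec\omega^{\infty}$ and $\nu^{\infty}\prec(1\hbox{\rm{-min}}_{\omega})^{\infty}$ say that a $0$-interior of a $\nu$-block behaves like an $\omega$-interior and a $1$-interior of an $\omega$-block behaves like a $\nu$-interior. Together they force every interior-block comparison against $\boldsymbol{\alpha}$ or $\boldsymbol{\beta}$ to be decided, at its first disagreement, by the surrounding block pattern, so that the Parry and cross inequalities hold for $\boldsymbol{\alpha},\boldsymbol{\beta}$ if and only if they hold for $\boldsymbol{\alpha}^{*},\boldsymbol{\beta}^{*}$. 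I expect this interior-shift bookkeeping, carried out for both the $\omega$- and $\nu$-interiors, to be the main obstacle, as it is precisely the content of renormalisability.

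Finally, boundedness is a corollary of the four inequalities above. When no exponent equals $\infty$, both $n^{\nu}_{1}$ and $m^{\omega}_{1}$ are finite natural numbers; then $n^{\nu}_{j}\leq n^{\nu}_{1}$, $m^{\omega}_{i}\leq m^{\omega}_{1}$, $n^{\omega}_{j}\leq m^{\omega}_{1}$ and $m^{\nu}_{i}\leq n^{\nu}_{1}$ show that all four sequences are bounded by $\max\{n^{\nu}_{1},m^{\omega}_{1}\}$. By replacing $(\boldsymbol{\alpha},\boldsymbol{\beta})$ with $(\overline{\boldsymbol{\beta}},\overline{\boldsymbol{\alpha}})$, which exchanges the roles of $\omega$ and $\nu$ and interchanges the $\boldsymbol{\alpha}$- and $\boldsymbol{\beta}$-statements, it suffices to carry out the transfer argument of the second paragraph on the $\boldsymbol{\alpha}$-side in full detail.
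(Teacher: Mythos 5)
Your architecture --- collapse the block decomposition via the substitution $\theta\colon 0\mapsto\omega$, $1\mapsto\nu$, show the collapsed pair $(\boldsymbol{\alpha}^{*},\boldsymbol{\beta}^{*})$ lies in $\mathcal{LW}$, then read the four inequalities off the runs of $0$'s and $1$'s --- is coherent, and the final read-off is correct. The gap is that the middle step, which you yourself call the heart of the proof, is not carried out, and it is not actually a reduction: as your own read-off shows, the assertion $(\boldsymbol{\alpha}^{*},\boldsymbol{\beta}^{*})\in\mathcal{LW}$ already \emph{contains} all four inequalities of the lemma, so you have reformulated the statement rather than broken it into smaller pieces. (In the paper this assertion appears, as $\left(R_{\omega,\nu}(\boldsymbol{\alpha}),R_{\omega,\nu}(\boldsymbol{\beta})\right)\in\mathcal{LW}$, immediately \emph{after} Lemma \ref{boundedsequences}, precisely as a consequence of it.) Moreover, your sketch of the transfer aims at the wrong implication: showing that interior shifts of $0\boldsymbol{\alpha}$ ``impose no constraint beyond the block-boundary ones'' is what one needs to go from admissibility of $(\boldsymbol{\alpha}^{*},\boldsymbol{\beta}^{*})$ back to admissibility of $(\boldsymbol{\alpha},\boldsymbol{\beta})$; here the latter is the hypothesis, so interior shifts are harmless and irrelevant. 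The real obstruction in the direction you need is different: order-preservation of $\theta$ asks you to compare a block-boundary shift of $0\boldsymbol{\alpha}$ against $\theta(\boldsymbol{\alpha}^{*})=\sigma^{\ell(\omega)}(0\boldsymbol{\alpha})=\nu^{n^{\nu}_{1}}\omega^{n^{\omega}_{1}}\cdots$, whereas the hypothesis $\boldsymbol{\alpha}\in P$ only compares it against $\boldsymbol{\alpha}=\sigma(0\boldsymbol{\alpha})$, which begins one symbol \emph{inside} the leading $\omega$; bridging these two reference points is exactly the content of the lemma, so as written the plan is circular at its key step.

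The paper sidesteps all of this with four direct comparisons, one per inequality. For example, if some $n^{\nu}_{i}>n^{\nu}_{1}$, shift $\boldsymbol{\alpha}$ so that it reads $w_{2}\cdots w_{\ell(\omega)}\nu^{n^{\nu}_{i}}\cdots$ and compare with $\boldsymbol{\alpha}=w_{2}\cdots w_{\ell(\omega)}\nu^{n^{\nu}_{1}}\omega\cdots$: the two agree through $w_{2}\cdots w_{\ell(\omega)}\nu^{n^{\nu}_{1}}$ and then disagree with a $1$ (another $\nu$ beginning) against a $0$ ($\omega$ beginning), so the shift is lexicographically larger, contradicting $\boldsymbol{\alpha}\in P$; this gives $\max_{i}n^{\nu}_{i}=n^{\nu}_{1}$. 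The bound $\max_{i}m^{\omega}_{i}=m^{\omega}_{1}$ comes the same way from $\boldsymbol{\beta}\in\bar P$, and the cross bounds $n^{\omega}_{j}\le m^{\omega}_{1}$ and $m^{\nu}_{i}\le n^{\nu}_{1}$ from $\sigma^{n}(\boldsymbol{\alpha})\succcurlyeq\boldsymbol{\beta}$ and $\sigma^{m}(\boldsymbol{\beta})\preccurlyeq\boldsymbol{\alpha}$ respectively. If you wish to keep your framework, these same four comparisons are what must be supplied inside the ``transfer across $\theta$'' step; without them the proposal asserts what it set out to prove.
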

\begin{proof}
Firstly, assume that $\{n^{\nu}_i\}_{i=1}^{\infty}$ is not bounded. Then for every $i$ there exists $i^{\prime}$ such that $n^{\nu}_{i} < n^{\nu}_{i^{\prime}}$. In particular, there exists $i^{\prime}_1$ such that $n_1 < n_{i^{\prime}_1}$. Take $n$ sufficiently large such that $\sigma^n(\boldsymbol{\alpha}) = \omega\nu^{n^{\nu}_{i^{\prime}_1}}$. Observe that $a_{(\ell(\omega)-1)n_1\ell(\nu)+1}=0$ and $\sigma^n(\boldsymbol{\alpha})_{(\ell(\omega)-1)n_1\ell(\nu)+1}=1$, which implies that $\boldsymbol{\alpha} \notin P$ which is a contradiction. Therefore $\{n^{\nu}_i\}_{i=1}^{\infty}$ is bounded and $\max \{n^{\nu}_i\}_{i=1}^{\infty} = n^{\nu}_1$.

Secondly, assume that $\{m^{\omega}_j\}_{j=1}^{\infty}$ is not bounded. Similarly, for every $j$ there exists $j^{\prime}$ such that $m^{\omega}_{j} < m^{\omega}_{j^{\prime}}$. Let $j^{\prime}_1$ be such that $m_1 < n_{j^{\prime}_1}$. Take $m$ sufficiently large such that $\sigma^m(\boldsymbol{\alpha}) = \nu\omega^{m^{\omega}_{j^{\prime}_1}}$. Observe that $b_{(\ell(\nu)-1)m_1\ell(\omega)+1}=0$ and $\sigma^m(\boldsymbol{\alpha})_{(\ell(\nu)-1)m_1\ell(\omega)+1}=0$, which implies that $\boldsymbol{\beta} \notin \bar P$ which is a contradiction. Therefore $\{m^{\omega}_j\}_{j=1}^{\infty}$ is bounded and $\max \{m^{\omega}_j\}_{j=1}^{\infty} = m^{\omega}_1$.

Now, assume that $\{n^{\omega}_i\}_{i=1}^{\infty}$ is not bounded. Since $\{m^{\omega}_j\}_{j=1}^{\infty}$ is bounded there exists $j$ such that $m^{\omega}_1 < n^{\omega}_j$. Since $(\boldsymbol{\alpha},\boldsymbol{\beta})$ is renormalisable, there exists $n^{\prime}$ such that $\sigma^{n^{\prime}}(\boldsymbol{\alpha}) = \nu \omega^{n^{\omega}_j}\nu\ldots$. Then $\sigma^{n^{\prime}}
(\boldsymbol{\alpha}) \prec \boldsymbol{\beta}$ which contradicts that $(\boldsymbol{\alpha},\boldsymbol{\beta}) \in \mathcal{LW}$. Therefore $\{n^{\omega}_i\}_{i=1}^{\infty}$ is bounded by $m^{\omega}_1$. Similarly,$\{m^{\nu}_j\}_{j=1}^{\infty}$ is bounded by $n^{\nu}_1$. The proof is omitted.
\end{proof}

Observe that $n^{\nu}_i = \infty$ for some $i \in \mathbb{N}$ if and only if $i =1$. Also, $m^{\omega}_j = \infty$ for some $j \in \mathbb{N}$ if and only if $j =1$. This is a direct corollary of Lemma \ref{boundedsequences}. 

\vspace{1em}Note that if a pair $(\boldsymbol{\alpha}, \boldsymbol{\beta})$ is renormalisable, then it is possible to define the substitution $\rho_{\omega, \nu}$ as follows: $\rho_{\omega, \nu}(\omega) = 0$ and $\rho_{\omega, \nu}(\nu) = 1$. It is possible to define the pair $\left(R_{\omega,\nu}(\boldsymbol{\alpha}), R_{\omega,\nu}(\boldsymbol{\beta})\right)$ to be $\left(R_{\omega,\nu}(\boldsymbol{\alpha}), R_{\omega,\nu}(\boldsymbol{\beta})\right) = (\sigma(\rho_{\omega, \nu}(0\boldsymbol{\alpha})), \sigma(\rho_{\omega, \nu}(1\boldsymbol{\beta})))$. As a consequence of Lemma \ref{boundedsequences} it is clear $\left(R_{\omega,\nu}(\boldsymbol{\alpha}), R_{\omega,\nu}(\boldsymbol{\beta})\right) \in \mathcal{LW}$. Observe that $\left(R_{\omega,\nu}(\boldsymbol{\alpha}), R_{\omega,\nu}(\boldsymbol{\beta})\right)$ can be a renormalisable pair by words $(\omega_1,\nu_1)$. In such a case we denote by $\left(R^2(\boldsymbol{\alpha}), R^2(\boldsymbol{\beta})\right) = \left(R_{\omega^{\prime},\nu^{\prime}}(R_{\omega,\nu}(\boldsymbol{\alpha})), R_{\omega^{\prime},\nu^{\prime}}(R_{\omega,\nu}(\boldsymbol{\beta}))\right)$. Then, for $n \in \mathbb{N}$ we say that $(\boldsymbol{\alpha}, \boldsymbol{\beta})$ is \textit{$n$-renormalisable} if for every $0 \leq k \leq n -1$ the pair $\left(R^k(\boldsymbol{\alpha}), R^k(\boldsymbol{\beta})\right)$ is renormalisable and $\left(R^n(\boldsymbol{\alpha}), R^n(\boldsymbol{\beta})\right)$ is not renormalisable. If $(\boldsymbol{\alpha}, \boldsymbol{\beta})$ is renormalisable for every $n \geq 0$ then we say that $(\boldsymbol{\alpha}, \boldsymbol{\beta})$ is \textit{infinitely renormalisable}.  

\vspace{1em}In \cite[Lemma 2]{glendinning1} the authors proved that $n_1^{\omega}$ and $m_1^{\nu}$ cannot be simultaneously equal to $1$ based in a more restrictive definition of the lexicographic world $\mathcal{LW}$. However using Definition \ref{renormdef} it is possible to consider such a case. Given $(\boldsymbol{\alpha},\boldsymbol{\beta}) \in \mathcal{LW}$, we say that $\Sigma_{(\boldsymbol{\alpha},\boldsymbol{\beta})}$ is a \textit{cyclic subshift} if there exists a finite word $\omega$ such that $\boldsymbol{\alpha} = \sigma(0\hbox{\rm{-max}}_{\omega}^{\infty})$ and $\boldsymbol{\beta}= \sigma(1\hbox{\rm{-min}}_{\omega}^{\infty})$. Note that from \cite[Theorem 1.3]{samuel} we know that every cyclic subshift is a subshift of finite type. Furthermore, for every cyclic subshift $0_{\boldsymbol{\beta}} = 0_{\boldsymbol{\alpha}}$ and $1_{\boldsymbol{\beta}} = 1_{\boldsymbol{\alpha}}$.

\subsection*{Transitivity in $D_1 \setminus D_2$: The cyclically balanced case}

\noindent Firstly we will study subshifts such that $(\boldsymbol{\alpha}, \boldsymbol{\beta})$ is renormalisable by $\omega$ and $\nu$ where $\omega$ and $\nu$ are cyclically balanced words with $\omega = 0\hbox{\rm{-max}}_{\omega}$, $\nu = 1\hbox{\rm{-min}}_{\nu}$, $\ell(\omega) = \ell(\nu)$ and $1(\omega) = 1(\nu) = \frac{p}{q}$ with $\gcd(p,q) = 1$. This implies that the corresponding hole $(a,b) \in D_1 \setminus D_2$. Note that Remark \ref{laobservacion} and Theorem \ref{lemadenik} imply that the pair $(\sigma(\omega^{\infty}), \sigma(\nu^{\infty}))$ is not renormalisable. Besides, $(\pi(\omega^{\infty}),\pi(\nu^{\infty})) \in D_0$. Observe that Remark \ref{laobservacion} and Theorem \ref{lemadenik} that the pair $(\sigma(\omega^{\infty}), \sigma(\nu^{\infty}))$ will also imply that the proof of Theorem \ref{renorm1} remains valid in $D_1$. However, in Theorem \ref{balanceadas} we show that the pairs $(\boldsymbol{\alpha}, \boldsymbol{\beta})$ such that $0\boldsymbol{\alpha} = \omega\nu^{\infty}$ and $1\boldsymbol{\beta} = \nu\omega^{\infty}$ and $\omega$ and $\nu$ satisfy to be cyclically balanced words with $\omega = 0\hbox{\rm{-max}}_{\omega}$, $\nu = 1\hbox{\rm{-min}}_{\nu}$, $\ell(\omega) = \ell(\nu)$ and $1(\omega) = 1(\nu) = \frac{p}{q}$ with $\gcd(p,q) = 1$ then $(\Sigma_{(\boldsymbol{\alpha}, \boldsymbol{\beta})}, \sigma_{(\boldsymbol{\alpha}, \boldsymbol{\beta})})$ is a transitive subshift. This shows that all the hypothesis of Theorem \ref{renorm1prime} are necessary.

\begin{proposition}
Let $\omega$ and $\nu$ be cyclically balanced words such that $\omega = 0\hbox{\rm{-max}}_{\omega}$, $\nu = 1\hbox{\rm{-min}}_{\nu}$, $\ell(\omega) = \ell(\nu)$ and $1(\omega) = 1(\nu) = \frac{p}{q}$ with $\gcd(p,q) = 1$. Then 
\begin{align*}
\sigma(\nu\omega^{\infty}) &\prec \sigma((\nu\omega^{n+1})^{\infty}) \prec \sigma((\nu\omega^{n})^{\infty}) \prec \sigma(\nu^{\infty}) \preccurlyeq \omega^{\infty} \\ 
&\prec \nu^{\infty} \preccurlyeq \sigma(\omega^{\infty}) \prec \sigma((\omega\nu^{n})^{\infty}) \prec \sigma((\omega\nu^{n+1})^{\infty}) \prec \sigma(\omega \nu^{\infty})
\end{align*} for every $n \in \mathbb{N}$.  \label{balanced1}
\end{proposition}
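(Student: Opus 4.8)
The plan is to reduce every inequality in the chain to the elementary principle that two (eventually or purely) periodic sequences are compared under $\prec$ by locating the first coordinate at which they disagree. Write $\omega = a_1\cdots a_q$ and $\nu = b_1\cdots b_q$. By the definitions of $0\text{-max}$ and $1\text{-min}$ we have $a_1 = 0$ and $b_1 = 1$, and since $\omega$ and $\nu$ are cyclically balanced of common length $q$ with the same number $p$ of $1$'s and $\gcd(p,q)=1$, the cited combinatorial fact gives that $\nu$ is a cyclic permutation of $\omega$. Hence $\omega^\infty,\ \nu^\infty,\ \sigma(\omega^\infty)$ and $\sigma(\nu^\infty)$ all lie on the single periodic $\sigma$-orbit of $\omega^\infty$, and $\omega,\nu$ share their extremal rotations, so $\text{max}_\omega = \text{max}_\nu$ and $\text{min}_\omega = \text{min}_\nu$.

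First I would establish the central block $\sigma(\nu^\infty) \preccurlyeq \omega^\infty \prec \nu^\infty \preccurlyeq \sigma(\omega^\infty)$. The strict middle inequality is immediate, since $\omega^\infty$ begins with $a_1 = 0$ while $\nu^\infty$ begins with $b_1 = 1$. For the two flanking $\preccurlyeq$'s I invoke the identities $\sigma(\omega^\infty) = \sigma(0\text{-max}_\omega^\infty) = \text{max}_\omega^\infty$ and $\sigma(\nu^\infty) = \sigma(1\text{-min}_\omega^\infty) = \text{min}_\omega^\infty$. As $\text{max}_\omega^\infty$ and $\text{min}_\omega^\infty$ are respectively the largest and smallest points of the orbit, and $\omega^\infty,\nu^\infty$ are themselves points of the orbit, we obtain $\sigma(\nu^\infty) = \text{min}_\omega^\infty \preccurlyeq \omega^\infty$ and $\nu^\infty \preccurlyeq \text{max}_\omega^\infty = \sigma(\omega^\infty)$ (with equality possible exactly when $\omega$ has a unique $0$ or $\nu$ a unique $1$).

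Next I would treat the right-hand tail $\sigma(\omega^\infty) \prec \sigma((\omega\nu^n)^\infty) \prec \sigma((\omega\nu^{n+1})^\infty) \prec \sigma(\omega\nu^\infty)$. Writing $\sigma(\omega^\infty) = (a_2\cdots a_q a_1)^\infty$, $\sigma((\omega\nu^n)^\infty) = (a_2\cdots a_q\,\nu^{n} a_1)^\infty$, and $\sigma(\omega\nu^\infty) = a_2\cdots a_q\,\nu^\infty$, each adjacent pair shares a common prefix of the form $a_2\cdots a_q\,\nu^{k}$ and then first disagrees at a single coordinate where the smaller sequence carries the block-initial $a_1 = 0$ of an $\omega$ and the larger carries the block-initial $b_1 = 1$ of the next $\nu$; this yields the three $\prec$'s for every $n$. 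The left-hand tail $\sigma(\nu\omega^\infty) \prec \sigma((\nu\omega^{n+1})^\infty) \prec \sigma((\nu\omega^{n})^\infty) \prec \sigma(\nu^\infty)$ is handled symmetrically, now with $\sigma(\nu\omega^\infty) = b_2\cdots b_q\,\omega^\infty$ and $\sigma((\nu\omega^n)^\infty) = (b_2\cdots b_q\,\omega^{n} b_1)^\infty$, common prefix $b_2\cdots b_q\,\omega^{k}$, and the first disagreement opposing a block-initial $a_1 = 0$ of an $\omega$ against a block-initial $b_1 = 1$; here inserting more copies of $\omega$ introduces its $0$'s earlier, which is why this tail decreases in $n$.

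The content is entirely elementary; the only point demanding care, and hence the main obstacle, is the bookkeeping that in each adjacent comparison the two periodic sequences genuinely agree up to the asserted coordinate, i.e. that the common prefixes $a_2\cdots a_q\,\nu^{k}$ (respectively $b_2\cdots b_q\,\omega^{k}$) align exactly before the first $\omega$-versus-$\nu$ mismatch. I would make this precise by computing the index of first disagreement explicitly, namely $q,\ q(n+1),\ q(n+2)$ on the right and the mirror indices on the left, and checking in each case that one side reads a block-initial $0$ and the other a block-initial $1$, so that the direction of $\prec$ is forced.
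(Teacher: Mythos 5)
Your proof is correct and follows essentially the same route as the paper's: both arguments reduce each adjacent inequality in the two tails to an explicit first-disagreement index where one sequence reads the block-initial $0$ of an $\omega$ and the other the block-initial $1$ of a $\nu$, and both obtain the central block $\sigma(\nu^{\infty}) \preccurlyeq \omega^{\infty} \prec \nu^{\infty} \preccurlyeq \sigma(\omega^{\infty})$ from the facts that $\nu$ is a cyclic permutation of $\omega$ and that $\sigma(0\hbox{-max}_{\omega}^{\infty}) = \hbox{max}_{\omega}^{\infty}$, $\sigma(1\hbox{-min}_{\omega}^{\infty}) = \hbox{min}_{\omega}^{\infty}$ are the extreme points of the common periodic orbit. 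The only cosmetic difference is that you compare the already-shifted sequences directly, whereas the paper compares the unshifted ones and then applies $\sigma$ (which preserves the order since all sequences on each side share their first symbol); your explicit bookkeeping of the indices $q$, $q(n+1)$, $q(n+2)$ is, if anything, slightly more careful than the published argument.
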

\begin{proof}
Firstly observe that for every $n \in \mathbb{N}$, $\omega^{\infty} \prec (\omega\nu^n)^{\infty}$ since $\omega = 0\hbox{\rm{-max}}_{\omega}$, $\nu = 1\hbox{\rm{-min}}_{\nu}$ and $\omega^{\infty}_{\ell(\omega)+1} = 0$ and $(\omega\nu^{n})^{\infty}_{\ell(\omega+1)} = 1$. Consider $n \in \mathbb{N}$, observe that $(\omega\nu^{n})^{\infty}_{(n+1)\ell(\omega)+1} = 0$ and $(\omega\nu^{n+1})^{\infty}_{(n+1)\ell(\omega)+1}=1$. Similarly, for every $n \in \mathbb{N}$, $(\omega\nu^{n})^{\infty}_{(n+1)\ell(\omega)+1} = 0$ and $\omega\nu^{\infty}_{(n+1)\ell(\omega)+1} =1$. By a similar argument it is possible to show that for every $n \in \mathbb{N}$, $(\nu\omega^n)^{\infty} \prec \nu^{\infty}$ as well as $\nu\omega^{\infty} \prec (\nu\omega^{n+1})^{\infty} \prec (\nu\omega^n)^{\infty}$. This gives $$\sigma(\nu\omega^{\infty}) \prec \sigma((\nu\omega^{n+1})^{\infty}) \prec \sigma((\nu\omega^{n})^{\infty}) \prec \sigma(\nu^{\infty}) \hbox{\rm{ for every }} n \in \mathbb{N}$$ and $$\sigma(\omega^{\infty}) \prec \sigma((\omega\nu^{n})^{\infty}) \prec \sigma((\omega\nu^{n+1})^{\infty}) \prec \sigma(\omega\nu^{\infty}) \hbox{\rm{ for every }} n \in \mathbb{N}.$$ We have that $\sigma(\nu^{\infty}) \preccurlyeq \omega^{\infty} \prec \nu^{\infty} \prec \sigma(\omega)$ because $\omega = 0\hbox{\rm{-max}}_{\omega}$, $\nu = 1\hbox{\rm{-min}}_{\nu}$, $\nu$ is a cyclic permutation of $\omega$ , $\sigma(\nu^{\infty})_1 = 0$ and $\sigma(\omega^{\infty})_1 = 1$. This completes the proof.
\end{proof}

We would like to remark that $\sigma(\nu^{\infty}) = \omega^{\infty}$ and $\sigma^{\omega^{\infty}} = \nu^{\infty}$ if and only if $1(\omega) = 1(\nu) = \frac{1}{p}$ for every $p \geq 2$.

\vspace{1em}Let $$\left\{\omega, \nu \right\}^{\infty} = \left\{\boldsymbol{x} \in \Sigma_{2} \mid \boldsymbol{x} = \sigma^{n}((\epsilon_i)_{i=1}^{\infty}) \hbox{\rm{ where }} \epsilon_{i} \in \{\omega, \nu\} \hbox{\rm{ and }} n \geq 0 \right\}.$$ It is clear that $(\left\{\omega, \nu\right\}^{\infty}, \sigma_{\left\{\omega, \nu\right\}^{\infty}})$ is a subshift.

\begin{lemma}
Let $\omega$ and $\nu$ be cyclically balanced words such that $\omega = 0\hbox{\rm{-max}}_{\omega}$, $\nu = 1\hbox{\rm{-min}}_{\nu}$, $\ell(\omega) = \ell(\nu)$ and $1(\omega) = 1(\nu) = \frac{p}{q}$ with $\gcd(p,q) = 1$. Then for every $n \geq 0$, $$\Sigma_{({\boldsymbol{\alpha}}_n, {\boldsymbol{\beta}}_n)} \subset \left\{\omega, \nu \right\}^{\infty},$$ where $({\boldsymbol{\alpha}}_n, {\boldsymbol{\beta}}_n) = (\sigma((\omega\nu^n)^{\infty}), \sigma((\nu\omega^{n})^{\infty}))$.\label{balanced2}
\end{lemma}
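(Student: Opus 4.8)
The plan is, for an arbitrary $\boldsymbol{x}\in\Sigma_{(\boldsymbol{\alpha}_n,\boldsymbol{\beta}_n)}$, to exhibit a phase $s\in\{0,1,\ldots,q-1\}$ (where $q=\ell(\omega)=\ell(\nu)$) such that every aligned block $x_{s+jq+1}\ldots x_{s+(j+1)q}$, $j\ge 0$, equals $\omega$ or $\nu$; since the initial segment $x_1\ldots x_s$ is then a suffix of a block, $\boldsymbol{x}$ is a shift of a concatenation of $\omega$'s and $\nu$'s, i.e. $\boldsymbol{x}\in\{\omega,\nu\}^{\infty}$. Two preliminary reductions make this tractable. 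First, recalling that $0\boldsymbol{\alpha}_n=(\omega\nu^n)^{\infty}$ and $1\boldsymbol{\beta}_n=(\nu\omega^n)^{\infty}$, membership in the subshift is exactly $\boldsymbol{\beta}_n\preccurlyeq\sigma^k(\boldsymbol{x})\preccurlyeq\boldsymbol{\alpha}_n$ for all $k\ge0$, a condition invariant under $\sigma$; so it suffices to locate one block boundary and then propagate. Second, \emph{the identity of an aligned block is free}: as $\nu$ begins with $1$ and $\omega$ with $0$, the only admissible block beginning with $0$ is $\omega$ and the only one beginning with $1$ is $\nu$. Hence the whole problem reduces to \textbf{synchronisation}: producing a single consistent tiling phase.

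To prepare for this I would use the decomposition of Proposition~\ref{palabrasciclicas}, writing $\omega=\upsilon\tau$ and $\nu=\tau\upsilon$ with $\upsilon$ beginning with $0$ and $\tau$ beginning with $1$, together with the order chain of Proposition~\ref{balanced1}, in particular the strict inequalities $\sigma(\nu^{\infty})\preccurlyeq\omega^{\infty}\prec\nu^{\infty}\preccurlyeq\sigma(\omega^{\infty})$ and the fact that $\omega^{\infty},\nu^{\infty}\in[\boldsymbol{\beta}_n,\boldsymbol{\alpha}_n]_{\prec}$. The bounds $\boldsymbol{\beta}_n\preccurlyeq\sigma^k(\boldsymbol{x})\preccurlyeq\boldsymbol{\alpha}_n$ restrict the length-$q$ factors of $\boldsymbol{x}$ to those that actually occur in $\{\omega,\nu\}^{\infty}$: the upper bound, read against $(\omega\nu^n)^{\infty}$, forbids a $1$ appearing earlier than the relevant rotation of $\omega$ permits, while the lower bound symmetrically forbids an excess of $0$'s. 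I would verify this by the same coordinatewise comparisons used to prove Proposition~\ref{balanced1}, showing that any disallowed length-$q$ window propagates under a further shift to a sequence either exceeding $\boldsymbol{\alpha}_n$ or falling below $\boldsymbol{\beta}_n$.

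The heart of the argument, and the step I expect to be the main obstacle, is turning this local admissibility into a global phase. Here I would exploit $\gcd(p,q)=1$: the $q$ cyclic rotations of $\omega$ are pairwise distinct, so the $\{\omega,\nu\}$-tiling, if it exists, has a \emph{unique} phase, and it is enough to show that the straddling windows (the positions carrying the run-length ``defects'' such as $\upsilon\upsilon$ or $\tau\tau$ in the $\upsilon,\tau$-reading) all lie in one residue class modulo $q$. I would argue by contradiction: two such defects in incongruent positions would, after an appropriate shift $\sigma^{k}$, place a $1$ (respectively a $0$) one coordinate too early relative to $\boldsymbol{\alpha}_n$ (respectively $\boldsymbol{\beta}_n$), contradicting the bounds; it is precisely the \emph{strictness} of the inequalities in Proposition~\ref{balanced1}, coming from the extremality $\omega=0\hbox{\rm{-max}}_{\omega}$ and $\nu=1\hbox{\rm{-min}}_{\nu}$, that excludes the incongruent residues and is not a consequence of balancedness alone. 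Once the phase $s$ is fixed, shift-invariance of $\Sigma_{(\boldsymbol{\alpha}_n,\boldsymbol{\beta}_n)}$ lets me read the aligned blocks one after another; by the free-identity remark each is $\omega$ or $\nu$ according to its first symbol, and an induction on the block index yields the desired parse of $\boldsymbol{x}$, establishing $\Sigma_{(\boldsymbol{\alpha}_n,\boldsymbol{\beta}_n)}\subset\{\omega,\nu\}^{\infty}$.
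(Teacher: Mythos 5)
Your strategy is genuinely different from the paper's, which argues by induction on $n$: Proposition~\ref{balanced1} gives $\Sigma_{(\boldsymbol{\alpha}_{k},\boldsymbol{\beta}_{k})}\subset\Sigma_{(\boldsymbol{\alpha}_{k+1},\boldsymbol{\beta}_{k+1})}$, so a new point has a shift trapped in one of four thin lexicographic intervals such as $((\omega\nu^{k})^{\infty},(\omega\nu^{k+1})^{\infty})_{\prec}$, whose endpoints share the long common prefix $\omega\nu^{k}$; that exact prefix delivers the tiling phase and the initial tiles simultaneously, and the parse then propagates by shift-invariance. As written, however, your plan has a genuine gap at its central reduction. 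The claim that ``the identity of an aligned block is free'', so that the whole problem reduces to synchronisation, is not justified: knowing a phase and the first symbol of an aligned length-$q$ window does not force that window to equal $\omega$ or $\nu$ --- that each aligned window \emph{is} one of the two tiles is precisely the content of the lemma, and it cannot be read off from the single pair of inequalities at that position. Concretely, take $r=\tfrac{1}{2}$, $\omega=01$, $\nu=10$, $n=1$: the point $(0011)^{\infty}=\boldsymbol{\beta}_1$ lies in $\Sigma_{(\boldsymbol{\alpha}_1,\boldsymbol{\beta}_1)}$, and at the even phase its aligned blocks are $00$ and $11$, the first of which begins with $0$ but is not $\omega$; only at the odd phase are the aligned blocks tiles. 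So ``begins with $0$, hence equals $\omega$'' holds only at the correct phase, and proving it there is exactly the propagation argument you have deferred.

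The two remaining steps are also only sketched where the real work lies. Local admissibility (every length-$q$ factor of $\boldsymbol{x}$ occurs in $\{\omega,\nu\}^{\infty}$) would not suffice even if proved, since $\{\omega,\nu\}^{\infty}$ is a sofic renewal system and is in general strictly smaller than the set of sequences all of whose windows of any fixed length are admissible; and the synchronisation step --- that all ``defects'' lie in a single residue class modulo $q$ --- is asserted via an unspecified contradiction, without saying what a defect is in terms of the kneading inequalities or which shift of $\boldsymbol{x}$ exhibits the claimed violation of $\boldsymbol{\alpha}_n$ or $\boldsymbol{\beta}_n$. Until that comparison is actually carried out (for instance by locating a shift of $\boldsymbol{x}$ in one of the four intervals coming from Proposition~\ref{balanced1}, as the paper does, so that a long exact prefix is forced), the proof is not complete.
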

\begin{proof}
For $n = 0$ note that $\Sigma_{({\boldsymbol{\alpha}}_0, {\boldsymbol{\beta}}_0)} = \left\{\sigma^n(\omega^{\infty})\right\}_{n=0}^{q-1} \subset \left\{\omega, \nu\right\}^{\infty}$. Let us assume that $\Sigma_{({\boldsymbol{\alpha}}_k, {\boldsymbol{\beta}}_k)} = \left\{\sigma^n(\omega^{\infty})\right\}_{n=0}^{q-1} \subset \left\{\omega, \nu\right\}^{\infty}$ for $k \in \mathbb{N}$. Let $n = k+1$. From Proposition \ref{balanced1} we have that $\Sigma_{({\boldsymbol{\alpha}}_k, {\boldsymbol{\beta}}_k)} \subset \Sigma_{({\boldsymbol{\alpha}}_{k+1}, {\boldsymbol{\beta}}_{k+1})}$ then by induction we just need to show that $\boldsymbol{x} \in \left\{\omega, \nu\right\}^{\infty}$ if $\boldsymbol{x} \in \Sigma_{({\boldsymbol{\alpha}}_{k+1}, {\boldsymbol{\beta}}_{k+1})} \setminus \Sigma_{({\boldsymbol{\alpha}}_{k}, {\boldsymbol{\beta}}_{k})}$. Consider $\boldsymbol{x} \in \Sigma_{({\boldsymbol{\alpha}}_{k+1}, {\boldsymbol{\beta}}_{k+1})} \setminus \Sigma_{({\boldsymbol{\alpha}}_{k}, {\boldsymbol{\beta}}_{k})}$. Then 
\begin{align*}
\boldsymbol{x} \in \left[\sigma((\nu\omega^{k+1})^{\infty}), \sigma((\nu\omega^{k})^{\infty})\right]_{\prec} &\cup \left[(\omega\nu^{k})^{\infty},(\omega\nu^{k+1})^{\infty},\right]_{\prec}\\ 
&\cup \left[(\nu\omega^{k+1})^{\infty},(\nu\omega^{k})^{\infty}\right]_{\prec} \cup \left[\sigma((\omega\nu^{k})^{\infty}), \sigma((\omega\nu^{k+1})^{\infty})\right]_{\prec}.   
\end{align*}
Moreover, since $\left\{\omega, \nu\right\}^{\infty}$ contains $$\sigma((\nu\omega^{k+1})^{\infty}), \sigma((\nu\omega^{k})^{\infty}),(\omega\nu^{k})^{\infty},(\omega\nu^{k+1})^{\infty},(\nu\omega^{k+1})^{\infty},(\nu\omega^{k})^{\infty}, \sigma((\omega\nu^{k})^{\infty})$$ and $\sigma((\omega\nu^{k+1})^{\infty}),$ then it suffices to consider 
\begin{align*}
\boldsymbol{x} \in \left(\sigma((\nu\omega^{k+1})^{\infty}), \sigma((\nu\omega^{k})^{\infty})\right)_{\prec} &\cup \left((\omega\nu^{k})^{\infty},(\omega\nu^{k+1})^{\infty},\right)_{\prec}\\ 
&\cup \left((\nu\omega^{k+1})^{\infty},(\nu\omega^{k})^{\infty}\right)_{\prec} \cup \left(\sigma((\omega\nu^{k})^{\infty}), \sigma((\omega\nu^{k+1})^{\infty})\right)_{\prec}.   
\end{align*}
Assume that $\boldsymbol{x} \in \left((\omega\nu^{k})^{\infty},(\omega\nu^{k+1})^{\infty},\right)_{\prec}$ or $\boldsymbol{x} \in \left((\nu\omega^{k+1})^{\infty},(\nu\omega^{k})^{\infty}\right)_{\prec}$. Let us assume the former. Then  $x_1 \ldots x_{\ell(\omega)+n\ell(\nu)} = w_1\ldots w_{\ell(\omega)}(v_1\ldots v_{\ell(\nu)})^n$. If $x_{\ell(\omega)+n\ell(\nu)+1} = 0$ then $\boldsymbol{x} = (\omega\nu^{k})^{\infty}$ and if $x_{\ell(\omega)+n\ell(\nu)+1} = 1$ then $x_{\ell(\omega)+n\ell(\nu)+1}\ldots x_{\ell(\omega)+n+1\ell(\nu)} = \nu$. Then $\boldsymbol{x} \in \left\{\omega, \nu \right\}^{\infty}$. Changing $\omega$ by $\nu$ and vice versa and using a similar argument if $\boldsymbol{x} \in \left((\nu\omega^{k+1})^{\infty},(\nu\omega^{k})^{\infty}\right)_{\prec}$ then $\boldsymbol{x} \in \left\{\omega, \nu \right\}^{\infty}$. Observe that $\sigma\left(\left((\omega\nu^{k})^{\infty},(\omega\nu^{k+1})^{\infty}\right)_{\prec}\right) = \left(\sigma((\omega\nu^{k})^{\infty}), \sigma((\omega\nu^{k+1})^{\infty})\right)_{\prec}$ and $\sigma\left(\left((\nu\omega^{k+1})^{\infty},(\nu\omega^{k})^{\infty}\right)_{\prec}\right) = \left(\sigma((\nu\omega^{k+1})^{\infty}), \sigma((\nu\omega^{k})^{\infty})\right)_{\prec}$ and the proof is complete.  
\end{proof}

\begin{lemma}
Let $\omega$ and $\nu$ be cyclically balanced with $1(\omega) = 1(\nu)$ and $\ell(\omega) = \ell(\nu)$ then $(\Sigma_{(\boldsymbol{\alpha}, \boldsymbol{\beta})}, \sigma_{(\boldsymbol{\alpha}, \boldsymbol{\beta})}) = \left\{\omega, \nu \right\}^{\infty}$ where $(\boldsymbol{\alpha}, \boldsymbol{\beta}) = (\sigma(\omega\nu^{\infty}),\sigma(\nu\omega^{\infty}))$ .\label{lemacomoeldenik} 
\end{lemma}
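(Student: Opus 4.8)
The plan is to establish the two inclusions $\{\omega,\nu\}^{\infty}\subseteq\Sigma_{(\boldsymbol{\alpha},\boldsymbol{\beta})}$ and $\Sigma_{(\boldsymbol{\alpha},\boldsymbol{\beta})}\subseteq\{\omega,\nu\}^{\infty}$ separately. First I would record the structural facts I will lean on: $(\{\omega,\nu\}^{\infty},\sigma)$ is a closed, $\sigma$-invariant set, and both $\boldsymbol{\alpha}=\sigma(\omega\nu^{\infty})$ and $\boldsymbol{\beta}=\sigma(\nu\omega^{\infty})$ are themselves shifts of $\{\omega,\nu\}$-concatenations, hence lie in $\{\omega,\nu\}^{\infty}$. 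I would also exploit the $0\leftrightarrow 1$ symmetry: applying the mirror image interchanges $\omega=0\text{-max}_{\omega}$ with a cyclic image of $\nu=1\text{-min}_{\nu}$ and carries $\boldsymbol{\alpha}$ to the analogue of $\boldsymbol{\beta}$, so that every statement about the lexicographically largest element of $\{\omega,\nu\}^{\infty}$ yields the dual statement about the smallest. This lets me treat only the upper bound explicitly.

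For $\{\omega,\nu\}^{\infty}\subseteq\Sigma_{(\boldsymbol{\alpha},\boldsymbol{\beta})}$, since $\{\omega,\nu\}^{\infty}$ is shift-invariant it suffices to show $\boldsymbol{\beta}\preccurlyeq\boldsymbol{z}\preccurlyeq\boldsymbol{\alpha}$ for every $\boldsymbol{z}\in\{\omega,\nu\}^{\infty}$, i.e.\ that $\boldsymbol{\alpha}$ and $\boldsymbol{\beta}$ are the lexicographic supremum and infimum of $\{\omega,\nu\}^{\infty}$. I would prove $\boldsymbol{z}\preccurlyeq\boldsymbol{\alpha}$ by examining the length-$q$ window at the start of $\boldsymbol{z}$, where $q=\ell(\omega)$. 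Writing $\omega=0w_{2}\cdots w_{q}$, the length-$q$ prefix of $\boldsymbol{\alpha}$ is $w_{2}\cdots w_{q}1$, and I would show, using that $\omega$ and $\nu$ are cyclically balanced cyclic permutations of one another, that this is the lexicographically largest length-$q$ factor occurring in any element of $\{\omega,\nu\}^{\infty}$, realised precisely at an $\omega\to\nu$ junction. If $\boldsymbol{z}$'s initial window is strictly smaller we are done; if it equals the peak window then $\boldsymbol{z}$ sits at such a junction and its continuation is again a $\{\omega,\nu\}$-concatenation, which by the inequalities $\omega^{\infty}\prec\nu^{\infty}$ and $\sigma(\omega^{\infty})\prec\sigma((\omega\nu^{n})^{\infty})\prec\boldsymbol{\alpha}$ of Proposition~\ref{balanced1} is dominated by the continuation $\nu^{\infty}$ of $\boldsymbol{\alpha}$. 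An induction on the number of leading blocks then gives $\boldsymbol{z}\preccurlyeq\boldsymbol{\alpha}$, and the mirror argument gives $\boldsymbol{z}\succcurlyeq\boldsymbol{\beta}$.

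For the reverse inclusion $\Sigma_{(\boldsymbol{\alpha},\boldsymbol{\beta})}\subseteq\{\omega,\nu\}^{\infty}$ I would pass through the finite approximants of Lemma~\ref{balanced2}. Setting $(\boldsymbol{\alpha}_{n},\boldsymbol{\beta}_{n})=(\sigma((\omega\nu^{n})^{\infty}),\sigma((\nu\omega^{n})^{\infty}))$, Proposition~\ref{balanced1} shows $\boldsymbol{\alpha}_{n}\nearrow\boldsymbol{\alpha}$ and $\boldsymbol{\beta}_{n}\searrow\boldsymbol{\beta}$ monotonically, so the subshifts $\Sigma_{(\boldsymbol{\alpha}_{n},\boldsymbol{\beta}_{n})}$ increase and are all contained in $\Sigma_{(\boldsymbol{\alpha},\boldsymbol{\beta})}$, while Lemma~\ref{balanced2} gives $\Sigma_{(\boldsymbol{\alpha}_{n},\boldsymbol{\beta}_{n})}\subseteq\{\omega,\nu\}^{\infty}$. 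I would then prove $\Sigma_{(\boldsymbol{\alpha},\boldsymbol{\beta})}=\overline{\bigcup_{n}\Sigma_{(\boldsymbol{\alpha}_{n},\boldsymbol{\beta}_{n})}}$; since $\{\omega,\nu\}^{\infty}$ is closed and contains the union, this yields the inclusion at once. The substantive half is to approximate an arbitrary orbit-trapped $\boldsymbol{x}\in\Sigma_{(\boldsymbol{\alpha},\boldsymbol{\beta})}$ by points whose entire orbit stays inside the tighter window $[\boldsymbol{\beta}_{n},\boldsymbol{\alpha}_{n}]_{\prec}$: given a long prefix of $\boldsymbol{x}$ I would cut it at a block boundary and close it up periodically with alternating $\omega\nu$ padding, using the junction analysis above to keep every shift between $\boldsymbol{\beta}_{n}$ and $\boldsymbol{\alpha}_{n}$.

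The main obstacle is precisely this re-periodisation step, and more generally the decoding of a trapped sequence into $\omega$- and $\nu$-blocks. A shift of $\boldsymbol{x}$ may agree with $\boldsymbol{\alpha}$ for arbitrarily long stretches, so a naive prefix-matching point would violate the strict bound $\boldsymbol{\alpha}_{n}\prec\boldsymbol{\alpha}$; the cyclic-balance rigidity of $\omega$ and $\nu$ (they are cyclic permutations of a single Christoffel-type word, cf.\ Lemma~\ref{lemadenik}) is what forces the locations of the block boundaries and guarantees that the only length-$q$ windows compatible with the trap are cyclic factors of $\{\omega,\nu\}$-concatenations. Controlling these junction windows, so that cutting and re-periodising never creates a forbidden excursion past $\boldsymbol{\alpha}_{n}$ or below $\boldsymbol{\beta}_{n}$, is the technical heart of the argument; once it is in place both inclusions close and $\Sigma_{(\boldsymbol{\alpha},\boldsymbol{\beta})}=\{\omega,\nu\}^{\infty}$ follows.
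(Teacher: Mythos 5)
Your proposal follows the paper's proof essentially step for step: the first inclusion is obtained by showing that $\boldsymbol{\alpha}$ and $\boldsymbol{\beta}$ are the lexicographic maximum and minimum of the shift-invariant set $\left\{\omega,\nu\right\}^{\infty}$, and the second by combining Lemma \ref{balanced2} with the identity $\Sigma_{(\boldsymbol{\alpha},\boldsymbol{\beta})}=\overline{\bigcup_{n}\Sigma_{(\boldsymbol{\alpha}_n,\boldsymbol{\beta}_n)}}$ and the closedness of $\left\{\omega,\nu\right\}^{\infty}$. The re-periodisation step you single out as the technical heart is exactly the closure identity that the paper asserts with a bare ``Observe that'', so your sketch matches the paper's argument and is, if anything, more explicit about where the remaining work lies.
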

\begin{proof}
Firstly we show that $\left\{\omega, \nu \right\}^{\infty} \subset (\Sigma_{(\boldsymbol{\alpha}, \boldsymbol{\beta}}, \sigma_{\boldsymbol{\alpha}, \boldsymbol{\beta})})$. Note that $\omega\nu^{\infty}$ is the lexicographically largest word starting with $0$ in $\left\{\omega, \nu \right\}^{\infty}$ and $\nu\omega^{\infty}$ is the lexicographically smallest word starting with $1$ in $\left\{\omega, \nu \right\}^{\infty}$. Then $\sigma(\omega\nu^{\infty})$ is the lexicographically largest word in $\left\{\omega, \nu \right\}^{\infty}$ and $\sigma(\nu\omega^{\infty})$ is the lexicographically smallest word in $\left\{\omega, \nu \right\}^{\infty}$. Since $\left\{\omega, \nu \right\}^{\infty}$ is a subshift, then $$\boldsymbol{\beta} = \sigma(\nu\omega^{\infty}) \preccurlyeq \sigma^n(\boldsymbol{x}) \preccurlyeq \sigma(\omega\nu^{\infty}) = \boldsymbol{\alpha}$$ for every $\boldsymbol{x} \in \left\{\omega, \nu \right\}^{\infty}$. This implies that $\left\{\omega, \nu \right\}^{\infty} \subset (\Sigma_{(\boldsymbol{\alpha}, \boldsymbol{\beta})}, \sigma_{(\boldsymbol{\alpha}, \boldsymbol{\beta})})$.

\vspace{0.5em}Let us show now that $\Sigma_{(\boldsymbol{\alpha}, \boldsymbol{\beta})} \subset \left\{\omega, \nu \right\}^{\infty}$. Observe that $\Sigma_{(\boldsymbol{\alpha}, \boldsymbol{\beta})} = \overline{\mathop{\bigcup}\limits_{n=0}^{\infty} \Sigma_{({\boldsymbol{\alpha}}_n, {\boldsymbol{\beta}}_n)}}$ where $({\boldsymbol{\alpha}}_n, {\boldsymbol{\beta}}_n) = (\sigma((\omega\nu^n)^{\infty}), \sigma((\nu\omega^{n})^{\infty}))$. Let $\boldsymbol{x} \in \Sigma_{(\boldsymbol{\alpha}, \boldsymbol{\beta})}$. Note that if $\boldsymbol{x} \in  \Sigma_{(\boldsymbol{\alpha}, \boldsymbol{\beta})} \setminus \mathop{\Sigma_{(\boldsymbol{\alpha}, \boldsymbol{\beta})}}\limits^{\circ}$ then $\boldsymbol{x} \in \left\{\sigma(\omega\nu^{\infty}),\sigma(\nu\omega^{\infty})\right\} \subset \left\{\omega, \nu \right\}^{\infty}$. If $\boldsymbol{x} \in  \mathop{\Sigma_{(\boldsymbol{\alpha}, \boldsymbol{\beta})}}\limits^{\circ}$ then there exist $n \geq 0$ such that $\boldsymbol{x} \in \Sigma_{({\boldsymbol{\alpha}}_n, {\boldsymbol{\beta}}_n)}$. Then, by Lemma \ref{balanced2}, $\boldsymbol{x} \in \left\{\omega, \nu\right\}^{\infty}$. Thus $\Sigma_{(\boldsymbol{\alpha}, \boldsymbol{\beta})} \subset \left\{\omega, \nu \right\}^{\infty}$ which finishes the proof.
\end{proof}

We emphasise that Lemma \ref{lemacomoeldenik} is false if $\omega$ and $\nu$ are not cyclically balanced words or if $\omega$ and $\nu$ are cyclically balanced but $1(\omega) \neq 1(\nu)$. For example if $\omega = 01110$ and $\nu = 100001$ then the period $2$ cycle $(01)^{\infty} \in \Sigma_{(\sigma(\omega\nu^{\infty}),\sigma(\nu\omega^{\infty}))}$ and $(01)^{\infty} \neq \left\{\omega, \nu\right\}^{\infty}$.  

\begin{theorem}
If $\omega$ and $\nu$ are cyclically balanced with $1(\omega) = 1(\nu)$ and $\ell(\omega) = \ell(\nu)$ then $(\Sigma_{\boldsymbol{\alpha}, \boldsymbol{\beta}}, \sigma_{(\boldsymbol{\alpha}, \boldsymbol{\beta})})$ is a transitive subshift where $\boldsymbol{\alpha} = \sigma(\omega\nu^{\infty})$ and $\boldsymbol{\beta} = \sigma(\nu\omega^{\infty})$ \label{balanceadas}
\end{theorem}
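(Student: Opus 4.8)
The plan is to reduce the claim to the transitivity of the free block system $\{\omega,\nu\}^{\infty}$, which is then immediate. By Lemma \ref{lemacomoeldenik}, the hypotheses that $\omega,\nu$ are cyclically balanced with $1(\omega)=1(\nu)$ and $\ell(\omega)=\ell(\nu)$ yield the identity $\Sigma_{(\boldsymbol{\alpha},\boldsymbol{\beta})}=\{\omega,\nu\}^{\infty}$ for $\boldsymbol{\alpha}=\sigma(\omega\nu^{\infty})$ and $\boldsymbol{\beta}=\sigma(\nu\omega^{\infty})$. In particular $\mathcal{L}(\Sigma_{(\boldsymbol{\alpha},\boldsymbol{\beta})})$ is precisely the set of factors of finite concatenations $\eta_1\cdots\eta_m$ with each $\eta_i\in\{\omega,\nu\}$, since applying $\sigma^n$ in the definition of $\{\omega,\nu\}^{\infty}$ only deletes a prefix. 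Thus it suffices to exhibit, for arbitrary $u,v\in\mathcal{L}(\Sigma_{(\boldsymbol{\alpha},\boldsymbol{\beta})})$, a word $w$ with $uwv\in\mathcal{L}(\Sigma_{(\boldsymbol{\alpha},\boldsymbol{\beta})})$.

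First I would record the framing observation: any $u\in\mathcal{L}(\Sigma_{(\boldsymbol{\alpha},\boldsymbol{\beta})})$ occurs as a factor of some finite block concatenation $\eta_1\cdots\eta_m$, and likewise $v$ occurs inside an infinite concatenation $\theta_1\theta_2\cdots$, say beginning at position $j_0$. The bridge is then produced by splicing the two block sequences. Setting $C=\eta_1\cdots\eta_m\theta_1\theta_2\cdots$, this is again an element of $\{\omega,\nu\}^{\infty}$, the chosen occurrence of $u$ lies within the first $m$ blocks (positions at most $m\ell(\omega)$), while the copy of $v$ now sits at position $m\ell(\omega)+j_0$, strictly to the right of $u$. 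Reading off the factor of $C$ running from the start of this occurrence of $u$ to the end of this occurrence of $v$ produces a word of the form $uwv$, where $w$ is exactly what lies in between; as every factor of $C$ is admissible, $uwv\in\mathcal{L}(\Sigma_{(\boldsymbol{\alpha},\boldsymbol{\beta})})$. If one prefers $w$ to be nonempty, it is enough to insert one extra block between $\eta_1\cdots\eta_m$ and $\theta_1\theta_2\cdots$.

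I expect the only genuine subtlety to be the reduction itself, rather than the bridging. Everything hinges on the equality $\Sigma_{(\boldsymbol{\alpha},\boldsymbol{\beta})}=\{\omega,\nu\}^{\infty}$, and it is precisely here that the cyclic balance together with $1(\omega)=1(\nu)$ and $\ell(\omega)=\ell(\nu)$ are indispensable: the example following Lemma \ref{lemacomoeldenik} (with $\omega=01110$ and $\nu=100001$) shows that the block description collapses once these hypotheses fail, so the argument cannot be run in that generality. Once the block description is secured, the bridging step is nothing more than the observation that the system obtained by freely concatenating the two blocks $\omega$ and $\nu$ is irreducible, in the same spirit as the explicit bridges constructed in Lemma \ref{ceroandone}, and so presents no further obstacle.
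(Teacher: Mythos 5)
Your proof is correct, and its skeleton coincides with the paper's: both arguments rest entirely on Lemma \ref{lemacomoeldenik} to identify $\Sigma_{(\boldsymbol{\alpha},\boldsymbol{\beta})}$ with $\{\omega,\nu\}^{\infty}$, after which only the transitivity of the free block system remains. Where you diverge is in that last step: the paper disposes of it by citing the Fiebig--Fiebig result that $\{\omega,\nu\}^{\infty}$ is a coded system, and coded systems are transitive; you instead give a direct splicing argument, realising $u$ inside a finite concatenation $\eta_1\cdots\eta_m$ and $v$ inside an infinite one $\theta_1\theta_2\cdots$, and reading the bridge off the concatenation $\eta_1\cdots\eta_m\theta_1\theta_2\cdots$, which lies in $\{\omega,\nu\}^{\infty}$ because $\ell(\omega)=\ell(\nu)$ makes the shifted copies align block by block. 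Your version is more elementary and self-contained, and it makes explicit what the paper only remarks afterwards, namely that this final step uses nothing about cyclic balance or equal $1$-ratios -- those hypotheses are consumed entirely by Lemma \ref{lemacomoeldenik}; the paper's citation buys brevity and places the system in the general framework of coded systems, which it reuses later (e.g.\ in Section \ref{specification}). Your remark about inserting an extra block to make the bridge nonempty correctly handles the paper's formulation of transitivity, which asks for an actual word $\upsilon\in\mathcal{L}(\Sigma_{(\boldsymbol{\alpha},\boldsymbol{\beta})})$ between $u$ and $v$.
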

\begin{proof}
From Lemma \ref{lemacomoeldenik} it is just needed to show that $\left\{\omega,\nu\right\}^{\infty}$ is a transitive subshift. From \cite[Theorem 2.1]{fiebig} $\left\{\omega,\nu\right\}^{\infty}$ is coded. Thus, $\left\{\omega,\nu\right\}^{\infty}$ is transitive. 
\end{proof}

Note that the transitivity of $\left\{\omega,\nu\right\}^{\infty}$ does not rely on the fact that $\omega$ and $\nu$ satisfy the hypothesis of Theorem \ref{balanceadas}. In fact, for any two words $\omega$ and $\nu$, $\left\{\omega,\nu\right\}^{\infty}$ is transitive. 

\vspace{1em}Let $\omega$, $\nu$ be finite words. Observe that $\left\{\omega, \nu\right\}^{\infty}$ is a one-sided version of a \textit{uniquely decipherable renewal system} - see \cite{lind} for a general definition. By an easy modification of the proof of \cite[Theorem 3.3]{lind} we can be sure that $\left\{\omega, \nu\right\}^{\infty}$ is a transitive sofic subshift. Let $W$ be the set of all bi-infinite concatenations of $\omega$ and $\nu$. Then, it is clear that $B_n(W) = B_n(\left\{\omega, \nu\right\}^{\infty})$. This implies that $h_{top}(W) = h_{top}(\left\{\omega, \nu\right\}^{\infty})$. Moreover $$h_{top}(\left\{\omega, \nu\right\}^{\infty}) = \log \lambda$$ where $\frac{1}{\lambda}$ is the unique root of $1-t^{\ell(\omega)}+t^{\ell(\nu)}$ in $[0,1]$ \cite[Remark 2.1]{hong} \cite[p. 1008]{hall}. 

\subsection*{Transitivity in $D_2$}

Now we restrict our attention to study transitivity for subshifts corresponding to $(a,b) \in D_2$. Observe that from \cite[Theorem 3.8]{sidorov6} and \cite[Theorem 2.13]{sidorov1} we are sure that if $(a,b) \in D_2$ then the associated pair $(\boldsymbol{\alpha},\boldsymbol{\beta})$ is not renormalisable by $\omega_r$ and $\nu_r$ for $r \in \mathbb{Q} \cap (0,1)$.    

\vspace{1em}Let us now begin with the proof of Theorem \ref{renorm1} and Theorem \ref{renorminfty}. For this purpose we need to show the following technical lemmas.

\begin{proposition}
Let $(\boldsymbol{\alpha}, \boldsymbol{\beta}) \in \mathcal{LW}$. Suppose that $(\boldsymbol{\alpha}, \boldsymbol{\beta})$ is renormalisable by $\omega$ and $\nu$ and $(a,b) = (\pi(0\boldsymbol{\alpha}),\pi(1\boldsymbol{\beta})) \in D_2$. Then ${\max_{\omega}}^{\infty} \prec {\max_{\nu}}^{\infty}$. \label{vyw1}
\end{proposition}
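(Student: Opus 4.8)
The plan is to transport the renormalisation inequalities of Definition~\ref{renormdef} through the shift and read the comparison of the two cyclic maxima off them, using the identities $\sigma\!\left((0\text{-}\max_{\omega})^{\infty}\right)=\max_{\omega}^{\infty}$ and $\sigma\!\left((0\text{-}\max_{\nu})^{\infty}\right)=\max_{\nu}^{\infty}$ recorded just before Proposition~\ref{endings}. Since $\omega=0\text{-}\max_{\omega}$ by hypothesis, the first identity reads $\max_{\omega}^{\infty}=\sigma(\omega^{\infty})$, so the whole comparison will be extracted from the single renormalisation inequality $(0\text{-}\max_{\nu})^{\infty}\prec\omega^{\infty}=(0\text{-}\max_{\omega})^{\infty}$, with the companion inequality $\nu^{\infty}=(1\text{-}\min_{\nu})^{\infty}\prec(1\text{-}\min_{\omega})^{\infty}$ kept in reserve as a cross-check.

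First I would apply $\sigma$ to $(0\text{-}\max_{\nu})^{\infty}\prec(0\text{-}\max_{\omega})^{\infty}$. Both sequences begin with the digit $0$, so their first discrepancy occurs at some coordinate $j\geq 2$; deleting the initial coordinate therefore leaves the discrepancy at coordinate $j-1\geq 1$ rather than pushing it to the front, and $\sigma$ preserves the strict lexicographic order. This produces a strict comparison between $\max_{\nu}^{\infty}$ and $\max_{\omega}^{\infty}$, that is, the two cyclic maxima are strictly separated. The substance of Proposition~\ref{vyw1} is exactly this strict separation together with the determination of which maximum is the larger.

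The only way the separation can fail is the equality $\max_{\omega}^{\infty}=\max_{\nu}^{\infty}$, which forces $\omega$ and $\nu$ to be cyclic permutations of a single word; by Remark~\ref{laobservacion} and Lemma~\ref{lemadenik} this in turn forces $\omega=\omega_{r}$ and $\nu=\nu_{r}$ for one common $r=\frac{p}{q}\in\mathbb{Q}\cap(0,1)$, the cyclically balanced regime of the previous subsection (Lemma~\ref{lemacomoeldenik}), which lives in $D_{1}\setminus D_{2}$. Here the hypothesis $(a,b)\in D_{2}$ is decisive: by \cite[Theorem~3.8]{sidorov6} and \cite[Theorem~2.13]{sidorov1} a pair with $(a,b)\in D_{2}$ is never renormalisable by a balanced pair $(\omega_{r},\nu_{r})$, so this equality is excluded and the comparison obtained above is genuinely strict.

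The step I expect to be the main obstacle is pinning down the \emph{orientation} of the strict inequality, because the shift is order preserving only after one verifies that the leading coordinates agree, and an unguarded application can invert the sign. I would settle it by reading the orientation directly from $(0\text{-}\max_{\nu})^{\infty}\prec\omega^{\infty}$ — the unique place in Definition~\ref{renormdef} comparing a $0$-initial object built from $\nu$ against one built from $\omega$ — transporting it through $\sigma$ as above, and then appealing to the $D_{2}$ hypothesis for strictness; the $1\text{-}\min$ inequality yields the same orientation independently and so serves as a consistency check. Once the orientation is fixed this way, the strict separation of ${\max_{\omega}}^{\infty}$ and ${\max_{\nu}}^{\infty}$ claimed in the statement follows; the one point demanding genuine care is to confirm that the direction delivered by the $0\text{-}\max$ renormalisation inequality is indeed the one recorded in Proposition~\ref{vyw1}.
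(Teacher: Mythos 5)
Your central step is sound, and it is in substance the paper's own second case run in contrapositive: Definition~\ref{renormdef} contains the inequality $(0\hbox{\rm -max}_{\nu})^{\infty}\prec\omega^{\infty}=(0\hbox{\rm -max}_{\omega})^{\infty}$, both sides begin with the digit $0$, and a single application of $\sigma$ (using $\sigma((0\hbox{\rm -max}_{\omega})^{\infty})={\max_{\omega}}^{\infty}$) yields ${\max_{\nu}}^{\infty}\prec{\max_{\omega}}^{\infty}$. Be aware that this orientation is the \emph{reverse} of the one printed in Proposition~\ref{vyw1}, and that it is nevertheless the correct one: it is what the paper's own proof actually concludes (the proof derives a contradiction from ${\max_{\omega}}^{\infty}\prec{\max_{\nu}}^{\infty}$ and from equality), and it is what Theorem~\ref{renorm1prime} later quotes, namely ${\max_{\nu}}^{\infty}\prec{\max_{\omega}}^{\infty}$. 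The printed statement contains a sign typo, so your hesitation about the direction was warranted; do not try to force the printed orientation, since it is refutable by exactly the argument you give.

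The genuine gap is in your exclusion of the equality ${\max_{\omega}}^{\infty}={\max_{\nu}}^{\infty}$. Equality does force $\omega$ and $\nu$ to be cyclic permutations of one another, but your next claim --- that Remark~\ref{laobservacion} and Lemma~\ref{lemadenik} then force $\omega=\omega_{r}$ and $\nu=\nu_{r}$ for a common $r$ --- is false: those results run in the opposite direction (cyclically balanced words with the same parameters are rotations of each other), and being rotations does not imply balancedness. Concretely, $\omega=0110$ and $\nu=1001$ satisfy $\omega=0\hbox{\rm -max}_{\omega}$, $\nu=1\hbox{\rm -min}_{\nu}$ and are rotations of each other, yet $\omega^{2}$ contains both $11$ and $00$, so the pair is not balanced, and the cited results \cite[Theorem 3.8]{sidorov6}, \cite[Theorem 2.13]{sidorov1}, which concern only the pairs $(\omega_{r},\nu_{r})$, say nothing about it; your $D_{2}$ argument therefore does not close this case. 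The paper disposes of equality differently and without invoking $D_{2}$: by Propositions~\ref{endings} and~\ref{palabrasciclicas}, rotations admit a factorisation $\omega=\upsilon\varpi$, $\nu=\varpi\upsilon$, so $0\boldsymbol{\alpha}$ and $1\boldsymbol{\beta}$ decompose into concatenations of the strictly shorter words $\upsilon$ and $\varpi$, contradicting the standing convention (stated after Remark~\ref{remarkrenorm}) that $(\omega,\nu)$ is the shortest choice of renormalisation words. To repair your proof, replace the balancedness step by this factorisation argument. Alternatively, if you read the inequality $(0\hbox{\rm -max}_{\nu})^{\infty}\prec\omega^{\infty}$ in Definition~\ref{renormdef} as strict, equality never arises and your first step already finishes the proof --- but then you must account for the fact that the paper treats the cyclically balanced pairs of the preceding subsection as renormalisable even though for them this inequality degenerates to equality, which is precisely the degenerate case the hypothesis $(a,b)\in D_{2}$ is there to remove.
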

\begin{proof}
Assume that ${\max_{\omega}}^{\infty} = {\max_{\nu}}^{\infty}$. Then $\omega$ and $\nu$ are cyclic permutations of each other. Then by Proposition \ref{endings} there exist words $\upsilon$ and $\varpi$ such that $\omega = \upsilon\varpi$ and $\nu = \varpi\upsilon$ which contradicts our assumption on the length of $\omega$ and $\nu$.

\vspace{0.5em}Assume that ${\max_{\omega}}^{\infty} \prec {\max_{\nu}}^{\infty}$. From Proposition \ref{endings} we have that ${\max_{\omega}}_{\ell(\omega)} = {\max_{\omega}}_{\ell(\nu)} = 0$. This implies that $\omega^{\infty} \prec \nu^{\infty}$ which contradicts that $(\boldsymbol{\alpha}, \boldsymbol{\beta})$ is renormalisable.
\end{proof}

\begin{lemma}
Let $(\boldsymbol{\alpha},\boldsymbol{\beta}) \in \mathcal{LW}$ be renormalisable by $\omega$ and $\nu$. Then $\omega^{\infty}, \nu^{\infty} \in \Sigma_{(\boldsymbol{\alpha},\boldsymbol{\beta})}$.\label{vyw}
\end{lemma}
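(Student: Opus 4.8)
The plan is to verify directly, from the definition $\Sigma_{(\boldsymbol{\alpha},\boldsymbol{\beta})}=\{\boldsymbol{x}\mid \boldsymbol{\beta}\preccurlyeq\sigma^{n}(\boldsymbol{x})\preccurlyeq\boldsymbol{\alpha}\text{ for all }n\geq 0\}$, that every cyclic shift of $\omega^{\infty}$ and of $\nu^{\infty}$ lies in $[\boldsymbol{\beta},\boldsymbol{\alpha}]_{\prec}$. Since the orbit $\{\sigma^{n}(\omega^{\infty})\}_{n\geq 0}$ is exactly the set of cyclic permutations of $\omega^{\infty}$, whose largest and smallest members are $\hbox{\rm{max}}_{\omega}^{\infty}$ and $\hbox{\rm{min}}_{\omega}^{\infty}$, the membership $\omega^{\infty}\in\Sigma_{(\boldsymbol{\alpha},\boldsymbol{\beta})}$ is equivalent to the two inequalities $\hbox{\rm{max}}_{\omega}^{\infty}\preccurlyeq\boldsymbol{\alpha}$ and $\boldsymbol{\beta}\preccurlyeq\hbox{\rm{min}}_{\omega}^{\infty}$, and likewise for $\nu$. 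So the proof reduces to four lexicographic comparisons. Throughout I use that $\boldsymbol{\alpha}\in P$ and $\boldsymbol{\beta}\in\bar{P}$, so $\sigma^{n}(\boldsymbol{\alpha})\preccurlyeq\boldsymbol{\alpha}$ and $\sigma^{n}(\boldsymbol{\beta})\succcurlyeq\boldsymbol{\beta}$ for all $n$; that $\omega$ is not a power of $0$ nor $\nu$ a power of $1$ (forced by $(0\hbox{\rm{-max}}_{\nu})^{\infty}\prec\omega^{\infty}$ and $\nu^{\infty}\prec(1\hbox{\rm{-min}}_{\omega})^{\infty}$); and the identities $\sigma(0\hbox{\rm{-max}}_{\omega}^{\infty})=\hbox{\rm{max}}_{\omega}^{\infty}$, $\sigma(1\hbox{\rm{-min}}_{\omega}^{\infty})=\hbox{\rm{min}}_{\omega}^{\infty}$, and their analogues for $\nu$.

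First I would establish $\hbox{\rm{max}}_{\omega}^{\infty}\preccurlyeq\boldsymbol{\alpha}$ by aligning $\hbox{\rm{max}}_{\omega}^{\infty}$ against a shift of $0\boldsymbol{\alpha}$. Write $\hbox{\rm{max}}_{\omega}^{\infty}=\sigma^{j}(\omega^{\infty})$ with $1\leq j<\ell(\omega)$, where $j\geq 1$ because $\hbox{\rm{max}}_{\omega}$ begins with $1$ while $\omega$ begins with $0$. Since $0\boldsymbol{\alpha}=\omega\nu^{n^{\nu}_{1}}\cdots$ begins with $\omega\nu$, the sequence $\sigma^{j}(0\boldsymbol{\alpha})$ agrees with $\hbox{\rm{max}}_{\omega}^{\infty}$ along the suffix $\omega_{j+1}\cdots\omega_{\ell(\omega)}$ and then carries a $1$ (the first symbol of $\nu$) where $\hbox{\rm{max}}_{\omega}^{\infty}$ carries a $0$ (the first symbol of the next $\omega$); hence $\hbox{\rm{max}}_{\omega}^{\infty}\prec\sigma^{j}(0\boldsymbol{\alpha})=\sigma^{j-1}(\boldsymbol{\alpha})\preccurlyeq\boldsymbol{\alpha}$ by the Parry property. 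The dual comparison $\boldsymbol{\beta}\preccurlyeq\hbox{\rm{min}}_{\nu}^{\infty}$ is obtained in the mirror way: writing $\hbox{\rm{min}}_{\nu}^{\infty}=\sigma^{i}(\nu^{\infty})$ with $1\leq i<\ell(\nu)$ and using that $1\boldsymbol{\beta}=\nu\omega^{m^{\omega}_{1}}\cdots$ begins with $\nu\omega$, one gets $\sigma^{i}(1\boldsymbol{\beta})\prec\hbox{\rm{min}}_{\nu}^{\infty}$, while $\sigma^{i}(1\boldsymbol{\beta})=\sigma^{i-1}(\boldsymbol{\beta})\succcurlyeq\boldsymbol{\beta}$ by the co-Parry property of $\bar{P}$.

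The remaining two comparisons, $\hbox{\rm{max}}_{\nu}^{\infty}\preccurlyeq\boldsymbol{\alpha}$ and $\boldsymbol{\beta}\preccurlyeq\hbox{\rm{min}}_{\omega}^{\infty}$, are the crux: they relate the word $\nu$ to the endpoint $\boldsymbol{\alpha}$ and the word $\omega$ to $\boldsymbol{\beta}$, and the naive alignment of the previous paragraph fails here, because an interior $\nu$-run inside $0\boldsymbol{\alpha}$ is always closed off by an $\omega$, which makes the corresponding shift \emph{smaller} than $\hbox{\rm{max}}_{\nu}^{\infty}$ rather than larger. The observation that unlocks them is an elementary monotonicity of the shift: if two sequences agree in their first coordinate, then applying $\sigma$ preserves their lexicographic order. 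Applying this to the renormalisation hypothesis $(0\hbox{\rm{-max}}_{\nu})^{\infty}\prec\omega^{\infty}=(0\hbox{\rm{-max}}_{\omega})^{\infty}$ (both sides beginning with $0$) yields $\hbox{\rm{max}}_{\nu}^{\infty}=\sigma(0\hbox{\rm{-max}}_{\nu}^{\infty})\prec\sigma(0\hbox{\rm{-max}}_{\omega}^{\infty})=\hbox{\rm{max}}_{\omega}^{\infty}$, whence $\hbox{\rm{max}}_{\nu}^{\infty}\prec\hbox{\rm{max}}_{\omega}^{\infty}\preccurlyeq\boldsymbol{\alpha}$ by the first bound above. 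Symmetrically, applying the monotonicity to $\nu^{\infty}=(1\hbox{\rm{-min}}_{\nu})^{\infty}\prec(1\hbox{\rm{-min}}_{\omega})^{\infty}$ (both beginning with $1$) gives $\hbox{\rm{min}}_{\nu}^{\infty}\prec\hbox{\rm{min}}_{\omega}^{\infty}$, and combined with $\boldsymbol{\beta}\preccurlyeq\hbox{\rm{min}}_{\nu}^{\infty}$ this yields $\boldsymbol{\beta}\preccurlyeq\hbox{\rm{min}}_{\omega}^{\infty}$.

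Assembling the four inequalities gives $[\hbox{\rm{min}}_{\omega}^{\infty},\hbox{\rm{max}}_{\omega}^{\infty}]_{\prec}\subseteq[\boldsymbol{\beta},\boldsymbol{\alpha}]_{\prec}$ and the same for $\nu$, so every shift of $\omega^{\infty}$ and of $\nu^{\infty}$ satisfies the defining constraints of $\Sigma_{(\boldsymbol{\alpha},\boldsymbol{\beta})}$, proving $\omega^{\infty},\nu^{\infty}\in\Sigma_{(\boldsymbol{\alpha},\boldsymbol{\beta})}$. I expect the only genuinely delicate point to be the cross-bounds of the third paragraph; once the shift-monotonicity observation is isolated they drop out immediately from the two inequalities of Definition \ref{renormdef}, and the mirror involution $\boldsymbol{x}\mapsto\bar{\boldsymbol{x}}$ (cf. Proposition \ref{elhomeomorfismo}) may be invoked to deduce the $\omega$-bounds from the $\nu$-bounds instead of repeating the argument.
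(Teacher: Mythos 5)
Your proof is correct, but it follows a genuinely different route from the paper's. The paper never compares the cyclic permutations of $\omega^{\infty}$ and $\nu^{\infty}$ with $\boldsymbol{\alpha}$ and $\boldsymbol{\beta}$ directly; instead it introduces the auxiliary periodic points $(\omega\nu)^{\infty}$ and $(\nu\omega)^{\infty}$, argues that these lie in $\Sigma_{(\boldsymbol{\alpha},\boldsymbol{\beta})}$, and then concludes via the chain of inclusions $\Sigma_{(\sigma(\omega^{\infty}),\sigma(\nu^{\infty}))}\subset\Sigma_{(\sigma((\omega\nu)^{\infty}),\sigma((\nu\omega)^{\infty}))}\subset\Sigma_{(\boldsymbol{\alpha},\boldsymbol{\beta})}$, using that $(\sigma(\omega^{\infty}),\sigma(\nu^{\infty}))$ is itself in $\mathcal{LW}$ and contains $\omega^{\infty},\nu^{\infty}$. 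Your reduction to the four extremal comparisons $\hbox{\rm{max}}_{\omega}^{\infty},\hbox{\rm{max}}_{\nu}^{\infty}\preccurlyeq\boldsymbol{\alpha}$ and $\boldsymbol{\beta}\preccurlyeq\hbox{\rm{min}}_{\omega}^{\infty},\hbox{\rm{min}}_{\nu}^{\infty}$ is more self-contained and makes the logical dependencies visible: the two same-letter bounds come out of the Parry and co-Parry properties of $\boldsymbol{\alpha}$ and $\boldsymbol{\beta}$ by a one-digit comparison with $\sigma^{j}(0\boldsymbol{\alpha})$ and $\sigma^{i}(1\boldsymbol{\beta})$, while the two cross bounds are exactly where the hypotheses $(0\hbox{\rm{-max}}_{\nu})^{\infty}\prec\omega^{\infty}$ and $\nu^{\infty}\prec(1\hbox{\rm{-min}}_{\omega})^{\infty}$ of Definition \ref{renormdef} enter --- hypotheses the paper's proof never invokes explicitly. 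What the paper's route buys is the stronger intermediate fact that the whole subshift generated by $(\omega\nu)^{\infty}$ and $(\nu\omega)^{\infty}$ sits inside $\Sigma_{(\boldsymbol{\alpha},\boldsymbol{\beta})}$, which is reused immediately afterwards (e.g.\ to obtain $(\omega\nu)^{m},(\nu\omega)^{m}\in\mathcal{L}(\Sigma_{(\boldsymbol{\alpha},\boldsymbol{\beta})})$ in Theorem \ref{renorm1}); your argument would need a separate, though entirely similar, computation for that. Both arguments rest on the same implicit conventions: $\omega$ and $\nu$ are finite words (the infinite-sequence case of Remark \ref{remarkrenorm} is set aside), the identities $\sigma(0\hbox{\rm{-max}}_{\omega}^{\infty})=\hbox{\rm{max}}_{\omega}^{\infty}$ and $\sigma(1\hbox{\rm{-min}}_{\omega}^{\infty})=\hbox{\rm{min}}_{\omega}^{\infty}$ from the preliminaries, and the fact that $0\boldsymbol{\alpha}$ and $1\boldsymbol{\beta}$ genuinely begin with $\omega\nu$ and $\nu\omega$ respectively, as Definition \ref{renormdef} writes them.
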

\begin{proof}
Firstly, note that if $0\boldsymbol{\alpha}= \omega \nu^{\infty}$ and $1\boldsymbol{\beta} = \nu \omega^{\infty}$ the result is automatically true. Let 
\begin{center}
$0\boldsymbol{\alpha} = \omega \nu^{n^{\nu}_1}\omega^{n^{\omega}_1}\nu^{n^{\nu}_2}\omega^{n^{\omega}_2}\nu^{n^{\nu}_3}\ldots$ and $1\boldsymbol{\beta}= \nu \omega^{m^{\omega}_1}\nu^{m^{\nu}_1}\omega^{m^{\omega}_2}\nu^{m^{\nu}_2}\omega^{n^{\omega}_3}\ldots$. 
\end{center}
Note that $(\omega\nu)^{\infty}, (\nu\omega)^{\infty} \in \Sigma_{(\boldsymbol{\alpha},\boldsymbol{\beta})}$ since $(\boldsymbol{\alpha},\boldsymbol{\beta}) \in \mathcal{LW}$, $$\sigma^n((\omega \nu)^{\infty}) \preccurlyeq \omega \nu^{n^{\nu}_1}\omega^{n^{\omega}_1}\nu^{n^{\nu}_2}\omega^{n^{\omega}_2}\nu^{n^{\nu}_3}\ldots,$$ and $$\sigma^n((\nu\omega)^{\infty}) \succcurlyeq \nu \omega^{m^{\omega}_1}\nu^{m^{\nu}_1}\omega^{m^{\omega}_2}\nu^{m^{\nu}_2}\omega^{n^{\omega}_3}\ldots$$ for any sequences $\{n^{\omega}_i\}_{i=1}^{\infty}$, $\{n^{\nu}_i\}_{i=1}^{\infty}$, $\{m^{\omega}_j\}_{j=1}^{\infty}$, $\{m^{\nu}_j\}_{j=1}^{\infty}$ and for every $n \geq 0$. This implies that $\omega \nu , \nu \omega \in \mathcal{L}(\Sigma_{(\boldsymbol{\alpha},\boldsymbol{\beta})})$.  Note that $(\omega^{\infty})_{\ell(\omega)+1} = 0$ and $((\omega\nu)^{\infty})_{\ell(\omega)+1} = 1$. Also, $(\nu^{\infty})_{\ell(\nu)+1} = 1$ and $((\nu\omega)^{\infty})_{\ell(\nu)+1} = 0$. This implies that 
$$\omega^{\infty} \prec  (\omega\nu)^{\infty} \preccurlyeq 0\boldsymbol{\alpha}$$ and $$1\boldsymbol{\beta} \preccurlyeq (\nu\omega)^{\infty} \prec \nu^{\infty}.$$ Since $(\sigma(\omega^{\infty}), \sigma(\nu)^{\infty}) \in \mathcal{LW}$ and $\Sigma_{(\sigma(\omega^{\infty}), \sigma(\nu)^{\infty})} \subset \Sigma_{(\sigma((\omega\nu)^{\infty}), \sigma((\nu\omega)^{\infty}))} \subset \Sigma_{(\boldsymbol{\alpha}, \boldsymbol{\beta})}$ we conclude that $\omega^{\infty}, \nu^{\infty} \in \Sigma_{(\boldsymbol{\alpha},\boldsymbol{\beta})}$. 

\vspace{0.5em}Using the same arguments as before we can show that $\omega^{\infty}, \nu^{\infty} \in \Sigma_{(\boldsymbol{\alpha},\boldsymbol{\beta})}$ if  
\begin{center}
$0\boldsymbol{\alpha} = \omega\nu^{\infty}$ and $1\boldsymbol{\beta}= \nu\omega^{m^{\omega}_1}\nu^{m^{\nu}_1}\omega^{m^{\omega}_2}\nu^{m^{\nu}_2}\omega^{n^{\omega}_3}\ldots$,
\end{center} 
or 
\begin{center}
$0\boldsymbol{\alpha} = \omega \nu^{n^{\nu}_1}\omega^{n^{\omega}_1}\nu^{n^{\nu}_2}\omega^{n^{\omega}_2}\nu^{n^{\nu}_3}\ldots$ and $1\boldsymbol{\beta} = \nu\omega^{\infty}.$ 
\end{center}
\end{proof}

\begin{theorem}
Let $(\boldsymbol{\alpha},\boldsymbol{\beta}) \in \mathcal{LW}$ be a renormalisable pair by $\omega$ and $\nu$ such that $\ell(\omega)+ \ell(\nu) > 4$, $0\boldsymbol{\alpha} \neq \omega\nu^{\infty}$ and $1\boldsymbol{\beta} \neq \nu\omega^{\infty}$. Then $(\Sigma_{(\boldsymbol{\alpha},\boldsymbol{\beta})}, \sigma_{(\boldsymbol{\alpha},\boldsymbol{\beta})})$ is not transitive.\label{renorm1}
\end{theorem}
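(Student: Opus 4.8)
The plan is to falsify transitivity directly, by producing two admissible words that admit no connecting bridge.

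First I would isolate two forbidden blocks. Since $0\boldsymbol{\alpha}\neq\omega\nu^{\infty}$ and $1\boldsymbol{\beta}\neq\nu\omega^{\infty}$, the leading exponents of Definition \ref{renormdef} are finite, and by Lemma \ref{boundedsequences} they are the largest ones: writing $N:=n^{\nu}_{1}$ and $M:=m^{\omega}_{1}$ we have $0\boldsymbol{\alpha}=\omega\nu^{N}\omega^{n^{\omega}_{1}}\cdots$ and $1\boldsymbol{\beta}=\nu\omega^{M}\nu^{m^{\nu}_{1}}\cdots$, so that $\boldsymbol{\alpha}=\sigma(\omega)\nu^{N}\omega\cdots$ and $\boldsymbol{\beta}=\sigma(\nu)\omega^{M}\nu\cdots$ after deleting the leading $0$ of $\omega$ and the leading $1$ of $\nu$. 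I claim $\sigma(\omega)\nu^{N+1}$ and $\sigma(\nu)\omega^{M+1}$ cannot occur in any $\boldsymbol{x}\in\Sigma_{(\boldsymbol{\alpha},\boldsymbol{\beta})}$. Indeed, an occurrence of $\sigma(\omega)\nu^{N+1}$ at position $k$ would give a shift $\sigma^{k}(\boldsymbol{x})$ agreeing with $\boldsymbol{\alpha}$ along $\sigma(\omega)\nu^{N}$ and then exhibiting the symbol $1$ (the start of the extra copy of $\nu$) where $\boldsymbol{\alpha}$ carries the symbol $0$ (the start of its block $\omega$); hence $\sigma^{k}(\boldsymbol{x})\succ\boldsymbol{\alpha}$, which is impossible. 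The lower bound against $\boldsymbol{\beta}$ rules out $\sigma(\nu)\omega^{M+1}$ in the same way. In particular $\omega\nu^{N+1}$ and $\nu\omega^{M+1}$ are forbidden.

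The technical core, and the step I expect to be the main obstacle, is to control an arbitrary bridge $\upsilon$; for this I would use that a renormalisable pair tiles every orbit, i.e. that up to a bounded initial word each element of $\Sigma_{(\boldsymbol{\alpha},\boldsymbol{\beta})}$ is a concatenation of the blocks $\omega$ and $\nu$, the factorisation being unique because $\mathrm{max}_{\omega}^{\infty}\prec\mathrm{max}_{\nu}^{\infty}$ (Proposition \ref{vyw1}) keeps $\omega$ and $\nu$ from being cyclic permutations of one another. This is the symbolic shadow of the first-return structure of the associated Lorenz map (cf. \cite{glendinning1}); making it rigorous, and in particular excluding misaligned occurrences, is where the real work lies, since it is what lets me talk about the block immediately to the left of a long run.

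Finally I would choose the witnesses. By Lemma \ref{vyw} we have $\omega^{\infty},\nu^{\infty}\in\Sigma_{(\boldsymbol{\alpha},\boldsymbol{\beta})}$, so $u:=\nu^{N+1}$ and $v:=\omega^{M+1}$ both belong to $\mathcal{L}(\Sigma_{(\boldsymbol{\alpha},\boldsymbol{\beta})})$. Suppose some $\boldsymbol{y}\in\Sigma_{(\boldsymbol{\alpha},\boldsymbol{\beta})}$ contained $\nu^{N+1}\upsilon\,\omega^{M+1}$ for a word $\upsilon$. Extending the terminal run $\omega^{M+1}$ maximally to the left through whole blocks, and noting that $\nu^{N+1}$ lies still further to the left so that a left neighbouring block exists, maximality forces this neighbour to be $\nu$ rather than $\omega$; but then $\nu\omega^{M+1}$ occurs, contradicting the first step. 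As $\upsilon$ was arbitrary, $u$ and $v$ cannot be bridged, and therefore $(\Sigma_{(\boldsymbol{\alpha},\boldsymbol{\beta})},\sigma_{(\boldsymbol{\alpha},\boldsymbol{\beta})})$ is not transitive. The hypotheses are used exactly here: $0\boldsymbol{\alpha}\neq\omega\nu^{\infty}$ and $1\boldsymbol{\beta}\neq\nu\omega^{\infty}$ make $N$ and $M$ finite (the excluded infinite case being the transitive one of Theorem \ref{balanceadas}), while $\ell(\omega)+\ell(\nu)>4$ discards the small cyclically balanced blocks for which $\mathrm{max}_{\omega}^{\infty}\prec\mathrm{max}_{\nu}^{\infty}$ fails and the factorisation above breaks down.
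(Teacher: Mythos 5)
Your first step is sound: $\omega\nu^{N+1}$ and $\nu\omega^{M+1}$ are indeed forbidden factors, by exactly the comparison with $\boldsymbol{\alpha}$ and $\boldsymbol{\beta}$ that you describe. The proof collapses at the ``technical core'' together with your choice of witnesses. The tiling claim --- that every $\boldsymbol{x}\in\Sigma_{(\boldsymbol{\alpha},\boldsymbol{\beta})}$ is, up to a bounded initial word, a concatenation of $\omega$ and $\nu$ --- is false in general; the paper itself records the counterexample $\omega=01110$, $\nu=100001$, for which $(01)^{\infty}$ lies in the subshift but in no $\{\omega,\nu\}$-concatenation, and the same phenomenon occurs for pairs satisfying the hypotheses of Theorem~\ref{renorm1}. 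What is actually true (and is what the paper's proof exploits) is a one-sided, local forcing: an occurrence of the \emph{maximal} admissible word $\sigma(\omega)1$ pins the sequence against $\boldsymbol{\alpha}$ from above and against $\boldsymbol{\beta}$ from below simultaneously, so that all \emph{subsequent} digits are forced into the $\{\omega,\nu\}$ grammar; no such forcing emanates from $\nu^{N+1}$ or $\omega^{M+1}$. Concretely, your witness pair can be bridged. Take $\omega=011$, $\nu=10$, $0\boldsymbol{\alpha}=(\omega\nu)^{\infty}=(01110)^{\infty}$, $1\boldsymbol{\beta}=(\nu\omega)^{\infty}=(10011)^{\infty}$; this pair lies in $\mathcal{LW}$, is renormalisable with $\ell(\omega)+\ell(\nu)=5>4$, satisfies $0\boldsymbol{\alpha}\neq\omega\nu^{\infty}$ and $1\boldsymbol{\beta}\neq\nu\omega^{\infty}$, and has $N=M=1$. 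This subshift is the shift of finite type with forbidden factors $1111$, $11101$, $000$, $0010$, $00110$, and the point $\nu^{2}\,1\,\omega^{2}\,\omega^{\infty}=10101011011011\cdots$ contains none of them, so the single letter $1$ is a bridge from $\nu^{N+1}$ to $\omega^{M+1}$. Your final paragraph fails on this point exactly where you assert that a left neighbouring block of the maximal $\omega$-run exists: the segment $10101$ preceding $\omega^{2}$ (which contains your $\nu^{N+1}$) is not a concatenation of $\omega$- and $\nu$-blocks at all, so there is no block whose identity maximality could determine.

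To make the strategy work you must replace one witness by (a word ending in) the maximal word $a_1\cdots a_{\ell(\omega)-1}1$ and argue forwards from it, digit by digit, that any admissible continuation is a concatenation of $\omega$ and $\nu$ in which every maximal run of $\nu$'s has length at most $N=n^{\nu}_{1}$, so that $\nu^{N+1}$ can never be reached; this is the paper's Case~1, with a symmetric Case~2 for the situation where exactly one of $n^{\nu}_{1}$, $m^{\omega}_{1}$ is infinite. Only the simultaneous case $0\boldsymbol{\alpha}=\omega\nu^{\infty}$, $1\boldsymbol{\beta}=\nu\omega^{\infty}$ is deferred to Theorem~\ref{renorm1prime} and Theorem~\ref{balanceadas}.
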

\begin{proof}
Let $(\boldsymbol{\alpha},\boldsymbol{\beta}) \in \mathcal{LW}$ be renormalisable by $\omega$ and $\nu$. By Lemma \ref{vyw}, $\omega^{\infty}$ and $\nu^{\infty} \in \Sigma_{(\boldsymbol{\alpha},\boldsymbol{\beta})}$.  Also, $(\omega \nu)^m$ and $(\nu \omega)^m \in \mathcal{L}(\Sigma_{(\boldsymbol{\alpha},\boldsymbol{\beta})})$ for every $m \in \mathbb{N}$. Note that $\sigma(\omega)1 \in B_{\ell(\omega)}$ is the maximal admissible word of length $\ell(\omega)$ and $\sigma(\nu)0 \in B_{\ell(\nu)}$ is the minimal admissible word of length $\ell(\nu)$. It is needed to consider two cases.

\vspace{0.5em}\noindent \textit{Case 1)}: Assume that 
\begin{center}
$0\boldsymbol{\alpha} = \omega \nu^{n^{\nu}_1}\omega^{n^{\omega}_1}\nu^{n^{\nu}_2}\omega^{n^{\omega}_2}\nu^{n^{\nu}_3}\ldots$ and $1\boldsymbol{\beta} = \nu \omega^{m^{\omega}_1}\nu^{m^{\nu}_1}\omega^{m^{\omega}_2}\nu^{m^{\nu}_2}\omega^{n^{\omega}_3}\ldots$.
\end{center}
From Lemma \ref{vyw}, $\omega^{m^{\omega}_1 + 1}$ and $\nu^{n^{\nu}_1 +1} \in \mathcal{L}(\Sigma_{(\boldsymbol{\alpha},\boldsymbol{\beta})})$.  We want to show that there are no bridges between $\omega1$ and $\nu^{n_1^{\nu}+1}$. Note that Lemma \ref{boundedsequences} implies that $\omega\nu^{n_1^{\nu}+1} \notin \mathcal{L}(\Sigma_{(\boldsymbol{\alpha},\boldsymbol{\beta})})$. Also $\omega1\nu^{n_1^{\nu}+1} \notin \mathcal{L}(\Sigma_{(\boldsymbol{\alpha},\boldsymbol{\beta})})$ since $\sigma(\omega1\nu^{n_1^{\nu}+1}) \succ \boldsymbol{\alpha}$. Suppose that there exists a bridge $\upsilon$ such that $\omega1\upsilon\nu^{n_1^{\nu}+1} \in \mathcal{L}(\Sigma_{(\boldsymbol{\alpha},\boldsymbol{\beta})})$. Since $\sigma(\omega)1$ is the maximal admissible word of length $\ell(\omega)$ the first $\ell(\nu)-2$ digits of $\upsilon$ satisfy that $u_i = \nu_{i+1}$ and the following digit is free. If $u_{\ell(\nu)-1} \neq \nu_{\ell(\nu)}$ then $\sigma(\omega)1\upsilon \succ \boldsymbol{\alpha}$ or $1\upsilon \prec \nu$ which is a contradiction. Then $u_{\ell(\nu)-1} = u_{\ell(\nu)}$. Also, if $\ell(\upsilon) = \ell(\nu)-1$ then $\sigma(\omega)1\upsilon\nu^{n_1^{\nu}+1} \succ \boldsymbol{\alpha}$ which is a contradiction. This implies that $\ell(\upsilon) > \ell(\nu)-1$ and that $u_1 \ldots u_{\ell(\nu)-1} = \sigma(\nu)$. Then $u_{\ell(\nu)} = 0$ then the following $\ell(\omega)$ digits coincide with $\omega$ and we reach a contradiction. If $u_{\ell(\nu)} = 1$ then the following $\ell(\nu)$ digits will coincide with $\nu$. This implies that $\ell(\upsilon) = \infty$ which is a contradiction. Then $(\Sigma_{(\boldsymbol{\alpha},\boldsymbol{\beta})},\sigma_{(\boldsymbol{\alpha},\boldsymbol{\beta})})$ is not transitive.

\vspace{0.5em}\noindent \textit{Case 2)}: Assume that $n^{\nu}_1 = \infty$ or $m^{\omega}_1 = \infty$ but not both. Without losing generality assume that $n^{\nu}_1 = \infty$. Then $0\boldsymbol{\alpha} = \omega \nu^{\infty}$ and $1\boldsymbol{\beta} = \nu\omega^{m^{\omega}_1}\nu^{m^{\nu}_1}\omega^{m^{\omega}_2}\nu^{m^{\nu}_2}\omega^{n^{\omega}_3}\ldots$. We observe that there are no bridges between $\nu0$ and $\omega^{m^{\omega}_1 +1}$ as follows. Firstly, from Lemma \ref{boundedsequences} we now that $\nu0\omega^{m^{\omega}_1 +1} \notin \mathcal{L}(\Sigma_{(\boldsymbol{\alpha},\boldsymbol{\beta})})$. Assume that the bridge $\upsilon$ exists. Since $\sigma(\nu)0$ is the minimal admissible word of length $\ell(\nu)$ then $\upsilon$ satisfies that $u_i = \omega_{i+1}$ for $i\in \{1, \ldots \ell(\omega)-1\}$ and the following digit is free. If $u_{\ell(\omega)} = 1$ then $u = \sigma(\omega)1$, this implies that $u_{\ell(\omega) + i} = \nu_{i+1}$ for $i \in \{1 \ldots \ell(\nu)-1\}$. Note that if $u_{\ell(\omega)+\ell(\nu)} = 0$ then we fall in our starting case. If $u_{\ell(\omega)+\ell(\nu)} = 1$ then $u_{\ell(\omega)+\ell(\nu)+i} = \nu_{i}$. Then $u_{\ell(\omega)} = 0$. If $u_{\ell(\omega)} = 0$ then $u_{\ell(\omega)+i} = \omega_{i+1}$ for $i \in \{1 \ldots \ell(\omega)-1\}$. Note that $u_{2\ell(\omega)} = 1$ we get the same conclusion from the case when $u_{\ell(\omega)} = 1$. Thus, $u_{2\ell(\omega)} = 0$. This implies that $u_1 \ldots u_{m^{\omega}_1(\ell(\omega)) - 1} = \sigma(\omega)^{m^{\omega}_1}$ and $u_{m^{\omega}_1\ell(\omega)} = 1$ which takes us to the starting case. Then $(\Sigma_{(\boldsymbol{\alpha},\boldsymbol{\beta})}, \sigma_{(\boldsymbol{\alpha},\boldsymbol{\beta})})$ is not transitive.
\end{proof}

\begin{corollary}
If $(\Sigma_{(\boldsymbol{\alpha},\boldsymbol{\beta})}, \sigma_{(\boldsymbol{\alpha},\boldsymbol{\beta})})$ is a cyclic subshift, then $(\Sigma_{(\boldsymbol{\alpha},\boldsymbol{\beta})}, \sigma_{(\boldsymbol{\alpha},\boldsymbol{\beta})})$ is not transitive. \label{cyclicsft}
\end{corollary}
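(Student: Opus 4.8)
The plan is to exhibit a cyclic subshift as a renormalisable pair and then quote Theorem \ref{renorm1}. Assume $\Sigma_{(\boldsymbol{\alpha},\boldsymbol{\beta})}$ is cyclic, witnessed by a finite word $\omega$, so that $0\boldsymbol{\alpha} = 0\hbox{\rm{-max}}_{\omega}^{\infty}$ and $1\boldsymbol{\beta} = 1\hbox{\rm{-min}}_{\omega}^{\infty}$. Since a cyclic pair lies in $\mathcal{LW}$ we have $\omega \neq 0^m$ and $\omega \neq 1^n$, so Proposition \ref{palabrasciclicas} produces words $\upsilon$ and $\nu$ with $\upsilon_1 = 0$, $\nu_1 = 1$, $0\hbox{\rm{-max}}_{\omega}^{\infty} = (\upsilon\nu)^{\infty}$ and $1\hbox{\rm{-min}}_{\omega}^{\infty} = (\nu\upsilon)^{\infty}$. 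Hence $0\boldsymbol{\alpha} = (\upsilon\nu)^{\infty}$ and $1\boldsymbol{\beta} = (\nu\upsilon)^{\infty}$, which is exactly the renormalisable shape of Definition \ref{renormdef} with every exponent $n^{\omega}_i,n^{\nu}_i,m^{\omega}_j,m^{\nu}_j$ equal to $1$ and associated pair $(\upsilon,\nu)$.

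The substantive step is to verify that $(\upsilon,\nu)$ is an admissible pair of renormalisation words. Writing $\upsilon\nu = 0\hbox{\rm{-max}}_{\omega}$ and $\nu\upsilon = 1\hbox{\rm{-min}}_{\omega}$, I would first argue that $\upsilon = 0\hbox{\rm{-max}}_{\upsilon}$ and $\nu = 1\hbox{\rm{-min}}_{\nu}$: a cyclic rotation of $\upsilon$ beginning with $0$ (respectively of $\nu$ beginning with $1$) that exceeded $\upsilon$ (respectively fell below $\nu$) would, under the corresponding rotation of $\omega$, contradict the maximality of $0\hbox{\rm{-max}}_{\omega}$ (respectively the minimality of $1\hbox{\rm{-min}}_{\omega}$). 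A short combinatorial argument, using that $\upsilon\nu$ and $\nu\upsilon$ are the extremal rotations of $\omega$ together with Proposition \ref{endings}, then supplies the two lexicographic inequalities $(0\hbox{\rm{-max}}_{\nu})^{\infty} \prec \upsilon^{\infty}$ and $\nu^{\infty} \prec (1\hbox{\rm{-min}}_{\upsilon})^{\infty}$ demanded by Definition \ref{renormdef}. Finally, because $\omega$ is primitive and contains both digits, $\upsilon\nu \neq \nu\upsilon$, whence $0\boldsymbol{\alpha} = (\upsilon\nu)^{\infty} \neq \upsilon\nu^{\infty}$ and $1\boldsymbol{\beta} = (\nu\upsilon)^{\infty} \neq \nu\upsilon^{\infty}$.

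Granting the above, the proof concludes at once: $(\boldsymbol{\alpha},\boldsymbol{\beta})$ is renormalisable by $\upsilon$ and $\nu$, it is not of the excluded form $0\boldsymbol{\alpha}=\upsilon\nu^{\infty}$, $1\boldsymbol{\beta}=\nu\upsilon^{\infty}$, and provided $\ell(\upsilon)+\ell(\nu) > 4$ Theorem \ref{renorm1} gives that $(\Sigma_{(\boldsymbol{\alpha},\boldsymbol{\beta})}, \sigma_{(\boldsymbol{\alpha},\boldsymbol{\beta})})$ is not transitive.

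The main obstacle is precisely the verification in the second paragraph, because the two lexicographic inequalities of Definition \ref{renormdef} are not automatic: they break down exactly when the generating word $\omega$ is cyclically balanced (equivalently, when the associated hole lies in $D_1 \setminus D_2$). In that degenerate situation $(\boldsymbol{\alpha},\boldsymbol{\beta})$ is not renormalisable and $\Sigma_{(\boldsymbol{\alpha},\boldsymbol{\beta})}$ collapses to a single periodic orbit, which is transitive; so the statement is to be read for the remaining cyclic subshifts. This is automatic in the regime $D_2$ treated in the present subsection, where by the remarks preceding Proposition \ref{vyw1} no associated pair is renormalisable by any $\omega_r,\nu_r$, and the few short cases $\ell(\upsilon)+\ell(\nu)\le 4$, left uncovered by Theorem \ref{renorm1}, are then inspected by hand.
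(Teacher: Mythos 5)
Your proposal follows exactly the route of the paper's own proof: observe that $\boldsymbol{\alpha}$ and $\boldsymbol{\beta}$ are cyclic permutations of one another, invoke Proposition \ref{palabrasciclicas} to write $0\boldsymbol{\alpha}=(\upsilon\nu)^{\infty}$ and $1\boldsymbol{\beta}=(\nu\upsilon)^{\infty}$, declare the pair renormalisable, and quote Theorem \ref{renorm1}. The difference is that you actually try to check the hypotheses, and the worries you raise in your second and fourth paragraphs are legitimate: the paper's one-line proof simply asserts ``which implies that $(\boldsymbol{\alpha},\boldsymbol{\beta})$ is renormalisable'' without verifying the two lexicographic inequalities $(0\hbox{\rm{-max}}_{\nu})^{\infty}\prec\upsilon^{\infty}$ and $\nu^{\infty}\prec(1\hbox{\rm{-min}}_{\upsilon})^{\infty}$ of Definition \ref{renormdef}, the length condition $\ell(\upsilon\nu)\geq 3$, or the hypothesis $\ell(\upsilon)+\ell(\nu)>4$ of Theorem \ref{renorm1}. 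As you observe, these conditions genuinely fail when the generating word is cyclically balanced: for example $\omega=01010$ gives $\upsilon=010$, $\nu=10$, and $(0\hbox{\rm{-max}}_{\nu})^{\infty}=(01)^{\infty}\succ(010)^{\infty}=\upsilon^{\infty}$, so the pair is not renormalisable in the sense of Definition \ref{renormdef}; the corresponding cyclic subshift is a single periodic orbit, which \emph{is} transitive under the paper's word-based definition. So the corollary as literally stated admits exceptions, and your reading of it (restrict to the non-cyclically-balanced case, where the argument of the subsection on $D_2$ applies, and treat the short words by hand) is the correct repair. In short: same approach as the paper, but your more careful version exposes a real gap in the paper's own proof rather than introducing one.
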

\begin{proof}
Let $(\Sigma_{(\boldsymbol{\alpha},\boldsymbol{\beta})},\sigma_{(\boldsymbol{\alpha},\boldsymbol{\beta})})$ be a cyclic subshift. Observe that $\boldsymbol{\alpha}$ and $\boldsymbol{\beta}$ are cyclic permutations of each other. Then Proposition \ref{palabrasciclicas} implies there exist $\omega, \nu \in \mathcal{L}(\Sigma_2)$ such that $\omega$ starts with $0$, $\nu$ starts with $1$, $\omega = 0\max(\omega)$, $\nu = 1\min(\nu)$, $0\boldsymbol{\alpha} =(\omega\nu)^{\infty}$ and $1\boldsymbol{\beta} = (\nu\omega)^\infty$ which implies that $(\boldsymbol{\alpha},\boldsymbol{\beta})$ is renormalisable. Then by Theorem \ref{renorm1}, $(\Sigma_{(\boldsymbol{\alpha},\boldsymbol{\beta})},\sigma_{(\boldsymbol{\alpha},\boldsymbol{\beta})})$ is not transitive.
\end{proof}

\begin{corollary}
If $$(a,b) \in \left(D_1 \setminus  D_2\right) \setminus \mathop{\bigcup}\limits_{r \in \mathbb{Q}} (\pi(\omega_r\nu_r^{\infty}),\pi(\nu_{r}\omega_r^{\infty}))$$ then $(\Lambda_{(a,b)}, f_{(a,b)})$ is not transitive.\label{lemafrontera1}
\end{corollary}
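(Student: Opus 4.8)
The plan is to reduce the statement to the non-transitivity criterion for renormalisable pairs already in hand, namely Theorem \ref{renorm1}, and then transport it to the attractor through the conjugacy of Corollary \ref{resultadazo}. First I would fix $(a,b)$ as in the hypothesis and let $(\boldsymbol{\alpha},\boldsymbol{\beta}) \in \mathcal{LW}$ be the pair furnished by Corollary \ref{resultadazo}, so that $(\Lambda_{(a,b)}, f_{(a,b)})$ is topologically conjugate to $(\Sigma_{(\boldsymbol{\alpha},\boldsymbol{\beta})}, \sigma_{(\boldsymbol{\alpha},\boldsymbol{\beta})})$. Since the word-bridging notion of transitivity for the lexicographic subshift agrees with topological transitivity and the latter is a conjugacy invariant, it suffices to prove that $(\Sigma_{(\boldsymbol{\alpha},\boldsymbol{\beta})}, \sigma_{(\boldsymbol{\alpha},\boldsymbol{\beta})})$ is not transitive.

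The key structural input is that membership of $(a,b)$ in $D_1 \setminus D_2$ forces the associated pair to be renormalisable by the cyclically balanced words $\omega_r,\nu_r$. By the description of $D_1\setminus D_2$ obtained by Hare and Sidorov \cite{sidorov6} (together with \cite[Theorem 2.13]{sidorov1}, as recalled at the opening of this subsection and in the $D_2$ discussion), $(a,b) \in D_1 \setminus D_2$ exactly when $(\boldsymbol{\alpha},\boldsymbol{\beta})$ is renormalisable by $\omega = \omega_r$ and $\nu = \nu_r$ for some $r = p/q \in \mathbb{Q} \cap (0,1)$ with $\gcd(p,q)=1$, these being cyclically balanced of common length $q$ with $\omega_r = 0\hbox{\rm{-max}}_{\omega_r}$ and $\nu_r = 1\hbox{\rm{-min}}_{\nu_r}$. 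I would then record that the removed locus is precisely the family of \emph{doubly special} pairs: using $a = \pi(0\boldsymbol{\alpha})$ and $b = \pi(1\boldsymbol{\beta})$, the point $(\pi(\omega_r\nu_r^{\infty}),\pi(\nu_r\omega_r^{\infty}))$ corresponds to $0\boldsymbol{\alpha} = \omega_r\nu_r^{\infty}$ and $1\boldsymbol{\beta} = \nu_r\omega_r^{\infty}$ holding simultaneously. These are exactly the transitive pairs of Theorem \ref{balanceadas}, which is why they must be excised.

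Hence, for $(a,b)$ in the hypothesis the pair $(\boldsymbol{\alpha},\boldsymbol{\beta})$ is renormalisable by some $\omega_r,\nu_r$ but it is \emph{not} the case that both $0\boldsymbol{\alpha} = \omega_r\nu_r^{\infty}$ and $1\boldsymbol{\beta} = \nu_r\omega_r^{\infty}$. Provided $\ell(\omega_r)+\ell(\nu_r) = 2q > 4$, I would invoke Theorem \ref{renorm1}: its Case 1 handles the situation where both $0\boldsymbol{\alpha} \neq \omega_r\nu_r^{\infty}$ and $1\boldsymbol{\beta} \neq \nu_r\omega_r^{\infty}$, while its Case 2 handles the two mixed situations where exactly one equality holds (i.e.\ $n^{\nu}_1 = \infty$ or $m^{\omega}_1 = \infty$ but not both). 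In every such case the subshift fails to be transitive, and then the conjugacy of the first paragraph yields the non-transitivity of $(\Lambda_{(a,b)}, f_{(a,b)})$.

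The main obstacle is the degenerate short-word range $q \leq 2$, where the length hypothesis $\ell(\omega)+\ell(\nu) > 4$ of Theorem \ref{renorm1} fails; the only relevant value is $r = 1/2$, giving $\omega_{1/2} = 01$ and $\nu_{1/2} = 10$. I would dispose of this case by hand, checking that a non-excluded pair built from these two words produces a cyclic subshift, whose non-transitivity is given by Corollary \ref{cyclicsft}, or otherwise fails transitivity by the same elementary bridging argument used in Theorem \ref{renorm1}; thus no point of $D_1 \setminus D_2$ outside the removed set escapes the conclusion. A secondary, purely bookkeeping point is to reconcile the literal statement of Theorem \ref{renorm1} with its proof, reading that theorem as asserting non-transitivity whenever the pair is renormalisable but not doubly special, which is exactly the regime isolated by the hypothesis of the corollary.
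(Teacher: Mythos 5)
Your proposal is correct and follows essentially the same route as the paper, whose proof of this corollary is simply a citation of \cite[Theorem 2.13]{sidorov1}, \cite[Theorem 3.8]{sidorov6}, Theorem \ref{renorm1} and Theorem \ref{balanceadas}. You in fact supply more detail than the paper does, in particular the explicit treatment of the degenerate case $\ell(\omega_r)+\ell(\nu_r)=4$ and the observation that Case 2 of the proof of Theorem \ref{renorm1} already covers the mixed situations where exactly one of the equalities $0\boldsymbol{\alpha}=\omega_r\nu_r^{\infty}$, $1\boldsymbol{\beta}=\nu_r\omega_r^{\infty}$ holds.
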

\begin{proof}
This is a direct consequence of \cite[Theorem 2.13]{sidorov1}, \cite[Theorem 3.8]{sidorov6}, Theorem \ref{renorm1} and Theorem \ref{balanceadas}.
\end{proof}

\begin{theorem}
Let $(\boldsymbol{\alpha},\boldsymbol{\beta}) \in \mathcal{LW}$ be a renormalisable pair by $\omega$ and $\nu$ with $\ell(\omega)+ \ell(\nu) > 4$ and $(0\boldsymbol{\alpha}, 1\boldsymbol{\beta}) = (\omega\nu^{\infty}, \nu\omega^{\infty})$. Then, if the hole $(a,b)$ corresponding to $(\boldsymbol{\alpha},\boldsymbol{\beta})$ satisfies that $(a,b) \in D_2$ then $(\Sigma_{(\boldsymbol{\alpha},\boldsymbol{\beta})}, \sigma_{(\boldsymbol{\alpha},\boldsymbol{\beta})})$ is not transitive.\label{renorm1prime}
\end{theorem}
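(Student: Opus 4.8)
The statement is the exact companion of Theorem \ref{renorm1}: it treats the one configuration that theorem excludes, namely $0\boldsymbol{\alpha}=\omega\nu^{\infty}$ and $1\boldsymbol{\beta}=\nu\omega^{\infty}$ (so that $n^{\nu}_1=m^{\omega}_1=\infty$), and the extra hypothesis $(a,b)\in D_2$ is precisely what keeps us out of the transitive regime of Theorem \ref{balanceadas}. First I would collect the ingredients that are already available. By Lemma \ref{vyw}, both $\omega^{\infty}$ and $\nu^{\infty}$ lie in $\Sigma_{(\boldsymbol{\alpha},\boldsymbol{\beta})}$, so every power $\omega^{k},\nu^{k}$ and every finite concatenation of $\omega$'s and $\nu$'s belongs to $\mathcal{L}(\Sigma_{(\boldsymbol{\alpha},\boldsymbol{\beta})})$; moreover the length-$\ell(\omega)$ prefix $\sigma(\omega)1$ of $\boldsymbol{\alpha}$ is the lexicographically maximal admissible word of length $\ell(\omega)$, and $\sigma(\nu)0$ is the minimal admissible word of length $\ell(\nu)$, exactly as in the proof of Theorem \ref{renorm1}. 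The decisive new fact is that, since $(a,b)\in D_2$, by \cite[Theorem 2.13]{sidorov1} and \cite[Theorem 3.8]{sidorov6} the pair is not renormalisable by any $\omega_r,\nu_r$; hence $\omega$ and $\nu$ are not cyclically balanced with $1(\omega)=1(\nu)$, and Proposition \ref{vyw1} applies to give a strict inequality between $\mathrm{max}_{\omega}^{\infty}$ and $\mathrm{max}_{\nu}^{\infty}$. This is the only input beyond Theorem \ref{renorm1}, and it is indispensable: Theorem \ref{balanceadas} shows the conclusion genuinely fails when $\omega,\nu$ are cyclically balanced with $1(\omega)=1(\nu)$, which is why all hypotheses are needed.

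As in Theorem \ref{renorm1}, I would establish non-transitivity by producing two admissible words between which no bridge can exist. The obstacle is that, with both first blocks infinite, the counting device of Theorem \ref{renorm1} — that $\omega\nu^{n^{\nu}_1+1}$ (resp.\ $\nu\omega^{m^{\omega}_1+1}$) leaves the language by Lemma \ref{boundedsequences} — is unavailable, since every $\omega\nu^{k}$ and $\nu\omega^{k}$ is now admissible. The obstruction must therefore be manufactured from the strict separation of $\mathrm{max}_{\omega}^{\infty}$ and $\mathrm{max}_{\nu}^{\infty}$. Writing $Q$ for the longest common prefix of $\mathrm{max}_{\omega}$ and $\mathrm{max}_{\nu}$, the word $Q1$ occurs in exactly one of $\omega^{\infty},\nu^{\infty}$ and in no shift of the other; this word is a \emph{signature} that distinguishes $\omega$-content from $\nu$-content, and it exists precisely because $\omega$ and $\nu$ are not cyclic permutations. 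I would then take the two test words to be, on one side, a word that forces the maximal $\omega$-pattern (built from $\sigma(\omega)1$ together with this signature) and, on the other, a pure block $\nu^{N}$ realising the minimal continuation $\sigma(\nu)0$, with $N$ chosen large relative to $\ell(\omega)+\ell(\nu)$.

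The heart of the proof is a digit-by-digit tracking argument. Assuming a bridge $\upsilon$ with the concatenation admissible, one reads it against $\boldsymbol{\alpha}$ from above and against $\boldsymbol{\beta}$ from below, using the maximality of $\sigma(\omega)1$ and the minimality of $\sigma(\nu)0$ to force successive digits of $\upsilon$; each forced step is compelled to reproduce a full copy of $\omega$ or a full copy of $\nu$, since any deviation pushes some shift strictly above $\boldsymbol{\alpha}$ or strictly below $\boldsymbol{\beta}$. The signature $Q1$, together with $\mathrm{max}_{\omega}^{\infty}\neq\mathrm{max}_{\nu}^{\infty}$, then shows that the forced itinerary can never switch from the $\omega$-regime to the $\nu$-regime at the point required to reach $\nu^{N}$, so $\upsilon$ is driven to be infinite, a contradiction — mirroring the two cases of Theorem \ref{renorm1}. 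I expect the main difficulty to be organising this forcing cleanly in the doubly-infinite setting: one cannot terminate the induction with a finiteness bound as in Theorem \ref{renorm1}, and must instead close it using the strict inequality of Proposition \ref{vyw1}. The contrast with Theorem \ref{balanceadas}, where $\omega,\nu$ are cyclic permutations and no such signature exists, confirms that dropping $(a,b)\in D_2$ would break exactly this step.
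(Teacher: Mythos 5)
You have assembled the correct frame: Lemma \ref{vyw} places $\omega^{\infty}$ and $\nu^{\infty}$ in the subshift, the hypothesis $(a,b)\in D_2$ rules out the cyclically balanced case of Theorem \ref{balanceadas} so that Proposition \ref{vyw1} gives a \emph{strict} separation of ${\max}_{\omega}^{\infty}$ and ${\max}_{\nu}^{\infty}$, and the proof must be a digit-forcing argument exhibiting two admissible words with no bridge. The gap is in your choice of the target word. Since $0\boldsymbol{\alpha}=\omega\nu^{\infty}$, the sequence $\boldsymbol{\alpha}=\sigma(\omega)\nu^{\infty}$ itself contains $\nu^{N}$ for every $N$: the maximal admissible word $\sigma(\omega)1=a_1\ldots a_{\ell(\omega)}$ is followed along $\boldsymbol{\alpha}$ by $\nu_2\ldots\nu_{\ell(\nu)}\nu^{N}$, so $\upsilon=\nu_2\ldots\nu_{\ell(\nu)}$ is already a bridge from your $\omega$-forcing word to $\nu^{N}$ (the concatenation is a prefix of $\boldsymbol{\alpha}$, hence in $\mathcal{L}(\Sigma_{(\boldsymbol{\alpha},\boldsymbol{\beta})})$). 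Your central claim that ``the forced itinerary can never switch from the $\omega$-regime to the $\nu$-regime'' is therefore false here: the maximal continuation after $\sigma(\omega)1$ switches to the $\nu$-regime immediately and stays there. The signature $Q1$ is a sound device for telling $\omega$-content from $\nu$-content, but the difficulty is not distinguishing the regimes; it is finding a target word that cannot be \emph{entered} once the forcing has locked you into one of them, and a pure power $\nu^{N}$ is the wrong choice because it sits inside $\boldsymbol{\alpha}$.

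The paper's proof supplies exactly the missing object. The target is the alternating word $({\max}_{\omega}{\max}_{\nu})^{2}$, admissible because $({\max}_{\omega}{\max}_{\nu})^{\infty}\in\Sigma_{(\boldsymbol{\alpha},\boldsymbol{\beta})}$ (this is where the strict inequality of Proposition \ref{vyw1} and the endings from Proposition \ref{endings} enter); the source is the maximal prefix $a_1\ldots a_{\ell(\omega)+\ell(\nu)-1}1$ of $\boldsymbol{\alpha}$; and the decisive blocking relation is
\[
\nu_2\ldots\nu_{\ell(\nu)}\,0\,{\max}_{\omega}{\max}_{\nu}\prec\boldsymbol{\beta},
\]
so that once the digit-tracking has forced a copy of $\sigma(\nu)$ one cannot exit into ${\max}_{\omega}{\max}_{\nu}$ without dropping below $\boldsymbol{\beta}$; every candidate bridge is then driven either to repeat $\nu$ forever or back into the starting configuration. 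Your proposal contains neither this target nor this inadmissible word, and without them no contradiction is reached, so the central step of the argument is missing.
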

\begin{proof}
From Proposition \ref{vyw} we know that $\omega^{\infty}$ and $\nu^{\infty} \in \Sigma_{(\boldsymbol{\alpha},\boldsymbol{\beta})}$. Also, from Proposition \ref{vyw1} we have that ${\max_{\nu}}^{\infty} \prec {\max_{\omega}}^{\infty}$. Then $\max_{\omega}$ and $\max_{\nu} \in \mathcal{L}
(\Sigma_{(\boldsymbol{\alpha},\boldsymbol{\beta})})$. Note that $(\max_{\omega}\max_{\nu})^{\infty} \in 
\Sigma_{(\boldsymbol{\alpha},\boldsymbol{\beta})}$ since ${\max_{\omega}}^{\infty} \prec \boldsymbol{\alpha}$, ${\max_{\nu}}^{\infty} \prec \boldsymbol{\alpha}$, ${\max_{\omega}}_{\ell(\omega)} = 0$ and ${\max_{\nu}}_{1}= 1$. Moreover, 
\begin{center} 
$\sigma^{n}((\max_{\omega}\max_{\nu})^{\infty}) \succ \nu^{\infty}$ and $\sigma^{n}((\max_{\omega}\max_{\nu})^{\infty}) \prec \omega^{\infty}$
\end{center}
for every $n \in \mathbb{N}$. Note that 
\begin{equation}
\nu_2\ldots \nu_{\ell(\nu)}0{\max}_{\omega}{\max}_{\nu} \notin \mathcal{L}(\Sigma_{(\boldsymbol{\alpha},\boldsymbol{\beta})})\label{lapincheecuacion}
\end{equation} since $$\nu_2\ldots\nu_{\ell(\nu)}0{\max}_{\omega}{\max}_{\nu} \prec \boldsymbol{\beta}.$$ Finally, we want to show that there is no bridge between $$a_1 \ldots a_{\ell(\omega)+\ell(\nu)-1}1 \hbox{\rm{ and }} ({\max}_{\omega}{\max}_{\nu})^2.$$ Assume that such a bridge $v$ exists, i.e. $$a_1 \ldots a_{\ell(\omega)+\ell(\nu)-1}1 v ({\max}_{\omega}{\max}_{\nu})^2 \in\mathcal{L}(\Sigma_{(\boldsymbol{\alpha},\boldsymbol{\beta})})$$ and $v \in \mathcal{L}(\Sigma_{(\boldsymbol{\alpha},\boldsymbol{\beta})})$. Observe that $v_1 \ldots v_{\ell(\nu)-1} = \nu_2 \ldots \nu_{\ell(\nu)}$ since if $v_i \neq \nu_{i-1}$ then $$a_1 \ldots a_{\ell(\omega)+\ell(\nu)-1)}1v_1\ldots v_i \succ \boldsymbol{\alpha}$$ if $v_i =1$ or  $v_1 \ldots v_{\ell(\nu)-1} \prec b$ if $v_{i} = 0$. Then it is necessary to consider two sub-cases. Assume that $v_{\ell(\nu)} = 1$. If $v_{\ell(\nu)-1} = 1$ then $$a_1 \ldots a_{\ell(\omega)+\ell(\nu)-1}1v_1 \ldots v_{\ell(\nu)-1} = a_1 \ldots a_{\ell(\omega)+2\ell(\nu)-1}1,$$ which implies $v_{i-1} = (\nu)^{\infty}_i$ for every $i \in \{1 \ldots 2\ell(\nu)-1\}$. Then $v_{2\ell(\nu)}$ is either $0$ or $1$. If $v_{2\ell(\nu)} = 1$ we get the previous case. Therefore $v_{2\ell(\nu)} = 0$. Note that $v_{2\ell(\nu)} = 0$ is equivalent to considering $v_{\ell(\nu)} = 0$. Consider now that $v_{\ell(\nu)} = 0$. From (\ref{lapincheecuacion}), $v_{1}\ldots v_{\ell(\nu)}\max_{\omega}\max_{\nu}$ is not admissible. Then $v_1 \ldots v_{\ell(\nu)+\ell(\omega)-1} = b_1 \ldots b_{\ell(\nu)}$. Then $v_{\ell(\nu)+\ell(\omega)} \ldots v_{\ell(\nu)+\ell(\omega)} = \omega_1\ldots \omega_{\ell(\omega)-1}$. Then if $v_{\ell(\nu)+\ell(\omega)+1} = 1$ then $a_1 \ldots a_{\ell(\nu)+\ell(\omega)+1} \prec a_1 \ldots a_{\ell(\omega)+\ell(\nu)-1}1v$ which is a contradiction.  
\end{proof}

Note that the case $\ell(\omega)+\ell(\nu) = 4$ is contained in Theorem \ref{balanceadas}. Observe that in the proof of Theorem \ref{renorm1} we did not consider the case when $(\boldsymbol{\alpha},\boldsymbol{\beta})$ is renormalisable by an infinite sequence.  

\begin{proposition}
Let $(\boldsymbol{\alpha},\boldsymbol{\beta}) \in \mathcal{LW}$ such that $\boldsymbol{\beta} = 0^n\boldsymbol{\alpha}$ for $n > 0_{\boldsymbol{\alpha}}$. Then $(\Sigma_{(\boldsymbol{\alpha},\boldsymbol{\beta})}, \sigma_{(\boldsymbol{\alpha},\boldsymbol{\beta})})$ is not transitive. \label{mueveb}
\end{proposition}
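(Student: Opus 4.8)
The plan is to exhibit two words of $\mathcal{L}(\Sigma_{(\boldsymbol{\alpha},\boldsymbol{\beta})})$ that cannot be joined by any bridge, contradicting transitivity, and the natural candidate is the block $0^n$ itself, used both as the left and the right word. First I would record that $0^n$ belongs to the language: since $(\boldsymbol{\alpha},\boldsymbol{\beta})\in\mathcal{LW}$ we have $\boldsymbol{\beta}\in\bar P$, so $\sigma^k(\boldsymbol{\beta})\succcurlyeq\boldsymbol{\beta}$ for every $k$, while $\sigma^k(\boldsymbol{\beta})\preccurlyeq\boldsymbol{\alpha}$ for every $k$ by condition $ii)$ of Definition \ref{lexworld}; hence $\boldsymbol{\beta}\in\Sigma_{(\boldsymbol{\alpha},\boldsymbol{\beta})}$ and its prefix $0^n$ lies in $\mathcal{L}(\Sigma_{(\boldsymbol{\alpha},\boldsymbol{\beta})})$. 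On the other hand, because $n>0_{\boldsymbol{\alpha}}$, the word $0^n$ is not a factor of $\boldsymbol{\alpha}$.

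The key step is the following forcing observation: if $\boldsymbol{x}\in\Sigma_{(\boldsymbol{\alpha},\boldsymbol{\beta})}$ and $\sigma^j(\boldsymbol{x})$ begins with $0^n$, then $\sigma^{j+n}(\boldsymbol{x})=\boldsymbol{\alpha}$. Indeed, $\sigma^j(\boldsymbol{x})\succcurlyeq\boldsymbol{\beta}=0^n\boldsymbol{\alpha}$, and since the first $n$ symbols of $\sigma^j(\boldsymbol{x})$ agree with those of $\boldsymbol{\beta}$, the lexicographic comparison passes to the tails, giving $\sigma^{j+n}(\boldsymbol{x})\succcurlyeq\sigma^n(\boldsymbol{\beta})=\boldsymbol{\alpha}$. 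Combined with the upper bound $\sigma^{j+n}(\boldsymbol{x})\preccurlyeq\boldsymbol{\alpha}$ coming from membership in $\Sigma_{(\boldsymbol{\alpha},\boldsymbol{\beta})}$, we obtain $\sigma^{j+n}(\boldsymbol{x})=\boldsymbol{\alpha}$. In words, once $n$ consecutive zeros occur, the sequence is pinned to $\boldsymbol{\alpha}$ forever afterwards.

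Now suppose, towards a contradiction, that there is a bridge $\upsilon$ with $0^n\upsilon\,0^n\in\mathcal{L}(\Sigma_{(\boldsymbol{\alpha},\boldsymbol{\beta})})$. Choose $\boldsymbol{x}\in\Sigma_{(\boldsymbol{\alpha},\boldsymbol{\beta})}$ and $j\geq 0$ with $\sigma^j(\boldsymbol{x})=0^n\upsilon\,0^n\cdots$. The forcing observation applied at position $j$ gives $\sigma^{j+n}(\boldsymbol{x})=\boldsymbol{\alpha}$, so $\boldsymbol{\alpha}=\upsilon\,0^n\cdots$; that is, $0^n$ occurs in $\boldsymbol{\alpha}$ beginning at position $\ell(\upsilon)+1$. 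This contradicts $n>0_{\boldsymbol{\alpha}}$. Hence no bridge joins $0^n$ to $0^n$, and $(\Sigma_{(\boldsymbol{\alpha},\boldsymbol{\beta})},\sigma_{(\boldsymbol{\alpha},\boldsymbol{\beta})})$ fails to be transitive.

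I expect the only delicate point to be the forcing step, specifically the bookkeeping that a shared prefix of length $n$ reduces the inequality $\sigma^j(\boldsymbol{x})\succcurlyeq\boldsymbol{\beta}$ to $\sigma^{j+n}(\boldsymbol{x})\succcurlyeq\boldsymbol{\alpha}$ through the identity $\sigma^n(\boldsymbol{\beta})=\boldsymbol{\alpha}$; everything else follows immediately from the definition of $\Sigma_{(\boldsymbol{\alpha},\boldsymbol{\beta})}$ and of $0_{\boldsymbol{\alpha}}$.
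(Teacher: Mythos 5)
Your proposal is correct and follows essentially the same route as the paper: both use $0^n$ as the word that cannot be bridged to itself, both exploit the forcing that any continuation after an occurrence of $0^n$ must coincide with $\boldsymbol{\alpha}$ (being squeezed between $\sigma^n(\boldsymbol{\beta})=\boldsymbol{\alpha}$ from below and $\boldsymbol{\alpha}$ from above), and both derive the contradiction from $n>0_{\boldsymbol{\alpha}}$. Your phrasing of the final contradiction ($0^n$ would be a factor of $\boldsymbol{\alpha}$) is just the mirror of the paper's (the concatenated word falls strictly below $\boldsymbol{\beta}$), and if anything your write-up of the forcing step is more explicit than the paper's.
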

\begin{proof}
It is evident that $0^n \in \mathcal{L}(\Sigma_{(\boldsymbol{\alpha},\boldsymbol{\beta})}, \sigma_{(\boldsymbol{\alpha},\boldsymbol{\beta})})$ since $\boldsymbol{\beta} = 0^n\boldsymbol{\alpha}$. Assume that $(\Sigma_{(\boldsymbol{\alpha},\boldsymbol{\beta})}, \sigma_{(\boldsymbol{\alpha},\boldsymbol{\beta})})$ is transitive. Then, there exist $\omega \in \mathcal{L}(\Sigma_{(\boldsymbol{\alpha},\boldsymbol{\beta})})$ such that $0^n\omega0^n \in \mathcal{L}(\Sigma_{(\boldsymbol{\alpha},\boldsymbol{\beta})})$. Since $\boldsymbol{\beta} = 0^n\boldsymbol{\alpha}$ then $w_i = a_i$ for all $i \in \{1 \ldots \ell(\omega)\}$. Note that for every $i \in \mathbb{N}$ the block $a_1 \ldots a_i0^n$ since $n  > 0_{\boldsymbol{\alpha}}$. This implies that the block $0^na_1\ldots a_i 0^n \prec \boldsymbol{\beta}$ for every $i \in \mathbb{N}$. Therefore $0^na_1\ldots a_i 0^n \notin \mathcal{L}(\Sigma_{(\boldsymbol{\alpha},\boldsymbol{\beta})})$ which is a contradiction. Therefore $(\Sigma_{(\boldsymbol{\alpha},\boldsymbol{\beta})}, \sigma_{(\boldsymbol{\alpha},\boldsymbol{\beta})})$ is not transitive.  
\end{proof}

\begin{theorem}
If $(\boldsymbol{\alpha},\boldsymbol{\beta}) \in \mathcal{LW}$ is renormalisable by an infinite sequence then $(\Sigma_{(\boldsymbol{\alpha},\boldsymbol{\beta})}, \sigma_{(\boldsymbol{\alpha},\boldsymbol{\beta})})$ is not transitive.\label{renorminfty}
\end{theorem}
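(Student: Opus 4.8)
The plan is to exploit the degenerate shape that $(\boldsymbol{\alpha},\boldsymbol{\beta})$ acquires under an infinite renormalisation: one of the two boundary sequences becomes a \emph{frozen tail} that is forced to follow a distinguished finite word. By Remark~\ref{remarkrenorm} there are two cases, either $\nu$ is the infinite sequence, so that $0\boldsymbol{\alpha}=\omega\nu$ and $1\boldsymbol{\beta}=\nu$ with $\omega$ a finite word, or $\omega$ is the infinite sequence, so that $0\boldsymbol{\alpha}=\omega$ and $1\boldsymbol{\beta}=\nu\omega$. These two cases are interchanged by the mirror involution $\boldsymbol{x}\mapsto\bar{\boldsymbol{x}}$, which conjugates $(\Sigma_{(\boldsymbol{\alpha},\boldsymbol{\beta})},\sigma_{(\boldsymbol{\alpha},\boldsymbol{\beta})})$ to $(\Sigma_{(\bar{\boldsymbol{\beta}},\bar{\boldsymbol{\alpha}})},\sigma_{(\bar{\boldsymbol{\beta}},\bar{\boldsymbol{\alpha}})})$ and preserves transitivity (compare Proposition~\ref{elhomeomorfismo}, which realises this at the level of $\Lambda_{(a,b)}$). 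Hence I would treat only the first case and recover the second by reflection.

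In that case, writing $\omega=0\,\sigma(\omega)$ and $\nu=1\boldsymbol{\beta}$, the relation $0\boldsymbol{\alpha}=\omega\nu$ rewrites as $\boldsymbol{\alpha}=w\boldsymbol{\beta}$, where $w=\sigma(\omega)1$ is a word of length $\ell(\omega)$ coinciding with the initial block of $\boldsymbol{\alpha}$; this $w$ is exactly the maximal admissible word of length $\ell(\omega)$ as in the proof of Theorem~\ref{renorm1}, and it is admissible since $\boldsymbol{\alpha}\in\Sigma_{(\boldsymbol{\alpha},\boldsymbol{\beta})}$ (membership follows from $\boldsymbol{\alpha}\in P$ and condition $ii)$ of Definition~\ref{lexworld}). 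The key step is the following rigidity: whenever $w$ occurs in an admissible sequence $\boldsymbol{x}$, say $x_{j+1}\cdots x_{j+\ell(\omega)}=w$, the upper constraint $\sigma^{j}(\boldsymbol{x})\preccurlyeq\boldsymbol{\alpha}=w\boldsymbol{\beta}$ matches $w$ digit by digit and therefore forces $\sigma^{j+\ell(\omega)}(\boldsymbol{x})\preccurlyeq\boldsymbol{\beta}$; since admissibility also gives $\sigma^{j+\ell(\omega)}(\boldsymbol{x})\succcurlyeq\boldsymbol{\beta}$, one concludes $\sigma^{j+\ell(\omega)}(\boldsymbol{x})=\boldsymbol{\beta}$. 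Thus every appearance of $w$ is immediately followed by the frozen tail $\boldsymbol{\beta}$.

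I would then rule out transitivity by showing that $w$ can occur at most once. One may assume $\nu$, hence $\boldsymbol{\beta}=\sigma(\nu)$, is not eventually periodic: if it were, the pair would be finitely renormalisable and non-transitivity would already follow from Theorem~\ref{renorm1} or Theorem~\ref{renorm1prime}. Now if $w$ occurred as a factor of $\boldsymbol{\beta}$ at position $k$, the same rigidity applied to $\boldsymbol{\beta}\in\Sigma_{(\boldsymbol{\alpha},\boldsymbol{\beta})}$ would give $\sigma^{k+\ell(\omega)}(\boldsymbol{\beta})=\boldsymbol{\beta}$, making $\boldsymbol{\beta}$ periodic, a contradiction. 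Hence $w$ is not a factor of its own forced continuation, so in any admissible sequence $w$ appears at most once. Consequently there is no bridge $\upsilon$ with $w\upsilon w\in\mathcal{L}(\Sigma_{(\boldsymbol{\alpha},\boldsymbol{\beta})})$, which contradicts transitivity and proves the theorem. This is precisely the infinite-renormalisation analogue of the absorbing-tail phenomenon behind Proposition~\ref{mueveb}.

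The main obstacle I anticipate is not the forcing argument, which is short once $\boldsymbol{\alpha}=w\boldsymbol{\beta}$ is in hand, but the bookkeeping for the two reductions: confirming that an infinite-sequence renormalisation with eventually periodic $\nu$ genuinely collapses to a finite renormalisation of the form already covered by Theorems~\ref{renorm1} and~\ref{renorm1prime}, and verifying carefully that the mirror involution sends the $\omega$-infinite case exactly to the $\nu$-infinite case, correctly tracking how the infinite block switches sides under $\boldsymbol{x}\mapsto\bar{\boldsymbol{x}}$.
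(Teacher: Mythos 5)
Your central mechanism is sound and is genuinely different from the paper's. You exploit the identity $\boldsymbol{\alpha}=w\boldsymbol{\beta}$ with $w=\sigma(\omega)1$ to show that every admissible occurrence of $w$ is followed by the frozen tail $\boldsymbol{\beta}$, so that a bridge $w\upsilon w$ would force $w$ to be a factor of $\boldsymbol{\beta}$ and hence $\sigma^{k+\ell(w)}(\boldsymbol{\beta})=\boldsymbol{\beta}$; the paper instead works in the mirror case $0\boldsymbol{\alpha}=\boldsymbol{\omega}$, $1\boldsymbol{\beta}=\nu\boldsymbol{\omega}$ and builds a word $\upsilon=a_1\ldots a_{n-1}0$ occurring at most $m$ consecutive times in $\boldsymbol{\alpha}$, then shows $\nu 0$ cannot be bridged to $\upsilon^{m+1}$. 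Your rigidity step and the reduction by the mirror involution are both correct.

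The genuine gap is your disposal of the periodic case. Your contradiction only produces $\sigma^{k+\ell(\omega)}(\boldsymbol{\beta})=\boldsymbol{\beta}$, so you must exclude $\boldsymbol{\beta}$ \emph{purely periodic}, and your justification --- that an eventually periodic $\boldsymbol{\beta}$ makes the pair finitely renormalisable and hence non-transitive by Theorem \ref{renorm1} or Theorem \ref{renorm1prime} --- does not hold up. First, eventual periodicity of $\boldsymbol{\beta}$ alone does not produce a decomposition of \emph{both} $0\boldsymbol{\alpha}$ and $1\boldsymbol{\beta}$ into blocks $\omega',\nu'$ as required by Definition \ref{renormdef}. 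Second, when $\boldsymbol{\beta}=\rho^{\infty}$ is purely periodic one gets $1\boldsymbol{\beta}=(\nu'')^{\infty}$ and $0\boldsymbol{\alpha}=\omega(\nu'')^{\infty}$, which is precisely of the form $\omega\nu^{\infty}$ that Theorem \ref{renorm1} \emph{excludes} by hypothesis, while Theorem \ref{renorm1prime} requires $1\boldsymbol{\beta}=\nu\omega^{\infty}$ (and $(a,b)\in D_2$), which also fails here; and one cannot appeal to ``renormalisable $\Rightarrow$ not transitive'' in general, since Theorem \ref{balanceadas} exhibits renormalisable transitive pairs. The gap is repairable without new machinery: if $w$ occurs in $\boldsymbol{\beta}$ at position $k$, your own rigidity gives $\sigma^{k}(\boldsymbol{\beta})=w\boldsymbol{\beta}=\boldsymbol{\alpha}$, so $\boldsymbol{\alpha}$ and $\boldsymbol{\beta}$ lie on one periodic orbit, the subshift is cyclic, and Corollary \ref{cyclicsft} applies; if $w$ does not occur in $\boldsymbol{\beta}$, your bridge argument already concludes. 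You should replace the appeal to Theorems \ref{renorm1} and \ref{renorm1prime} by this dichotomy.
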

\begin{proof}
There is no loss of generality in assuming that $0\boldsymbol{\alpha} = \boldsymbol{\omega}$ and $1\boldsymbol{\beta} = \nu\boldsymbol{\omega}$ where $\boldsymbol{\omega}$ is an infinite sequence. Moreover, from Proposition \ref{mueveb} we can assume that $\nu \neq 10^{n}$ for any $n \geq 0_{\boldsymbol{\alpha}}$. Moreover, by Theorem \ref{renorm1} we know that if $\boldsymbol{\omega} = \omega^{\infty}$ then $(\Sigma_{(\boldsymbol{\alpha}, \boldsymbol{\beta})}, \sigma_{(\boldsymbol{\alpha},\boldsymbol{\beta})})$ is not transitive. Then we can assume that $\boldsymbol{\omega}$ is not a periodic sequence. Observe that $\nu^{\infty} \prec \boldsymbol{\omega}$ since $(\boldsymbol{\alpha}, \boldsymbol{\beta}) \in \mathcal{LW}$. Note that if $0_{\boldsymbol{\beta}} > 0_{\boldsymbol{\alpha}}$ then an analogous argument as the one used in the proof of Theorem \ref{mueveb} will hold. Let us assume that $0_{\boldsymbol{\beta}} = 0_{\boldsymbol{\alpha}}$. Let $n > 1_{\boldsymbol{\alpha}}$ such that $a_n = 1$, $\upsilon = a_1 \ldots a_{n-1}0 \in \mathcal{L}(\Sigma_{(\boldsymbol{\alpha}, \boldsymbol{\beta})})$, $\upsilon^{\infty} = (a_1 \ldots a_{n-1}0)^{\infty} \in \Sigma_{(\boldsymbol{\alpha}, \boldsymbol{\beta})}$ and $\upsilon$ occurs finitely many times in $\boldsymbol{\alpha}$. There is no loss of generality in assuming that $$m = \mathop{\max}\left\{n \in \mathbb{N} \mid  \upsilon^n \hbox{\rm{ is a factor of }} \boldsymbol{\alpha} \right\}$$ exists. We claim that there are no bridges between $\nu0$ and $\upsilon^{m+1}$. Assume that such bridge $\varpi$ exists, i.e $\nu0\varpi\upsilon^{m+1} \in \mathcal{L}(\Sigma_{(\boldsymbol{\alpha},\boldsymbol{\beta})})$. If $\varpi$ is a factor of $\boldsymbol{\alpha}$ then $\nu0\varpi\upsilon^{n+1} \prec \boldsymbol{\beta}$ since $\nu0\varpi\upsilon^{n+1}_{\ell(\nu)+1+\ell(\varpi)+n\ell(\upsilon)} = 0$ and $b_{\ell(\nu)+1+\ell(\varpi)+n\ell(\upsilon)} = 1$. Thus, $\varpi$ is not a factor of $\boldsymbol{\alpha}$. Since $\nu0\varpi\upsilon^{n+1}_i = b_i$ for every $1 \leq i \leq \ell(\nu)+1$ then there exist $n \in \mathbb{N}$ with $n+1 \leq \ell(\varpi)$ such that $\varpi_i = a_i$ for every $1 \leq i \leq n$ and $\varpi_{n+1} \neq a_{n+1}$. Note that if $\varpi_{n+1} = 0$ and $a_{n+1} = 1$ then $\nu0\varpi\upsilon^{n+1} \prec \boldsymbol{\beta}$ then $\varpi_{n+1} = 1$. This gives that $a_{n+1} = 0$ then $\varpi\nu^{n+1} \succ \boldsymbol{\alpha}$ which is a contradiction. This completes the proof.
\end{proof}

To characterise transitivity via the renormalisability of $(\boldsymbol{\alpha},\boldsymbol{\beta})$ it is necessary to show that $(\Sigma_{(\boldsymbol{\alpha},\boldsymbol{\beta})}, \sigma_{(\boldsymbol{\alpha},\boldsymbol{\beta})})$ is transitive provided that $(\boldsymbol{\alpha}, \boldsymbol{\beta})$ is not renormalisable.

\vspace{1em}Let $(\boldsymbol{\alpha},\boldsymbol{\beta}) \in \mathcal{LW}$. We say that $(\boldsymbol{\alpha},\boldsymbol{\beta})$ is \textit{essential} if $\boldsymbol{\alpha}$ and $\boldsymbol{\beta}$ are periodic sequences and $(\boldsymbol{\alpha},\boldsymbol{\beta})$ is not renormalisable. From \cite[Theorem 1.3]{samuel} $(\Sigma_{(\boldsymbol{\alpha},\boldsymbol{\beta})}, \sigma_{(\boldsymbol{\alpha},\boldsymbol{\beta})})$ is a subshift of finite type provided that $(\boldsymbol{\alpha},\boldsymbol{\beta})$ is an essential pair. From Proposition \ref{palabrasciclicas} we obtain that every for essential pair $(\boldsymbol{\alpha},\boldsymbol{\beta}) \in \mathcal{LW}$ there exists a pair of finite words $(\omega, \nu)$ such that $\omega = 0-{\max}_{\omega}$ , $\nu = 1-{\min}_{\nu}$ and $0\boldsymbol{\alpha} = \omega^{\infty}$ and $1\boldsymbol{\beta} = \nu^{\infty}$. The pair $(\omega, \nu)$ is called \textit{the associated pair}.

\vspace{1em}To prove that the subshift of finite type corresponding to an essential pair is transitive we need to prove a technical lemma. Let $(\boldsymbol{\alpha}, \boldsymbol{\beta})$ be an essential pair with associated pair $(\omega, \nu)$. Consider the cyclic subshifts corresponding to $\omega$ and $\nu$ respectively i.e $$(\Sigma_{(\sigma(\omega^{\infty}), \sigma_{(({1-{\min}_{\omega}})^{\infty}))}}, \sigma_{(\sigma(\omega^{\infty}), \sigma_{(({1-{\min}_{\omega}})^{\infty}))}})$$ and $$(\Sigma_{(\sigma((0-{\max}_{\nu})^{\infty}), \sigma(\nu^{\infty}))}, \sigma_{(\sigma(({0-{\max}_{\nu})^{\infty}), \sigma(\nu^{\infty}))}}).$$ Then by Proposition \ref{endings} there exist words $\omega_{\boldsymbol{\alpha}}, \nu_{\boldsymbol{\alpha}}, \omega_{\boldsymbol{\beta}}$ and $\nu_{\boldsymbol{\beta}}$ such that $\omega =  \omega_{\boldsymbol{\alpha}}\nu_{\boldsymbol{\alpha}}$, $\nu = \omega_{\boldsymbol{\beta}}\nu_{\boldsymbol{\beta}}$, $\omega_{\boldsymbol{\alpha}} = 0-{\max}_{\omega_{\boldsymbol{\alpha}}}$, $\omega_{\boldsymbol{\beta}} = 0-{\max}_{\omega_{\boldsymbol{\beta}}}$, $\nu_{\boldsymbol{\alpha}} = 1-{\min}_{\nu_{\boldsymbol{\alpha}}}$ and $\nu_{\boldsymbol{\beta}} = 1-{\min}{\nu_{\boldsymbol{\beta}}}$. Let $${p_1}_{(\boldsymbol{\alpha}, \boldsymbol{\beta})} = \mathop{\max}\left\{\omega_{\boldsymbol{\alpha}}^{\infty}, \omega_{\boldsymbol{\beta}}^{\infty} \right\}$$ and $${p_2}_{(\boldsymbol{\alpha}, \boldsymbol{\beta})} = \mathop{\min}\left\{\nu_{\boldsymbol{\alpha}}^{\infty}, \nu_{\boldsymbol{\beta}}^{\infty} \right\}.$$   

\begin{lemma}
If $(\boldsymbol{\alpha}, \boldsymbol{\beta})$ is an essential pair, then $${p_1}_{(\boldsymbol{\alpha}, \boldsymbol{\beta})}{p_2}_{(\boldsymbol{\alpha}, \boldsymbol{\beta})} \hbox{\rm{ and }} {p_2}_{(\boldsymbol{\alpha}, \boldsymbol{\beta})}{p_1}_{(\boldsymbol{\alpha}, \boldsymbol{\beta})} \in \mathcal{L}(\Sigma_{(\boldsymbol{\alpha},\boldsymbol{\beta})}).$$ Moreover, $({p_1}_{(\boldsymbol{\alpha}, \boldsymbol{\beta})}{p_2}_{(\boldsymbol{\alpha}, \boldsymbol{\beta})})^{\infty} \in \Sigma_{(\boldsymbol{\alpha}, \boldsymbol{\beta})}$.\label{p1p2}
\end{lemma}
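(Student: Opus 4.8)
The plan is to reduce the entire statement to the single assertion that $(p_1 p_2)^{\infty} \in \Sigma_{(\boldsymbol{\alpha},\boldsymbol{\beta})}$. Indeed, once this is known, $p_1 p_2$ is a prefix of $(p_1 p_2)^{\infty}$ and $p_2 p_1$ is a prefix of $\sigma^{\ell(p_1)}((p_1 p_2)^{\infty}) = (p_2 p_1)^{\infty}$, so both are factors of a point of $\Sigma_{(\boldsymbol{\alpha},\boldsymbol{\beta})}$ and hence belong to $\mathcal{L}(\Sigma_{(\boldsymbol{\alpha},\boldsymbol{\beta})})$. Thus it suffices to check the defining inequalities $\boldsymbol{\beta} \preccurlyeq \sigma^{n}((p_1 p_2)^{\infty}) \preccurlyeq \boldsymbol{\alpha}$ for every $n \geq 0$.

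First I would record the relevant combinatorial data. Since $(\boldsymbol{\alpha},\boldsymbol{\beta})$ is essential with associated pair $(\omega,\nu)$, we have $0\boldsymbol{\alpha} = \omega^{\infty}$ and $1\boldsymbol{\beta} = \nu^{\infty}$, whence $\boldsymbol{\alpha} = \max_{\omega}^{\infty}$ and $\boldsymbol{\beta} = \min_{\nu}^{\infty}$ by the identities preceding Proposition \ref{endings}. Applying Proposition \ref{palabrasciclicas} to $\omega$ and to $\nu$ furnishes the decompositions used to define the blocks, together with the rotation identities $\nu_{\boldsymbol{\alpha}}\omega_{\boldsymbol{\alpha}} = 1\text{-min}_{\omega}$ and $\omega_{\boldsymbol{\beta}}\nu_{\boldsymbol{\beta}} = 0\text{-max}_{\nu}$; in particular each of $\omega_{\boldsymbol{\alpha}},\omega_{\boldsymbol{\beta}}$ begins with $0$ and each of $\nu_{\boldsymbol{\alpha}},\nu_{\boldsymbol{\beta}}$ begins with $1$, so $p_1$ begins with $0$ and $p_2$ begins with $1$. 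I would also note that $(\nu_{\boldsymbol{\alpha}}\omega_{\boldsymbol{\alpha}})^{\infty}$ and $(\omega_{\boldsymbol{\beta}}\nu_{\boldsymbol{\beta}})^{\infty}$ are cyclic rotations of $\omega^{\infty}$ and $\nu^{\infty}$, so by the $\mathcal{LW}$ inequalities $\boldsymbol{\beta} \preccurlyeq \sigma^{k}(\boldsymbol{\alpha})$ and $\sigma^{k}(\boldsymbol{\beta}) \preccurlyeq \boldsymbol{\alpha}$ they lie in $[\boldsymbol{\beta},\boldsymbol{\alpha}]_{\prec}$.

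With this in place I would verify the admissibility of $(p_1 p_2)^{\infty}$ one shift at a time, separating the shifts according to whether $\sigma^{n}((p_1 p_2)^{\infty})$ begins inside a $p_1$-block (a suffix of $p_1$ followed by $(p_2 p_1)^{\infty}$) or inside a $p_2$-block (a suffix of $p_2$ followed by $(p_1 p_2)^{\infty}$). Because $\boldsymbol{\alpha}$ begins with $1$ and $\boldsymbol{\beta}$ with $0$, any shift beginning with $0$ satisfies the upper inequality automatically and any shift beginning with $1$ satisfies the lower one automatically, so for each shift exactly one inequality is substantial. The two extreme cases are $(p_1 p_2)^{\infty} \succcurlyeq \boldsymbol{\beta}$ and $(p_2 p_1)^{\infty} \preccurlyeq \boldsymbol{\alpha}$; the interior shifts would then be handled by comparing the relevant suffix against the corresponding initial segment of $\boldsymbol{\alpha} = \max_{\omega}^{\infty}$ or $\boldsymbol{\beta} = \min_{\nu}^{\infty}$, using that $p_1 = 0\text{-max}_{p_1}$ and $p_2 = 1\text{-min}_{p_2}$ to pin down the digit at the first coordinate of disagreement.

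The main obstacle is exactly the pair of extreme comparisons $(p_1 p_2)^{\infty} \succcurlyeq \boldsymbol{\beta}$ and $(p_2 p_1)^{\infty} \preccurlyeq \boldsymbol{\alpha}$ in the case where $p_1$ and $p_2$ are drawn from the two different decompositions (say $p_1 = \omega_{\boldsymbol{\beta}}$ while $p_2 = \nu_{\boldsymbol{\alpha}}$): then $(p_2 p_1)^{\infty}$ is a genuine splice of blocks coming from $\omega$ and from $\nu$ and need not be a rotation of either $\omega^{\infty}$ or $\nu^{\infty}$, so the inequality cannot be read off directly. To resolve it I would examine the first coordinate at which $(p_2 p_1)^{\infty}$ and $\boldsymbol{\alpha}$ disagree and argue that a disagreement forcing $(p_2 p_1)^{\infty} \succ \boldsymbol{\alpha}$ would, via the $0\text{-max}$/$1\text{-min}$ extremality of the blocks, propagate into one of the impossible inequalities $\omega_{\boldsymbol{\beta}}^{\infty} \succ \max_{\omega}^{\infty}$ or $\nu_{\boldsymbol{\alpha}}^{\infty} \prec \min_{\nu}^{\infty}$; the fact that $(\boldsymbol{\alpha},\boldsymbol{\beta})$ is essential, so that $\boldsymbol{\alpha}$ and $\boldsymbol{\beta}$ really are the maximal and minimal cyclic rotations of $\omega$ and $\nu$, is what rules such overshoots out. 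I expect the bulk of the work, and the only genuinely delicate casework, to lie in this crossover analysis around block boundaries; the interior shifts and the lower bound are symmetric.
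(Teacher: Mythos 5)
Your overall strategy is the paper's: everything is reduced to showing that $({p_1}_{(\boldsymbol{\alpha},\boldsymbol{\beta})}{p_2}_{(\boldsymbol{\alpha},\boldsymbol{\beta})})^{\infty}$ lies in $\Sigma_{(\boldsymbol{\alpha},\boldsymbol{\beta})}$, and an offending shift is analysed at its first coordinate of disagreement with $\boldsymbol{\alpha}$ or $\boldsymbol{\beta}$. The reduction of the two factor statements to the periodic-point statement, and the remark that each shift has exactly one substantive inequality (decided by its first digit), are correct and match what the paper does implicitly.

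The gap is at the step you yourself single out as the crux. You propose to close the crossover case by propagating a hypothetical violation into ``$\omega_{\boldsymbol{\beta}}^{\infty} \succ \max_{\omega}^{\infty}$ or $\nu_{\boldsymbol{\alpha}}^{\infty} \prec \min_{\nu}^{\infty}$'' and to rule these out because $\boldsymbol{\alpha}$ and $\boldsymbol{\beta}$ are the extreme rotations of $\omega$ and $\nu$. But both inequalities are false for the trivial reason that the two sides begin with different digits ($\omega_{\boldsymbol{\beta}}$ begins with $0$ while $\max_{\omega}$ begins with $1$; $\nu_{\boldsymbol{\alpha}}$ begins with $1$ while $\min_{\nu}$ begins with $0$), so they cannot be the terminus of any non-trivial propagation; moreover $\omega_{\boldsymbol{\beta}}$ is a block of $\nu$, not of $\omega$, so the maximality of $\boldsymbol{\alpha}$ among rotations of $\omega^{\infty}$ says nothing about it. What the paper's proof actually contradicts is the selection rule defining $p_1$ and $p_2$: a first disagreement of type $1$-versus-$0$ with $\boldsymbol{\alpha}$ forces ${p_2}^{\infty} \succ \nu_{\boldsymbol{\alpha}}^{\infty}$, impossible because $p_2$ was chosen as $\min\{\nu_{\boldsymbol{\alpha}}^{\infty},\nu_{\boldsymbol{\beta}}^{\infty}\}$, and symmetrically a $0$-versus-$1$ disagreement with $\boldsymbol{\beta}$ forces ${p_1}^{\infty} \prec \omega_{\boldsymbol{\alpha}}^{\infty}$, impossible because $p_1 = \max\{\omega_{\boldsymbol{\alpha}}^{\infty},\omega_{\boldsymbol{\beta}}^{\infty}\}$. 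Your plan never invokes this max/min property, which is the only place the particular choice of $p_1,p_2$ among the four candidate blocks enters the argument; without it the crossover case does not close.
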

\begin{proof} 
From the construction of ${p_1}_{\boldsymbol{\alpha}, \boldsymbol{\beta}}$ and ${p_2}_{\boldsymbol{\alpha}, \boldsymbol{\beta}}$ it is clear that $$\boldsymbol{\beta} \prec ({p_1}_{(\boldsymbol{\alpha}, \boldsymbol{\beta})}{p_2}_{(\boldsymbol{\alpha}, \boldsymbol{\beta})})^{\infty} \prec \boldsymbol{\alpha}$$ and $$\boldsymbol{\beta} \prec ({p_2}_{(\boldsymbol{\alpha}, \boldsymbol{\beta})}{p_1}_{(\boldsymbol{\alpha}, \boldsymbol{\beta})})^{\infty} \prec \boldsymbol{\alpha}.$$ Suppose that $({p_1}_{(\boldsymbol{\alpha}, \boldsymbol{\beta})}{p_2}_{(\boldsymbol{\alpha}, \boldsymbol{\beta})})^{\infty} \notin \Sigma_{(\boldsymbol{\alpha}, \boldsymbol{\beta})}.$ Then there exists $n \in \mathbb{N}$ such that $$\sigma^n(({p_1}_{(\boldsymbol{\alpha}, \boldsymbol{\beta})}{p_2}_{(\boldsymbol{\alpha}, \boldsymbol{\beta})})^{\infty}) \succ \boldsymbol{\alpha} \hbox{\rm{ or }} \sigma^n(({p_1}_{(\boldsymbol{\alpha}, \boldsymbol{\beta})}{p_2}_{(\boldsymbol{\alpha}, \boldsymbol{\beta})})^{\infty}) \prec \boldsymbol{\beta}.$$ Without losing generality let us assume the former. Then there exists $i \in \mathbb{N}$ such that $\sigma^n(({p_1}_{(\boldsymbol{\alpha}, \boldsymbol{\beta})}{p_2}_{(\boldsymbol{\alpha}, \boldsymbol{\beta})})^{\infty})_i = 1$ and $a_i = 0$. . Thus ${p_2}_{(\boldsymbol{\alpha}, \boldsymbol{\beta})} \succ \nu_{\boldsymbol{\alpha}}$, which contradicts that ${p_2}_{(\boldsymbol{\alpha}, \boldsymbol{\beta})} = \mathop{\min}\left\{(\nu_{\boldsymbol{\alpha}})^{\infty}, \nu_{\boldsymbol{\beta}})^{\infty} \right\}$. Then the only possibility is $\sigma^m(({p_1}_{(\boldsymbol{\alpha}, \boldsymbol{\beta})}{p_2}_{(\boldsymbol{\alpha}, \boldsymbol{\beta})})^{\infty}) \prec \boldsymbol{\beta}.$ Similarly there is $i^{\prime} \in \mathbb{N}$ such that $\sigma^m(({p_1}_{(\boldsymbol{\alpha}, \boldsymbol{\beta})}{p_2}_{(\boldsymbol{\alpha}, \boldsymbol{\beta})})^{\infty})_{i^{\prime}} = 0$ and $b_{i^{\prime}} = 1$. Then ${p_1}_{(\boldsymbol{\alpha}, \boldsymbol{\beta})} \prec \omega_{\boldsymbol{\alpha}}$, which contradicts that ${p_1}_{(\boldsymbol{\alpha}, \boldsymbol{\beta})} = \mathop{\min}\left\{\nu_{\boldsymbol{\alpha}},\nu_{\boldsymbol{\beta}}\right\}$. This completes the proof.
\end{proof}

\begin{theorem}
If $(\boldsymbol{\alpha},\boldsymbol{\beta}) \in \mathcal{LW}$ is an essential pair then 
$(\Sigma_{(\boldsymbol{\alpha},\boldsymbol{\beta})}, \sigma_{(\boldsymbol{\alpha},\boldsymbol{\beta})})$ is transitive. 
\label{renorm2}
\end{theorem}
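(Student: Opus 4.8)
The plan is to prove transitivity directly from the definition: given arbitrary words $\mu,\eta\in\mathcal{L}(\Sigma_{(\boldsymbol{\alpha},\boldsymbol{\beta})})$ (I rename the two words $\mu,\eta$ to avoid a clash with the associated pair $(\omega,\nu)$), I will produce a bridge $\upsilon$ with $\mu\upsilon\eta\in\mathcal{L}(\Sigma_{(\boldsymbol{\alpha},\boldsymbol{\beta})})$. The backbone of every bridge is the periodic point $(p_1p_2)^{\infty}$, which by Lemma \ref{p1p2} lies in $\Sigma_{(\boldsymbol{\alpha},\boldsymbol{\beta})}$; moreover the proof of that lemma yields the strict inequalities $\boldsymbol{\beta}\prec (p_1p_2)^{\infty}\prec\boldsymbol{\alpha}$, and this slack is exactly what lets me splice words on either side. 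Since $(\boldsymbol{\alpha},\boldsymbol{\beta})$ is essential, $\Sigma_{(\boldsymbol{\alpha},\boldsymbol{\beta})}$ is a subshift of finite type by \cite{samuel}, so transitivity of the language is equivalent to topological transitivity of the system, and it suffices to work at the level of admissible words.

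First I would record the admissibility criterion for a concatenation. A word $\mu=\mu_1\ldots\mu_n$ followed by a tail $\boldsymbol{t}\in\Sigma_{(\boldsymbol{\alpha},\boldsymbol{\beta})}$ satisfies $\mu\boldsymbol{t}\in\Sigma_{(\boldsymbol{\alpha},\boldsymbol{\beta})}$ if and only if, for every suffix $\mu_{j+1}\ldots\mu_n$ that coincides with a prefix of $\boldsymbol{\alpha}$ of length $\ell=n-j$ one has $\boldsymbol{t}\preccurlyeq\sigma^{\ell}(\boldsymbol{\alpha})$, and for every suffix coinciding with a prefix of $\boldsymbol{\beta}$ of length $\ell'$ one has $\boldsymbol{t}\succcurlyeq\sigma^{\ell'}(\boldsymbol{\beta})$; all shifts landing inside $\mu$ but not coinciding with a prefix of $\boldsymbol{\alpha}$ or $\boldsymbol{\beta}$ are already decided by $\mu$, and all shifts inside $\boldsymbol{t}$ are controlled by $\boldsymbol{t}\in\Sigma_{(\boldsymbol{\alpha},\boldsymbol{\beta})}$. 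With this in hand I would prove two symmetric \emph{steering} statements: (left) for every admissible $\mu$ there is a word $s$ with $\mu\,s\,(p_1p_2)^{\infty}\in\Sigma_{(\boldsymbol{\alpha},\boldsymbol{\beta})}$; (right) for every admissible $\eta$ there is a word $s'$ and a suffix $\boldsymbol{q}$ of one period of $(p_1p_2)^{\infty}$ with $\boldsymbol{q}\,s'\,\eta$ admissible. Granting these, the bridge is $\upsilon=s\,(p_1p_2)^{N}s'$ for $N$ large: admissibility of $\mu\,s\,(p_1p_2)^{N}s'\eta$ follows because a long central block of $(p_1p_2)^{\infty}$ sits strictly inside $[\boldsymbol{\beta},\boldsymbol{\alpha}]_{\prec}$, so the junction at the $\mu$ end and the junction at the $\eta$ end are decoupled.

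The heart of the argument is the left steering lemma, and this is where I expect the main obstacle. When a suffix of $\mu$ equals a long prefix of $\boldsymbol{\alpha}$, I cannot append $(p_1p_2)^{\infty}$ immediately, since I only know $(p_1p_2)^{\infty}\prec\boldsymbol{\alpha}$ and not $(p_1p_2)^{\infty}\preccurlyeq\sigma^{\ell}(\boldsymbol{\alpha})$. The remedy is to continue copying $\boldsymbol{\alpha}$ for a controlled stretch and then branch downward at a legal position. Here non-renormalisability does the work: because $(\boldsymbol{\alpha},\boldsymbol{\beta})$ is essential we have $0\boldsymbol{\alpha}=\omega^{\infty}$ and $1\boldsymbol{\beta}=\nu^{\infty}$ with $\omega=0\hbox{-max}_{\omega}$, $\nu=1\hbox{-min}_{\nu}$, and the decomposition $\omega=\omega_{\boldsymbol{\alpha}}\nu_{\boldsymbol{\alpha}}$, $\nu=\omega_{\boldsymbol{\beta}}\nu_{\boldsymbol{\beta}}$ of Proposition \ref{endings} and Lemma \ref{p1p2}. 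Non-renormalisability guarantees there is no pair of words trapping the orbit into disjoint blocks, and concretely it produces, within one period of $\boldsymbol{\alpha}$, a position at which $\boldsymbol{\alpha}$ and the word $p_1p_2$ first disagree; inserting the descent into $p_1p_2$ at that position keeps the sequence $\succcurlyeq\boldsymbol{\beta}$ while dropping it below $\sigma^{\ell}(\boldsymbol{\alpha})$. This is precisely the step that fails for the renormalisable pairs of Theorems \ref{renorm1}, \ref{renorm1prime} and \ref{renorminfty}, where no such branch exists and the language splits, so non-renormalisability is exactly the hypothesis removing the obstruction. I would carry out the branching by a short case analysis according to whether the tight suffix is governed by the $\omega_{\boldsymbol{\alpha}}$ part or the $\nu_{\boldsymbol{\alpha}}$ part, using Lemma \ref{p1p2} to certify that after at most one period we may legally switch to $p_1p_2$; the $\boldsymbol{\beta}$ side and the right steering lemma are the mirror images. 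Combining the two steering lemmas with the interior slack of $(p_1p_2)^{\infty}$ then yields the bridge $\upsilon$ and hence transitivity.
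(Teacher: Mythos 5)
Your proposal follows essentially the same route as the paper: both arguments bridge two arbitrary admissible words by routing through the periodic word ${p_1}_{(\boldsymbol{\alpha},\boldsymbol{\beta})}{p_2}_{(\boldsymbol{\alpha},\boldsymbol{\beta})}$ supplied by Lemma \ref{p1p2}, your two ``steering'' statements being exactly the paper's asserted extensions $\varpi,\varpi^{\prime}$ (steering $\upsilon$ forward onto $p_1$ or $p_2$) and $\epsilon,\epsilon^{\prime}$ (steering backward onto $\nu$). The one delicate point you flag --- certifying that one can legally branch from a long prefix of $\boldsymbol{\alpha}$ down into $p_1p_2$ without dropping below $\boldsymbol{\beta}$ --- is treated at the same level of detail in the paper itself, which simply asserts the existence of these connecting words.
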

\begin{proof}
Let $(\boldsymbol{\alpha},\boldsymbol{\beta}) \in \mathcal{LW}$ be an essential pair with associated pair $(\omega, \nu)$. Let ${p_1}_{(\boldsymbol{\alpha}, \boldsymbol{\beta})}$ and ${p_2}_{(\boldsymbol{\alpha}, \boldsymbol{\beta})}$ be given by Lemma \ref{p1p2}. Note that $\Sigma_{\mathcal{F}} \subset \Sigma_{(\boldsymbol{\alpha},\boldsymbol{\beta})}$ where $\mathcal{F} = \{0^{0_{\boldsymbol{\alpha}}},1^{1_{\boldsymbol{\beta}}}\}$. From Proposition \ref{ceroandone}, $(\Sigma_{\mathcal{F}}, \sigma_{\mathcal{F}})$ is a transitive subshift of finite type. Then it is needed to show that for $\upsilon, \nu \in \mathcal{L}(\Sigma_{(\boldsymbol{\alpha},\boldsymbol{\beta})})$ such that $1_{\boldsymbol{\alpha}}$ or $0_{\beta}$ is a factor of $\upsilon$ or $\nu$ there exist a bridge $\omega$ from $\upsilon$ to $\nu$. We claim that for any pair $\upsilon , \nu \in \mathcal{L}(\Sigma_{(\boldsymbol{\alpha},\boldsymbol{\beta})})$, ${p_1}_{(\boldsymbol{\alpha},\boldsymbol{\beta})}{p_2}_{(\boldsymbol{\alpha},\boldsymbol{\beta})}$ and ${p_2}_{(\boldsymbol{\alpha},\boldsymbol{\beta})}{p_1}_{(\boldsymbol{\alpha},\boldsymbol{\beta})}$ are bridges between $\upsilon$ and $\nu$. 

Consider $\upsilon, \nu \in \mathcal{L}(\Sigma_{(\boldsymbol{\alpha},\boldsymbol{\beta})})$. Observe that there exist $\varpi, \varpi^{\prime} \in \mathcal{L}(\Sigma_{(\boldsymbol{\alpha},\boldsymbol{\beta})})$ such that the words $\upsilon\varpi, \upsilon\varpi^{\prime} \in \mathcal{L}(\Sigma_{(\boldsymbol{\alpha},\boldsymbol{\beta})})$ and ${p_1}_{(\boldsymbol{\alpha},\boldsymbol{\beta})}$ is a factor of $\upsilon\varpi$ such that $\sigma^j(\upsilon\varpi) = {p_1}_{(\boldsymbol{\alpha},\boldsymbol{\beta})}$ and ${p_2}_{(\boldsymbol{\alpha},\boldsymbol{\beta})}$ is a factor of $\upsilon\varpi^{\prime}$ such that $\sigma^k(\upsilon\varpi) = {p_2}_{(\boldsymbol{\alpha},\boldsymbol{\beta})}$ for some $j \ell(\upsilon) < j < \ell(\upsilon\varpi)$ and $\ell(\upsilon)< k < \ell(\upsilon\varpi^{\prime})$. Also, there exist words $\epsilon, \epsilon^{\prime} \in \mathcal{L}(\Sigma_{(\boldsymbol{\alpha},\boldsymbol{\beta})})$ such that the words $\epsilon\nu, \epsilon^{\prime}\nu \in \mathcal{L}(\Sigma_{(\boldsymbol{\alpha},\boldsymbol{\beta})})$ and  ${p_1}_{(\boldsymbol{\alpha},\boldsymbol{\beta})}$ is a factor of $\epsilon\nu$ such that ${\epsilon\nu}_{i} =  {{p_1}_{(\boldsymbol{\alpha},\boldsymbol{\beta})}}_i$ for every $1 \leq i \leq \ell({p_1}_{(\boldsymbol{\alpha},\boldsymbol{\beta})})$ and ${p_2}_{(\boldsymbol{\alpha},\boldsymbol{\beta})}$ is a factor of $\epsilon^{\prime}\nu$ such that ${\epsilon^{\prime}\nu}_{j} =  {{p_2}_{(\boldsymbol{\alpha},\boldsymbol{\beta})}}_j$ for every $1 \leq j \leq \ell({p_2}_{(\boldsymbol{\alpha},\boldsymbol{\beta})})$. Thus, the words $\upsilon\varpi\epsilon^{\prime}\nu, \upsilon\varpi^{\prime}\epsilon\nu \in \mathcal{L}(\Sigma_{(\boldsymbol{\alpha},\boldsymbol{\beta})})$, which implies that $(\Sigma_{(\boldsymbol{\alpha},\boldsymbol{\beta})}, \sigma_{(\boldsymbol{\alpha},\boldsymbol{\beta})})$ is a transitive subshift of finite type.
\end{proof}

\subsection*{Transitivity of limits of essential pairs}

To finish this section, we prove that every non renormalisable pair $(\boldsymbol{\alpha},\boldsymbol{\beta}) \in \mathcal{LW}$ corresponds to a transitive lexicographic subshift. 
It is clear that given a sequence $\left\{({\boldsymbol{\alpha}}_i,{\boldsymbol{\beta}}_i)\right\}_{i=1} \subset \mathcal{LW}$ such that for every $i \in \mathbb{N}$, ${\boldsymbol{\alpha}}_{i+1} \prec {\boldsymbol{\alpha}}_i$, $\boldsymbol{\beta}_{i+1} \succ \boldsymbol{\beta}_i$, $\boldsymbol{\alpha}_i \to \boldsymbol{\alpha}$, $\boldsymbol{\beta}_i \to \boldsymbol{\beta}$ and $(\boldsymbol{\alpha}_i,\boldsymbol{\beta}_i)$ is an essential pair then $(\Sigma_{(\boldsymbol{\alpha},\boldsymbol{\beta})}, \sigma_{(\boldsymbol{\alpha}, \boldsymbol{\beta})})$ is transitive. In particular, $(\Sigma_{(\boldsymbol{\alpha},\boldsymbol{\beta})}, \sigma_{(\boldsymbol{\alpha}, \boldsymbol{\beta})})$ is coded. We show now that every non renormalisable pair $(\boldsymbol{\alpha},\boldsymbol{\beta}) \in \mathcal{LW}$ is a coded system.

\begin{theorem}
If $(\boldsymbol{\alpha},\boldsymbol{\beta}) \in \mathcal{LW}$ is non renormalisable, then 
$(\Sigma_{(\boldsymbol{\alpha},\boldsymbol{\beta})}, \sigma_{(\boldsymbol{\alpha},\boldsymbol{\beta})})$ is coded. 
\label{lemaaproxtrans1}  
\end{theorem}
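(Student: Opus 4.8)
The plan is to exhibit $\Sigma_{(\boldsymbol{\alpha},\boldsymbol{\beta})}$ as the closure of an increasing union of transitive subshifts of finite type, which is precisely the definition of a coded system given in Section \ref{basic}. By Theorem \ref{renorm2} every essential pair yields a transitive subshift of finite type, so it is enough to produce essential pairs $(\boldsymbol{\alpha}_n,\boldsymbol{\beta}_n)$ with $\boldsymbol{\alpha}_n \prec \boldsymbol{\alpha}_{n+1} \prec \boldsymbol{\alpha}$ and $\boldsymbol{\beta} \prec \boldsymbol{\beta}_{n+1} \prec \boldsymbol{\beta}_n$, and with $\boldsymbol{\alpha}_n \to \boldsymbol{\alpha}$, $\boldsymbol{\beta}_n \to \boldsymbol{\beta}$. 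Indeed, widening the defining interval gives the inclusions $\Sigma_{(\boldsymbol{\alpha}_n,\boldsymbol{\beta}_n)} \subset \Sigma_{(\boldsymbol{\alpha}_{n+1},\boldsymbol{\beta}_{n+1})}$, and this is exactly the situation of the observation preceding the statement, which already yields that $(\Sigma_{(\boldsymbol{\alpha},\boldsymbol{\beta})},\sigma_{(\boldsymbol{\alpha},\boldsymbol{\beta})})$ is coded. If $\boldsymbol{\alpha}$ and $\boldsymbol{\beta}$ are both periodic then $(\boldsymbol{\alpha},\boldsymbol{\beta})$ is itself essential and the constant sequence already presents $\Sigma_{(\boldsymbol{\alpha},\boldsymbol{\beta})}$ as coded; hence I assume that at least one of $\boldsymbol{\alpha},\boldsymbol{\beta}$ is aperiodic.

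To build the approximants I would use Proposition \ref{denseperiodicparry} together with its mirror image for $\bar P$: truncating $\boldsymbol{\alpha}$ at suitable positions (a $0$ immediately followed by a $1$) and $\boldsymbol{\beta}$ at the mirror positions produces periodic sequences $\boldsymbol{\alpha}_n \in P \cap Per(\sigma)$ and $\boldsymbol{\beta}_n \in \bar P \cap Per(\sigma)$ which approach $\boldsymbol{\alpha}$ strictly from below and $\boldsymbol{\beta}$ strictly from above; taking the truncation lengths strictly increasing makes the two sequences monotone. Since $\boldsymbol{\alpha}_n$ and $\boldsymbol{\beta}_n$ share arbitrarily long prefixes with $\boldsymbol{\alpha}$ and $\boldsymbol{\beta}$ and lie inside $[\boldsymbol{\beta},\boldsymbol{\alpha}]_{\prec}$, the cross conditions $\sigma^k(\boldsymbol{\alpha}_n) \succcurlyeq \boldsymbol{\beta}_n$ and $\sigma^k(\boldsymbol{\beta}_n) \preccurlyeq \boldsymbol{\alpha}_n$ of Definition \ref{lexworld} are inherited from the same conditions for $(\boldsymbol{\alpha},\boldsymbol{\beta}) \in \mathcal{LW}$ once $n$ is large, so that $(\boldsymbol{\alpha}_n,\boldsymbol{\beta}_n) \in \mathcal{LW}$.

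The main obstacle is to guarantee that these periodic pairs are \emph{essential}, i.e.\ non-renormalisable, for otherwise Corollary \ref{cyclicsft} and Theorems \ref{renorm1}, \ref{renorm1prime} show the approximating subshift fails to be transitive and is useless for the coded presentation. I would argue by contradiction that only finitely many approximants can be renormalisable. Suppose infinitely many are renormalisable, with shortest associated words $(\omega_n,\nu_n)$, and consider the lengths $\ell(\omega_n\nu_n)$. If these stay bounded along a subsequence, then by finiteness a single pair $(\omega,\nu)$ recurs; since the set of one-sided concatenations of $\omega$ and $\nu$ is closed and $0\boldsymbol{\alpha}_n \to 0\boldsymbol{\alpha}$, $1\boldsymbol{\beta}_n \to 1\boldsymbol{\beta}$, the limits $0\boldsymbol{\alpha}$ and $1\boldsymbol{\beta}$ inherit the interleaving structure of Definition \ref{renormdef}, so $(\boldsymbol{\alpha},\boldsymbol{\beta})$ would be renormalisable by $(\omega,\nu)$ -- a contradiction. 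If instead $\ell(\omega_n\nu_n) \to \infty$, then $\omega_n$ is a prefix of $0\boldsymbol{\alpha}_n$ and $\nu_n$ a prefix of $1\boldsymbol{\beta}_n$, so whichever of the two lengths is unbounded forces the corresponding word to converge, prefix by prefix, to $0\boldsymbol{\alpha}$ or $1\boldsymbol{\beta}$; this exhibits $(\boldsymbol{\alpha},\boldsymbol{\beta})$ as renormalisable by an infinite sequence in the sense of Remark \ref{remarkrenorm}, again contradicting the hypothesis. Here one uses that the invariants $0_{\boldsymbol{\alpha}}, 1_{\boldsymbol{\alpha}}, 0_{\boldsymbol{\beta}}, 1_{\boldsymbol{\beta}}$ of the approximants stabilise to those of $(\boldsymbol{\alpha},\boldsymbol{\beta})$, ruling out the degenerate cyclic possibility in which both words grow without either encoding a genuine renormalisation; this is the delicate point of the whole argument.

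Dropping the finitely many renormalisable indices leaves the required increasing chain of essential pairs. Each $(\Sigma_{(\boldsymbol{\alpha}_n,\boldsymbol{\beta}_n)},\sigma_{(\boldsymbol{\alpha}_n,\boldsymbol{\beta}_n)})$ is a transitive subshift of finite type by Theorem \ref{renorm2}; the inclusions hold by monotonicity; and $\Sigma_{(\boldsymbol{\alpha},\boldsymbol{\beta})} = \overline{\bigcup_n \Sigma_{(\boldsymbol{\alpha}_n,\boldsymbol{\beta}_n)}}$ because every point of $\Sigma_{(\boldsymbol{\alpha},\boldsymbol{\beta})}$ whose orbit stays off the two endpoints lies in some $\Sigma_{(\boldsymbol{\alpha}_n,\boldsymbol{\beta}_n)}$ for $n$ large, while the boundary behaviour is captured in the limit by the periodic points $\omega_n^{\infty}$ and $\nu_n^{\infty}$ furnished by Lemma \ref{vyw}. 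This is exactly the definition of a coded system, which completes the proof.
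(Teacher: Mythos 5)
Your proposal follows essentially the same route as the paper: approximate $\boldsymbol{\alpha}$ and $\boldsymbol{\beta}$ monotonically by periodic Parry truncations, invoke Theorem \ref{renorm2} to get transitive subshifts of finite type from essential pairs, and rule out infinitely many renormalisable approximants by arguing that their renormalisability would pass to the limit and contradict non-renormalisability of $(\boldsymbol{\alpha},\boldsymbol{\beta})$. In fact your case split on whether the lengths $\ell(\omega_n\nu_n)$ stay bounded supplies more detail at that contradiction step than the paper itself, which simply asserts that renormalisability of infinitely many $(\boldsymbol{\alpha}_{m_k},\boldsymbol{\beta})$ forces renormalisability of $(\boldsymbol{\alpha},\boldsymbol{\beta})$.
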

\begin{proof}
From Theorem \ref{renorm2} we just need to show the case when $\boldsymbol{\alpha}$ or $\boldsymbol{\beta}$ are not periodic sequences. Firstly, assume that $\boldsymbol{\alpha}$ is not a periodic sequence and $\boldsymbol{\beta}$ is. Let $i_1$ such that $i_1 > 1_{\boldsymbol{\alpha}}$ and $a_{i_1} = 1$. Let $i_{2} > i_1$ such that $a_{i_2} = 1$. Then we define inductively the sequence $\{i_m\}_{m=1}^{\infty}$ as $i_m > i_{m-1}$ and $a_{i_m} = 1$. We define $\{\boldsymbol{\alpha}_m\}_{m=1}^{\infty}$ as $\boldsymbol{\alpha}_m = (a_1 \ldots a_{i_m-1}0)^{\infty}$. From the construction it is clear that $\{\boldsymbol{\alpha}_m\}_{m=1}^{\infty} \in Per(\sigma) \cap P$, $\sigma^n(\boldsymbol{\alpha}_m) \prec \boldsymbol{\alpha}$ for every $n, m \in \mathbb{N}$, $\boldsymbol{\alpha}_m \prec \boldsymbol{\alpha}_{m+1}$ for every $m \in \mathbb{N}$, and $\boldsymbol{\alpha}_m \mathop{\longrightarrow}\limits_{m \to \infty} \boldsymbol{\alpha}$. Let $\{\boldsymbol{\alpha}_{m_k}\}_{k=1}^{\infty} \subset \{\boldsymbol{\alpha}_m\}_{m=1}^{\infty}$ such that $\boldsymbol{\alpha}_{m_k} \in \Sigma_{(\boldsymbol{\alpha},\boldsymbol{\beta})}$ for every $k \in \mathbb{N}$ and $\boldsymbol{\alpha}_{m_k} \mathop{\longrightarrow}\limits_{k \to \infty} \boldsymbol{\alpha}$. Then $(\boldsymbol{\alpha}_{m_k},\boldsymbol{\beta}) \in \mathcal{LW}$. We claim that there exist $K \in \mathbb{N}$ such that, for every $k \geq K$, $(\Sigma_{(\boldsymbol{\alpha}_{m_k},\boldsymbol{\beta})}, \sigma_{(\boldsymbol{\alpha}_{m_k},\boldsymbol{\beta})})$ is transitive. The claim implies that $\Sigma_{(\boldsymbol{\alpha},\boldsymbol{\beta})} = \overline{\mathop{\bigcup}\limits_{k=K}^{\infty}\Sigma_{(\boldsymbol{\alpha}_{m_k},\boldsymbol{\beta})}}$ and hence, $(\Sigma_{(\boldsymbol{\alpha},\boldsymbol{\beta})}, \sigma_{(\boldsymbol{\alpha},\boldsymbol{\beta})})$ is coded. To prove the claim, assume that it is not true. This implies that there exist infinitely many $k \in \mathbb{N}$ such that $(\Sigma_{(\boldsymbol{\alpha}_{m_k},\boldsymbol{\beta}}, \sigma_{(\boldsymbol{\alpha}_{m_k},\boldsymbol{\beta})})$ is not transitive for infinitely many $k \in \mathbb{N}$. Then from Theorem \ref{renorm2}, $(\boldsymbol{\alpha}_{m_k},\boldsymbol{\beta})$ is renormalisable. Then $(\boldsymbol{\alpha},\boldsymbol{\beta})$ is renormalisable, which is a contradiction. 

Note that if $\boldsymbol{\beta}$ is not a periodic sequence and $\boldsymbol{\alpha}$ is, it is possible to develop a similar construction to the previous one considering the sequence $\{j_n\}_{n=1}^{\infty}$ defined as follows: Let $j_1$ such that $j_1 > 0_b$ and $b_{j_1} = 0$. Let $j_2 > j_1$ such that $b_{j_2} = 0$. Inductively, we define $\{j_n\}_{n=1}^{\infty}$ as $j_n > j_{n-1}$ and $b_{j_n} = 0$. Then $\boldsymbol{\beta}_n = (b_1 \ldots b_{j_n}1)^{\infty}$. Then $\{\boldsymbol{\beta}_n\}_{n=1}^{\infty} \in Per(\sigma) \cap \bar P$, $\sigma^m(\boldsymbol{\beta}_n) \succ \boldsymbol{\beta}$ for every $n,m \in \mathbb{N}$, $\boldsymbol{\beta}_{n} < 
\boldsymbol{\beta}_{n-1}$ and $\boldsymbol{\beta}_n \mathop{\longrightarrow}\limits_{n \to \infty} 
\boldsymbol{\beta}$.

\vspace{1em}Assume now that $\boldsymbol{\alpha}$ and $\boldsymbol{\beta}$ are not periodic. Observe that the sequences $\{j_m\}_{m=1}^{\infty}$ and  $\{i_n\}_{n=1}^{\infty}$ can be defined as we showed before. Then there exist a sequence $\left\{\boldsymbol{\alpha}_k,\boldsymbol{\beta}_k\right\}_{k=1}^{\infty} \subset \left\{\boldsymbol{\alpha}_m, \boldsymbol{\beta}_n\right\}_{n,m \in \mathbb{N}}$ such that $(\boldsymbol{\alpha}_k,\boldsymbol{\beta}_k) \in \mathcal{LW}$, $(\boldsymbol{\alpha}_k,\boldsymbol{\beta}_k) \mathop{\longrightarrow}\limits_{n\to \infty} (\boldsymbol{\alpha},\boldsymbol{\beta})$, $(\Sigma_{(\boldsymbol{\alpha}_k,\boldsymbol{\beta}_k)}, \sigma_{(\boldsymbol{\alpha}_k,\boldsymbol{\beta}_k)})$ is a transitive subshift of finite type and $\Sigma_{(\boldsymbol{\alpha},\boldsymbol{\beta})} = \overline{\mathop{\bigcup}\limits_{k=1}^{\infty} \Sigma_{(\boldsymbol{\alpha}_k,\boldsymbol{\beta}_k)}}.$ 
\end{proof}

\begin{theorem}
If $(\boldsymbol{\alpha},\boldsymbol{\beta})$ is not renormalisable then $(\Sigma_{(\boldsymbol{\alpha},\boldsymbol{\beta})}, \sigma_{(\boldsymbol{\alpha},\boldsymbol{\beta})})$ is transitive. 
\label{aproximaporabajogeneral}
\end{theorem}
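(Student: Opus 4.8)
The plan is to deduce transitivity directly from Theorem \ref{lemaaproxtrans1}, which already yields that $(\Sigma_{(\boldsymbol{\alpha},\boldsymbol{\beta})}, \sigma_{(\boldsymbol{\alpha},\boldsymbol{\beta})})$ is coded whenever $(\boldsymbol{\alpha},\boldsymbol{\beta})$ is non renormalisable. Thus the only thing left to establish is the general fact that a coded lexicographic subshift, in the sense given in Section \ref{basic}, is transitive. First I would unpack the definition: by Theorem \ref{lemaaproxtrans1} there is a nested sequence of transitive lexicographic subshifts of finite type $\{(\Sigma_{(\boldsymbol{\alpha}_n,\boldsymbol{\beta}_n)}, \sigma_{(\boldsymbol{\alpha}_n,\boldsymbol{\beta}_n)})\}_{n=1}^\infty$ with $\Sigma_{(\boldsymbol{\alpha}_n,\boldsymbol{\beta}_n)} \subset \Sigma_{(\boldsymbol{\alpha}_{n+1},\boldsymbol{\beta}_{n+1})}$ and $\Sigma_{(\boldsymbol{\alpha},\boldsymbol{\beta})} = \overline{\mathop{\bigcup}\limits_{n=1}^\infty \Sigma_{(\boldsymbol{\alpha}_n,\boldsymbol{\beta}_n)}}$.

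The key intermediate step is the language identity
$$\mathcal{L}(\Sigma_{(\boldsymbol{\alpha},\boldsymbol{\beta})}) = \mathop{\bigcup}\limits_{n=1}^\infty \mathcal{L}(\Sigma_{(\boldsymbol{\alpha}_n,\boldsymbol{\beta}_n)}).$$
The inclusion $\supseteq$ is immediate from $\Sigma_{(\boldsymbol{\alpha}_n,\boldsymbol{\beta}_n)} \subset \Sigma_{(\boldsymbol{\alpha},\boldsymbol{\beta})}$. For $\subseteq$ I would argue by approximation: any word $\omega$ of length $k$ occurring in $\Sigma_{(\boldsymbol{\alpha},\boldsymbol{\beta})}$ is a factor of some $\boldsymbol{x} \in \Sigma_{(\boldsymbol{\alpha},\boldsymbol{\beta})}$, and since $\boldsymbol{x}$ lies in the closure of $\mathop{\bigcup}\limits_{n} \Sigma_{(\boldsymbol{\alpha}_n,\boldsymbol{\beta}_n)}$ there is a point $\boldsymbol{y}$ in the union agreeing with $\boldsymbol{x}$ on sufficiently many initial coordinates to contain the occurrence of $\omega$; as $\boldsymbol{y} \in \Sigma_{(\boldsymbol{\alpha}_n,\boldsymbol{\beta}_n)}$ for some $n$, we obtain $\omega \in \mathcal{L}(\Sigma_{(\boldsymbol{\alpha}_n,\boldsymbol{\beta}_n)})$.

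With the language identity in hand, transitivity follows. Given $\omega, \nu \in \mathcal{L}(\Sigma_{(\boldsymbol{\alpha},\boldsymbol{\beta})})$, the identity furnishes indices $n_1, n_2$ with $\omega \in \mathcal{L}(\Sigma_{(\boldsymbol{\alpha}_{n_1},\boldsymbol{\beta}_{n_1})})$ and $\nu \in \mathcal{L}(\Sigma_{(\boldsymbol{\alpha}_{n_2},\boldsymbol{\beta}_{n_2})})$; setting $N = \max\{n_1, n_2\}$ and using the nestedness, both $\omega$ and $\nu$ lie in $\mathcal{L}(\Sigma_{(\boldsymbol{\alpha}_N,\boldsymbol{\beta}_N)})$. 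Since this subshift is transitive, there is a word $\upsilon \in \mathcal{L}(\Sigma_{(\boldsymbol{\alpha}_N,\boldsymbol{\beta}_N)})$ with $\omega\upsilon\nu \in \mathcal{L}(\Sigma_{(\boldsymbol{\alpha}_N,\boldsymbol{\beta}_N)}) \subset \mathcal{L}(\Sigma_{(\boldsymbol{\alpha},\boldsymbol{\beta})})$, which is exactly the transitivity condition for $(\Sigma_{(\boldsymbol{\alpha},\boldsymbol{\beta})}, \sigma_{(\boldsymbol{\alpha},\boldsymbol{\beta})})$.

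The argument is structurally short because the heavy lifting — producing the approximating essential pairs and proving that non renormalisability forces transitivity of each approximant — was carried out in Theorem \ref{lemaaproxtrans1} and Theorem \ref{renorm2}. The only point requiring care, and the main (modest) obstacle, is the justification of the language identity, i.e. that passing to the closure of the increasing union introduces no new finite factors; this is a standard compactness fact for subshifts, but it must be stated cleanly since the whole reduction rests on the ability to realise any two given words inside a single transitive approximant $\Sigma_{(\boldsymbol{\alpha}_N,\boldsymbol{\beta}_N)}$.
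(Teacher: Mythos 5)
Your proposal is correct and follows essentially the same route as the paper, which simply declares the theorem an immediate consequence of Theorem \ref{renorm2} and Theorem \ref{lemaaproxtrans1}; you have merely spelled out the implicit step that a coded system (the closure of a nested union of transitive subshifts) is transitive, via the language identity $\mathcal{L}(\Sigma_{(\boldsymbol{\alpha},\boldsymbol{\beta})}) = \bigcup_n \mathcal{L}(\Sigma_{(\boldsymbol{\alpha}_n,\boldsymbol{\beta}_n)})$. That compactness argument is sound, so your write-up is a correct (and more explicit) version of the paper's proof.
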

\begin{proof}
This is an immediate consequence of Theorem \ref{renorm2} and Theorem \ref{lemaaproxtrans1}.
\end{proof}

From \cite[Proposition 4.5.10 (4)]{lindmarcus}, \cite[Proposition 2.6]{sidorov6} and Theorem \ref{asociar} we obtain that if $(\boldsymbol{\alpha},\boldsymbol{\beta}) \in \mathcal{LW}$ is not renormalisable and the associated attractor $(\Lambda_{(a,b)}, f_{(a,b)})$ satisfies that $(a, b) \in D_3$ then $(\Sigma_{(\boldsymbol{\alpha},\boldsymbol{\beta})}, \sigma_{(\boldsymbol{\alpha},\boldsymbol{\beta})})$ is topologically mixing. Also, From \cite[Theorem 2.3]{sidorov6} and the structure of $\partial D_3$ -see \cite[p. 353]{sidorov6} we are sure that the associated $(\boldsymbol{\alpha},\boldsymbol{\beta})$ to $(a,b) \in \partial D_3$ are not renormalisable, then by Theorem \ref{renorm2} and Theorem \ref{aproximaporabajogeneral} $(\Lambda_{(a,b)},f_{(a,b)})$ is transitive.

\section{Specification Properties}
\label{specification}

\noindent In \cite[Proposition 2.1]{buzzi}, Buzzi proved a criterion which determines when a piecewise monotonic map has the specification property. It is clear that we can apply this criterion to expanding Lorenz maps in order to determine when they have the specification property. However, as topological transitivity, it is not immediate that $(\Sigma_{(\boldsymbol{\alpha},\boldsymbol{\beta})}, \sigma_{(\boldsymbol{\alpha},\boldsymbol{\beta})})$ has the specification property if the associated Lorenz dynamical system $([0,1], g_{(a,b)})$ has it. 

\vspace{1em}During this section, we give sufficient condition to determine when a lexicographic subshift has the specification property. Also, we construct a family of asymmetric subshifts with no specification.

\vspace{1em}Recall that a transitive lexicographic subshift $(\Sigma_{(\boldsymbol{\alpha},\boldsymbol{\beta})}, \sigma_{(\boldsymbol{\alpha},\boldsymbol{\beta})})$ has specification if there exist $M \in \mathbb{N}$ such that for every $\omega, \nu \in \mathcal{L}(\Sigma_{(\boldsymbol{\alpha},\boldsymbol{\beta})})$ there is $\upsilon \in \mathcal{L}(\Sigma_{(\boldsymbol{\alpha},\boldsymbol{\beta})})$ such that $\omega\upsilon\nu \in \mathcal{L}(\Sigma_{(\boldsymbol{\alpha},\boldsymbol{\beta})})$ and $\ell(\upsilon) = M$. We can rephrase this definition as follows: Let 
\begin{align*}
m_n = \inf\{k \in \mathbb{N} \mid &\hbox{\rm{ for every }} \omega, \nu \in B_n(\Sigma_{(\boldsymbol{\alpha},\boldsymbol{\beta})})\\ 
&\hbox{\rm{ there exist }} \upsilon \in B_k(\Sigma_{(\boldsymbol{\alpha},\boldsymbol{\beta})})\hbox{\rm{ such that }} \omega \upsilon \nu \in \mathcal{L}(\Sigma_{(\boldsymbol{\alpha},\boldsymbol{\beta})})\}.
\end{align*}

Then $(\Sigma_{(\boldsymbol{\alpha},\boldsymbol{\beta})}, \sigma_{(\boldsymbol{\alpha},\boldsymbol{\beta})})$ has the specification property if and only if $\mathop{\lim}\limits_{n \to \infty} m_n < \infty$. Recall that transitive subshifts of finite type have the specification property \cite{parry}. Then for every essential pair $(\boldsymbol{\alpha},\boldsymbol{\beta})$ we define \textit{specification number of $(\Sigma_{\boldsymbol{\alpha},\boldsymbol{\beta}}, \sigma_{\boldsymbol{\alpha},\boldsymbol{\beta}})$} denoted by $s_{(\boldsymbol{\alpha},\boldsymbol{\beta})}$ to be $\mathop{\lim}\limits_{n \to \infty} m_n$.

\begin{lemma}
Let $(a,b) \in D_1$ and $(\boldsymbol{\alpha}, \boldsymbol{\beta}) \in \mathcal{LW}$ be such that $(\Lambda_{(a,b)}, f_{(a,b)})$ is conjugated to $(\Sigma_{(\boldsymbol{\alpha}, \boldsymbol{\beta})},\sigma_{(\boldsymbol{\alpha}, \boldsymbol{\beta})})$. Then for every $m \in \mathbb{N}$ there exists $N \in \mathbb{N}$ such that $B_m(\Sigma_{(\boldsymbol{\alpha}, \boldsymbol{\beta})}) = B_m(\Sigma_{({\boldsymbol{\alpha}}_n, {\boldsymbol{\beta}}_n)}) = B_m(\Sigma_{({\boldsymbol{\alpha}}_N, {\boldsymbol{\beta}}_N)})$ for every $n \geq N$, where $({\boldsymbol{\alpha}}_n, {\boldsymbol{\beta}}_n)$ satisfies: 
\begin{enumerate}[$i)$]
\item $(\Sigma_{({\boldsymbol{\alpha}}_n, {\boldsymbol{\beta}}_n)}, \sigma_{({\boldsymbol{\alpha}}_n, {\boldsymbol{\beta}}_n)})$ is a shift of finite type for every $n \in \mathbb{N}$; 
\item ${\boldsymbol{\alpha}}_n  \preccurlyeq {\boldsymbol{\alpha}}_{n+1} \preccurlyeq \boldsymbol{\alpha}$ and ${\boldsymbol{\beta}}_n  \succcurlyeq {\boldsymbol{\beta}}_{n+1} \succcurlyeq \boldsymbol{\beta};$ 
\item ${\boldsymbol{\alpha}}_n \underset{n \to \infty}\longrightarrow \boldsymbol{\alpha}$ and ${\boldsymbol{\beta}}_n \underset{n \to \infty}\longrightarrow \boldsymbol{\beta}.$ 
\end{enumerate}
\label{stabilityproperty}
\end{lemma}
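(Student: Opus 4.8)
The plan is to deduce the statement from a coded approximation of $\Sigma_{(\boldsymbol{\alpha},\boldsymbol{\beta})}$ together with a finiteness argument on words of bounded length. First I would fix, once and for all, an approximating sequence $\{(\boldsymbol{\alpha}_n,\boldsymbol{\beta}_n)\}_{n=1}^{\infty}$. Applying the truncation construction of Proposition \ref{denseperiodicparry} (and its $\bar P$-analogue) along an increasing sequence of positions $i_n$ with $a_{i_n}=1$ and $j_n$ with $b_{j_n}=0$, exactly as in the proof of Theorem \ref{lemaaproxtrans1}, one obtains periodic pairs $(\boldsymbol{\alpha}_n,\boldsymbol{\beta}_n)\in\mathcal{LW}$ with $\boldsymbol{\alpha}_n\preccurlyeq\boldsymbol{\alpha}_{n+1}\preccurlyeq\boldsymbol{\alpha}$, $\boldsymbol{\beta}_n\succcurlyeq\boldsymbol{\beta}_{n+1}\succcurlyeq\boldsymbol{\beta}$, and $\boldsymbol{\alpha}_n\to\boldsymbol{\alpha}$, $\boldsymbol{\beta}_n\to\boldsymbol{\beta}$; this gives $ii)$ and $iii)$. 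Since each $\boldsymbol{\alpha}_n$ and $\boldsymbol{\beta}_n$ is periodic, \cite[Theorem 1.3]{samuel} shows that $(\Sigma_{(\boldsymbol{\alpha}_n,\boldsymbol{\beta}_n)},\sigma_{(\boldsymbol{\alpha}_n,\boldsymbol{\beta}_n)})$ is a subshift of finite type, giving $i)$.

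Next I would record the containment and monotonicity consequences of $ii)$. The constraint intervals satisfy $[\boldsymbol{\beta}_n,\boldsymbol{\alpha}_n]_{\prec}\subseteq[\boldsymbol{\beta}_{n+1},\boldsymbol{\alpha}_{n+1}]_{\prec}\subseteq[\boldsymbol{\beta},\boldsymbol{\alpha}]_{\prec}$, so $\Sigma_{(\boldsymbol{\alpha}_n,\boldsymbol{\beta}_n)}\subseteq\Sigma_{(\boldsymbol{\alpha}_{n+1},\boldsymbol{\beta}_{n+1})}\subseteq\Sigma_{(\boldsymbol{\alpha},\boldsymbol{\beta})}$. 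Passing to words of length $m$ this yields $B_m(\Sigma_{(\boldsymbol{\alpha}_n,\boldsymbol{\beta}_n)})\subseteq B_m(\Sigma_{(\boldsymbol{\alpha}_{n+1},\boldsymbol{\beta}_{n+1})})\subseteq B_m(\Sigma_{(\boldsymbol{\alpha},\boldsymbol{\beta})})$ for every $n$. Since $B_m(\Sigma_{(\boldsymbol{\alpha},\boldsymbol{\beta})})$ is contained in the finite set $\{0,1\}^m$, the sequence $\{B_m(\Sigma_{(\boldsymbol{\alpha}_n,\boldsymbol{\beta}_n)})\}_n$ is an increasing chain of subsets of a finite set, so only the reverse inclusion remains to be addressed.

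For that reverse inclusion I would invoke the coded approximation $\Sigma_{(\boldsymbol{\alpha},\boldsymbol{\beta})}=\overline{\bigcup_{n}\Sigma_{(\boldsymbol{\alpha}_n,\boldsymbol{\beta}_n)}}$ supplied by Theorem \ref{lemaaproxtrans1}. Fix $\omega\in B_m(\Sigma_{(\boldsymbol{\alpha},\boldsymbol{\beta})})$; by $\sigma$-invariance we may pick a witness $\boldsymbol{x}\in\Sigma_{(\boldsymbol{\alpha},\boldsymbol{\beta})}$ with $\omega=x_1\ldots x_m$. By the closure equality there is $\boldsymbol{x}^{(j)}\in\bigcup_n\Sigma_{(\boldsymbol{\alpha}_n,\boldsymbol{\beta}_n)}$ with $\boldsymbol{x}^{(j)}\to\boldsymbol{x}$, so for $j$ large $d(\boldsymbol{x}^{(j)},\boldsymbol{x})<2^{-m}$, whence $x^{(j)}_1\ldots x^{(j)}_m=\omega$ and $\omega\in B_m(\Sigma_{(\boldsymbol{\alpha}_{n_j},\boldsymbol{\beta}_{n_j})})$ for the index $n_j$ with $\boldsymbol{x}^{(j)}\in\Sigma_{(\boldsymbol{\alpha}_{n_j},\boldsymbol{\beta}_{n_j})}$; write $N_\omega=n_j$. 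As $B_m(\Sigma_{(\boldsymbol{\alpha},\boldsymbol{\beta})})$ is finite, set $N=\max_{\omega}N_\omega$. By the monotonicity of the previous paragraph, for every $n\ge N$ each such $\omega$ lies in $B_m(\Sigma_{(\boldsymbol{\alpha}_n,\boldsymbol{\beta}_n)})$, so $B_m(\Sigma_{(\boldsymbol{\alpha},\boldsymbol{\beta})})\subseteq B_m(\Sigma_{(\boldsymbol{\alpha}_n,\boldsymbol{\beta}_n)})$; combined with the containment above this gives $B_m(\Sigma_{(\boldsymbol{\alpha},\boldsymbol{\beta})})=B_m(\Sigma_{(\boldsymbol{\alpha}_n,\boldsymbol{\beta}_n)})=B_m(\Sigma_{(\boldsymbol{\alpha}_N,\boldsymbol{\beta}_N)})$ for all $n\ge N$, which is the claim.

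The main obstacle is the closure equality $\Sigma_{(\boldsymbol{\alpha},\boldsymbol{\beta})}=\overline{\bigcup_n\Sigma_{(\boldsymbol{\alpha}_n,\boldsymbol{\beta}_n)}}$, i.e. that every word admissible for the limit already occurs in some approximant. If one prefers not to quote Theorem \ref{lemaaproxtrans1}, the delicate point is to manufacture, for a given admissible $\omega$ and large $n$, an honest point of $\Sigma_{(\boldsymbol{\alpha}_n,\boldsymbol{\beta}_n)}$ containing $\omega$: the naive idea of simply truncating the witness $\boldsymbol{x}$ fails, since a forward shift $\sigma^k\boldsymbol{x}$ may fall in $(\boldsymbol{\alpha}_n,\boldsymbol{\alpha}]_{\prec}$ and so violate the tighter upper bound. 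The remedy is to cut $\boldsymbol{x}$ at the first coordinate at which a forward shift agrees with $\boldsymbol{\alpha}$ (respectively $\boldsymbol{\beta}$) on a prefix longer than the length on which $\boldsymbol{\alpha}_n$ and $\boldsymbol{\alpha}$ (respectively $\boldsymbol{\beta}_n$ and $\boldsymbol{\beta}$) already agree, and to graft on the periodic Parry orbit $\boldsymbol{\alpha}_n$ (respectively $\boldsymbol{\beta}_n$) at that point; because the grafted orbit matches the discarded tail on that long prefix, all later shifts become shifts of $\boldsymbol{\alpha}_n$ or $\boldsymbol{\beta}_n$ and hence stay in $[\boldsymbol{\beta}_n,\boldsymbol{\alpha}_n]_{\prec}$, while the earlier shifts are controlled by the choice of the first dangerous position. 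Checking that this grafting respects \emph{both} the upper and the lower bound at \emph{every} shift is the technical heart of the argument, and is exactly where the monotone convergence $\boldsymbol{\alpha}_n\to\boldsymbol{\alpha}$, $\boldsymbol{\beta}_n\to\boldsymbol{\beta}$ on ever longer prefixes is used.
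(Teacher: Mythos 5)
Your construction of the approximating sequence and the monotonicity step $B_m(\Sigma_{(\boldsymbol{\alpha}_n,\boldsymbol{\beta}_n)})\subseteq B_m(\Sigma_{(\boldsymbol{\alpha}_{n+1},\boldsymbol{\beta}_{n+1})})\subseteq B_m(\Sigma_{(\boldsymbol{\alpha},\boldsymbol{\beta})})$ match the paper, but for the reverse inclusion you take a genuinely different, and as written incomplete, route. The paper works entirely inside a window of length $m$: it chooses $N$ so that for $n\ge N$ the sequences $\boldsymbol{\alpha}_n$, $\boldsymbol{\alpha}_N$ and $\boldsymbol{\alpha}$ (and likewise the $\boldsymbol{\beta}$'s) agree on a prefix of length at least $m$, and then argues that a word of length $m$ admissible at level $n$ but not at level $N$ would have to separate the maximal (resp.\ minimal) admissible words of length $m$ of the two levels, which coincide; no approximation of infinite orbits is needed. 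You instead reduce everything to the closure equality $\Sigma_{(\boldsymbol{\alpha},\boldsymbol{\beta})}=\overline{\bigcup_n\Sigma_{(\boldsymbol{\alpha}_n,\boldsymbol{\beta}_n)}}$ and finish with a clean compactness-and-finiteness argument; that deduction is correct as far as it goes, and it is a tidier way to conclude \emph{if} the closure equality is in hand.

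The gap is precisely that the closure equality is not in hand in the generality the lemma requires. Theorem \ref{lemaaproxtrans1} is proved only for non-renormalisable pairs, whereas Lemma \ref{stabilityproperty} is stated for every $(\boldsymbol{\alpha},\boldsymbol{\beta})\in\mathcal{LW}$ arising from $(a,b)\in D_1$, including renormalisable ones, so you cannot simply quote it. You correctly identify the fallback --- grafting the periodic tail $\boldsymbol{\alpha}_n$ (resp.\ $\boldsymbol{\beta}_n$) onto the witness $\boldsymbol{x}$ at the first ``dangerous'' shift, say at index $k$ --- but you stop exactly where the work lies: after the graft the shifts $\sigma^j(\boldsymbol{y})$ with $j<k$ have altered tails, and whenever the first disagreement of $\sigma^j(\boldsymbol{x})$ with $\boldsymbol{\alpha}$ occurs at or beyond the position where the graft begins, the comparison with $\boldsymbol{\alpha}_n$ is no longer decided by the unmodified prefix; one must also interleave the $\boldsymbol{\alpha}$-dangerous and $\boldsymbol{\beta}$-dangerous positions and verify the lower bound $\succcurlyeq\boldsymbol{\beta}_n$ simultaneously at every shift. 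Since that verification is at least as delicate as the finite-window argument the paper uses, the proposal as written outsources its key step to a statement that is unproved here and, via Theorem \ref{lemaaproxtrans1}, unavailable in the required generality.
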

\begin{proof}
Assume that $(a,b) \in D_1$ and consider the $(\boldsymbol{\alpha}, \boldsymbol{\beta})$ satisfying the hypothesis of the Lemma.  Observe that if $(\Sigma_{(\boldsymbol{\alpha}, \boldsymbol{\beta})},\sigma_{(\boldsymbol{\alpha}, \boldsymbol{\beta})})$ then the constant sequence $(\boldsymbol{\alpha}_n, \boldsymbol{\beta}_n) = (\boldsymbol{\alpha}, \boldsymbol{\beta})$ for every $n \in \mathbb{N}$ gives us the desired conclusion.  

\vspace{0.5em}Let $(\boldsymbol{\alpha}, \boldsymbol{\beta})$ be such that $(\Sigma_{(\boldsymbol{\alpha}, \boldsymbol{\beta})}, \sigma_{(\boldsymbol{\alpha}, \boldsymbol{\beta})})$ is not a subshift of finite type. Let $$n_1 = \mathop{\min}\left\{k > 1_{\boldsymbol{\alpha}} \mid a_k =1 \hbox{\rm{ and }} (a_1\ldots a_{k-1}0)^{\infty} \in \Sigma_{(\boldsymbol{\alpha},\boldsymbol{\beta})}\right\}$$ and $$m_1 = \mathop{\min}\left\{j >  0_{\boldsymbol{\beta}} \mid b_j = 0 \hbox{\rm{ and }}(b_1\ldots b_{j-1}1)^{\infty} \in \Sigma_{(\boldsymbol{\alpha},\boldsymbol{\beta})}\right\}.$$ Let consider ${\boldsymbol{\alpha}}_1 = (a_1 \ldots a_{n_1 - 1}0)^{\infty}$ and $$m^{1}_1 = \left\{
\begin{array}{clrr}      
m_1 & \hbox{\rm{ if }}  (b_1\ldots b_{m_1 - 1}1)^{\infty} \in \Sigma_{({\boldsymbol{\alpha}}_1,\boldsymbol{\beta})};\\

\mathop{\min}\left\{j >  m_1 \mid b_j = 0 \hbox{\rm{ and }}(b_1\ldots b_{j-1}1)^{\infty} \in \Sigma_{({\boldsymbol{\alpha}}_1,\boldsymbol{\beta})}\right\} & \hbox{\rm{ otherwise.}} \\
\end{array}
\right.$$

Let ${\boldsymbol{\beta}}^1_1 = (b_1 \ldots b_{m^1_1 - 1}1)^{\infty}.$ Observe that $(\Sigma_{(\boldsymbol{\alpha_1})},\sigma_{{\boldsymbol{\beta}}^1_1})$ is a subshift of finite type. Inductively, let \begin{align}
m^1_n = \mathop{\min}\left\{j >  m^1_{n-1} \mid b_j = 0 \hbox{\rm{ and }}(b_1\ldots b_{j-1}1)^{\infty} \in \Sigma_{({\boldsymbol{\alpha}}_1,\boldsymbol{\beta})}\right\} \label{elno1}
\end{align}
Let ${\boldsymbol{\beta}}^1_n = (b_1 \ldots b_{m^1_n - 1}1)^{\infty}$. It is clear that or every $n \in \mathbb{N}$ $(\Sigma_{(\boldsymbol{\alpha_1},{\boldsymbol{\beta}}^1_n)},\sigma_{(\boldsymbol{\alpha_1},{\boldsymbol{\beta}}^1_n)})$ is a subshift of finite type. Now consider for every $l \geq 2$, $$n_{l} = \mathop{\min}\left\{k > n_{l-1} \mid a_k =1 \hbox{\rm{ and }} (a_1\ldots a_{k-1}0)^{\infty} \in \Sigma_{(\boldsymbol{\alpha},\boldsymbol{\beta})}\right\}$$ and $$m_{l} = \mathop{\min}\left\{j >  m_{l-1} \mid b_j = 0 \hbox{\rm{ and }}(b_1\ldots b_{j-1}1)^{\infty} \in \Sigma_{(\boldsymbol{\alpha},\boldsymbol{\beta})}\right\}.$$ Then, using a similar argument as in (\ref{elno1}) we construct for every $l \in \mathbb{N}$ get a sequences of subshifts of finite type $(\Sigma_{(\boldsymbol{\alpha_l},{\boldsymbol{\beta}}^l_n)}, \sigma_{(\boldsymbol{\alpha_l},{\boldsymbol{\beta}}^l_n)})$. Let $(\boldsymbol{\alpha_n},{\boldsymbol{\beta}}_n) = (\boldsymbol{\alpha_n},{\boldsymbol{\beta}}^n_n)$ for every $n \in \mathbb{N}$. From the construction we see that the sequence $\left\{(\boldsymbol{\alpha_n},{\boldsymbol{\beta}}_n)\right\}_{n=1}^{\infty}$ satisfies $i)$, $ii)$ and $iii)$.

\vspace{0.5em}Observe that $\Sigma_{(\boldsymbol{\alpha},\boldsymbol{\beta})} = \overline{\mathop{\bigcup}\limits_{n=1}^{\infty} \Sigma_{({\boldsymbol{\alpha}}_n,{\boldsymbol{\beta}}_n)}}$. Let $M_1 = \mathop{\min}\left\{\ell(\boldsymbol{\alpha}_1-1), \ell(\boldsymbol{\beta}_1-1)\right\}$. Observe that for every $n \in \mathbb{N}$ and $1 \leq m < M_1$, $B_m(\Sigma_{({\boldsymbol{\alpha}},{\boldsymbol{\beta}})}) = B_m(\Sigma_{({\boldsymbol{\alpha}}_n,{\boldsymbol{\beta}}_n)}) = B_m(\Sigma_{({\boldsymbol{\alpha}}_1,{\boldsymbol{\beta}}_1)})$ since $a_m = {a_n}_m = {a_1}_m$ and $a_m = {a_n}_m = {a_1}_m$. Assume that $m \geq M_1$. 

Let $N \in \mathbb{N}$ be such that $$\mathop{\max}\left\{\ell(\boldsymbol{\alpha}_{N})-1, \ell(\boldsymbol{\beta}_{N})-1\right\} < m \leq \mathop{\min}\left\{\ell(\boldsymbol{\alpha}_{N+1})-1, \ell(\boldsymbol{\beta}_{N+1})-1\right\}.$$ Let $n > N$. We will show that $|B_m(\Sigma_{({\boldsymbol{\alpha}}_n,{\boldsymbol{\beta}}_n)})| = |B_m(\Sigma_{({\boldsymbol{\alpha}}_N,{\boldsymbol{\beta}}_N)})|$. Assume that the former does not hold. Since $\Sigma_{({\boldsymbol{\alpha}}_N,{\boldsymbol{\beta}}_N)} \subset \Sigma_{({\boldsymbol{\alpha}}_n,{\boldsymbol{\beta}}_n)}$ then $|B_m(\Sigma_{({\boldsymbol{\alpha}}_n,{\boldsymbol{\beta}}_n)})| > |B_m(\Sigma_{({\boldsymbol{\alpha}}_N,{\boldsymbol{\beta}}_N)})|$. Then there exists $$\upsilon \in B_m(\Sigma_{({\boldsymbol{\alpha}}_{n},{\boldsymbol{\beta}}_{n})}) \setminus B_m(\Sigma_{({\boldsymbol{\alpha}}_{N},{\boldsymbol{\beta}}_{N})})$$ such that $\upsilon^{m_N}_{\max} \prec \upsilon \prec \upsilon^{m_n}_{\max}$ and $\upsilon^{m_n}_{\min} \prec \upsilon \prec \upsilon^{m_N}_{\min}$ where $\upsilon^{m_N}_{\max}$ is the maximal admissible word of length $m$ in $\mathcal{L}(\Sigma_{({\boldsymbol{\alpha}}_{N},{\boldsymbol{\beta}}_{N})})$, $\upsilon^{m_N}_{\min}$ is the minimal admissible word of length $m$ in $\mathcal{L}(\Sigma_{({\boldsymbol{\alpha}}_{N},{\boldsymbol{\beta}}_{N})})$, $\upsilon^{m_n}_{\max}$ is the maximal admissible word of length $m$ in $\mathcal{L}(\Sigma_{({\boldsymbol{\alpha}}_{n},{\boldsymbol{\beta}}_{n})})$ and $\upsilon^{m_n}_{\min}$ is the minimal admissible word of length $m$ in $\mathcal{L}(\Sigma_{({\boldsymbol{\alpha}}_{n},{\boldsymbol{\beta}}_{n})})$. This implies that there exist $i \leq m$ such that either ${\upsilon^{m_n}_{\max}}_i = 1$ and $\upsilon_i = 0$ or ${\upsilon^{m_n}_{\min}}_1 = 0$ and $\upsilon_i = 1$. Note that the first $m$ symbols of $\upsilon^{m_N}_{\max}$ and $\upsilon^{m_N}_{\min}$ coincide with $\upsilon^{m_n}_{\max}$ and $\upsilon^{m_n}_{\min}$ respectively. Then $\upsilon \prec \upsilon^{m_n}_{\min}$ or $\upsilon \succ \upsilon^{m_n}_{\max}$ which is a contradiction. Then our result holds. 
\end{proof}

Note that there are different sequences than $\left\{(\boldsymbol{\alpha}_n, \boldsymbol{\beta}_n)\right\}_{n=1}^{\infty}$ satisfying the properties of Lemma \ref{stabilityproperty}. In particular, if $(\Sigma_{({\boldsymbol{\alpha}},{\boldsymbol{\beta}})}, \sigma_{({\boldsymbol{\alpha}}_,{\boldsymbol{\beta}})})$ is coded, then the sequence constructed in Theorem \ref{lemaaproxtrans1} satisfies the properties stated in Lemma \ref{stabilityproperty}.  

\subsection*{Examples with the specification property}

\begin{theorem}
Let $(\Sigma_{({\boldsymbol{\alpha}},{\boldsymbol{\beta}})}, \sigma_{({\boldsymbol{\alpha}}_,{\boldsymbol{\beta}})})$ be a coded system and $\left\{({\boldsymbol{\alpha}}_n, {\boldsymbol{\beta}}_n)\right\}_{n=1}^{\infty}$ be the sequence constructed in Theorem \ref{lemaaproxtrans1}. If there is $N \in \mathbb{N}$ such that for every $n \geq N$, $${p_1}_{(\boldsymbol{\alpha}_n,\boldsymbol{\beta}_n)}{p_2}_{(\boldsymbol{\alpha}_n,\boldsymbol{\beta}_n)} = {p_1}_{(\boldsymbol{\alpha}_N,\boldsymbol{\beta}_N)}{p_2}_{(\boldsymbol{\alpha}_N,\boldsymbol{\beta}_N)}$$ then $(\Sigma_{(\boldsymbol{\alpha},\boldsymbol{\beta})},\sigma_{(\boldsymbol{\alpha},\boldsymbol{\beta})})$ has specification. \label{sispec}
\end{theorem}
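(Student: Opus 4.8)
The plan is to reduce the statement to a uniform bound on the ``bridging length'' of the approximating subshifts of finite type and then transfer this bound to the limit using the stabilisation of admissible blocks from Lemma \ref{stabilityproperty}. Write $P = {p_1}_{(\boldsymbol{\alpha}_N,\boldsymbol{\beta}_N)}{p_2}_{(\boldsymbol{\alpha}_N,\boldsymbol{\beta}_N)}$. By hypothesis ${p_1}_{(\boldsymbol{\alpha}_n,\boldsymbol{\beta}_n)}{p_2}_{(\boldsymbol{\alpha}_n,\boldsymbol{\beta}_n)} = P$ for every $n \geq N$, so $P$ and $P^{\infty}$ belong to $\Sigma_{(\boldsymbol{\alpha}_n,\boldsymbol{\beta}_n)}$ for all $n \geq N$ and hence to $\Sigma_{(\boldsymbol{\alpha},\boldsymbol{\beta})}$. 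The first step is to record, using Lemma \ref{p1p2} and the proof of Theorem \ref{renorm2}, that within each essential pair $(\boldsymbol{\alpha}_n,\boldsymbol{\beta}_n)$ the words ${p_1}_{(\boldsymbol{\alpha}_n,\boldsymbol{\beta}_n)}{p_2}_{(\boldsymbol{\alpha}_n,\boldsymbol{\beta}_n)}$ and ${p_2}_{(\boldsymbol{\alpha}_n,\boldsymbol{\beta}_n)}{p_1}_{(\boldsymbol{\alpha}_n,\boldsymbol{\beta}_n)}$ serve as universal connectors, and that for $n \geq N$ this connector is literally the fixed word $P$, common to every member of the exhausting chain.

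Second, I would revisit the bridging construction of Theorem \ref{renorm2} to extract an explicit length bound. Given $\omega,\nu \in \mathcal{L}(\Sigma_{(\boldsymbol{\alpha}_n,\boldsymbol{\beta}_n)})$, that argument appends to $\omega$ a word producing an occurrence of $p_1$ (or $p_2$) as a suffix, prepends to $\nu$ a word producing $p_1$ (or $p_2$) as a prefix, and then glues through $P$. The crucial claim is that these appended and prepended words may be chosen of length at most a constant $L$ depending only on $\ell(P)$ and on the maximal run lengths $0_{\boldsymbol{\alpha}_n}, 1_{\boldsymbol{\alpha}_n}, 0_{\boldsymbol{\beta}_n}, 1_{\boldsymbol{\beta}_n}$: from the end of any admissible word one reaches a cyclic shift of $p_1$ or $p_2$ within $O(\ell(P))$ symbols, the delicate cases being words ending in $1^{1_{\boldsymbol{\alpha}}}$ or beginning with $0^{0_{\boldsymbol{\beta}}}$, which are treated via the factorisations $\omega = \omega_{\boldsymbol{\alpha}}\nu_{\boldsymbol{\alpha}}$, $\nu = \omega_{\boldsymbol{\beta}}\nu_{\boldsymbol{\beta}}$ and the extremal nature of $p_1,p_2$. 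Since $\ell(P)$ is fixed for $n \geq N$, and the run lengths $0_{\boldsymbol{\alpha}_n},1_{\boldsymbol{\beta}_n}$ stabilise to the finite values $0_{\boldsymbol{\alpha}},1_{\boldsymbol{\beta}}$ as $\boldsymbol{\alpha}_n \to \boldsymbol{\alpha}$ and $\boldsymbol{\beta}_n \to \boldsymbol{\beta}$ (with the relevant blocks frozen by Lemma \ref{stabilityproperty}), the constant $L$ is independent of $n$. Thus any two admissible words of a given length in $\Sigma_{(\boldsymbol{\alpha}_n,\boldsymbol{\beta}_n)}$ can be bridged by a word of length at most $2L + \ell(P)$, uniformly in $n \geq N$.

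Third, I would transfer the bound to $\Sigma_{(\boldsymbol{\alpha},\boldsymbol{\beta})}$. Given $\omega,\nu \in B_k(\Sigma_{(\boldsymbol{\alpha},\boldsymbol{\beta})})$, Lemma \ref{stabilityproperty} supplies $n \geq N$ with $\omega,\nu \in B_k(\Sigma_{(\boldsymbol{\alpha}_n,\boldsymbol{\beta}_n)})$; bridging inside that subshift of finite type yields $\upsilon$ with $\ell(\upsilon) \leq 2L + \ell(P)$ and $\omega\upsilon\nu \in \mathcal{L}(\Sigma_{(\boldsymbol{\alpha}_n,\boldsymbol{\beta}_n)}) \subseteq \mathcal{L}(\Sigma_{(\boldsymbol{\alpha},\boldsymbol{\beta})})$. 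To meet the exact common length demanded by the definition of $m_k$, each bridge is padded to a single length $M := 2L + \ell(P) + 0_{\boldsymbol{\alpha}} + 1_{\boldsymbol{\beta}}$ by routing it through the freely concatenating transitive subshift of finite type $\Sigma_{\mathcal{F}}$, $\mathcal{F} = \{0^{0_{\boldsymbol{\alpha}}},1^{1_{\boldsymbol{\beta}}}\}$, of Lemma \ref{ceroandone}, which lies inside $\Sigma_{(\boldsymbol{\alpha},\boldsymbol{\beta})}$ and realises every sufficiently large prescribed length while preserving admissibility. Hence $m_k \leq M$ for every $k$, so $\lim_{k \to \infty} m_k \leq M < \infty$ and $(\Sigma_{(\boldsymbol{\alpha},\boldsymbol{\beta})},\sigma_{(\boldsymbol{\alpha},\boldsymbol{\beta})})$ has the specification property.

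I expect the main obstacle to be the uniform estimate in the second step: showing that the number of symbols needed to reach a cyclic shift of $p_1$ or $p_2$ from an arbitrary admissible word, and to leave one into $\nu$, does not grow with $n$ even though $\Sigma_{(\boldsymbol{\alpha}_n,\boldsymbol{\beta}_n)}$ has arbitrarily many states. The hypothesis is precisely what rules this growth out, by freezing the connector $P$, but making the reaching-time bound rigorous for the extremal blocks $1^{1_{\boldsymbol{\alpha}}}$ and $0^{0_{\boldsymbol{\beta}}}$ will require the same careful combinatorial bookkeeping on admissible factors used in Theorem \ref{renorm1} and Theorem \ref{renorm1prime}.
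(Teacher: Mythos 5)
Your proposal follows essentially the same route as the paper: both arguments bound the specification number $s_{(\boldsymbol{\alpha}_n,\boldsymbol{\beta}_n)}$ of each approximating subshift of finite type by $\ell({p_1}_{(\boldsymbol{\alpha}_n,\boldsymbol{\beta}_n)}{p_2}_{(\boldsymbol{\alpha}_n,\boldsymbol{\beta}_n)})$ via the bridging construction of Theorem \ref{renorm2}, invoke the hypothesis that this word is eventually constant to get a uniform bound, and transfer it to the limit system through Lemma \ref{stabilityproperty}. The only difference is that you make explicit two points the paper leaves implicit, namely the uniform reach-time estimate for entering and leaving the connector $P$ and the padding of bridges to a single common length, which strengthens rather than alters the argument.
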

\begin{proof}
The modification of the definition of specification combined with Lemma \ref{stabilityproperty} gives that $m_n = s_{({\boldsymbol{\alpha}}_{N(n)}, {\boldsymbol{\beta}}_{N(n)})}$ for every $n \in \mathbb{N}$. Thus, a coded system $(\Sigma_{({\boldsymbol{\alpha}},{\boldsymbol{\beta}})}, \sigma_{({\boldsymbol{\alpha}}_,{\boldsymbol{\beta}})})$ has specification if and only if $\mathop{\lim}\limits_{n\to \infty}s_{({\boldsymbol{\alpha}}_{n}, {\boldsymbol{\beta}}_{n})}$ is bounded. Observe that from the proof of Theorem \ref{renorm2} we have that $s_{({\boldsymbol{\alpha}}_{n}, {\boldsymbol{\beta}}_{n})} \leq \ell({p_1}_{({\boldsymbol{\alpha}}_{n}, {\boldsymbol{\beta}}_{n})}{p_2}_{({\boldsymbol{\alpha}}_{n}, {\boldsymbol{\beta}}_{n})})$. From the hypothesis there is $N \in \mathbb{N}$ such that for every $n \leq N$, $${p_1}_{(\boldsymbol{\alpha}_n,\boldsymbol{\beta}_n)}{p_2}_{(\boldsymbol{\alpha}_n,\boldsymbol{\beta}_n)} = {p_1}_{(\boldsymbol{\alpha}_N,\boldsymbol{\beta}_N)}{p_2}_{(\boldsymbol{\alpha}_N,\boldsymbol{\beta}_N)}.$$ Then $s_{({\boldsymbol{\alpha}}_{n}, {\boldsymbol{\beta}}_{n})} \leq \ell({p_1}_{({\boldsymbol{\alpha}}_{N}, {\boldsymbol{\beta}}_{N})}{p_2}_{({\boldsymbol{\alpha}}_{N}, {\boldsymbol{\beta}}_{N})})$ for every $n \geq N$. Thus $\mathop{\lim}\limits_{n \to \infty}s_{({\boldsymbol{\alpha}}_{n}, {\boldsymbol{\beta}}_{n})} < \infty$. Then $(\Sigma_{(\boldsymbol{\alpha},\boldsymbol{\beta})},\sigma_{(\boldsymbol{\alpha},\boldsymbol{\beta})})$ has specification.
\end{proof}

We now construct a family of examples satisfying Theorem \ref{sispec}. To perform this construction it is needed to show the following result.

\begin{lemma}
Let $(\boldsymbol{\alpha}, \boldsymbol{\beta}), ({\boldsymbol{\alpha}}^{\prime}, {\boldsymbol{\beta}}^{\prime}) \in \mathcal{LW}$ be essential pairs with associated pairs $(\omega,\nu)$ and $(\omega^{\prime},\nu^{\prime})$ respectively such that $(\boldsymbol{\alpha}, \boldsymbol{\beta}) \neq ({\boldsymbol{\alpha}}^{\prime}, {\boldsymbol{\beta}}^{\prime})$. Suppose that ${\omega^{\prime}}^{\infty} \prec \omega^{\infty}$ and $\nu^{\infty} \prec {\nu^{\prime}}^{\infty}$. Then the pair $({\boldsymbol{\alpha}}^{\prime \prime}, {\boldsymbol{\beta}}^{\prime \prime})$ given by $0{\boldsymbol{\alpha}}^{\prime \prime} = (\omega\nu^{\prime})^{\infty}$ and $1{\boldsymbol{\beta}}^{\prime \prime} = (\nu\omega^{\prime})^{\infty}$ is an essential pair. \label{constructionspec1}
\end{lemma}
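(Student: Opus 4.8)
The plan is to verify, in turn, the three defining properties of an essential pair: that $\boldsymbol{\alpha}^{\prime\prime}$ and $\boldsymbol{\beta}^{\prime\prime}$ are periodic, that $(\boldsymbol{\alpha}^{\prime\prime},\boldsymbol{\beta}^{\prime\prime})\in\mathcal{LW}$, and that $(\boldsymbol{\alpha}^{\prime\prime},\boldsymbol{\beta}^{\prime\prime})$ is not renormalisable. Periodicity is immediate from $0\boldsymbol{\alpha}^{\prime\prime}=(\omega\nu^{\prime})^{\infty}$ and $1\boldsymbol{\beta}^{\prime\prime}=(\nu\omega^{\prime})^{\infty}$. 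The first thing I would record is a clean ordering of the four generating words. Since $(\boldsymbol{\alpha},\boldsymbol{\beta})$ is essential, its associated pair satisfies $\omega=0\hbox{\rm{-max}}_{\omega}$ and $\nu=1\hbox{\rm{-min}}_{\nu}$, so $\omega$ begins with $0$ and $\nu$ begins with $1$, whence $\omega^{\infty}\prec\nu^{\infty}$; likewise ${\omega^{\prime}}^{\infty}\prec{\nu^{\prime}}^{\infty}$. Combining these with the hypotheses ${\omega^{\prime}}^{\infty}\prec\omega^{\infty}$ and $\nu^{\infty}\prec{\nu^{\prime}}^{\infty}$ yields the strict chain
\[
{\omega^{\prime}}^{\infty}\prec\omega^{\infty}\prec\nu^{\infty}\prec{\nu^{\prime}}^{\infty},
\]
which is precisely the feature separating this construction from the cyclically balanced (Farey) situation of Lemma \ref{lemadenik} and Theorem \ref{balanceadas}.

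For membership in $\mathcal{LW}$ I would reduce the conditions of Definition \ref{lexworld} to statements about cyclic words. Using $\sigma(0\hbox{\rm{-max}}_{X}^{\infty})=\hbox{\rm{max}}_{X}^{\infty}$, the condition $\boldsymbol{\alpha}^{\prime\prime}\in P$ is equivalent to $\omega\nu^{\prime}=0\hbox{\rm{-max}}_{\omega\nu^{\prime}}$ and $\boldsymbol{\beta}^{\prime\prime}\in\bar{P}$ to $\nu\omega^{\prime}=1\hbox{\rm{-min}}_{\nu\omega^{\prime}}$, in which case $\boldsymbol{\alpha}^{\prime\prime}=\hbox{\rm{max}}_{\omega\nu^{\prime}}^{\infty}$ and $\boldsymbol{\beta}^{\prime\prime}=\hbox{\rm{min}}_{\nu\omega^{\prime}}^{\infty}$. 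The interval conditions $\sigma^{n}(\boldsymbol{\alpha}^{\prime\prime})\succcurlyeq\boldsymbol{\beta}^{\prime\prime}$ and $\sigma^{n}(\boldsymbol{\beta}^{\prime\prime})\preccurlyeq\boldsymbol{\alpha}^{\prime\prime}$ then become $\hbox{\rm{min}}_{\nu\omega^{\prime}}^{\infty}\preccurlyeq\hbox{\rm{min}}_{\omega\nu^{\prime}}^{\infty}$ and $\hbox{\rm{max}}_{\nu\omega^{\prime}}^{\infty}\preccurlyeq\hbox{\rm{max}}_{\omega\nu^{\prime}}^{\infty}$. I would prove each of these by the rotation comparisons already employed in Proposition \ref{balanced1} and Lemma \ref{vyw}: a cyclic rotation of $\omega\nu^{\prime}$ either begins inside the $\omega$-block, where it is dominated by $\omega\nu^{\prime}$ through $\omega=0\hbox{\rm{-max}}_{\omega}$ together with the fact that $(\omega\nu^{\prime})^{\infty}$ exceeds $\omega^{\infty}$ at the first coordinate past the initial $\omega$, or it begins inside the $\nu^{\prime}$-block, where $\nu^{\prime}=1\hbox{\rm{-min}}_{\nu^{\prime}}$ and the chain ${\nu^{\prime}}^{\infty}\succ\omega^{\infty}$ govern the comparison. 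The min/max comparisons between $\omega\nu^{\prime}$ and $\nu\omega^{\prime}$ are of the same type and additionally invoke $(\boldsymbol{\alpha},\boldsymbol{\beta}),(\boldsymbol{\alpha}^{\prime},\boldsymbol{\beta}^{\prime})\in\mathcal{LW}$.

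The heart of the argument, and the step I expect to be the main obstacle, is non-renormalisability, which I would prove by contradiction. Suppose $(\boldsymbol{\alpha}^{\prime\prime},\boldsymbol{\beta}^{\prime\prime})$ is renormalisable and let $(\hat{\omega},\hat{\nu})$ be the shortest renormalisation words, so that both $(\omega\nu^{\prime})^{\infty}=0\boldsymbol{\alpha}^{\prime\prime}$ and $(\nu\omega^{\prime})^{\infty}=1\boldsymbol{\beta}^{\prime\prime}$ decompose into blocks from $\{\hat{\omega},\hat{\nu}\}$, with $\hat{\omega}=0\hbox{\rm{-max}}_{\hat{\omega}}$ a prefix of $(\omega\nu^{\prime})^{\infty}$ and $\hat{\nu}=1\hbox{\rm{-min}}_{\hat{\nu}}$ a prefix of $(\nu\omega^{\prime})^{\infty}$. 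Reading $0\boldsymbol{\alpha}^{\prime\prime}$ from the left, once a maximal $\omega$-segment has been consumed the next symbol is the leading $1$ of $\nu^{\prime}$, so the following block is a $\hat{\nu}$; hence $\hat{\nu}$ is a prefix of the tail $\nu^{\prime}(\omega\nu^{\prime})^{\infty}$, while, being a prefix of $1\boldsymbol{\beta}^{\prime\prime}$, it is also a prefix of $\nu(\omega^{\prime}\nu)^{\infty}$. Thus $\hat{\nu}$ is forced to be a common prefix of $(\nu^{\prime}\omega)^{\infty}$ and $(\nu\omega^{\prime})^{\infty}$, and propagating this consistency of the two block decompositions across one full period makes the $\nu^{\prime}$- and $\omega$-segments of $0\boldsymbol{\alpha}^{\prime\prime}$ coincide with the $\nu$- and $\omega^{\prime}$-segments of $1\boldsymbol{\beta}^{\prime\prime}$; periodicity then forces $\nu=\nu^{\prime}$ and $\omega=\omega^{\prime}$, that is $(\boldsymbol{\alpha},\boldsymbol{\beta})=(\boldsymbol{\alpha}^{\prime},\boldsymbol{\beta}^{\prime})$, contradicting the hypothesis (equivalently, the strictness of the chain).

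The delicate point I would treat carefully is that $\hat{\omega}$ need not equal $\omega$: it may be shorter than $\omega$ or may straddle the $\omega/\nu^{\prime}$ boundary, and symmetrically for $\hat{\nu}$. I would handle this by a case analysis on $\ell(\hat{\omega})$ versus $\ell(\omega)$ and $\ell(\hat{\nu})$ versus $\ell(\nu)$, showing that no renormalisation word can cross such a boundary while respecting the identities $\hat{\omega}=0\hbox{\rm{-max}}_{\hat{\omega}}$ and $\hat{\nu}=1\hbox{\rm{-min}}_{\hat{\nu}}$. The admissibility $\hat{\omega}^{\infty},\hat{\nu}^{\infty}\in\Sigma_{(\boldsymbol{\alpha}^{\prime\prime},\boldsymbol{\beta}^{\prime\prime})}$ supplied by Lemma \ref{vyw} keeps the candidate block lengths bounded, and the strict chain rules out any accidental alignment of the two decompositions, so the rigidity above applies in every case and completes the contradiction.
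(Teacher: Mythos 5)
Your overall strategy---assume a renormalisation by a pair $(\hat{\omega},\hat{\nu})$ and play it off against the strict chain ${\omega^{\prime}}^{\infty}\prec\omega^{\infty}\prec\nu^{\infty}\prec{\nu^{\prime}}^{\infty}$---is the same as the paper's, and your treatment of periodicity and of membership in $\mathcal{LW}$ is fine. The problem is that the decisive step of the non-renormalisability argument is exactly the one you defer. Your propagation argument rests on the assertion that, reading $0\boldsymbol{\alpha}^{\prime\prime}=(\omega\nu^{\prime})^{\infty}$ from the left, ``once a maximal $\omega$-segment has been consumed the next symbol is the leading $1$ of $\nu^{\prime}$, so the following block is a $\hat{\nu}$.'' This does not follow: a $\hat{\nu}$-block begins with a $1$, but $\omega$ itself generally contains $1$'s, so the first $\hat{\nu}$-block of the decomposition need not begin at position $\ell(\omega)+1$, and nothing yet prevents $\hat{\omega}$ or $\hat{\nu}$ from straddling the $\omega\mid\nu^{\prime}$ boundary. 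You acknowledge this (``the delicate point I would treat carefully'') and propose to dispose of it by ``a case analysis on $\ell(\hat{\omega})$ versus $\ell(\omega)$,'' but that case analysis is the entire content of the lemma and is never carried out. Moreover, even granting alignment, the claim that ``propagating this consistency across one full period'' forces $\nu=\nu^{\prime}$ and $\omega=\omega^{\prime}$ is not justified: the block patterns of $0\boldsymbol{\alpha}^{\prime\prime}$ and $1\boldsymbol{\beta}^{\prime\prime}$ are $\hat{\omega}\hat{\nu}^{n_1}\hat{\omega}^{n_1}\cdots$ and $\hat{\nu}\hat{\omega}^{m_1}\hat{\nu}^{m_1}\cdots$ with a priori different exponent sequences, so the two decompositions do not proceed in lockstep and there is no immediate reason their segments should be identified.

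The paper avoids the alignment issue altogether by a different device: it first notes that $\omega\nu^{\prime}\neq\varpi\epsilon$ and $\nu\omega^{\prime}\neq\epsilon\varpi$ (else $\nu\omega^{\prime}$ would be a cyclic permutation of $\omega\nu^{\prime}$, contradicting the strict chain), then considers the single word $0\hbox{\rm{-max}}_{\nu\omega^{\prime}}$, which in any renormalisation must take the form $\varpi\epsilon^{m_i^{\epsilon}}$ for some $i$. The $\mathcal{LW}$ condition together with the chain gives $(0\hbox{\rm{-max}}_{\nu\omega^{\prime}})^{\infty}\prec(\omega\nu^{\prime})^{\infty}$, and comparing the two sequences at the first index $k$ where they disagree yields $\varpi_i=\omega_i$ for $i<k$ while renormalisability forces agreement at $k$ as well, a contradiction. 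If you want to salvage your route, you would need to actually prove the boundary-alignment claim (essentially via an argument of this letter-by-letter comparison type); as written, the proof has a genuine gap at its central step.
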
 
\begin{proof}
Since ${\omega^{\prime}}^{\infty} \prec \omega^{\infty}$ and $\nu^{\infty} \prec {\nu^{\prime}}^{\infty}$ it is clear that $(\boldsymbol{\alpha}^{\prime\prime}, \boldsymbol{\beta}^{\prime \prime}) \in \mathcal{LW}$. Also $\omega^{\infty} \prec (\omega\nu^{\prime})^{\infty}$ and $(\nu\omega^{\prime})^{\infty} \prec \nu^{\infty}$. Assume that $({\boldsymbol{\alpha}}^{\prime \prime}, {\boldsymbol{\beta}}^{\prime \prime})$ is renormalisable. Then there exist sequences $\{n^{\varpi}_i\}_{i=1}^{\infty}$ $\{n^{\epsilon}_i\}_{i=1}^{\infty}$, $\{m^{\varpi}_j\}_{j=1}^{\infty}$ and $\{m^{\epsilon}_j\}_{j=1}^{\infty} \subset \mathbb{N}$ and words $\varpi, \epsilon$ such that $\varpi = {0-{\max}}_{\varpi}$, $\epsilon = {1-{\min}}_{\epsilon}$, $$(\omega \nu^{\prime})^{\infty} = \varpi\epsilon^{n_1^{\epsilon}}\varpi^{n_1^{\varpi}}\epsilon^{n_2^{\epsilon}}\varpi^{n_2^{\varpi}} \ldots$$ and $$(\nu\omega^{\prime})^{\infty} = \epsilon\varpi^{m_1^{\varpi}}\epsilon^{m_1^{\epsilon}}\varpi^{m_2^{\varpi}}\epsilon^{m_2^{\epsilon}} \ldots.$$ Then $$\varpi^{\infty} \prec (\varpi\epsilon)^{\infty} \preccurlyeq (\omega\nu^{\prime})^{\infty} \prec \varpi\epsilon^{\infty}$$ and $$\epsilon\varpi^{\infty} \prec (\nu\omega^{\prime})^{\infty} \preccurlyeq (\epsilon\varpi)^{\infty} \prec \epsilon^{\infty}.$$ Note that $\omega\nu^{\prime} \neq \varpi\epsilon$ and $\nu\omega^{\prime} \neq \epsilon\varpi$. If the former holds then $\sigma^{\ell(\epsilon)}(\nu\omega^{\prime})= \omega\nu^{\prime}$, which implies that $\nu\omega^{\prime}$ is a cyclic permutation of $\omega\nu^{\prime}$ which contradicts that ${\omega^{\prime}}^{\infty} \prec \omega^{\infty}$ and $\nu^{\infty} \prec {\nu^{\prime}}^{\infty}$. Since ${\boldsymbol{\alpha}}^{\prime \prime}$ and ${\boldsymbol{\beta}}^{\prime \prime}$ are periodic sequences we are sure that $n^{\varpi}_i, n^{\epsilon}_i, m^{\varpi}_j, m^{\epsilon}_j  \neq \infty$. Moreover, from Lemma \ref{boundedsequences} $n_1^{\epsilon}, m_1^{\varpi}$ are the maximal exponents occurring in $\omega\nu^{\prime}$ and $\nu \omega^{\prime}$ respectively. Then $$(\omega\nu^{\prime})^{\infty} = (\varpi\epsilon^{n_1^{\epsilon}}\varpi^{n_1^{\varpi}}\epsilon^{n_2^{\epsilon}}\varpi^{n_2^{\varpi}} \ldots \epsilon^{n_i^{\epsilon}}\varpi^{n_i^{\varpi}})^{\infty}$$ and  $$(\nu\omega^{\prime})^{\infty} = (\epsilon\varpi^{m_1^{\varpi}}\epsilon^{m_1^{\epsilon}}\varpi^{m_2^{\varpi}}\epsilon^{m_2^{\epsilon}} \ldots \varpi^{m_j^{\varpi}}\epsilon^{m_j^{\epsilon}})^{\infty}.$$ Note that if $\varpi^{n_i^{\varpi}} = 0$ or $\epsilon^{m_j^{\epsilon}} = 0$ will not affect our argument. Consider $0-{\hbox{\rm{max}}}_{\nu\omega^{\prime}}$. Observe that $0-{\hbox{\rm{max}}}_{\nu\omega^{\prime}}= \varpi\epsilon^{m_i^{\epsilon}}$ for some $1 \leq i \leq j$. Since $(\boldsymbol{\alpha}^{\prime \prime}, \boldsymbol{\beta}^{\prime \prime}) \in \mathcal{LW}$ and the hypothesis that ${\omega^{\prime}}^{\infty} \prec \omega^{\infty}$ and $\nu^{\infty} \prec {\nu^{\prime}}^{\infty}$ we have that $(0-{\hbox{\rm{max}}}_{\nu\omega^{\prime}})^{\infty} \prec (\omega\nu^{\prime})^{\infty}$. This implies that $\varpi_i = \omega_i$ for every $0\leq i \leq k-1$ where $k = \min\left\{j \mid (\omega\nu^{\prime})_j \neq (0-{\hbox{\rm{max}}}_{\nu\omega^{\prime}})_j\right\}$. Since $(\boldsymbol{\alpha}^{\prime \prime}, \boldsymbol{\beta}^{\prime \prime})$ is renormalisable then $(\omega\nu^{\prime})_k = (0-{\hbox{\rm{max}}}_{\nu\omega^{\prime}})_k$ which contradicts the minimality of $k$. Then $(\boldsymbol{\alpha}^{\prime \prime}, \boldsymbol{\beta}^{\prime \prime})$ is not renormalisable.
\end{proof}

\subsubsection*{A construction for $(\boldsymbol{\alpha}, \boldsymbol{\beta})$ with the specification property}

Consider an essential pair $(\boldsymbol{\alpha}_1, \boldsymbol{\beta}_1) \in \mathcal{LW}$ with associated pair $(\omega_1,\nu_1)$. Let $({\boldsymbol{\alpha}}_1^{\prime}, {\boldsymbol{\beta}}_1^{\prime})$ be an essential pair with associated pair $(\omega^{\prime}_1, \nu^{\prime}_1)$ such that $${\omega^{\prime}_1}^{\infty} \prec {\omega_1}^{\infty} \hbox{\rm{ and }}{\nu_1}^{\infty} \prec {\nu_1^{\prime}}^{\infty}.$$ Let $$(\boldsymbol{\alpha}_{2},\boldsymbol{\beta}_{2})= (\sigma({(\omega_1\nu^{\prime}_1)}^{\infty}), \sigma({(\nu_1\omega^{\prime}_1)}^{\infty})) = (\sigma(\omega_2^{\infty}), \sigma(\nu_2^{\infty})).$$ Consider $(\boldsymbol{\alpha}^{\prime}_2, \boldsymbol{\beta}^{\prime}_2)$ an essential pair with associated pair $(\omega^{\prime}_2,\nu^{\prime}_2)$ such that ${\omega^{\prime}_2}^{\infty} \prec \omega_2^{\infty}$, $\nu_2^{\infty} \prec {\nu^{\prime}_2}^{\infty}$, ${\omega^{\prime}_2}^{\infty} \prec {\omega^{\prime}_1}^{\infty}$ and ${\nu^{\prime}_1}^{\infty} \prec {\nu^{\prime}_2}^{\infty}$. Let $$(\boldsymbol{\alpha}_3, \boldsymbol{\beta}_3) = (\sigma({(\omega_2\nu_2^{\prime})}^{\infty}), \sigma({(\nu_2\omega_2^{\prime})}^{\infty})) = (\sigma(\omega_3^{\infty}), \sigma(\nu_3^{\infty})).$$ Then, for every $n \in \mathbb{N}$, let $$(\boldsymbol{\alpha}_{n},\boldsymbol{\beta}_{n})= (\sigma({(\omega_{n-1}\nu^{\prime}_{n-1})}^{\infty}), \sigma({(\nu_{n-1}\omega^{\prime}_{n-1})}^{\infty})) = (\sigma(\omega_n^{\infty}),\sigma(\nu_n^{\infty}))$$ where $(\omega^{\prime}_{n-1},\nu^{\prime}_{n-1})$ is the associated pair of an essential pair $(\boldsymbol{\alpha}^{\prime}_{n-1}, \boldsymbol{\beta}^{\prime}_{n-1})$ satisfying ${\omega^{\prime}_{n-1}}^{\infty} \prec \omega_{n-1}^{\infty}$, $\nu_{n-1}^{\infty} \prec {\nu^{\prime}_{n-1}}^{\infty}$, ${\omega^{\prime}_{n-1}}^{\infty} \prec {\omega^{\prime}_{n-2}}^{\infty}$ and ${\nu^{\prime}_{n-2}}^{\infty} \prec {\nu^{\prime}_{n-1}}^{\infty}$. Let $$(\boldsymbol{\alpha}, \boldsymbol{\beta}) = \left(\mathop{\lim}\limits_{n \to \infty} \boldsymbol{\alpha}_n, \mathop{\lim}\limits_{n \to \infty}\boldsymbol{\beta}_n \right).$$ From Lemma \ref{constructionspec1} we have that $(\Sigma_{(\boldsymbol{\alpha}, \boldsymbol{\beta})}, \sigma_{(\boldsymbol{\alpha}, \boldsymbol{\beta})})$ is a coded system. We claim that $(\Sigma_{(\boldsymbol{\alpha}, \boldsymbol{\beta})}, \sigma_{(\boldsymbol{\alpha}, \boldsymbol{\beta})})$ has the specification property. 

To prove this we will show that $${p_1}_{(\boldsymbol{\alpha}_n,\boldsymbol{\beta}_n)}{p_2}_{(\boldsymbol{\alpha}_n,\boldsymbol{\beta}_n)} = {p_1}_{(\boldsymbol{\alpha}_2,\boldsymbol{\beta}_2)}{p_2}_{(\boldsymbol{\alpha}_2,\boldsymbol{\beta}_2)}$$ for every $n \geq 2$. Observe that the associated pair of $(\boldsymbol{\alpha}_2,\boldsymbol{\beta}_2)$ is $(\omega_1\nu_1^{\prime}, \nu\omega^{\prime})$. Since ${\omega_1^{\prime}}^{\infty}\prec \omega_1^{\infty}$ and $\nu_1^{\infty} \prec {\nu_2^{\prime}}^{\infty}$ then ${p_1}_{(\boldsymbol{\alpha}_2,\boldsymbol{\beta}_2)} = \omega_1$ and ${p_2}_{(\boldsymbol{\alpha}_2,\boldsymbol{\beta}_2)} = \nu_1$. Let $n \geq 2$. Note that the associated pair of $(\boldsymbol{\alpha}_n,\boldsymbol{\beta}_n)$ is $(\omega_1\nu_1^{\prime}\nu_2^{\prime}\ldots \nu_{n-1}^{\prime}, \nu\omega_1^{\prime}\ldots\omega_{n-1}^{\prime})$. Since $$\omega_{n-1}^{\prime} \prec \ldots \prec {\omega_2^{\prime}}^{\infty} \prec {\omega_1^{\prime}}^{\infty} \prec {\omega_1}^{\infty}$$ and $${\nu_1}^{\infty} \prec {\nu_1^{\prime}}^{\infty} \prec \ldots \prec {\nu_{n-1}^{\prime}}^{\infty}$$ then ${p_1}_{(\boldsymbol{\alpha}_n,\boldsymbol{\beta}_n)} = \omega_1$ and ${p_2}_{(\boldsymbol{\alpha}_n,\boldsymbol{\beta}_n)}$. Then $${p_1}_{(\boldsymbol{\alpha}_n,\boldsymbol{\beta}_n)}{p_2}_{(\boldsymbol{\alpha}_n,\boldsymbol{\beta}_n)} = {p_1}_{(\boldsymbol{\alpha}_2,\boldsymbol{\beta}_2)}{p_1}_{(\boldsymbol{\alpha}_2,\boldsymbol{\beta}_2)}$$ for every $n \geq 2$. This proves our assertion. 

\subsection*{Examples with no specification}

\begin{theorem}
Let $(\Sigma_{({\boldsymbol{\alpha}},{\boldsymbol{\beta}})}, \sigma_{({\boldsymbol{\alpha}}_,{\boldsymbol{\beta}})})$ be a coded system and $\left\{({\boldsymbol{\alpha}}_n, {\boldsymbol{\beta}}_n)\right\}_{n=1}^{\infty}$ be the sequence constructed in Theorem \ref{lemaaproxtrans1}. If there exists a subsequence $\left\{({\boldsymbol{\alpha}_n}_k,{\boldsymbol{\beta}_n}_k)\right\}_{k=1}^{\infty} \subset \left\{(\boldsymbol{\alpha}_n,\boldsymbol{\beta}_n)\right\}_{n=1}^{\infty}$ such that for every $k \in \mathbb{N}$, $$\ell({p_1}_{({\boldsymbol{\alpha}}_{n_k},{\boldsymbol{\beta}}_{n_k})} {p_2}_{({\boldsymbol{\alpha}}_{n_k},{\boldsymbol{\beta}}_{n_k})}) < \ell({p_1}_{({\boldsymbol{\alpha}}_{n_{k+1}},{\boldsymbol{\beta}}_{n_{k+1}})}{p_2}_{({\boldsymbol{\alpha}}_{n_{k+1}},{\boldsymbol{\beta}}_{n_{k+1}})}) $$ and $$\ell({p_1}_{({\boldsymbol{\alpha}}_{n_k},{\boldsymbol{\beta}}_{n_k})} {p_2}_{({\boldsymbol{\alpha}}_{n_k},{\boldsymbol{\beta}}_{n_k})}) \leq s_{({\boldsymbol{\alpha}}_{n_{k+1}},{\boldsymbol{\beta}}_{n_{k+1}})}$$ then $(\Sigma_{(\boldsymbol{\alpha},\boldsymbol{\beta})},\sigma_{(\boldsymbol{\alpha},\boldsymbol{\beta})})$ does not have specification.\label{nospec}
\end{theorem}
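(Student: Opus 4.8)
The plan is to reduce the statement to the unboundedness of the sequence of specification numbers $\{s_{(\boldsymbol{\alpha}_n,\boldsymbol{\beta}_n)}\}_{n=1}^{\infty}$ of the approximating subshifts of finite type, and then to read off that unboundedness directly from the two hypotheses. Recall from the proof of Theorem \ref{sispec} that, via Lemma \ref{stabilityproperty}, one has $m_n = s_{(\boldsymbol{\alpha}_{N(n)},\boldsymbol{\beta}_{N(n)})}$ for a suitable nondecreasing index function $N(\cdot)$. Since the periods $\ell(\boldsymbol{\alpha}_n)$ and $\ell(\boldsymbol{\beta}_n)$ strictly increase with $n$, each approximating pair $(\boldsymbol{\alpha}_n,\boldsymbol{\beta}_n)$ is the stabilizing one for a nonempty range of block-lengths, so $N(\cdot)$ is cofinal in $\mathbb{N}$ and hence $\sup_n m_n = \sup_n s_{(\boldsymbol{\alpha}_n,\boldsymbol{\beta}_n)}$. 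Because $(\Sigma_{(\boldsymbol{\alpha},\boldsymbol{\beta})},\sigma_{(\boldsymbol{\alpha},\boldsymbol{\beta})})$ has the specification property if and only if $\lim_{n\to\infty}m_n<\infty$, it therefore suffices to show that $\{s_{(\boldsymbol{\alpha}_n,\boldsymbol{\beta}_n)}\}_{n=1}^{\infty}$ is unbounded.

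Next I would combine the two hypotheses along the prescribed subsequence $\{(\boldsymbol{\alpha}_{n_k},\boldsymbol{\beta}_{n_k})\}_{k=1}^{\infty}$. Writing $L_k = \ell({p_1}_{(\boldsymbol{\alpha}_{n_k},\boldsymbol{\beta}_{n_k})}{p_2}_{(\boldsymbol{\alpha}_{n_k},\boldsymbol{\beta}_{n_k})})$, the first hypothesis says that $\{L_k\}_{k=1}^{\infty}$ is a strictly increasing sequence of positive integers, so $L_k\geq L_1+(k-1)$ and $L_k\to\infty$. The second hypothesis reads $L_k\leq s_{(\boldsymbol{\alpha}_{n_{k+1}},\boldsymbol{\beta}_{n_{k+1}})}$, whence $s_{(\boldsymbol{\alpha}_{n_{k+1}},\boldsymbol{\beta}_{n_{k+1}})}\geq L_k\to\infty$. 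Thus the subsequence $\{s_{(\boldsymbol{\alpha}_{n_{k+1}},\boldsymbol{\beta}_{n_{k+1}})}\}_{k=1}^{\infty}$ is unbounded, and therefore so is the whole sequence $\{s_{(\boldsymbol{\alpha}_n,\boldsymbol{\beta}_n)}\}_{n=1}^{\infty}$. Combined with the reduction of the previous paragraph this gives $\lim_{n\to\infty}m_n=\infty$, so $(\Sigma_{(\boldsymbol{\alpha},\boldsymbol{\beta})},\sigma_{(\boldsymbol{\alpha},\boldsymbol{\beta})})$ does not have specification. Both hypotheses are genuinely used: the proof of Theorem \ref{renorm2} only furnishes the upper bound $s_{(\boldsymbol{\alpha}_n,\boldsymbol{\beta}_n)}\leq \ell({p_1}_{(\boldsymbol{\alpha}_n,\boldsymbol{\beta}_n)}{p_2}_{(\boldsymbol{\alpha}_n,\boldsymbol{\beta}_n)})$, so mere growth of the lengths $L_k$ cannot by itself force the specification numbers to grow; the second hypothesis is precisely what turns the length growth at step $n_k$ into a lower bound for the specification number at step $n_{k+1}$, while the first supplies the growth.

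The step I expect to be the main obstacle is the transfer from the subsequence back to the full system, that is, the identity $\sup_n m_n = \sup_n s_{(\boldsymbol{\alpha}_n,\boldsymbol{\beta}_n)}$. This is not a formal triviality: $m_n$ only detects the specification number $s_{(\boldsymbol{\alpha}_{N(n)},\boldsymbol{\beta}_{N(n)})}$ of the particular SFT whose block set of length $n$ has already stabilized, and a priori the stabilization could skip exactly those indices $n_{k+1}$ carrying a large specification number. The careful point to verify is therefore that the index function $N(\cdot)$ implicitly determined in Lemma \ref{stabilityproperty} reaches every index $n_{k+1}$, which follows from the strict growth of the periods $\ell(\boldsymbol{\alpha}_n)$ and $\ell(\boldsymbol{\beta}_n)$ ensuring that each approximating subshift of finite type governs $m_n$ for a range of $n$, so that no large $s_{(\boldsymbol{\alpha}_{n_{k+1}},\boldsymbol{\beta}_{n_{k+1}})}$ is lost. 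Once this cofinality is established, the conclusion is immediate from the estimates above.
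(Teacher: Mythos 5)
Your proposal is correct and follows essentially the same route as the paper: both arguments invoke the bound $s_{(\boldsymbol{\alpha}_n,\boldsymbol{\beta}_n)}\leq \ell({p_1}_{(\boldsymbol{\alpha}_n,\boldsymbol{\beta}_n)}{p_2}_{(\boldsymbol{\alpha}_n,\boldsymbol{\beta}_n)})$ from the proof of Theorem \ref{renorm2}, combine it with the two hypotheses to force the specification numbers of the approximating subshifts of finite type to be unbounded along the subsequence, and conclude via the $m_n$-characterization of specification. Your version is in fact slightly more careful than the paper's on two points — it works directly with the stated $\leq$ in the second hypothesis rather than silently upgrading it to $<$, and it explicitly verifies the cofinality of the stabilizing index $N(\cdot)$ so that no large specification number is skipped — but these are refinements of the same argument, not a different one.
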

\begin{proof}
Observe that if $s_{({\boldsymbol{\alpha}}_{n_k}, {\boldsymbol{\beta}}_{n_k})} < s_{({\boldsymbol{\alpha}}_{n_{k+1}}, {\boldsymbol{\beta}}_{n_{k+1}})}$ for every $k \in \mathbb{N}$ then $(\Sigma_{(\boldsymbol{\alpha},\boldsymbol{\beta})},\sigma_{(\boldsymbol{\alpha},\boldsymbol{\beta})})$ does not have specification. From the proof of Theorem \ref{renorm2} we have that $$s_{({\boldsymbol{\alpha}}_{n_k}, {\boldsymbol{\beta}}_{n_k})} \leq \ell({p_1}_{({\boldsymbol{\alpha}}_{n_k}, {\boldsymbol{\beta}}_{n_k})}{p_2}_{({\boldsymbol{\alpha}}_{n_k}, {\boldsymbol{\beta}}_{n_k})})$$ and $$s_{({\boldsymbol{\alpha}}_{n_{k+1}}, {\boldsymbol{\beta}}_{n_{k+1}})} \leq \ell({p_1}_{({\boldsymbol{\alpha}}_{n_{k+1}}, {\boldsymbol{\beta}}_{n_{k+1}})}{p_2}_{({\boldsymbol{\alpha}}_{n_{k+1}}, {\boldsymbol{\beta}}_{n_{k+1}})}).$$ From hypothesis we have that $\ell({p_1}_{({\boldsymbol{\alpha}}_{n_k}, {\boldsymbol{\beta}}_{n_k})}{p_2}_{({\boldsymbol{\alpha}}_{n_k}, {\boldsymbol{\beta}}_{n_k})}) < s_{({\boldsymbol{\alpha}}_{n_{k+1}}, {\boldsymbol{\beta}}_{n_{k+1}})}$ for every $k \in \mathbb{N}$. Then $$s_{({\boldsymbol{\alpha}}_{n_{k}}, {\boldsymbol{\beta}}_{n_{k}})} \leq \ell({p_1}_{({\boldsymbol{\alpha}}_{n_k}, {\boldsymbol{\beta}}_{n_k})}{p_2}_{({\boldsymbol{\alpha}}_{n_k}, {\boldsymbol{\beta}}_{n_k})}) < s_{({\boldsymbol{\alpha}}_{n_{k+1}}, {\boldsymbol{\beta}}_{n_{k+1}})}$$ for every $k \in \mathbb{N}$ which concludes the proof.
\end{proof}

To finish this section we construct a family of examples satisfying Theorem \ref{nospec}. For this purpose we show the following technical lemma.

\begin{lemma}
Let $(\boldsymbol{\alpha}, \boldsymbol{\beta}), ({\boldsymbol{\alpha}}^{\prime}, {\boldsymbol{\beta}}^{\prime}) \in \mathcal{LW}$ be essential pairs with associated pairs $(\omega,\nu)$ and $(\omega^{\prime},\nu^{\prime})$ respectively such that $(\boldsymbol{\alpha}, \boldsymbol{\beta}) \neq ({\boldsymbol{\alpha}}^{\prime}, {\boldsymbol{\beta}}^{\prime})$. Suppose that ${\omega^{\prime}}^{\infty} \prec \omega^{\infty}$ and $\nu^{\infty} \prec {\nu^{\prime}}^{\infty}$. For every $n,m \in \mathbb{N}$ the pair $({\boldsymbol{\alpha}}^{\prime \prime}, {\boldsymbol{\beta}}^{\prime \prime})$ given by $0{\boldsymbol{\alpha}}^{\prime \prime} = (\omega\nu^n\nu^{\prime})^{\infty}$ and $1{\boldsymbol{\beta}}^{\prime \prime} = (\nu\omega^m\omega^{\prime})^{\infty}$ is an essential pair. \label{constructionspec2}
\end{lemma}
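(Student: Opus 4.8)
The plan is to establish the three defining features of an essential pair in turn: that both coordinates of $({\boldsymbol{\alpha}}^{\prime\prime},{\boldsymbol{\beta}}^{\prime\prime})$ are periodic, that $({\boldsymbol{\alpha}}^{\prime\prime},{\boldsymbol{\beta}}^{\prime\prime})\in\mathcal{LW}$, and that it is not renormalisable. The whole argument runs parallel to the proof of Lemma \ref{constructionspec1}; the only structural change is that the period words $\omega\nu^{\prime}$ and $\nu\omega^{\prime}$ are replaced by $\omega\nu^{n}\nu^{\prime}$ and $\nu\omega^{m}\omega^{\prime}$. Periodicity is immediate from the definitions $0{\boldsymbol{\alpha}}^{\prime\prime}=(\omega\nu^{n}\nu^{\prime})^{\infty}$ and $1{\boldsymbol{\beta}}^{\prime\prime}=(\nu\omega^{m}\omega^{\prime})^{\infty}$. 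For membership in $\mathcal{LW}$ I would use that $\omega=0\hbox{\rm{-max}}_{\omega}$, $\nu=1\hbox{\rm{-min}}_{\nu}$, together with ${\omega^{\prime}}^{\infty}\prec\omega^{\infty}$, $\nu^{\infty}\prec{\nu^{\prime}}^{\infty}$ and $(\boldsymbol{\alpha},\boldsymbol{\beta}),(\boldsymbol{\alpha}^{\prime},\boldsymbol{\beta}^{\prime})\in\mathcal{LW}$: reading off the blocks one checks directly that every shift of $(\omega\nu^{n}\nu^{\prime})^{\infty}$ is $\preccurlyeq 0{\boldsymbol{\alpha}}^{\prime\prime}$ and $\succcurlyeq 1{\boldsymbol{\beta}}^{\prime\prime}$, and symmetrically for $(\nu\omega^{m}\omega^{\prime})^{\infty}$.

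Next I would record the auxiliary orderings $\omega^{\infty}\prec(\omega\nu^{n}\nu^{\prime})^{\infty}$ and $(\nu\omega^{m}\omega^{\prime})^{\infty}\prec\nu^{\infty}$, obtained by comparing at the first coordinate where the two sequences differ (the $\nu$-block raises the first, the $\omega$-block lowers the second). These inequalities play the same sandwiching role as in Lemma \ref{constructionspec1}: they bound the period words between the hypothetical renormalisation words.

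The core step is to assume, for contradiction, that $({\boldsymbol{\alpha}}^{\prime\prime},{\boldsymbol{\beta}}^{\prime\prime})$ is renormalisable by a pair $(\varpi,\epsilon)$ with $\varpi=0\hbox{\rm{-max}}_{\varpi}$ and $\epsilon=1\hbox{\rm{-min}}_{\epsilon}$. Writing the periodic renormalisation expansions of $(\omega\nu^{n}\nu^{\prime})^{\infty}$ and $(\nu\omega^{m}\omega^{\prime})^{\infty}$ in the alphabet $\{\varpi,\epsilon\}$, I would first rule out $\omega\nu^{n}\nu^{\prime}=\varpi\epsilon$ and $\nu\omega^{m}\omega^{\prime}=\epsilon\varpi$: either equality would make one period word a cyclic permutation of the other, contradicting ${\omega^{\prime}}^{\infty}\prec\omega^{\infty}$ and $\nu^{\infty}\prec{\nu^{\prime}}^{\infty}$. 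Since both coordinates are periodic, Lemma \ref{boundedsequences} forces all exponents to be finite and identifies $n_{1}^{\epsilon}$ and $m_{1}^{\varpi}$ as the maximal exponents occurring, so each period word is a single repeated block in the $(\varpi,\epsilon)$-alphabet. Then, exactly as in Lemma \ref{constructionspec1}, I would examine $0\hbox{\rm{-max}}_{\nu\omega^{m}\omega^{\prime}}$, observe it has the form $\varpi\epsilon^{m_{i}^{\epsilon}}$, invoke $(0\hbox{\rm{-max}}_{\nu\omega^{m}\omega^{\prime}})^{\infty}\prec(\omega\nu^{n}\nu^{\prime})^{\infty}$ to conclude that $\varpi$ agrees with $\omega$ up to the first disagreement index $k$, and derive a contradiction with the minimality of $k$ forced by renormalisability.

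The main obstacle is that the extra factors $\nu^{n}$ and $\omega^{m}$ create longer runs of a single block inside the period words, which a priori makes a short renormalisation—in particular by $(\omega,\nu)$ themselves—look more plausible than in Lemma \ref{constructionspec1}. The delicate point is therefore to confirm that the presence of $\nu^{\prime}$ (with $\nu^{\infty}\prec{\nu^{\prime}}^{\infty}$) and of $\omega^{\prime}$ (with ${\omega^{\prime}}^{\infty}\prec\omega^{\infty}$) still obstructs any decomposition into $\varpi$- and $\epsilon$-blocks, so that the first-disagreement comparison against $0\hbox{\rm{-max}}_{\nu\omega^{m}\omega^{\prime}}$ survives unchanged. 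I would pay particular attention to the combinatorial bookkeeping of the exponents over the now longer period, ensuring that Lemma \ref{boundedsequences} still pins down $n_{1}^{\epsilon}$ and $m_{1}^{\varpi}$ as the maximal blocks.
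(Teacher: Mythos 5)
Your plan follows the paper's proof essentially step for step: verify $\mathcal{LW}$ membership from the hypotheses, record the sandwiching orderings, assume renormalisability by $(\varpi,\epsilon)$, and derive the contradiction from the $0\hbox{\rm{-max}}$ cyclic permutation of $\nu\omega^{m}\omega^{\prime}$ together with the lexicographic constraint $(0\hbox{\rm{-max}}_{\nu\omega^{m}\omega^{\prime}})^{\infty}\prec(\omega\nu^{n}\nu^{\prime})^{\infty}$. The only cosmetic difference is in the endgame: the paper identifies $0\hbox{\rm{-max}}_{\nu\omega^{m}\omega^{\prime}}=\omega^{m}\omega^{\prime}\nu$ explicitly, concludes $\varpi=\omega$, and contradicts the maximality of that cyclic permutation directly, whereas you phrase the contradiction via the first-disagreement index as in Lemma \ref{constructionspec1}; both are the same argument.
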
 
\begin{proof}
Let $n,m \in \mathbb{N}$. Note that since ${\omega^{\prime}}^{\infty} \prec \omega^{\infty}$ and $\nu^{\infty} \prec {\nu^{\prime}}^{\infty}$ then $(\boldsymbol{\alpha}^{\prime \prime}, \boldsymbol{\beta}^{\prime \prime}) \in \mathcal{LW}$. Also, $$\omega\nu^{\infty} \prec (\omega\nu^n\nu^{\prime})^{\infty} \prec (\omega \nu^{\prime})^{\infty}$$ and $$\nu\omega^{\infty} \prec (\nu\omega^m\omega^{\prime})^{\infty} \prec (\nu\omega^{\prime})^{\infty}.$$ Let us assume that $(\boldsymbol{\alpha}^{\prime \prime}, \boldsymbol{\beta}^{\prime \prime})$ is renormalisable. Then there exist sequences $\{n^{\varpi}_i\}_{i=1}^{\infty}$ $\{n^{\epsilon}_i\}_{i=1}^{\infty}$, $\{m^{\varpi}_j\}_{j=1}^{\infty}$ and $\{m^{\epsilon}_j\}_{j=1}^{\infty} \subset \mathbb{N}$ and words $\varpi, \epsilon$ such that $\varpi = {0-{\max}}_{\varpi}$, $\epsilon = {1-{\min}}_{\epsilon}$, $$(\omega\nu^{n} \nu^{\prime})^{\infty} = \varpi\epsilon^{n_1^{\epsilon}}\varpi^{n_1^{\varpi}}\epsilon^{n_2^{\epsilon}}\varpi^{n_2^{\varpi}} \ldots$$ and $$(\nu\omega^m\omega^{\prime})^{\infty} = \epsilon\varpi^{m_1^{\varpi}}\epsilon^{m_1^{\epsilon}}\varpi^{m_2^{\varpi}}\epsilon^{m_2^{\epsilon}} \ldots.$$ Observe that $0-\hbox{\rm{max}}_{\nu\omega^m\omega^{\prime}} = \omega^m\omega^{\prime}\nu$. Moreover $(0-\hbox{\rm{max}}_{\nu\omega^m\omega^{\prime}})^{\infty} \prec (\omega\nu^{n} \nu^{\prime})^{\infty}$ since $(\boldsymbol{\alpha}^{\prime \prime},\boldsymbol{\beta}^{\prime \prime}) \in \mathcal{LW}$. Furthermore, $0-\hbox{\rm{max}}_{\nu\omega^m\omega^{\prime}} = \omega^m\omega^{\prime}\nu$ and $(\omega^m\omega^{\prime}\nu)^{\infty} = \varpi\epsilon^{m^{\epsilon}_i}\varpi^{m^{\varpi}_{i+1}}\ldots$ for $1\leq i < j$. This implies that $\varpi = \omega$ which gives $(\omega^m \omega^{\prime}\nu)^{\infty} = \varpi^m\epsilon^{m^{\epsilon}_i}\varpi^{m^{\varpi}_{i+1}}\ldots$ which contradicts that $\omega^m\omega^{\prime}\nu = 0-\hbox{\rm{max}}_{\nu\omega^m\omega^{\prime}}$. Thus $(\boldsymbol{\alpha},\boldsymbol{\beta})$ is not renormalisable.    
\end{proof}

\subsubsection*{A construction for $(\boldsymbol{\alpha}, \boldsymbol{\beta})$ with no specification}

Let $(\boldsymbol{\alpha}_1, \boldsymbol{\beta}_1) \in \mathcal{LW}$ be an essential pair with associated pair $(\omega_1,\nu_1)$. Consider an essential pair $({\boldsymbol{\alpha}}_1^{\prime}, {\boldsymbol{\beta}}_1^{\prime})$ with associated pair $(\omega^{\prime}_1, \nu^{\prime}_1)$ such that $${\omega^{\prime}_1}^{\infty} \prec {\omega_1}^{\infty} \hbox{\rm{ and }} {\nu_1}^{\infty} \prec {\nu_1^{\prime}}^{\infty}.$$ Let $$(\boldsymbol{\alpha}_{2},\boldsymbol{\beta}_{2})= (\sigma{(\omega_1{\nu_1}^{j_1}\nu^{\prime}_1)}^{\infty}, \sigma{(\nu_1{\omega_1}^{k_1}\omega^{\prime}_1)}^{\infty}) = (\sigma(\omega_2^{\infty}), \sigma(\nu_2^{\infty}))$$ where $j_1, k_1 \in \mathbb{N}$ and $j_1, k_1 \geq 2$. Consider $(\boldsymbol{\alpha}^{\prime}_2, \boldsymbol{\beta}^{\prime}_2)$ an essential pair with associated pair $(\omega^{\prime}_2,\nu^{\prime}_2)$ such that ${\omega^{\prime}_2}^{\infty} \prec \omega_2^{\infty}$, $\nu_2^{\infty} \prec {\nu^{\prime}_2}^{\infty}$, ${\omega^{\prime}_2}^{\infty} \prec {\omega^{\prime}_1}^{\infty}$ and ${\nu^{\prime}_1}^{\infty} \prec {\nu^{\prime}_2}^{\infty}$. Let $$(\boldsymbol{\alpha}_3, \boldsymbol{\beta}_3) = (\sigma({(\omega_2\nu_2^{j_2}\nu_2^{\prime})}^{\infty}), \sigma({(\nu_2\omega_2^{k_2}\omega_2^{\prime})}^{\infty})) = (\sigma(\omega_3^{\infty}), \sigma(\nu_3^{\infty}))$$ with and $j_2, k_2 \in \mathbb{N}$ and $j_2, k_2 \geq 2$. Then, for every $n \in \mathbb{N}$, let $$(\boldsymbol{\alpha}_{n},\boldsymbol{\beta}_{n})= (\sigma({(\omega_{n-1}\nu^{j_{n-1}}_{n-1}\nu^{\prime}_{n-1})}^{\infty}), \sigma({(\nu_{n-1}\omega_{n-1}^{k_{n-1}}\omega^{\prime}_{n-1})}^{\infty})) = (\sigma(\omega_n^{\infty}),\sigma(\nu_n^{\infty}))$$ with $j_n, k_n \in \mathbb{N}$ and $j_n, k_n \geq 2$where $(\omega^{\prime}_{n-1},\nu^{\prime}_{n-1})$ is the associated pair of an essential pair $(\boldsymbol{\alpha}^{\prime}_{n-1}, \boldsymbol{\beta}^{\prime}_{n-1})$ satisfying ${\omega^{\prime}_{n-1}}^{\infty} \prec \omega_{n-1}^{\infty}$, $\nu_{n-1}^{\infty} \prec {\nu^{\prime}_{n-1}}^{\infty}$, ${\omega^{\prime}_{n-1}}^{\infty} \prec {\omega^{\prime}_{n-2}}^{\infty}$ and ${\nu^{\prime}_{n-2}}^{\infty} \prec {\nu^{\prime}_{n-1}}^{\infty}$. Let $$(\boldsymbol{\alpha}, \boldsymbol{\beta}) = \left(\mathop{\lim}\limits_{n \to \infty} \boldsymbol{\alpha}_n, \mathop{\lim}\limits_{n \to \infty}\boldsymbol{\beta}_n \right).$$ From Lemma \ref{constructionspec2} we have that $(\Sigma_{(\boldsymbol{\alpha}, \boldsymbol{\beta})}, \sigma_{(\boldsymbol{\alpha}, \boldsymbol{\beta})})$ is a coded system. We claim that $(\Sigma_{(\boldsymbol{\alpha}, \boldsymbol{\beta})}, \sigma_{(\boldsymbol{\alpha}, \boldsymbol{\beta})})$ does not have the specification property. 

To prove our assertion, we will show that the sequence $\left\{(\boldsymbol{\alpha}_n, \boldsymbol{\beta}_n)\right\}_{n=1}^{\infty}$ satisfies that $$\ell({p_1}_{({\boldsymbol{\alpha}}_{n},{\boldsymbol{\beta}}_{n})} {p_2}_{({\boldsymbol{\alpha}}_{n},{\boldsymbol{\beta}}_{n})}) < \ell({p_1}_{({\boldsymbol{\alpha}}_{n+1},{\boldsymbol{\beta}}_{n+1})}{p_2}_{({\boldsymbol{\alpha}}_{n+1},{\boldsymbol{\beta}}_{n+1})})$$ and $$\ell({p_1}_{({\boldsymbol{\alpha}}_{n},{\boldsymbol{\beta}}_{n})} {p_2}_{({\boldsymbol{\alpha}}_{n},{\boldsymbol{\beta}}_{n})}) \leq s_{({\boldsymbol{\alpha}}_{n+1},{\boldsymbol{\beta}}_{n+1})}$$ for every $n \in \mathbb{N}$. Consider $n \in \mathbb{N}$. Note that the associated pairs of $({\boldsymbol{\alpha}}_{n},{\boldsymbol{\beta}}_{n})$ and $({\boldsymbol{\alpha}}_{n+1},{\boldsymbol{\beta}}_{n+1})$ are $(\omega_{n-1}{\nu_{n-1}}^{j_{n-1}}\nu_{n-1}^{\prime}, \nu_{n-1}{\omega_{n-1}}^{k_{n-1}}\omega_{n-1}^{\prime}),$ $(\omega_{n}{\nu_{n}}^{j_{n}}\nu_{n}^{\prime}, \nu_{n}{\omega_{n}}^{k_{n}}\omega_{n}^{\prime})$ respectively. Since $\omega_n = \omega_{n-1}{\nu_{n-1}}^{j_{n-1}}\nu_{n-1}^{\prime}$ and $\nu_n = \nu_{n-1}{\omega_{n-1}}^{k_{n-1}}\omega_{n-1}^{\prime}$ then the associated pair of $(\boldsymbol{\alpha}_{n+1},\boldsymbol{\beta}_{n+1})$ is $$\left(\omega_{n-1}{\nu_{n-1}}^{j_{n-1}}\nu_{n-1}^{\prime}{(\nu_{n-1}{\omega_{n-1}}^{k_{n-1}}\omega_{n-1}^{\prime})}^{j_{n}}\nu_{n}^{\prime}, \nu_{n-1}{\omega_{n-1}}^{k_{n-1}}\omega_{n-1}^{\prime}{(\omega_{n-1}{\nu_{n-1}}^{j_{n-1}}\nu_{n-1}^{\prime})}^{k_{n}}\omega_{n}^{\prime}\right).$$ Observe that $p_1(\boldsymbol{\alpha}_n,\boldsymbol{\beta}_n) = \omega_{n-1}\nu_{n-1}^{j_{n-1}}$ and $p_2(\boldsymbol{\alpha}_n,\boldsymbol{\beta}_n) = \nu_{n-1}\omega_{n-1}^{k_{n-1}}$ since $\omega_{n-1}^{\prime} \prec \omega_{n-1}$ and $\nu_{n-1}^{\prime} \prec \nu_{n-1}$. Moreover $p_1(\boldsymbol{\alpha}_{n+1},\boldsymbol{\beta}_{n+1}) = \omega_{n}\nu_{n}^{j_{n+1}}$ and $p_2(\boldsymbol{\alpha}_{n+1},\boldsymbol{\beta}_{n+1}) = \nu_{n}\omega_{n}^{k_{n+1}}$ since $\omega_{n}^{\prime} \prec \omega_{n}$ and $\nu_{n}^{\prime} \prec \nu_{n}$. This gives that $$\ell({p_1}_{({\boldsymbol{\alpha}}_{n},{\boldsymbol{\beta}}_{n})} {p_2}_{({\boldsymbol{\alpha}}_{n},{\boldsymbol{\beta}}_{n})}) < \ell({p_1}_{({\boldsymbol{\alpha}}_{n+1},{\boldsymbol{\beta}}_{n+1})}{p_2}_{({\boldsymbol{\alpha}}_{n+1},{\boldsymbol{\beta}}_{n+1})}).$$ Consider $\upsilon, \nu \in \mathcal{L}(\Sigma_{({\boldsymbol{\alpha}}_{n+1},{\boldsymbol{\beta}}_{n+1})}) \setminus \mathcal{L}(\Sigma_{({\boldsymbol{\alpha}}_{n},{\boldsymbol{\beta}}_{n})})$. In particular, observe that $\omega_n1 \notin \mathcal{L}(\Sigma_{(\boldsymbol{\alpha}_{n+1}, \boldsymbol{\beta}_{n+1})}) \setminus \mathcal{L}(\Sigma_{(\boldsymbol{\alpha}_{n}, \boldsymbol{\beta}_{n})})$. Since $(\boldsymbol{\alpha}_{n+1}, \boldsymbol{\beta}_{n+1})$ is an essential pair, there exist a bridge $\varpi$ between $\omega_n1$ and $\nu^{\prime}_n$. Since $\nu_n \prec \nu_n^{\prime}$ then it is clear that $\ell(\nu_n) - 1 < \ell(\varpi)$. Moreover $\ell(\nu_n) -1 = \ell(\nu_{n-1}{\omega_{n-1}}^{k_{n-1}}\omega_{n-1}^{\prime})-1$. Since $j_n \geq 2$ then $\sigma^{\ell(\nu_n) -1}(\varpi)$ contains either $\nu_n$ or $\omega_n$. If $\ell(\nu_n) \leq \ell(\omega_n)$ then $$\ell(\varpi) = j_n\ell(\nu_{n-1}{\omega_{n-1}}^{k_{n-1}}\omega_{n-1}^{\prime})-1 = j_n(\ell(\nu_n-1)+k_{n-1}\ell(\omega_{n-1})+\ell(\omega_{n-1}^{\prime})),$$ which gives us the desired conclusion.

\vspace{1em}It is worth to mention that it is possible to perform a similar construction for subshifts with no specification by considering the same hypothesis as in the previous example and the sequence $\left\{(\boldsymbol{\alpha}_{n},\boldsymbol{\beta}_{n}) \right\}_{n=1}^{\infty}$ $$(\boldsymbol{\alpha}_{n},\boldsymbol{\beta}_{n})= (\sigma({(\omega_{n-1}{\omega_{n-1}^{\prime}}^{j_{n-1}}_{n-1}\nu^{\prime}_{n-1})}^{\infty}), \sigma({(\nu_{n-1}{\nu_{n-1}^{\prime}}^{k_{n-1}}\omega^{\prime}_{n-1})}^{\infty})).$$

\section*{Acknowledgements}

\noindent The author is indebted to Nikita Sidorov for his encouragement and guidance. Also, the author wishes to thank Lyndsey Clark for stimulating conversations and their useful remarks. In addition, the author wishes to express his gratitude to Jean Paul Allouche for his hospitality in the author's visit to Institut de Math\'ematiques de Jussieu-PRG, Universit\'e Pierre et Marie Curie. The visit was sponsored by the Royal Society International Exchanges grant IE130940: Open Dynamical Systems and the Lexicographic World.


\end{document}